\tikzset{->-/.style={decoration={
  markings,
  mark=at position #1 with {\arrow{Computer Modern Rightarrow[length=5pt,width=5pt]}}},postaction={decorate}}}
\tikzset{->-rev/.style={decoration={
  markings,
  mark=at position #1 with {\arrow{Computer Modern Rightarrow[length=5pt,width=5pt,reversed]}}},postaction={decorate}}}
\title[Trickle groups]{Trickle groups}
\author[P Bellingeri]{Paolo Bellingeri}
\address{Paolo Bellingeri, Laboratoire Nicolas Oresme, UMR 6139, CNRS, Université de Caen-Normandie, 14032 Caen Cedex, France
\vskip 3 pt
Eddy Godelle, Laboratoire Nicolas Oresme, UMR 6139, CNRS, Université de Caen-Normandie, 14032 Caen Cedex, France
\vskip 3 pt
Luis Paris, Institut de Mathématiques de Bourgogne, UMR 5584, CNRS, Université de Bourgogne, 21000 Dijon, France}
\email{paolo.bellingeri@unicaen.fr}
\author[E Godelle]{Eddy Godelle}
\email{eddy.godelle@unicaen.fr}
\author[L Paris]{Luis Paris}
\email{lparis@u-bourgogne.fr}
\newtheorem{thm}{Theorem}[section]
\newtheorem{lem}[thm]{Lemma}
\newtheorem{prop}[thm]{Proposition}
\newtheorem{corl}[thm]{Corollary}
\theoremstyle{definition}
\newtheorem*{defin}{Definition}
\newtheorem*{rem}{Remark}
\newtheorem*{expl}{Example}
\newtheorem*{acknow}{Acknowledgments}
\newtheorem*{expl1}{Example 1}
\newtheorem*{expl2}{Example 2}
\newtheorem*{expl3}{Example 3}
\numberwithin{equation}{section}
\renewcommand{\thefigure}{\ifnum \c@section>\z@ \thesection.\fi
 \@arabic\c@figure}
\begin{document}

\def\N{\mathbb N} \def\R{\mathbb R} \def\SSS{\mathfrak S}
\def\SS{\mathcal S} \def\id{{\rm id}} \def\VJ{{\rm VJ}}
\def\KJ{{\rm KJ}} \def\link{{\rm link}} \def\starE{{\rm star}}
\def\Tr{{\rm Tr}} \def\Z{\mathbb Z} \def\supp{{\rm supp}}
\def\st{{\rm st}} \def\pil{{\rm pil}} \def\RR{\mathcal R}
\def\LL{\mathcal L} \def\nf{{\rm nf}} \def\syl{{\rm syl}}
\def\FF{\mathcal F} \def\AC{{\rm AC}} \def\Div{{\rm Div}}
\def\Ker{{\rm Ker}} \def\KVJ{{\rm KVJ}} \def\PVJ{{\rm PVJ}}
\def\SF{{\rm SF}} \def\PP{\mathcal P} \def\fin{{\rm fi}}
\def\linkE{{\rm link}} \def\Aut{{\rm Aut}} \def\AA{\mathcal A}


\begin{abstract}
A new family of groups, called trickle groups, is presented.
These groups generalize right-angled Artin and Coxeter groups, as well as cactus groups.
A trickle group is defined by a presentation with relations of the form $xy = zx$ and $x^\mu = 1$, that are governed by a simplicial graph, called a trickle graph, endowed with a partial ordering on the vertices, a vertex labeling, and an automorphism of the star of each vertex.
We show several examples of trickle groups, including extended cactus groups, certain finite-index subgroups of virtual cactus groups, Thompson group F, and ordered quandle groups.
A terminating and confluent rewriting system is established for trickle groups, enabling the definition of normal forms and a solution to the word problem.
An alternative solution to the word problem is also presented, offering a simpler formulation akin to Tits' approach for Coxeter groups and Green's for graph products of cyclic groups.
A natural notion of a parabolic subgraph of a trickle graph is introduced.
The subgroup generated by the vertices of such a subgraph is called a standard parabolic subgroup and it is shown to be the trickle group associated with the subgraph itself.
The intersection of two standard parabolic subgroups is also proven to be a standard parabolic subgroup.
If only relations of the form $xy = zx$ are retained in the definition of a trickle group, then the resulting group is called a preGarside trickle group.
Such a group is proved to be a preGarside group, a torsion-free group, and a Garside group if and only if its associated trickle graph is finite and complete.

\smallskip\noindent
{\bf AMS Subject Classification\ \ } 
Primary: 20F10, Secondary: 20F05, 20F36, 20F55, 20F65.

\smallskip\noindent
{\bf Keywords\ \ } 
Trickle groups, right-angled Coxeter groups, right-angled Artin groups, cactus groups, virtual cactus groups, Thompson group F, word problem, rewriting systems, preGarside groups, Garside groups.
\end{abstract}

\maketitle


\section{Introduction}\label{sec1}

There are numerous groups in the literature defined by relations of the form $xy = zx$, often with additional constraints on the orders of generators. 
Prominent examples include right-angled Artin groups, right-angled Coxeter groups, and more generally, graph products of cyclic groups. 
The aim of the present paper is to study a specific family of such groups that we call \emph{trickle groups}.

Cactus groups are emblematic examples of trickle groups. 
These groups first appeared as quasi-braid groups in the study of the mosaic operad \cite{Devad1,EHKR1,KhWi1} and they were subsequently generalized to all Coxeter groups \cite{DaJaSc1}. 
Their significance was further highlighted in their connection to coboundary categories \cite{HenKam1}, mirroring the role of braid groups in braided categories. 
Note that the term ``cactus groups'' was coined in \cite{HenKam1}. 
Additionally, cactus groups and their generalizations to Coxeter groups have found applications in representation theory under various guises \cite{KnTaWo1,Bonna1,Losev1,ChGlPy1,RouWhi1}.

For $n \in \N_{\ge 2}$, the \emph{cactus group} $J_n$ is defined by the presentation with generators $x_{p,q}$, $1 \le p < q \le n$, and relations:
\begin{itemize}
\item[(j1)]
$x_{p,q}^2 = 1$, for $1 \le p <q \le n$,
\item[(j2)]
$x_{p,q} x_{m,r} = x_{m,r} x_{p,q}$, for $[p,q] \cap [m,r] = \emptyset$,
\item[(j3)]
$x_{p,q} x_{m,r} = x_{p+q-r,p+q-m} x_{p,q}$, for $[m,r]\subset[p,q]$.
\end{itemize}

The elements of $J_n$ are often depicted using planar diagrams.
More precisely, an element $g \in J_n$ is represented by an $n$-tuple of smooth paths in the plane, $b=(b_1,\dots,b_n)$, $b_i : [0,1] \to \R^2$, satisfying the following conditions.
\begin{itemize}
\item
There exists a permutation $\sigma \in \SSS_n$ such that $b_i(0)=(0,i)$ and $b_i(1)=(1,\sigma(i))$, for all $i\in \{1,\dots, n\}$.
\item
For all $t\in [0,1]$ and all $i \in \{1, \dots, n\}$ we have $\pi_1(b_i(t))=t$, where $\pi_1 : \R^2 \to \R$ denotes the projection onto the first coordinate.
\item
Crossings between the $b_i$'s may be multiple but are always transversal.
\end{itemize}
The generator $x_{p,q}$ is represented in Figure \ref{fig1_1} and relation (j3) is illustrated in Figure \ref{fig1_2}.

\begin{figure}[ht!]
\begin{center}
\includegraphics[width=2.8cm]{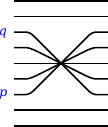}
\caption{Generator of $J_n$}\label{fig1_1}
\end{center}
\end{figure}

\begin{figure}[ht!]
\begin{center}
\includegraphics[width=8.4cm]{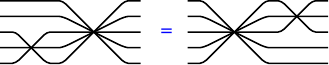}
\caption{Relation (j3) in the presentation of $J_n$}\label{fig1_2}
\end{center}
\end{figure}

The extension of this definition to Coxeter groups is straightforward.
Let $(W,S)$ be a Coxeter system associated with a Coxeter graph $\Upsilon$.
For $X \subseteq S$, the subgroup of $W$ generated by $X$ is called a \emph{standard parabolic subgroup} and is denoted by $W_X$, and the full Coxeter subgraph of $\Upsilon$ spanned by $X$ is denoted by $\Upsilon_X$.
We say that $X \subseteq S$ is \emph{irreducible} if $\Upsilon_X$ is connected, and we say that $X$ is of \emph{spherical type} if $W_X$ is finite.
In the latter case $W_X$ contains a unique element of maximal length (with respect to $S$), denoted by $w_X$, and this element satisfies $w_X X w_X^{-1}=X$ and $w_X^2=\id$ (see \cite{Bourb1}).
We denote the set of non-empty irreducible and spherical subsets of $S$ by $\SS_{\fin}$.

The \emph{cactus group} $C (W, S)$ associated with $(W, S)$ is defined by the presentation with generators $x_X$, $X \in \SS_{\fin}$, and relations:
\begin{itemize}
\item[(j1)]
$x_X^2=1$, for $X \in \SS_{\fin}$,
\item[(j2)]
$x_X x_Y = x_Y x_X$, if $\Upsilon_{X\cup Y}$ is the disjoint union of $\Upsilon_X$ and $\Upsilon_Y$, meaning that $X \cap Y = \emptyset$ and $st = ts$ for all $s \in X$ and $t \in Y$, 
\item[(j3)] 
$x_X x_Y = x_{w_X(Y)} x_X$, for $Y \subset X$.
\end{itemize}

Our aim is to investigate the combinatorial properties of these groups within a broader framework that encompasses various more or less natural generalizations of cactus groups.

Let $\Gamma$ be a simplicial graph.
We denote the vertex set of $\Gamma$ by $V (\Gamma)$ and the edge set by $E (\Gamma)$.
The \emph{link} of a vertex $x \in V(\Gamma)$, denoted by $\linkE_x (\Gamma)$, is the full subgraph of $\Gamma$ spanned by $\{y \in V (\Gamma) \mid \{x, y\} \in E(\Gamma)\}$.
The \emph{star} of $x$, denoted by $\starE_x (\Gamma)$, is the full subgraph of $\Gamma$ spanned by $\{y\in V (\Gamma) \mid \{x, y\} \in E(\Gamma)\}\cup \{x\}$.

A \emph{trickle graph} is a quadruple $(\Gamma, \le, \mu, (\varphi_x)_{x\in V(\Gamma)})$, where:
\begin{itemize}
\item
$\Gamma$ is a simplicial graph,
\item
$\le$ is a (partial) order on $V (\Gamma)$,
\item
$\mu: V(\Gamma) \to \N_{\ge 2} \cup \{\infty\}$ is a vertex labeling,
\item
$\varphi_x: \starE_x (\Gamma) \to \starE_x (\Gamma)$ is an automorphism of $\starE_x (\Gamma)$ for all $x \in V(\Gamma)$,
\end{itemize}
that must satisfy certain conditions defined in Section \ref{sec2}.

The \emph{trickle group} $\Tr (\Gamma)$ associated with a trickle graph $\Gamma = (\Gamma, \le, \mu,(\varphi_x)_{x\in V(\Gamma)})$ is the group defined by the following presentation.
\begin{gather*}
\Tr (\Gamma) = \langle V(\Gamma) \mid x^{\mu(x)} = 1 \text{ for all } x \in V (\Gamma) \text{ such that } \mu (x) \neq \infty\,,\ \varphi_x(y)\,x = \varphi_y(x)\,y \\
\text{ for all } \{x, y\} \in E(\Gamma) \rangle\,.
\end{gather*}

One of the conditions in the definition of a trickle graph in Section \ref{sec2} entails that, for any edge $\{x,y\} \in E (\Gamma)$, either $\varphi_x (y) = y$ or $\varphi_y (x) = x$.
Consequently, the above presentation is indeed a presentation with relations of the form $xy=zx$ and $x^\mu=1$.
Another immediate consequence of these conditions is that the trivial order is admissible, and therefore right-angled Coxeter groups, right-angled Artin groups and, more generally, graph products of cyclic groups are trickle groups.

As mentioned above, cactus groups are trickle groups, where here $\mu(x) = 2$ for all $x \in V (\Gamma)$.
Trickle groups include other groups naturally related to cactus groups, such as the ``Artin'' versions of cactus groups (associated with Coxeter groups), where we keep relations (j2) and (j3) and ignore relations (j1).
In this case, such a group is also a preGarside group in the sense of \cite{GodPar2} (see Section \ref{sec7} and Subsection \ref{subsec2_5}).
More generally, the example of cactus groups associated with Coxeter systems can be naturally extended to families of subgroups of a given group $G$ satisfying certain properties that are presented in Subsection \ref{subsec3_1}.

Our study is also an opportunity to investigate \emph{virtual cactus groups} since, as we will see in Subsection \ref{subsec3_2}, they contain finite index subgroups that are trickle groups.
Let $S_1, \dots, S_\ell$ be a collection of $\ell$ circles immersed in the plane having only double transverse crossings.
We assign to each crossing a ``positive'', ``negative'' or ``virtual'' value that we indicate on the graphical representation of $S_1 \cup \cdots \cup S_\ell$ as in Figure \ref{fig1_3}.
Such a figure is called a \emph{virtual link diagram}.
We consider the equivalence relation on the set of virtual link diagrams generated by the isotopy and the virtual Reidemeister moves as defined by Kauffman \cite{Kauff1,Kauff2}.
An equivalence class of virtual link diagrams is a \emph{virtual link}.

\begin{figure}[ht!]
\begin{center}
\includegraphics[width=4.4cm]{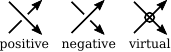}
\caption{Crossings in a virtual link diagram}\label{fig1_3}
\end{center}
\end{figure}

Since the publication of Kauffman's seminal paper \cite{Kauff1}, the theory of virtual knots and links has grown significantly, and this notion has been extended to other combinatorial and/or topological objects represented by planar diagrams. 
The leitmotif underlying these theories is that two arcs connecting the same points and passing only through virtual crossings are equivalent.

Since the elements of the cactus group $J_n$ are represented by planar diagrams, it is natural to extend $J_n$ by adding virtual crossings to the cactus crossings while keeping the principle that two arcs connecting the same points and passing only through virtual crossings are equivalent.
Then we obtain the \emph{virtual cactus group}, $\VJ_n$, which will be studied in detail in Subsection \ref{subsec3_2}.
Note that these groups are not new, having been introduced in \cite{IKLPR1}, where it is shown that $\VJ_n$ is the $\SSS_n$-equivariant fundamental group of the real form of the ``cactus flower moduli space'' $\bar F_n$.

The connection between virtual cactus groups and trickle groups mirrors that between virtual braid groups and Artin groups \cite{GodPar1, BeCiPa1, BelPar1, BePaTh1}.
We prove that $\VJ_n$ can be decomposed as a semi-direct product $\VJ_n = \KJ_n \rtimes \SSS_n$, where $\SSS_n$ is the symmetric group on $\{1,\dots, n\}$, and $\KJ_n$ is a trickle group (see Proposition \ref{prop3_7}).
This decomposition enables us to address the word problem in $\VJ_n$, to define normal forms for the elements of $\VJ_n$, and to show that $J_n$ embeds into $\VJ_n$.

It is probable that numerous other trickle groups exist within the literature.
Among these, we have pinpointed two specific examples: one originating from dynamical systems and the other from knot theory.

Thompson group $F$, introduced by Richard Thompson in 1965 in an unpublished manuscript, is a group of homeomorphisms of the real line with many unusual properties.
For instance, its derived group $F'$ is a simple group, the quotient $F/F'$ is a free abelian group of rank $2$, and it contains no subgroup isomorphic to the rank $2$ free group.
We refer to \cite{CaFlPa1} for a general overview on this group.
A well-known presentation for $F$ is as follows:
\[
\langle x_n\,,\ n\in \N \mid x_k x_n = x_{n+1} x_k \text{ for } k < n \rangle\,,
\]
which, while featuring relations of the form $xy=zx$, does not fulfill all the requirements of a trickle group presentation.
However, in Subsection \ref{subsec3_3} we show another presentation for $F$ that is indeed a trickle presentation (see Theorem \ref{thm3_15}).
So, Thompson group $F$ is a trickle group.
This structure is particularly noteworthy as any ``natural standard parabolic subgroup'' of $F$ is a copy of $F$ itself (see Proposition \ref{prop3_16}).
Moreover, Theorem \ref{thm2_14} will imply that $F$ is a preGarside group.

Quandles are algebraic structures whose axioms reproduce the Reidemeister moves in knot theory.
Independently introduced by Joyce \cite{Joyce1} and Matveev \cite{Matve1}, they have been frequently used to construct knot or link invariants.
As noted by Joyce \cite{Joyce1} and Matveev \cite{Matve1}, a classification of quandles would de facto entail a classification of knots.
This explains the difficulty of studying the set of all quandles, leading researchers to focus on specific families of quandles.
Ordered quandles were introduced recently in this perspective \cite{BaPaSi1,DDHPV1}, and it turns out that they are a disguised form of trickle graphs.
The details of this construction are given in Subsection \ref{subsec3_4}.

As previously mentioned, the objective of this paper is a combinatorial study of trickle groups.
In Section \ref{sec4} we determine a confluent and terminating rewriting system for these groups (see Theorem \ref{thm2_4}).
According to Newman \cite{Newma1}, this  enables the definition of normal forms for their elements and, consequently,  a solution to the word problem (see Corollary \ref{corl2_5}).
Moreover, it yields other immediate results, such as a characterization of finite trickle groups (see Corollary \ref{corl2_7}).
Notice that our rewriting system and its associated normal forms are not novel for graph products of cyclic groups \cite{Wyk1,HerMei1,CrGoWi1}, and a similar rewriting system for classical cactus groups was considered in \cite{Genev1}.
Note also that the term ``trickle'', used to designate trickle groups, originates from this algorithm because it metaphorically involves pushing down as many syllables as possible using only relations of the form $xy = zx$.

The remaining sections of the paper explore how trickle groups behave in a manner analogous to groups studied in the theory of Coxeter, Artin, and Garside groups.
Building upon the algorithms and methods introduced in Section \ref{sec4}, we present in Section \ref{sec5} an alternative algorithm for solving the word problem in a trickle group. 
This new algorithm offers a simpler formulation compared to the one presented in Section \ref{sec4}. 
Furthermore, it aligns more closely with the algorithms described in \cite{Tits1} for Coxeter groups, in \cite{Green1} for graph products of cyclic groups, and more generally, in \cite{ParSoe1} for Dyer groups.

A natural notion of a \emph{parabolic subgraph} emerges in the context of trickle graphs.
This leads to the natural question of studying the trickle groups defined by such subgraphs.
Drawing a parallel with the theory of Coxeter and Artin groups, we refer to these as \emph{standard parabolic subgroups}.
As the name suggests, we establish in Section \ref{sec6} that a standard parabolic subgroup is indeed a subgroup of the trickle group associated with the original graph (see Theorem \ref{thm2_10}). 
Moreover, we prove that the intersection of two standard parabolic subgroups is a standard parabolic subgroup (see Corollary \ref{corl2_12}).

Section \ref{sec7} establishes a connection between trickle groups and Garside theory.
A \emph{preGarside trickle graph} is defined as a trickle graph $\Gamma = (\Gamma, \le, \mu, (\varphi_x)_{x \in V (\Gamma)})$ for which $\mu (x) = \infty$ for all $x \in V (\Gamma)$.
A \emph{preGarside trickle group} is a trickle group associated with a preGarside trickle graph.
Additionally, one can define an associated \emph{preGarside trickle monoid} using the same presentation, but interpreted as a monoid presentation.
The term ``preGarside'' originates from \cite{GodPar2}, where the authors investigate monoids and groups that they call preGarside monoids and preGarside groups.
Notable examples of such monoids and groups include all Artin monoids and all Artin groups.
Garside monoids and Garside groups are also preGarside monoids and preGarside groups.
In Section \ref{sec7}, we prove that a preGarside trickle monoid is indeed a preGarside monoid and that a preGarside trickle group is a preGarside group (see Theorem \ref{thm2_14}).
Furthermore, the monoid and the group are respectively a Garside monoid and a Garside group if and only if $\Gamma$ is finite and complete (see Theorem \ref{thm2_15}).
This introduces new examples of Garside groups to which we can associate Coxeter-style quotients.

Another objective of Section \ref{sec7} is to address certain questions that arise for preGarside monoids and  groups within the context of preGarside trickle monoids and groups.
First, we already know from Section \ref{sec4} that a preGarisde trickle group has a solution to the word problem.
Then, we prove in Section \ref{sec7} that, if the vertex set of the graph is finite, then the preGarside trickle group is torsion-free (see Theorem \ref{thm2_16}).
The remaining questions concern the relationship between monoids and groups.
These inquiries, posed in \cite{GodPar2}, are specifically focused on preGarside monoids and groups.
We prove that a preGarside trickle monoid embeds into its enveloping group (see Theorem \ref{thm2_17}) and we prove several results concerning its parabolic submonoids and subgroups (see Theorem \ref{thm2_18}).

Trickle groups appear to be a reasonable generalization of graph products of cyclic groups, and consequently, of right-angled Artin groups and right-angled Coxeter groups.
Therefore, it is natural to explore whether results known for some or all graph products of cyclic groups can be extended to some or all trickle groups.
Relevant questions in this direction include:
\begin{itemize}
\item[(1)]
Do the normal forms described in Section \ref{sec4} form a regular language? Are trickle groups automatic or bi-automatic?
\item[(2)]
Which trickle groups admit geometric actions on CAT(0) cube complexes?
\item[(3)]
Are trickle groups residually finite? Are preGarside trickle groups residually nilpotent without torsion? Can we determine the Lie algebra associated with their lower central series, as done for right-angled Artin groups (see \cite{DucKro1})?
\item[(4)]
Which preGarside trickle groups are orderable, bi-orderable, or admit isolated orders?
\end{itemize}
Additionally, it would be valuable to discover new examples of trickle groups, particularly those arising from areas of mathematics beyond group theory.

The paper is organized as follows.
Section \ref{sec2} presents the fundamental definitions and precise statements of our main results.
It is divided into five subsections.
In the first subsection we introduce the concepts of trickle graphs and trickle groups, along with illustrative examples like graph products of cyclic groups.
In Subsection \ref{subsec2_2} we state the main results of Section \ref{sec4}, which focuses on the trickle algorithm.
In Subsection \ref{subsec2_3} we state the main results of Section \ref{sec5}, which focuses on the Tits-style algorithm.
In Subsection \ref{subsec2_4} we state the main results of Section \ref{sec6}, which focuses on parabolic subgroups.
In Subsection \ref{subsec2_5} we state the main results of Section \ref{sec7}, which focuses on preGarside trickle groups.
Section \ref{sec3} is devoted to examples and it is divided into four subsections: Subsection \ref{subsec3_1} for cactus groups in their generalized version, Subsection \ref{subsec3_2} for virtual cactus groups, Subsection \ref{subsec3_3} for Thompson group F, and Subsection \ref{subsec3_4} for ordered quandle groups.
As mentioned earlier, Sections \ref{sec4}, \ref{sec5}, \ref{sec6} and \ref{sec7} contain the proofs: those concerning the trickle algorithm in Section \ref{sec4}, those concerning the Tits-style algorithm in Section \ref{sec5}, those concerning parabolic subgroups in Section \ref{sec6}, and those concerning preGarside trickle groups in Section \ref{sec7}.

\begin{acknow}
This work originated during a research residency program titled ``Cactus and Posets'' at CIRM (Luminy, Marseille, France) from November 7th to 11th, 2022.
The three authors extend their sincere gratitude to CIRM for the generous support and resources provided (funding, dedicated workspaces, library access, etc.), without which this project would not have been possible.
\end{acknow}


\section{Definitions and statements}\label{sec2}

\subsection{Definitions and first examples}\label{subsec2_1}

The set of vertices of a simplicial graph $\Gamma$ is denoted by $V(\Gamma)$ and the set of its edges is denoted by $E(\Gamma)$.
The \emph{link} of a vertex $x \in V(\Gamma)$, denoted by $\link_x(\Gamma)$, is the full subgraph of $\Gamma$ spanned by $\{y \in V (\Gamma) \mid \{x, y\} \in E (\Gamma) \}$, and the \emph{star} of $x$, denoted by $\starE_x (\Gamma)$, is the full subgraph of $\Gamma$ spanned by $\{y \in V(\Gamma) \mid \{x, y\} \in E (\Gamma)\} \cup \{x\}$.

\begin{defin}
A \emph{trickle graph} is a quadruple $(\Gamma, \le, \mu, (\varphi_x)_{x \in V (\Gamma)})$, where
\begin{itemize}
\item
$\Gamma$ is a simplicial graph,
\item
$\le$ is a (partial) order on $V (\Gamma)$,
\item
$\mu$ is a labeling $\mu: V(\Gamma) \to \N_{\ge 2} \cup \{\infty\}$ of the vertices,
\item
$\varphi_x: \starE_x (\Gamma) \to \starE_x (\Gamma)$ is an automorphism of $\starE_x (\Gamma)$ for all $x \in V (\Gamma)$.
\end{itemize}
For $x, y \in V (\Gamma)$ the notation $x||y$ means that $x$ and $y$ are not comparable in the sense that $x \not \le y$ and $y \not \le x$.
We set $E_{||} (\Gamma) = \{\{x, y\} \in E(\Gamma) \mid x||y \}$.
The quadruple $(\Gamma, \le, \mu, (\varphi_x)_{x \in V (\Gamma)})$ must satisfy the following conditions.
\begin{itemize}
\item[(a)]
For all $x, y \in V (\Gamma)$, if $x < y$, then $\{x, y\} \in E(\Gamma)$.
\item[(b)]
For all $x, y, z \in V (\Gamma)$, if $\{x, y\} \in E_{||} (\Gamma)$ and $z \le y$, then $\{ x, z\} \in E_{||} (\Gamma)$.
\item[(c)]
For all $x \in V(\Gamma)$ and all $y, z \in \starE_x (\Gamma)$, we have $z\le y$ if and only if $\varphi_x(z) \le \varphi_x (y)$.
\item[(d)]
For all $x \in V(\Gamma)$ and all $ y \in \starE_x (\Gamma)$, if $\varphi_x (y) \neq y$, then $y < x$.
\item[(e)]
For all $x \in V (\Gamma)$, if $\mu (x)$ is finite, then $\varphi_x$ has finite order and its order divides $\mu (x)$.
\item[(f)]
For all $x \in V(\Gamma)$ and all $y \in \starE_x (\Gamma)$, $\mu (\varphi_x (y)) = \mu (y)$.
\item[(g)]
For all $x, y, z \in V (\Gamma)$, if $z < y < x$, then $(\varphi_x \circ \varphi_y) (z) = (\varphi_{y'} \circ \varphi_x) (z)$, where $y' = \varphi_x (y)$.
\end{itemize}
We will often say that $\Gamma$ is a trickle graph meaning that, implicitly, $\le$, $\mu$ and $(\varphi_x)_{x \in V (\Gamma)}$ are also given.
\end{defin}

\begin{rem}
Let $x, y, z \in V(\Gamma)$ be such that $z \le y \le x$.
Then it is easily seen that $(\varphi_x \circ \varphi_y) (z) = (\varphi_{y'} \circ \varphi_x) (z)$, where $ y' = \varphi_x (y)$, if either $z=y$ or $y=x$.
So, Condition (g) in the definition of a trickle graph also holds if at least two of the three vertices are equal.
\end{rem}

\begin{defin}
The \emph{trickle group} $\Tr (\Gamma)$ associated with a trickle graph $\Gamma = (\Gamma, \le, \mu, (\varphi_x)_{x \in V(\Gamma) })$ is the group defined by the following presentation.
\[
\Tr (\Gamma) = \langle V(\Gamma) \mid x^{\mu (x)} = 1 \text{ for } x \in V (\Gamma) \text{ such that } \mu ( x) \neq \infty \,, \ \varphi_x (y) \, x = \varphi_y (x) \,y
\text{ for } \{x, y\} \in E (\Gamma) \rangle\,.
\]
\end{defin}

\begin{rem}
By Condition (d) in the definition of a trickle graph, if $\{x, y\} \in E_{||} (\Gamma)$, then the relation $\varphi_x (y)\, x = \varphi_y(x) \,y$ becomes $y x = x y$.
If $x < y$, then this relation becomes $y x = \varphi_y (x) \, y$.
So, $\Tr (\Gamma)$ has a presentation with relations of the form $x y = z x$ and $x^\mu=1$.
\end{rem}

\begin{expl1}
Let $\Gamma$ be a simplicial graph and let $\mu : V(\Gamma) \to \N_{\ge 2} \cup \{ \infty\}$ be a labeling of the vertices.
To the pair $(\Gamma, \mu)$ we associate the \emph{graph product of cyclic groups} $G(\Gamma, \mu)$ defined by the following presentation.
\[
G (\Gamma, \mu) = \langle V (\Gamma) \mid x^{\mu (x)} = 1 \text{ for } x \in V (\Gamma) \text{ such that } \mu (x) \neq \infty\,,\ x y = y x \text{ for } \{x, y\} \in E(\Gamma) \rangle\,.
\]
If $\mu (x) = \infty$ for all $x \in V(\Gamma)$, then $G (\Gamma, \mu)$ is a \emph{right-angled Artin group}, and if $\mu (x) = 2$ for all $x \in V(\Gamma)$, then $G (\Gamma, \mu)$ is a \emph{right-angled Coxeter group}.
It is easily seen that $G (\Gamma, \mu)$ is a trickle group, where $\le$ is the trivial order defined by $x \le y$ if and only if $x = y$, and $ \varphi_x$ is the identity of $\starE_x (\Gamma)$ for all $x \in V(\Gamma)$.
\end{expl1}

\begin{expl2}
Le $\Upsilon$ be a Coxeter graph and let $(W,S)$ be its associated Coxeter system.
As in the introduction, for $X \subseteq S$, we denote by $W_X$ the standard parabolic subgroup generated by $X$ and by $\Upsilon_X$ the full Coxeter subgraph of $\Upsilon$ spanned by $X$.
Recall that $X \subseteq S$ is called \emph{irreducible} and of \emph{spherical type} if $\Upsilon_X$ is connected and $W_X$ is finite.
Recall also that, in this case, $W_X$ contains a unique element of maximal length (with respect to $S$), denoted by $w_X$, and this element satisfies $w_X^2 = 1$ and $w_X X w_X = X$ (see \cite{Bourb1}).
We denote by $\SS_{\fin}$ the set of non-empty subsets $X \subseteq S$ that are irreducible and of spherical type.

We define $\Gamma = (\Gamma, \le, \mu, (\varphi_x)_{x \in V (\Gamma)})$ as follows.
The set of vertices of $\Gamma$ is a set $V(\Gamma) = \{ x_X \mid X \in \SS_\fin \}$ in one-to-one correspondence with $\SS_\fin$.
Two vertices $x_X$ and $x_Y$ are connected by an edge if either $X \subset Y$, or $Y \subset X$, or $\Upsilon_{X \cup Y}$ is the disjoint union of $\Upsilon_X$ and $\Upsilon_Y$ in the sense that $X \cap Y = \emptyset$ and $\{s, t\} \not \in E (\Upsilon)$ for all $s \in X$ and $t \in Y$.
We set $x_X \le x_Y$ if $X \subseteq Y$.
We set $\mu(x_X) = 2$ for all $X \in \SS_{\fin}$.
Let $x_X \in V (\Gamma)$ and $x_Y \in V (\starE_{x_X} (\Gamma))$.
We set $\varphi_{x_X} (x_Y) = x_{w_X(Y)}$ if $Y \subset X$ and $\varphi_{x_X} (x_Y) = x_Y$ otherwise.
It is easily verified that $\le$ is a (partial) order on $V (\Gamma)$ and that, for all $x_X \in V (\Gamma)$, $\varphi_{x_X}$ is an automorphism of $\starE_{x_X} (\Gamma)$.

Now, we prove that $\Gamma = (\Gamma, \le, \mu, (\varphi_x)_{x \in V(\Gamma)})$ satisfies Conditions (a) to (g) of the definition of a trickle graph.

\begin{lem}\label{lem2_1}
The quadruple $\Gamma = (\Gamma, \le, \mu, (\varphi_x)_{x \in V(\Gamma)})$ above defined is a trickle graph.
\end{lem}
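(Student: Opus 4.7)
The plan is to verify each of the seven axioms (a)–(g) in turn, relying throughout on two basic facts from Coxeter theory: for $X\in\SS_\fin$ the element $w_X$ is an involution satisfying $w_X X w_X = X$, and for $Y\subseteq X$ in $\SS_\fin$, conjugation by $w_X$ sends $W_Y$ to $W_{w_X(Y)}$, so that $w_X(Y)$ again lies in $\SS_\fin$ and
\[
w_{w_X(Y)} \;=\; w_X\,w_Y\,w_X\,.
\]
Before tackling (a)–(g) I would first record that $\varphi_{x_X}$ is indeed a graph automorphism of $\starE_{x_X}(\Gamma)$: by splitting vertices $x_Y$ of $\starE_{x_X}(\Gamma)$ according to whether $Y\subseteq X$, $X\subsetneq Y$, or $\Upsilon_{X\cup Y}$ is disjoint, one checks in each case that the definition preserves vertices and edges, and $\varphi_{x_X}^2=\id$ follows from $w_X^2=1$.

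The conditions (a), (d), (e), (f) are essentially immediate. For (a), $x_X<x_Y$ means $X\subsetneq Y$, which is an edge by definition. For (d), if $\varphi_{x_X}(x_Y)\neq x_Y$ then $Y\subseteq X$ and $w_X(Y)\neq Y$, forcing $Y\subsetneq X$, hence $x_Y<x_X$. Conditions (e) and (f) reduce to $\varphi_{x_X}^2=\id$ and to the constancy $\mu\equiv 2$.

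The substance of the proof lies in (b), (c) and (g). For (b), an edge $\{x_X,x_Y\}\in E_{||}(\Gamma)$ must come from the disjoint-union clause, since the subset clauses give comparable vertices. Then for $x_Z\le x_Y$ one has $Z\subseteq Y$, so $X\cap Z\subseteq X\cap Y=\emptyset$ and no edges of $\Upsilon$ cross between $X$ and $Z$; a short verification rules out the options $Z\subseteq X$ (using $Z\neq\emptyset$) and $X\subseteq Z$ (using $X||Y$ and $Z\subseteq Y$). For (c), I would argue by cases on which of $Y,Z$ lie in $X$, in $\supset X$, or are Coxeter-disjoint from $X$. The key observation is that $w_X$ preserves inclusion on subsets of $X$, while vertices $x_Y$ with $Y\supsetneq X$ satisfy $x_Y\ge x_X$ and those Coxeter-disjoint from $X$ are incomparable with every $x_Z$ with $Z\subseteq X$; this guarantees that $\varphi_{x_X}$ preserves all comparability relations internal to $\starE_{x_X}(\Gamma)$.

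The main obstacle is (g), and this is where the conjugation identity above is used. Assume $x_Z<x_Y<x_X$, i.e.\ $Z\subsetneq Y\subsetneq X$. Then $w_Y(Z)\subseteq Y\subseteq X$, so $\varphi_{x_X}\circ\varphi_{x_Y}$ sends $x_Z$ to $x_{w_X(w_Y(Z))}$. On the other side, $y'=\varphi_{x_X}(x_Y)=x_{w_X(Y)}$, and since $w_X(Z)\subseteq w_X(Y)$, $\varphi_{y'}\circ\varphi_{x_X}$ sends $x_Z$ to $x_{w_{w_X(Y)}(w_X(Z))}$. The conjugation identity $w_{w_X(Y)}=w_X w_Y w_X$ together with $w_X^2=1$ gives
\[
w_{w_X(Y)}\,w_X(Z)\;=\;w_X w_Y w_X\,w_X(Z)\;=\;w_X w_Y(Z)\,,
\]
so the two images coincide. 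This completes the verification that $\Gamma$ is a trickle graph.
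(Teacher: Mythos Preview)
Your proposal is correct and follows essentially the same route as the paper's proof: a direct verification of axioms (a)--(g), with (a), (d), (e), (f) immediate, (b) reduced to the observation that the disjoint-union clause is inherited by subsets, (c) handled by a case analysis exploiting $\varphi_{x_X}^2=\id$, and (g) reduced to the conjugation identity $w_{w_X(Y)}=w_X w_Y w_X$. The only cosmetic difference is that you front-load the check that $\varphi_{x_X}$ is a graph automorphism, whereas the paper records this before stating the lemma.
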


\begin{proof}
Conditions (a), (d) and (f) are satisfied by definition of $\Gamma$.

We show that $\Gamma$ satisfies Condition (b).
Let $X,Y,Z \in \SS_{\fin}$ be such that $\{x_X, x_Y\} \in E_{||} (\Gamma)$ and $x_Z \le x_Y$.
Then $\Upsilon_{X \cup Y}$ is the disjoint union of $\Upsilon_X$ and $\Upsilon_Y$ and $Z \subset Y$.
Thus, $\Upsilon_{X \cup Z}$ is the disjoint union of $\Upsilon_X$ and $\Upsilon_Z$, hence $\{x_X, x_Z\} \in E_{||} (\Gamma)$.

We show that $\Gamma$ satisfies Condition (e).
Let $X \in \SS_{\fin}$.
Since $w_X$ has order $2$, we have $\varphi_{x_X}^2 (x_Y) = x_{w_X^2(Y)} = x_Y$ if $Y \subset X$.
We have $\varphi_{x_X}^2 (x_Y) = \varphi_{x_X} (x_Y) = x_Y$ for every other vertex of $\starE_{x_X} (\Gamma)$, hence the order of $\varphi_{x_X}$ divides $\mu(x_X) =2$.

We show that $\Gamma$ satisfies Condition (c).
Let $X, Y, Z \in \SS_{\fin}$ be such that $x_Z, x_Y \in \starE_{x_X} (\Gamma)$.
Since,  by Condition (e) already proved, $\varphi_{x_X}^2 = \id$, to show the equivalence $x_Z \le x_Y\ \Leftrightarrow\ \varphi_{x_X} (x_Z) \le \varphi_{x_X} (x_Y)$, it suffices to show the implication $x_Z \le x_Y\ \Rightarrow\ \varphi_{x_X} (x_Z) \le \varphi_{x_X} (x_Y)$.
Suppose $x_Z \le x_Y$, that is, $Z \subseteq Y$.
If $\Upsilon_{X \cup Y}$ is the disjoint union of $\Upsilon_X$ and $\Upsilon_Y$, then $\Upsilon_{X \cup Z}$ is the disjoint union of $\Upsilon_X$ and $\Upsilon_Z $, hence $\varphi_{x_X} (x_Z) = x_Z \le x_Y = \varphi_{x_X} (x_Y)$.
If $x_X \le x_Z \le x_Y$, then $X \subseteq Z \subseteq Y$, hence $\varphi_{x_X} (x_Z) = x_Z \le x_Y = \varphi_{x_X} (x_Y)$.
If $x_Z \le x_X \le x_Y$, then $Z \subseteq X \subseteq Y$ hence $w_X (Z) \subseteq X \subseteq Y$, and therefore $\varphi_{x_X} (x_Z) = x_{w_X(Z)} \le x_X \le x_Y = \varphi_{x_X} (x_Y)$.
If $x_X \le x_Y$ and $x_Z || x_X$, then $X \subseteq Y$, $Z \subseteq Y$ and $\Upsilon_{X \cup Z}$ is the disjoint union of $\Upsilon_X$ and $\Upsilon_Z$, hence $\varphi_{x_X} (x_Z) = x_Z \le \varphi_{x_X} (x_Y) = x_Y$.
If $x_Z \le x_Y \le x_X$, then $Z \subseteq Y \subseteq X$, hence $w_X(Z) \subseteq w_X(Y) \subseteq X$, and therefore $\varphi_{x_X} (x_Z) = x_{w_X(Z)} \le x_{w_X(Y)} = \varphi_{x_X} (x_Y)$.

Finally we show that $\Gamma$ satisfies Condition (g).
Let $X, Y, Z \in \SS_{\fin}$ be such that $Z \subseteq Y \subseteq X$.
Let $Y' = w_X (Y)$.
We have
\[
(w_X w_Y) (Z) = (w_X w_Y w_X^{-1} w_X) (Z) = (w_{w_X(Y)} w_X) (Z) = (w_{Y'} w_X) (Z)\,,
\]
hence $(\varphi_{x_X} \circ \varphi_{x_Y}) (x_Z) = (\varphi_{x_{Y'}} \circ \varphi_{x_X}) (x_Z)$.
\end{proof}

It is obvious that the cactus group $C(W,S)$ is equal to the trickle group $\Tr (\Gamma)$.
\end{expl2}

\begin{expl3}
Let $\Gamma = (\Gamma, \le, \mu, (\varphi_x)_{x \in V(\Gamma)})$ be a trickle graph.
Notice that $\widetilde{\Gamma} = (\Gamma, \le, \mu, (\varphi_x^{-1})_{x \in V(\Gamma)})$ is also a trickle graph which we call the \emph{dual trickle graph} of $\Gamma$.
On the other hand, we can define the \emph{dual trickle group} $\widetilde{\Tr} (\Gamma)$ by the following presentation.
\[
\widetilde{\Tr} (\Gamma) = \langle V (\Gamma) \mid x^{\mu(x)} = 1 \text{ for } x \in V (\Gamma) \text{ such that } \mu (x) \neq \infty\,,\ x\, \varphi_x (y) = y\, \varphi_y (x)
\text{ for } \{ x, y \} \in E (\Gamma) \rangle\,.
\]
Trickle groups and dual trickle groups are related by the following.

\begin{prop}\label{prop2_2}
Let $\Gamma = (\Gamma, \le, \mu, (\varphi_x)_{x \in V(\Gamma)})$ be a trickle graph.
Then $\widetilde{\Tr} (\widetilde{\Gamma})=\Tr (\Gamma)$.
\end{prop}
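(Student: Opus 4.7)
The plan is to observe that both presentations have the same generating set $V(\Gamma)$ and identical torsion relations $x^{\mu(x)} = 1$, so the entire content of the proposition is that, for each edge $\{x,y\} \in E(\Gamma)$, the ``edge'' relations
\[
\varphi_x(y)\,x = \varphi_y(x)\,y \quad\text{(in } \Tr(\Gamma)\text{)}
\qquad \text{and}\qquad
x\,\varphi_x^{-1}(y) = y\,\varphi_y^{-1}(x) \quad\text{(in } \widetilde{\Tr}(\widetilde{\Gamma})\text{)}
\]
generate the same normal subgroup of the free group on $V(\Gamma)$. I would prove this edge by edge, fixing $\{x,y\} \in E(\Gamma)$.

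First I would reduce to two cases by symmetry of the edge: either $x \| y$, or, say, $x < y$. In the incomparable case, Condition~(d) applied to both $\varphi_x$ and $\varphi_x^{-1}$ gives $\varphi_x(y) = \varphi_x^{-1}(y) = y$ and similarly $\varphi_y(x) = \varphi_y^{-1}(x) = x$, so both relations collapse to $xy = yx$ and there is nothing more to check. In the case $x < y$, Condition~(d) forces $\varphi_x(y) = y$ and $\varphi_x^{-1}(y) = y$, so the two relations simplify to
\[
y\,x = \varphi_y(x)\,y \qquad \text{and} \qquad x\,y = y\,\varphi_y^{-1}(x)\,.
\]

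The main step is then to show these two families of relations (indexed by the vertices $x \in \starE_y(\Gamma)$ with $x < y$) generate the same normal subgroup. For this I would use that $\varphi_y$, being an automorphism of $\starE_y(\Gamma)$, restricts to a bijection on the set $\{z \in V(\starE_y(\Gamma)) \mid z < y\}$: indeed, by Condition~(d) every vertex of $\starE_y(\Gamma)$ not lying below $y$ is fixed by $\varphi_y$, hence so is the complementary set stable. Now, starting from the $\Tr(\Gamma)$-relation $yx = \varphi_y(x)\,y$ and substituting $x \mapsto \varphi_y^{-1}(x')$, which is a legitimate substitution as $x'$ runs over the same set of vertices as $x$, yields $y\,\varphi_y^{-1}(x') = x'\,y$, which is exactly the $\widetilde{\Tr}(\widetilde{\Gamma})$-relation for $\{x',y\}$. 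The reverse substitution $x' \mapsto \varphi_y(x)$ recovers the $\Tr(\Gamma)$-relations from the $\widetilde{\Tr}(\widetilde{\Gamma})$-ones.

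I do not anticipate any real obstacle here: once Condition~(d) is used to simplify $\varphi_x(y)$ and $\varphi_x^{-1}(y)$, the equivalence is a one-line manipulation, and the only point requiring care is checking that $\varphi_y$ acts as a bijection on the indexing set of edge relations so that the two presentations really list the same relations (not merely relations of the same shape). This is immediate from Conditions~(c) and~(d). No use of the cocycle Condition~(g) or of the other trickle-graph axioms is needed for this proposition.
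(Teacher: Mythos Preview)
Your proposal is correct and takes essentially the same approach as the paper: both arguments reduce the edge relation for $\{x,y\}$ with $x<y$ to $yx=\varphi_y(x)\,y$ versus $x'y=y\,\varphi_y^{-1}(x')$, and then match them via the substitution $x'=\varphi_y(x)$ (using that $\varphi_y$ permutes the vertices of $\starE_y(\Gamma)$ below $y$). The paper packages this as a pair of mutually inverse homomorphisms $\Tr(\Gamma)\leftrightarrow\widetilde{\Tr}(\widetilde{\Gamma})$ each sending $x\mapsto x$, while you argue directly that the two sets of relations coincide---but these are the same proof, and indeed the paper remarks afterward that its argument shows precisely that the two presentations are literally equal.
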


\begin{proof}
The group $\widetilde{\Tr} (\widetilde{\Gamma})$ has the following presentation.
\begin{gather*}
\widetilde{\Tr} (\widetilde{\Gamma}) = \langle V (\Gamma) \mid x^{\mu (x)} = 1 \text{ for } x \in V (\Gamma) \text{ such that } \mu (x) \neq \infty\,,\ x\, \varphi_x^{-1} (y) = y \, \varphi_y^{-1} (x)\\
\text{ for } \{x, y\} \in E(\Gamma) \rangle\,.
\end{gather*}
Let $f : V(\Gamma) \to \widetilde{\Tr} (\widetilde{\Gamma})$ be the map defined by $f(x) = x$ for all $x \in V(\Gamma)$.
Let $x \in V(\Gamma)$ be such that $\mu (x) \neq \infty$.
Then $f(x)^{\mu (x)} = x^{\mu (x)} = 1$.
Let $e = \{x, y\} \in E (\Gamma)$.
If $x || y$, then with $e$ we associate the relation $xy = yx$ in the presentation of $\Tr (\Gamma)$ as well as in that of $\widetilde{\Tr} (\widetilde{\Gamma})$.
So, $f(x) \, f(y) = f(y) \, f(x)$.
Suppose $x < y$.
The case $y < x$ is treated in the same way.
With $e$ we associate the relation $y x = \varphi_y(x)\,y$ in the presentation of $\Tr (\Gamma)$.
Let $x' = \varphi_y (x)$.
By definition of a trickle graph we have $e' =\{ x' ,y \} \in E(\Gamma)$ and $x' < y$.
With $e'$ we associate the relation $y \, \varphi_y^{-1} (x') = x' y$ in the presentation of $\widetilde{\Tr} (\widetilde{\Gamma})$.
But, since $x' = \varphi_y (x)$, this relation is also read $y x = \varphi_y(x)\, y$.
Thus, $f(y) \, f(x) = f(\varphi_y(x)) \, f(y)$.
This shows that $f$ induces a homomorphism $f : \Tr (\Gamma) \to \widetilde{\Tr} (\widetilde{\Gamma})$.
We show in the same way that we have a homomorphism $f': \widetilde{\Tr} (\widetilde{\Gamma}) \to \Tr (\Gamma)$ which sends $x$ to $x$ for all $x \in V(\Gamma)$.
It is clear that $f'$ is the inverse of $f$, hence $f$ is an isomorphism.
\end{proof}

\begin{rem}
The proof of Proposition \ref{prop2_2} proves more than what is stated: it actually proves that the presentation of $\widetilde{\Tr} (\widetilde{\Gamma})$ is equal to that of $\Tr (\Gamma)$.
Nevertheless, even if the two presentations coincide, it will be useful subsequently to call the presentation
\[
\langle V (\Gamma) \mid x^{\mu (x)} = 1 \text{ for } x \in V (\Gamma) \text{ such that } \mu (x) \neq \infty\,,\ x\,\varphi_x^{-1} (y) = y \, \varphi_y^{-1} (x) \text{ for } \{x, y\} \in E(\Gamma) \rangle
\]
the \emph{dual presentation} of $\Tr (\Gamma)$.
\end{rem}
\end{expl3}


\subsection{The trickle algorithm -- Results of Section \ref{sec4}}\label{subsec2_2}

Let $A$ be a set, which we call an \emph{alphabet}, and let $A^*$ be the free monoid on $A$.
The elements of $A^*$ are called \emph{words} and they are written as finite sequences.
The empty word is denoted by $\epsilon$ and the concatenation of two words $w_1, w_2 \in A^*$ is denoted by $w_1 \cdot w_2$.
A \emph{rewriting system} on $A^*$ is defined to be a subset $R \subseteq A^* \times A^*$.
Let $w, w' \in A^*$.
We set $w \stackrel{R}{\to} w'$ or simply $w \to w'$ if there exist $w_1, w_2 \in A^*$ and $(u,v) \in R$ such that $w = w_1 \cdot u \cdot w_2$ and $w' = w_1 \cdot v \cdot w_2$.
More generally, we set $w \stackrel{R\ *}{\to} w'$ or simply $w \to^* w'$ if either $w = w'$ or there exists a finite sequence $w = w_0, w_1, \dots, w_p = w'$ in $A^*$ such that $w_{i-1} \to w_i$ for all $i \in \{1, \dots, p\}$.
A word $w \in A^*$ is said to be \emph{$R$-reducible} if there exists $w' \in A^*$ such that $w \to w'$.
Otherwise we say that $w$ is \emph{$R$-irreducible}.
The pair $(A, R)$ is a \emph{rewriting system for a monoid} $M$ if $\langle A \mid u = v \text{ for } (u,v) \in R \rangle^+$ is a monoid presentation for $M$.
A \emph{rewriting system for a group} $G$ is a rewriting system for $G$ viewed as a monoid.
In particular, in this case $A$ generates $G$ as a monoid.
If $(A, R)$ is a rewriting system for a monoid $M$ and $w=(\alpha_1, \alpha_2, \dots,\alpha_\ell) \in A^*$, then we denote by $\overline{w} = \alpha_1 \alpha_2 \dots \alpha_\ell$ the element of $M$ represented by $w$.

Let $R$ be a rewriting system on $A^*$.
We say that $R$ is \emph{terminating} if there is no infinite sequence $\{w_k\}_{k=0}^\infty$ in $A^*$ such that $w_{k-1} \to w_k$ for all $k \in \N_{\ge 1}$.
We say that $R$ is \emph{confluent} if, for all $u, v_1, v_2 \in A^*$ such that $u \to^* v_1$ and $u \to^* v_2$, there exists $w \in A^*$ such that $v_1 \to^* w$ and $v_2 \to^* w$.
The importance of terminating and confluent rewriting systems comes from the following.

\begin{thm}[Newman \cite{Newma1}]\label{thm2_3}
Let $(A,R)$ be a terminating and confluent rewriting system for a monoid $M$.
\begin{itemize}
\item[(1)]
For all $w'\in A^*$ there exists a unique $R$-irreducible word $w \in A^*$ such that $w' \to^* w$.
\item[(2)]
For all $g \in M$ there exists a unique $R$-irreducible word $w \in A^*$ such that $g = \overline{w}$. \end{itemize}
\end{thm}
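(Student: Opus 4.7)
The plan is to handle Part (1) first using termination together with confluence, then to deduce Part (2) from a Church--Rosser-type strengthening of confluence. For Part (1), existence of an $R$-irreducible reduct of $w'$ follows directly from termination: iteratively apply a reduction step as long as possible; the process must halt at an $R$-irreducible word. For uniqueness, suppose $w_1, w_2 \in A^*$ are $R$-irreducible with $w' \to^* w_1$ and $w' \to^* w_2$; invoking confluence on the pair $(w_1,w_2)$ gives $w \in A^*$ with $w_1 \to^* w$ and $w_2 \to^* w$, and since $w_1,w_2$ admit no reduction, both chains are trivial, yielding $w_1 = w = w_2$.

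For Part (2), existence is immediate: any word $w'$ representing $g$ (which exists because $A$ generates $M$ as a monoid) reduces by Part (1) to an $R$-irreducible word $w$ with $\overline{w} = \overline{w'} = g$. The substance lies in uniqueness. Suppose $w_1, w_2$ are $R$-irreducible with $\overline{w_1} = \overline{w_2}$. Because $(A,R)$ presents $M$, the congruence on $A^*$ generated by $R$ identifies $w_1$ with $w_2$. This congruence coincides with the equivalence relation generated by the one-step relation $\to$ together with its converse, since $\to$ is already closed under left and right concatenation by words of $A^*$. Hence there is a finite zigzag $w_1 = u_0, u_1, \dots, u_n = w_2$ in which consecutive terms are related by $\to$ or $\leftarrow$.

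The central step is then the Church--Rosser property: any such zigzag can be ``closed'' into a common reduct, i.e.\ there exists $v \in A^*$ with $u_0 \to^* v$ and $u_n \to^* v$. I would prove this by induction on $n$, repeatedly flattening peaks of the form $u_{i-1} \leftarrow u_i \to u_{i+1}$ using confluence to produce a common reduct, and then splicing with the inductive hypothesis to propagate the closure to both ends of the zigzag. Once Church--Rosser is in hand, applying it to $w_1$ and $w_2$ produces $v$ with $w_1 \to^* v$ and $w_2 \to^* v$, and since both $w_1$ and $w_2$ are $R$-irreducible we conclude $w_1 = v = w_2$. The principal obstacle is organizing the induction cleanly, because confluence directly addresses only peaks of the zigzag and not valleys; a careful bookkeeping is needed to ensure that each application of confluence genuinely decreases the relevant complexity measure of the zigzag (for instance, the number of direction changes together with the total length) so that the induction closes.
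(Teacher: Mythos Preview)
Your proof is correct and is a standard argument for Newman's lemma and its consequences. However, there is nothing to compare against: the paper does not prove Theorem~\ref{thm2_3} at all. It is stated with attribution to Newman~\cite{Newma1} and used as a black box throughout (notably in the proofs of Corollaries~\ref{corl2_5}--\ref{corl2_7} and in Section~\ref{sec4}). So your write-up supplies a proof where the paper simply cites one.

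One minor remark on your argument: in the Church--Rosser induction you mention needing a complexity measure combining direction changes and total length. In fact, once confluence is available, the cleanest route is to prove by induction on $n$ that if $u_0$ and $u_n$ are joined by a zigzag of length $n$ then they have a common reduct; the inductive step just appends one edge to a shorter zigzag and uses confluence once to merge with the common reduct given by induction. No auxiliary measure is needed.
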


Now, we fix a trickle graph $\Gamma = (\Gamma, \le, \mu, (\varphi_x)_{x \in V (\Gamma)})$, and we turn to describe a rewriting system for $\Tr (\Gamma)$.

The alphabet of our rewriting system is not $V(\Gamma) \sqcup V (\Gamma)^{-1}$, as one may expect, but a more complicated set $\Omega = \Omega (\Gamma)$, which is generally infinite, and which is described as follows.

Throughout the paper we use the following notations.
For $\mu \in \N_{\ge 2} \cup \{\infty\}$ we set $\Z_\mu = \Z / \mu \Z$ if $\mu \neq \infty$ and $\Z_\mu = \Z$ if $\mu = \infty$.

The set of \emph{syllables} of $\Gamma$ is the abstract set
\[
S (\Gamma) = \{x^a \mid x \in V (\Gamma) \text{ and } a \in \Z_{\mu (x)} \setminus \{0\} \}\,.
\]
A \emph{stratum} of $\Gamma$ is a finite subset $U = \{x_1^{a_1}, x_2^{a_2}, \dots, x_p^{a_p} \} \subseteq S (\Gamma)$ such that $x_i \neq x_j$ and $\{x_i, x_j\} \in E (\Gamma)$ for all $i, j \in \{1, \dots, p\}$ such that $i \neq j$.
The \emph{support} of $U$ is $\supp (U) = \{x_1, x_2, \dots, x_p\} \subseteq V (\Gamma)$ and its \emph{length} is the integer $p$, which is denoted by $\lg_\st (U)$.
The empty set $\emptyset$ is assumed to be a stratum whose support is $\emptyset$.
The set of strata is denoted by $\Omega = \Omega (\Gamma)$.

The set $\Omega$ is the alphabet of our rewriting system.
The elements of $\Omega^*$ are called \emph{pilings}.
If $u = (U_1, \dots, U_p)$ is a piling, then $p$ is the \emph{length} of $u$, which is denoted by $\lg_\pil (u)$.

Now, we define three operations on the strata which will be used to define our rewriting system.

The first operation consists in removing an element from a stratum.
If $U = \{x_1^{a_1}, x_2^{a_2}, \dots, x_p^{a_p}\}$ is a non-empty stratum and $x_i^{a_i} \in U$, then we set
\[
L (U, x_i^{a_i}) = U \setminus \{ x_i^{a_i} \} = \{ x_1^{a_1}, \dots, x_{i-1}^{a_{i-1}} , x_{i+1}^{a_{i+1}}, \dots, x_p^{a_p}\}\,.
\]
Note that $L (U, x_i^{a_i})$ is a stratum.

The second operation consists in ``extracting'' a syllable from a stratum.
Let $U = \{x_1^{a_1}, x_2^{a_2}, \dots, x_p^{a_p}\}$ be a non-empty stratum and let $x_i^{a_i} \in U$.
We number the elements of $U$ such that, if $x_j > x_k$, then $j < k$.
Then we set 
\[
\gamma (U, x_i^{a_i}) = \big( (\varphi_{x_1}^{a_1} \circ \varphi_{x_2}^{a_2} \circ \cdots \circ \varphi_{x_{i-1}}^{a_{i-1}}) (x_i)\big)^{a_i}\,.
\]
It is easily seen that $\gamma(U, x_i^{a_i})$ is well-defined and belongs to $S(\Gamma)$.
Moreover, it will be proved in Section \ref{sec4} (see Lemma \ref{lem4_2}) that the definition of $\gamma (U, x_i^{a_i})$ does not depend on the choice of the numbering of the elements of $U$.

The third operation consists in ``adding'' a syllable to a stratum.
Let $U = \{x_1^{a_1}, x_2^{a_2}, \dots, x_p^{a_p} \} \in \Omega$ and let $y^b \in S(\Gamma)$.
We say that $y^b$ can be \emph{added} to $U$ if either $y \in \supp (U)$ or $\{y, x_i\} \in E(\Gamma)$ for all $i \in \{1, \dots, p\}$.
Suppose $y^b$ can be added to $U$.
If $y \not \in \supp (U)$, then we set
\[
R (U, y^b) = \{ \varphi_y^{-b} (x_1)^{a_1}, \dots, \varphi_y^{-b} (x_p)^{a_p}, y^b\}\,.
\]
If $y = x_i \in \supp (U)$ and $b + a_i = 0$ (in $\Z_{\mu (y)}$), then we set
\[
R(U, y^b) = \{\varphi_y^{-b} (x_1)^{a_1}, \dots, \varphi_y^{-b} (x_{i-1})^{a_{i-1}}, \varphi_y^{-b} (x_{i+1})^{a_{i+1}}, \dots, \varphi_y^{-b} (x_p)^{a_p}\}\,.
\]
If $y = x_i \in \supp (U)$ and $b + a_i \neq 0$ (in $\Z_{\mu (y)}$), then we set 
\[
R (U, y^b) = \{ \varphi_y^{-b} (x_1)^{a_1}, \dots, \varphi_y^{-b} (x_{i-1})^{a_{i-1}}, y^{a_i+b}, \varphi_y^{-b} (x_{i+1})^{a_{i+1}}, \dots, \varphi_y^{-b} (x_p) ^{a_p}\}\,.
\]
Note that, in the third case, since $y = x_i = \varphi_y^{-b}(x_i)$, $y^{a_i+b}$ can be replaced by $\varphi_y^{-b} (x_i)^{a_i+b}$.
Note also that $R (U, y^b)$ is always a stratum.

\begin{defin}
Let $(U, V)$ be a pair of strata with $V \neq \emptyset$ and let $x^a \in V$.
We set $V' = L(V, x^a)$ and $y^a = \gamma (V, x^a)$.
We assume that $y^a$ can be added to $U$ and we set $U' = R(U, y^a)$.
Then we say that $r = ((U, V), (U', V')) \in \Omega^* \times \Omega^*$ is a \emph{T-transformation}.
In this case we write $T (U, V, x^a) = (U', V')$.
We denote by $\RR_1$ the set of T-transformations.
On the other hand, we set $\RR_0 = \{ ((\emptyset), \epsilon)\} \subset \Omega^* \times \Omega^*$, where $(\emptyset)$ is the piling of length $1$ whose only entry is $\emptyset$ and $\epsilon$ is the empty piling of length $0$.
Finally, we set $\RR = \RR (\Gamma)= \RR_0 \cup \RR_1$.
\end{defin}

The following will be proved in Section \ref{sec4}.

\begin{thm}\label{thm2_4}
Let $\Gamma = (\Gamma, \le, \mu, (\varphi_x)_{x \in V(\Gamma)})$ be a trickle graph, let $\Omega = \Omega (\Gamma)$ be the set of strata of $\Gamma$, and let $\RR = \RR (\Gamma) \subseteq \Omega^* \times \Omega^*$ be as defined above.
Then $\RR$ is a rewriting system for $\Tr (\Gamma)$, and it is terminating and confluent.
\end{thm}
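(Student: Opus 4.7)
The proof has three ingredients: verifying that $\RR$ really presents $\Tr(\Gamma)$, establishing termination, and proving confluence. I would handle them in that order.

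For the first part, I would define an interpretation map $\overline{\cdot}\colon \Omega^* \to \Tr(\Gamma)$ by first sending a syllable $x^a \in S(\Gamma)$ to the corresponding element of $\Tr(\Gamma)$, and then sending a stratum $U = \{x_1^{a_1}, \dots, x_p^{a_p}\}$ to the product $x_1^{a_1} \cdots x_p^{a_p}$, after choosing a linear extension of $\le$ on $\supp(U)$ that puts larger elements first. That this image is independent of the linear extension is exactly the sort of statement that Lemma \ref{lem4_2} is designed for, and it follows from Condition (d) (which forces $\varphi_{x_k}(x_j) = x_j$ when $x_j$ is not below $x_k$) together with Conditions (a) and (c). Pilings extend by concatenation. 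One then checks on the generators and defining relators of $\Tr(\Gamma)$ that the monoid presented by $(\Omega, \RR)$ surjects onto $\Tr(\Gamma)$: each generator $x$ is the image of the piling $(\{x^1\})$, and each trickle relation $\varphi_x(y)\,x = \varphi_y(x)\,y$ can be obtained by a short sequence of T-transformations between the corresponding pilings of length two. Conversely, each T-transformation is a consequence of the trickle relations by construction (this is essentially a direct expansion using the formulas for $\gamma$ and $R$), and $\RR_0$ merely deletes an empty word. This gives an isomorphism of monoids, so $\RR$ is a rewriting system for $\Tr(\Gamma)$.

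For termination, a T-transformation at position $(i,i{+}1)$ strips one syllable out of $U_{i+1}$ (leaving $U'_{i+1}=L(U_{i+1},x^a)$ of strictly smaller size) without touching any $U_j$ for $j > i+1$, while the $\RR_0$-move deletes an empty stratum and thereby shortens the piling. Assigning to $(U_1,\dots,U_p)$ the tuple $\bigl(p,\lg_\st(U_p),\lg_\st(U_{p-1}),\dots,\lg_\st(U_1)\bigr)$ and using the lexicographic order yields a well-founded measure that strictly decreases under every step of $\RR$.

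For confluence, by Newman's lemma it suffices to prove local confluence, which I would do by exhausting the critical pairs. When two rewrites act on positions whose affected strata are disjoint, they commute trivially. The genuine critical overlaps are: (i) two T-transformations on the same pair $(U_i,U_{i+1})$ extracting different syllables $x^a$ and $y^b$ from $U_{i+1}$; (ii) two T-transformations on overlapping pairs $(U_{i-1},U_i)$ and $(U_i,U_{i+1})$; and (iii) an interaction between a T-transformation and an $\RR_0$ rule that happens to empty a stratum. Case (iii) is essentially bookkeeping. Case (i) reduces to checking that $\gamma$ is symmetric in the order of extraction: extracting $x^a$ and then $y'^b$ from $L(U_{i+1},x^a)$ yields the same result, up to the transformation $R$, as the opposite order; here Condition (c) (order-preservation under $\varphi_x$) and Condition (g) jointly guarantee well-definedness. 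Case (ii) is the main obstacle: one must track a syllable moving from $U_{i+1}$ down into $U_i$ while another syllable moves from $U_i$ down into $U_{i-1}$, and these interact nontrivially through the $R$-operation and the extraction formula for $\gamma$. After expanding the definitions, the desired equality becomes exactly a statement about compositions $\varphi_x \circ \varphi_y$ versus $\varphi_{\varphi_x(y)} \circ \varphi_x$ applied to a third vertex, which is precisely Condition (g) of the definition of a trickle graph.

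The main difficulty I anticipate is the bookkeeping in case (ii): each T-transformation involves both an extraction via $\gamma$ (a composition of $\varphi$'s indexed by the $\le$-larger syllables of the source stratum) and an insertion via $R$ (applying a single $\varphi^{-1}$ to every entry of the target stratum). Showing that the two orders of rewriting can be reconciled requires tracking how a $\varphi^{-1}$ introduced by $R$ interacts with the $\varphi$'s accumulated by $\gamma$, and verifying that the "triangle" thus formed closes. Conditions (c), (d), (f), and most crucially (g) are the exact algebraic statements needed to close this triangle, and I expect the proof to come down to invoking them in the correct order once the notation is set up.
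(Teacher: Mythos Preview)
Your proposal is correct and follows essentially the same architecture as the paper: termination via a well-founded measure (the paper uses the scalar weight $r+\sum_i i\cdot\lg_{\st}(U_i)$ rather than your lexicographic tuple, but both work), local confluence via Newman's lemma with a three-way split on critical pairs, and a separate verification that $(\Omega,\RR)$ presents $\Tr(\Gamma)$. One caveat: the case analysis is heavier than you anticipate---your case~(i) (the paper's (C3), Lemma~\ref{lem4_12}) is actually the most intricate, requiring an auxiliary lemma (Lemma~\ref{lem4_11}) that computes how $\gamma$ changes after removing one syllable before a further split on $y\,\|\,z$ versus $y<z$, and your case~(ii) (the paper's (C2), Lemma~\ref{lem4_10}) has ``collision'' sub-cases (when the syllable entering $V$ from $W$ lands on the same vertex as the syllable just removed from $V$, so that the $R$-operation merges or cancels them) that need separate handling.
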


As a consequence of Theorem \ref{thm2_4} we get that $\langle \Omega \mid u = v \text{ for } (u,v) \in \RR \rangle^+$ is a monoid presentation for $\Tr (\Gamma)$.
We explain how to go from this presentation to the standard one and vice versa.

Set $M (\Gamma) = \langle \Omega \mid u = v \text{ for } (u,v) \in \RR \rangle^+$.
Let $U \in \Omega$.
We write $U = \{ x_1^{a_1}, x_2^{a_2}, \dots, x_p^{a_p} \}$ so that, if $x_i > x_j$, then $i < j$, and we set $\omega (U) = x_1^{a_1} x_2^{a_2} \dots x_p^{a_p} \in \Tr (\Gamma)$.
We will show in Lemma \ref{lem4_13} that the definition of $\omega (U)$ does not depend on the choice of the numbering of the elements of $U$.
Then the isomorphism $\Phi: M (\Gamma) \to \Tr (\Gamma)$ sends $U$ to $\omega (U)$ for all $U \in \Omega$.
The reverse isomorphism $\Psi: \Tr (\Gamma) \to M (\Gamma)$ sends $x$ (resp. $x^{-1}$) to the piling $(\{x\})$ (resp. $(\{ x^{-1} \})$) of length $1$ for all $x \in V (\Gamma)$.
Details and proofs about these isomorphisms will be given in Section \ref{sec4}.

Recall that, if $G$ is a group and $V$ is a generating set for $G$, then a \emph{set of normal forms} for $G$ based on $V \sqcup V^{-1}$ is a language $\LL \subseteq (V \sqcup V^{-1})^*$ such that, for all $g \in G$, there exists a unique $w \in \LL$ such that $\overline{w} = g$.
The above constructions provide normal forms for $\Tr (\Gamma)$ based on $V (\Gamma) \sqcup V(\Gamma)^{-1}$ as well as an algorithm to calculate them when $V (\Gamma)$ is finite, as follows.

Let $\mu \in \N_{\ge 2} \cup \{ \infty \}$ and $a \in \Z_\mu \setminus \{ 0\}$.
If $\mu = \infty$, then we set $\rho (a) = a$.
If $\mu \neq \infty$, then we denote by $\rho (a)$ the unique representative of $a$ sitting inside $\{1, \dots, \mu-1\}$.
We fix a total order $\preceq$ on $V (\Gamma)$ which extends the partial order $\le$ in the sense that, if $x \le y$, then $x \preceq y$.
Such a total order always exists but it is not unique in general.
Let $U$ be a non-empty stratum that we write $U = \{ x_1^{a_1}, x_2^{a_2}, \dots, x_p^{a_p} \}$ with $x_1 \succ x_2 \succ \cdots \succ x_p$.
Then we set $\hat \omega (U) = x_1^{\rho (a_1)} \cdot x_2^{\rho (a_2)} \cdots x_p^{\rho (a_p)} \in (V (\Gamma) \sqcup V (\Gamma)^{-1})^*$.
Note that $\hat \omega (U)$ is a representative of $\omega (U)$.

Let $g \in \Tr (\Gamma)$.
By Theorems \ref{thm2_3} and \ref{thm2_4} there exists a unique piling $w = (U_1, U_2, \dots, U_p) \in \Omega^*$ such that $w$ is $\RR$-irreducible and $\overline{w} = \Psi (g)$.
Then we set 
\[
\nf (g) = \hat \omega (U_1) \cdot \hat \omega (U_2) \cdots \hat \omega (U_p) \in (V (\Gamma) \sqcup V (\Gamma)^{-1})^*\,.
\]
The following is a direct consequence of the previous constructions.

\begin{corl}\label{corl2_5}
Let $\Gamma = (\Gamma, \le, \mu, (\varphi_x)_{x \in V(\Gamma)})$ be a trickle graph.
\begin{itemize}
\item[(1)]
The set $\LL = \LL (\Gamma) = \{ \nf (g) \mid g \in \Tr (\Gamma) \}$ is a set of normal forms for $\Tr (\Gamma)$ based on $V (\Gamma) \sqcup V (\Gamma)^{-1}$.
\item[(2)]
Suppose that $V( \Gamma)$ is finite.
Then there exists an algorithm which, given a word $w \in (V(\Gamma) \sqcup V(\Gamma)^{-1})^*$, calculates $\nf (\overline{w})$.
In particular, this algorithm is a solution to the word problem in $\Tr (\Gamma)$.
\end{itemize}
\end{corl}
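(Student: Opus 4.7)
The plan is to derive Corollary \ref{corl2_5} as an essentially immediate consequence of Theorem \ref{thm2_4} combined with Newman's theorem (Theorem \ref{thm2_3}), using the isomorphisms $\Phi : M(\Gamma) \to \Tr(\Gamma)$ and $\Psi : \Tr(\Gamma) \to M(\Gamma)$ described just before the corollary statement. No new structural input is needed; the work consists in unpacking how the unique $\RR$-irreducible piling associated to $g$ translates into a word on $V(\Gamma) \sqcup V(\Gamma)^{-1}$.

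For part (1), I would first verify that $\nf$ is a well-defined function from $\Tr(\Gamma)$ to $(V(\Gamma) \sqcup V(\Gamma)^{-1})^*$. Fix $g \in \Tr(\Gamma)$. By Theorem \ref{thm2_4} the rewriting system $\RR$ on $\Omega^*$ is terminating and confluent, so Theorem \ref{thm2_3}(2), applied to the monoid $M(\Gamma)$ and the element $\Psi(g)$, yields a unique $\RR$-irreducible piling $w = (U_1, \dots, U_p)$ with $\overline{w} = \Psi(g)$. Hence $\nf(g) = \hat\omega(U_1) \cdot \hat\omega(U_2) \cdots \hat\omega(U_p)$ depends only on $g$. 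Since each $\hat\omega(U_i)$ is by construction a representative of $\omega(U_i) \in \Tr(\Gamma)$, one computes $\overline{\nf(g)} = \omega(U_1) \cdots \omega(U_p) = \Phi(\overline{w}) = \Phi(\Psi(g)) = g$, so every element of $\Tr(\Gamma)$ is represented in $\LL$. Uniqueness in $\LL$ is then automatic: if $w_1, w_2 \in \LL$ satisfy $\overline{w_1} = \overline{w_2}$, write $w_i = \nf(g_i)$; then $g_1 = \overline{w_1} = \overline{w_2} = g_2$, so $w_1 = \nf(g_1) = \nf(g_2) = w_2$.

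For part (2), assuming $V(\Gamma)$ finite, the algorithm I would give proceeds in three stages. First, given an input word $w \in (V(\Gamma) \sqcup V(\Gamma)^{-1})^*$, compute $\Psi(\overline{w}) \in \Omega^*$ by reading $w$ letter by letter, sending $x \mapsto (\{x\})$ and $x^{-1} \mapsto (\{x^{-1}\})$, and concatenating. Second, iteratively reduce: remove any empty stratum using $\RR_0$, and for each adjacent pair $(U_i, U_{i+1})$ and each syllable $x^a \in U_{i+1}$, test whether $\gamma(U_{i+1}, x^a)$ can be added to $U_i$ and, if so, apply the corresponding T-transformation. Termination of this loop is guaranteed by Theorem \ref{thm2_4}, and Theorem \ref{thm2_3}(1) ensures that the resulting piling is the unique $\RR$-irreducible representative of $\Psi(\overline{w})$, independent of the order in which reductions are performed. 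Third, output $\nf(\overline{w})$ by applying $\hat\omega$ to each stratum in turn. The word problem is then solved by computing $\nf(\overline{w_1})$ and $\nf(\overline{w_2})$ for two input words and comparing them as strings.

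There is essentially no obstacle here, the statement being a direct corollary; the only point worth checking is effectiveness of each atomic step of the algorithm. This holds because, with $V(\Gamma)$ finite, every stratum has support of cardinality at most $|V(\Gamma)|$, and the operations $L$, $\gamma$, $R$ are combinatorial manipulations whose underlying exponent arithmetic lives in $\Z_{\mu(x)}$, which is finite when $\mu(x) < \infty$ and is $\Z$ otherwise; in all cases the data can be stored and manipulated by a Turing machine. Consequently the procedure above actually computes $\nf(\overline{w})$, which finishes the proof of both parts.
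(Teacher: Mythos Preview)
Your proposal is correct and follows exactly the approach the paper intends: the paper simply states that Corollary~\ref{corl2_5} ``is a direct consequence of the previous constructions'' and gives no further argument, so your unpacking of how Theorems~\ref{thm2_3} and~\ref{thm2_4} together with the isomorphisms $\Phi$ and $\Psi$ yield well-definedness, surjectivity, and injectivity of $\nf$, and how finiteness of $V(\Gamma)$ makes each rewriting step effective, is precisely the intended justification spelled out in full.
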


\begin{rem}
Corollary \ref{corl2_5}\,(2) can be extended to a trickle graph with infinite but countable vertex set $V (\Gamma)$ provided that there exist an algorithm to manipulate the elements of $V (\Gamma)$, an algorithm which, given $x, y \in V (\Gamma)$, decides whether $\{x, y \} \in E (\Gamma)$ or not, an algorithm which, given $x,y \in V (\Gamma)$, decides whether $x \le y$ or not, and an algorithm which, given $x \in V (\Gamma)$ and $y \in \starE_x (\Gamma)$, determines $\varphi_x (y)$.
\end{rem}

Another straightforward consequence of Theorems \ref{thm2_3} and \ref{thm2_4} is the following.

\begin{corl}\label{corl2_6}
Let $\Gamma = (\Gamma, \le, \mu, (\varphi_x)_{x \in V(\Gamma)})$ be a trickle graph.
Then the map $S (\Gamma) \to \Tr (\Gamma)$, $x^a \mapsto x^a$, is injective.
In particular, $V(\Gamma)$ is a subset of $\Tr (\Gamma)$.
\end{corl}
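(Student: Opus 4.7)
The plan is to deduce this corollary immediately from the uniqueness of $\RR$-irreducible representatives supplied jointly by Theorems \ref{thm2_3} and \ref{thm2_4}. The key observation to establish first is that, for every syllable $x^a \in S(\Gamma)$, the one-letter piling $(\{x^a\}) \in \Omega^*$ is automatically $\RR$-irreducible.

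To check this, note that for any syllable $x^a$ the singleton $\{x^a\}$ is a non-empty stratum, so $(\{x^a\})$ is a piling of length one. No T-transformation can be applied to such a piling, since by definition a T-transformation rewrites a pair $(U,V)$ of consecutive strata with $V \neq \emptyset$, which requires length at least two. Likewise, the only $\RR_0$-rule $((\emptyset), \epsilon)$ requires its single stratum to be empty, which fails here since $\{x^a\} \neq \emptyset$. Hence $(\{x^a\})$ is indeed $\RR$-irreducible.

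Now suppose $x^a, y^b \in S(\Gamma)$ satisfy $x^a = y^b$ in $\Tr (\Gamma)$. By Theorem \ref{thm2_4}, $(\Omega, \RR)$ is a rewriting system for $\Tr (\Gamma)$, so the pilings $(\{x^a\})$ and $(\{y^b\})$, whose associated elements of $\Tr(\Gamma)$ are $x^a$ and $y^b$ respectively, both represent this common element. Theorem \ref{thm2_3}\,(2) then guarantees a unique $\RR$-irreducible representative per element, which forces $(\{x^a\}) = (\{y^b\})$ as words in $\Omega^*$, and therefore $\{x^a\} = \{y^b\}$ as strata. This yields $x=y$ and $a=b$, proving injectivity of $S (\Gamma) \to \Tr (\Gamma)$. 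The second assertion follows by specializing to $a = b = 1$, which is legitimate because $\mu (x) \ge 2$ ensures $1 \in \Z_{\mu (x)} \setminus \{0\}$ for every $x \in V(\Gamma)$.

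I expect no serious obstacle: all the substantive work has already been carried out in Theorem \ref{thm2_4}, and the corollary reduces to the elementary irreducibility observation above together with a single appeal to Newman's theorem. The only point requiring care is the unpacking of the definitions of $\RR_0$ and $\RR_1$ to confirm that neither rule can touch a length-one piling carrying a non-empty stratum.
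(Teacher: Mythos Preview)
Your proof is correct and follows essentially the same approach as the paper. The paper phrases it in terms of the normal form $\nf(x^a) = x^{\rho(a)} \in (V(\Gamma) \sqcup V(\Gamma)^{-1})^*$, whereas you work one level earlier with the $\RR$-irreducible piling $(\{x^a\})$ itself; since the normal form is computed from exactly this piling, the two arguments are the same idea expressed at slightly different stages of the construction.
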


\begin{proof}
The result follows from the fact that, for $x^a \in S(\Gamma)$, $x^{\rho (a)} \in (V(\Gamma) \sqcup V (\Gamma)^{-1})^*$ is the normal form of $x^a \in \Tr (\Gamma)$.
\end{proof}

Another more unexpected consequence is the following.

\begin{corl}\label{corl2_7}
Let $\Gamma = (\Gamma, \le, \mu, (\varphi_x)_{x \in V(\Gamma)})$ be a trickle graph.
Then $\Tr (\Gamma)$ is finite if and only if $V(\Gamma)$ is finite, $\Gamma$ is complete, and $\mu (x) \neq \infty$ for all $x \in V(\Gamma)$.
\end{corl}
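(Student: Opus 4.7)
The plan is to derive the statement from the trickle normal forms of Theorems~\ref{thm2_3} and~\ref{thm2_4} together with Corollary~\ref{corl2_6}: finiteness of $\Tr(\Gamma)$ is equivalent to finiteness of the set of $\RR$-irreducible pilings over $\Omega$, and this is what I would count in each direction.

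For the forward implication, Corollary~\ref{corl2_6} immediately gives two of the three conditions, since the syllables $x^a$ with $x \in V(\Gamma)$ and $a \in \Z_{\mu(x)}\setminus\{0\}$ are pairwise distinct in $\Tr(\Gamma)$: hence $V(\Gamma)$ must be finite and every $\mu(x)$ must be finite. The substantive step is completeness. Arguing by contradiction, suppose $x \ne y \in V(\Gamma)$ with $\{x,y\} \notin E(\Gamma)$, and for each $n \ge 1$ consider the piling $w_n \in \Omega^*$ of length $2n$ obtained by alternating the singleton strata $\{x^1\}$ and $\{y^1\}$. No entry of $w_n$ equals $\emptyset$, so no $\RR_0$-reduction applies; for any adjacent pair, say $(\{x^1\}, \{y^1\})$, the only candidate T-transformation extracts $\gamma(\{y^1\}, y^1) = y^1$ and tries to add it to $\{x^1\}$, but since $y \ne x$ and $\{x, y\} \notin E(\Gamma)$, neither clause of the ``can be added'' condition is satisfied, so no T-transformation applies. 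The pilings $w_n$ are therefore $\RR$-irreducible, and by Theorem~\ref{thm2_3} they represent pairwise distinct elements of $\Tr(\Gamma)$, contradicting finiteness.

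For the converse, assume all three hypotheses. Then $\Omega$ is finite: a stratum is a subset of $V(\Gamma)$ with labels in the finite sets $\Z_{\mu(x)}\setminus\{0\}$, and the clique condition in the definition of a stratum is automatic by completeness. The key step is to show that every $\RR$-irreducible piling has length at most $1$, which, together with Theorem~\ref{thm2_3}, yields $|\Tr(\Gamma)| \le |\Omega| < \infty$. Suppose $(U_1,\dots,U_p)$ is irreducible with $p \ge 2$; then $U_1$ and $U_2$ are both non-empty since the $\RR_0$-rule forbids any internal $\emptyset$-stratum. Pick $x^a \in U_2$ and set $y^a = \gamma(U_2, x^a)$. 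Either $y \in \supp(U_1)$ and the first clause of ``can be added'' holds, or $y \notin \supp(U_1)$ and completeness of $\Gamma$ gives $\{y, z\} \in E(\Gamma)$ for every $z \in \supp(U_1)$, so the second clause holds. In either case a T-transformation applies to $(U_1, U_2)$, contradicting irreducibility. The only mildly delicate point in the whole argument is unpacking the definition of $\gamma$ for the irreducibility check in the forward direction, which simplifies because the strata involved are singletons, so $\gamma$ acts as the identity on the syllable being extracted.
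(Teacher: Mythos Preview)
Your proof is correct and follows essentially the same approach as the paper's. Both directions use the same ideas: Corollary~\ref{corl2_6} for the first two necessary conditions, the alternating piling $(\{x\},\{y\},\{x\},\{y\},\dots)$ for non-completeness, and the observation that $\RR$-irreducible pilings have length at most~$1$ when $\Gamma$ is complete---the paper phrases this last step as an explicit description of the normal forms, while you phrase it as the bound $|\Tr(\Gamma)|\le|\Omega|$, but these are the same argument.
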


\begin{proof}
Suppose $V (\Gamma)$ is infinite.
Then $\Tr (\Gamma)$ is infinite because $V(\Gamma) \subseteq \Tr (\Gamma)$.
Suppose there exists $x \in V(\Gamma)$ such that $\mu (x) = \infty$.
Then, by Corollary \ref{corl2_6}, $\{ x^a \mid a \in \Z \setminus \{0\} \} \subseteq S (\Gamma) \subseteq \Tr (\Gamma)$, hence $\Tr (\Gamma)$ is infinite.
Suppose $\Gamma$ is not complete.
Let $x,y \in V(\Gamma)$ be such that $\{x, y\} \not \in E (\Gamma)$.
Let $u=(\{x\}, \{y\}) \in \Omega^*$.
Then, for all $n \in \N$, $u^n$ is an $\RR$-irreducible piling of length $2n$, hence, by Theorems \ref{thm2_3} and \ref{thm2_4}, the set $\{\overline{u ^n} \mid n \in \N\}$ is an infinite subset of $\Tr (\Gamma)$, and therefore $\Tr (\Gamma)$ is infinite.

Suppose $V(\Gamma)$ is finite, $\Gamma$ is complete, and $\mu (x) \neq \infty$ for all $x \in V(\Gamma)$.
We fix a total order $\preceq$ on $V (\Gamma)$ which extends the partial order $\le$, and we write $V (\Gamma) = \{ x_1, x_2, \dots, x_p \} $ with $x_1 \succ x_2 \succ \cdots \succ x_p$.
Then the set of normal forms for $\Tr (\Gamma)$ is
\[
\{ x_{i_1}^{a_1} \cdot x_{i_2}^{a_2} \cdots x_{i_q}^{a_q} \mid 1 \le i_1 < i_2 < \cdots < i_q \le p\,,\ 1 \le a_j \le \mu (x_{i_j})-1 \text{ for } 1 \le j \le q\}\,.
\]
This set is finite and it is in one-to-one correspondence with $\Tr (\Gamma)$, hence $\Tr (\Gamma)$ is finite.
\end{proof}


\subsection{The Tits-style algorithm -- Results of Section \ref{sec5}}\label{subsec2_3}

Let $\Gamma = (\Gamma, \le, \mu, (\varphi_x)_{x\in V(\Gamma)})$ be a trickle graph.
Recall that the set of \emph{syllables} of $\Tr (\Gamma)$ is $S (\Gamma) = \{x^a \mid x \in V(\Gamma) \text{ and } a \in \Z_{\mu (x)} \setminus\{0\} \}$.
Recall also that, by Corollary \ref{corl2_6}, $S (\Gamma)$ is a subset of $\Tr (\Gamma)$.
The elements of $S(\Gamma)^*$ are called \emph{syllabic words}.
For $w=(x_1^{a_1}, x_2^{a_2}, \dots, x_p^{a_p}) \in S(\Gamma)^*$ we denote by $\overline{w} = x_1^{a_1} x_2^{a_2} \dots x_p^{a_p}$ the element of $\Tr (\Gamma)$ represented by $w$. 
The integer $p$ is called the \emph{length} of $w$ and it is denoted by $\lg (w)$.
The smallest length of a syllabic word representing an element $g \in \Tr (\Gamma)$ is called the \emph{syllabic length} of $g$ and it is denoted by $\lg_{\syl} (g)$.
A syllabic word $w = (x_1^{a_1}, x_2^{a_2}, \dots, x_p^{a_p})$ is said to be \emph{syllabically reduced} if $\lg (w) = \lg_\syl (\overline{w})$.

Our Tits-style algorithm uses a rewriting system on $S (\Gamma)^*$, but it does not use Newman's results \cite{Newma1} stated in Theorem \ref{thm2_3}.
This rewriting system is defined as follows.

We set 
\begin{gather*}
\RR_{I} = \{ ((x^a, x^{-a}), \epsilon) \mid x \in V (\Gamma)\,,\ a \in \Z_{\mu (x)} \setminus \{0\} \}\ \cup\\
\{ ((x^a, x^b),(x^{a+b})) \mid x \in V (\Gamma)\,, \ a, b \in \Z_{\mu (x)} \setminus \{0\} \text{ and } a + b \neq 0\}\,,\\
\RR_{II} = \{ ((x^a, y^b), (\varphi_x^a (y)^b, \varphi_y^{-b}(x)^a)) \mid x^a, y^b \in S (\Gamma) \text{ and } \{x,y\} \in E (\Gamma) \}\,,\\
\RR_M = \RR_I \cup \RR_{II}\,.
\end{gather*}
Let $x^a, y^b \in S (\Gamma)$ be such that $\{x,y\} \in E (\Gamma)$. 
Note that, if $x || y$, then $(\varphi_x^a (y)^b, \varphi_y^{-b}(x)^a) = (y^b, x^a)$, if $x > y$, then $ (\varphi_x^a (y)^b, \varphi_y^{-b}(x)^a) = (\varphi_x^a (y)^b, x^a)$, and, if $x < y$, then $(\varphi_x^a (y)^b, \varphi_y^{-b}(x)^a) = (y^b, \varphi_y^{-b}(x)^a)$.
Note also that in all three cases we have the equality $x^a y^b = \varphi_x^a (y)^b\, \varphi_y^{-b}(x)^a$ in $\Tr (\Gamma)$.
For $X \in \{I, II, M\}$ and $w, w' \in S (\Gamma)^*$, we write $w \stackrel{X}{\to} w'$ in place of $w \stackrel{\RR_X}{\to} w'$ and $w \stackrel{X\ *}{\to} w'$ in place of $w \stackrel{\RR_X\, *}{\longrightarrow} w'$. 

Let $w, w' \in S (\Gamma)^*$.
Notice that, if $w \stackrel{I}{\to} w'$, then $\lg (w) > \lg (w')$.
In particular, the operation $\stackrel{I}{\to}$ is not reversible.
On the other hand the operation $\stackrel{II}{\to}$ is reversible in the sense that, if $w \stackrel{II}{\to} w'$, then $\lg (w) = \lg (w')$ and $w' \stackrel{II}{\to} w$.
We say that a syllabic word $w \in S (\Gamma)^*$ is \emph{syllabically $M$-reduced} if there is no word $w' \in S (\Gamma)^*$ such that $w \stackrel{M\ *}{\to} w'$ and $\lg (w') < \lg (w)$.

The following theorem is similar to the classical solution to the word problem for Coxeter groups \cite{Tits1}, for graph products of cyclic groups \cite{Green1}, and, more generally, for Dyer groups \cite{ ParSoe1}.
It will be proved in Section \ref{sec5}.

\begin{thm}\label{thm2_8}
Let $\Gamma = (\Gamma, \le, \mu, (\varphi_x)_{x\in V(\Gamma)})$ be a trickle graph.
\begin{itemize}
\item[(1)]
For every $w \in S (\Gamma)^*$, $w$ is syllabically reduced if and only if $w$ is syllabically $M$-reduced.
\item[(2)]
For every $w, v \in S (\Gamma)^*$, if $w$ and $v$ are syllabically reduced and $\overline{w} = \overline{v}$, then $w \stackrel{II\, *}{\to} v$.
\end{itemize}
\end{thm}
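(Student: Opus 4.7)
My plan is to reduce Theorem~\ref{thm2_8} to Theorem~\ref{thm2_4} by lifting each rewriting step on pilings to a sequence of $\RR_M$-moves on syllabic words. Fix once and for all a total order $\preceq$ on $V(\Gamma)$ extending $\le$. For each stratum $U$, let $\omega_0(U) \in S(\Gamma)^*$ be the syllabic word obtained by enumerating the syllables of $U$ in strictly decreasing $\preceq$-order, and for a piling $P = (U_1, \ldots, U_r)$ set $\omega_0(P) = \omega_0(U_1) \cdots \omega_0(U_r)$. Observe that $\lg(\omega_0(P)) = \sum_k |U_k|$, and that for any syllabic word $w = (\alpha_1, \ldots, \alpha_p)$ the trivial piling $P_w = (\{\alpha_1\}, \ldots, \{\alpha_p\})$ satisfies $\omega_0(P_w) = w$.

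The central step is the following \emph{Key Lemma}: for every $\RR$-move $P \to P'$, one has $\omega_0(P) \stackrel{M\ *}{\to} \omega_0(P')$. The $\RR_0$ rule is trivial. For an $\RR_1$ step $T(V_k, V_{k+1}, x^a) = (V_k', V_{k+1}')$, I would carry out four substeps at the syllabic level: (i) commute $x^a$ leftward through $\omega_0(V_{k+1})$ using $\RR_{II}$, so that by Lemma~\ref{lem4_2} the head becomes exactly $y^a = \gamma(V_{k+1}, x^a)$ and the tail is $\omega_0(V_{k+1}')$; (ii) commute $y^a$ leftward into its $\preceq$-position inside $\omega_0(V_k)$ using $\RR_{II}$, noting that for each $z_j \prec y$ encountered Condition~(d) forces $\varphi_{z_j}(y) = y$, so the swap reduces to $(z_j^{c_j}, y^a) \mapsto (y^a, \varphi_y^{-a}(z_j)^{c_j})$; (iii) if $y \in \supp(V_k)$, apply one $\RR_I$-move (merge or cancel) to combine $y^a$ with the adjacent occurrence of $y$; (iv) apply a few $\RR_{II}$-swaps of $\le$-incomparable syllables inside the resulting stratum to realign its enumeration with $\omega_0(V_k')$. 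Compatibility of this syllabic picture with the stratum-theoretic $R(V_k, y^a)$ rests on Conditions~(c) and (d): $\varphi_y^{-a}$ preserves $\le$ on $\starE_y(\Gamma)$, and $\varphi_y^{-a}(z) = z$ whenever $z \not< y$, so formally conjugating every syllable of $V_k$ by $\varphi_y^{-a}$ (as in the definition of $R$) agrees with only conjugating those actually sitting below $y$.

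Granting the Key Lemma, iteration along a reduction $P_w \stackrel{\RR\ *}{\to} P^*$ from Theorem~\ref{thm2_4} yields $w \stackrel{M\ *}{\to} \omega_0(P^*) =: w^*(\overline{w})$. Since $\RR_{II}$ preserves length and $\RR_I$ strictly decreases it, $\stackrel{M}{\to}$ never increases length; hence $\lg(w) \ge \lg(w^*(\overline{w}))$ for every representative $w$ of $\overline{w}$, and combined with $\lg_\syl(\overline{w}) \le \lg(w^*(\overline{w}))$ this forces $\lg_\syl(\overline{w}) = \lg(w^*(\overline{w}))$. Part~(1) now falls out: syllabically reduced clearly implies syllabically $M$-reduced since $\stackrel{M}{\to}$ preserves the represented element, and conversely if $w$ is syllabically $M$-reduced, the reduction $w \stackrel{M\ *}{\to} w^*(\overline{w})$ cannot decrease length, forcing $\lg(w) = \lg_\syl(\overline{w})$. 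For Part~(2), if $w, v$ are both syllabically reduced with $\overline{w} = \overline{v}$, both reductions to $w^*(\overline{w})$ occur at constant length and therefore use only $\RR_{II}$-moves; by reversibility of $\stackrel{II}{\to}$, one deduces $w \stackrel{II\ *}{\to} w^*(\overline{w}) \stackrel{II\ *}{\to} v$. The main obstacle is the bookkeeping in the Key Lemma, particularly verifying in step~(iv) that the syllabic implementation of $T$ matches the canonical $\omega_0$-enumeration of $(V_k', V_{k+1}')$ up to permutations of $\le$-incomparable syllables within a single stratum; this reconciliation rests cleanly on Conditions~(c)--(d) together with Lemmas~\ref{lem4_2} and \ref{lem4_13}.
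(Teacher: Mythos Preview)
Your proposal is correct and follows essentially the same route as the paper. Your $\omega_0$ is the paper's $\hat\omega^S$, your Key Lemma is precisely the content of Proposition~\ref{prop5_4} (with your substeps (i) and (ii)--(iv) corresponding to Lemmas~\ref{lem5_2} and~\ref{lem5_3}, and step~(iv) being Lemma~\ref{lem5_1}), and your deduction of Parts~(1) and~(2) from the Key Lemma via length considerations and reversibility of $\stackrel{II}{\to}$ matches the paper's final argument verbatim.
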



\subsection{Parabolic subgroups -- Results of Section \ref{sec6}}\label{subsec2_4}

\begin{defin}
Let $\Gamma = (\Gamma, \le, \mu, (\varphi_x)_{x\in V(\Gamma)})$ be a trickle graph.
A full subgraph $\Gamma_1$ of $\Gamma$ is called \emph{parabolic} if, for every $x \in V(\Gamma_1)$, the map $\varphi_x$ stabilizes $\starE_x (\Gamma_1)$.
\end{defin}

\begin{expl}
Let $\Gamma = (\Gamma, \le, \mu, (\varphi_x)_{x\in V(\Gamma)})$ be a trickle graph.
\begin{itemize}
\item[(1)]
If $X$ is a subset of $V(\Gamma)$ whose elements are pairwise incomparable, then the full subgraph of $\Gamma$ spanned by $X$ is parabolic.
\item[(2)]
If $X$ is a subset of $V(\Gamma)$ and $X_\downarrow = \{y \in V(\Gamma) \mid \exists x \in X, y \leq x\}$, then the full subgraph of $\Gamma$ spanned by $X_\downarrow$ is parabolic.
\end{itemize}
\end{expl}

The following can be easily checked.

\begin{lem}\label{lem2_9}
Let $\Gamma = (\Gamma, \le, \mu, (\varphi_x)_{x\in V(\Gamma)})$ be a trickle graph and let $\Gamma_1$ be a parabolic subgraph of $\Gamma$.
We denote by $\le_1$ and by $\mu_1$ the restrictions of $\le$ and $\mu$ to $V(\Gamma_1)$, respectively, and, for $x \in V (\Gamma_1)$, we denote by $\varphi_{1,x}$ the restriction of $\varphi_x$ to $\starE_x (\Gamma_1)$.
\begin{itemize}
\item[(1)]
The quadruple $\Gamma_1 = (\Gamma_1, \le_1, \mu_1, (\varphi_{1,x})_{x\in V(\Gamma_1)})$ is a trickle graph.
\item[(2)]
The embedding of $V(\Gamma_1)$ into $V(\Gamma)$ induces a group homomorphism $\iota_1: \Tr (\Gamma_1) \to \Tr (\Gamma)$.
\end{itemize}
\end{lem}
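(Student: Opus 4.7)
The plan is to verify both parts by a direct, mostly bureaucratic unpacking of definitions, with the only non-trivial point being that $\varphi_{1,x}$ is well-defined as an automorphism of $\starE_x(\Gamma_1)$.

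First, for part (1), I would note that $\Gamma_1$ is a simplicial graph by hypothesis, that $\le_1$ is a partial order since the restriction of any partial order is a partial order, and that $\mu_1$ is a labeling by $\N_{\ge 2} \cup \{\infty\}$. For each $x \in V(\Gamma_1)$, the parabolic hypothesis says that $\varphi_x$ sends $\starE_x(\Gamma_1)$ to itself; combined with injectivity of $\varphi_x$ on $\starE_x(\Gamma)$ and stability under $\varphi_x^{-1}$ (immediate if $\mu(x)$ is finite via a power, and coming from the set-wise stabilization in general), this gives a bijection $\varphi_{1,x}$ on $V(\starE_x(\Gamma_1))$. Since $\starE_x(\Gamma_1)$ is a full subgraph of $\starE_x(\Gamma)$, this vertex bijection is automatically a graph automorphism.

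Next I would verify conditions (a)--(g) one by one for the quadruple $(\Gamma_1, \le_1, \mu_1, (\varphi_{1,x})_{x\in V(\Gamma_1)})$. Conditions (a), (b), (c), (d) and (g) follow immediately from the corresponding conditions for $\Gamma$ together with the fact that $\Gamma_1$ is a full subgraph, that $\le_1$ and $\mu_1$ are restrictions, and that $\varphi_{1,x}$ and $\varphi_{y'}$ agree with $\varphi_x$ and $\varphi_{y'}$ on the relevant vertices (using again that $y' = \varphi_x(y) \in V(\Gamma_1)$ by the parabolic hypothesis). Condition (e) holds because the order of $\varphi_{1,x}$ divides the order of $\varphi_x$, which in turn divides $\mu(x) = \mu_1(x)$; condition (f) is immediate from $\mu_1 = \mu|_{V(\Gamma_1)}$ and $\varphi_{1,x}(y) = \varphi_x(y) \in V(\Gamma_1)$.

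For part (2), I would use the universal property of the presentation of $\Tr(\Gamma_1)$: it suffices to check that the images in $\Tr(\Gamma)$ of the generators $V(\Gamma_1) \subseteq V(\Gamma)$ satisfy the defining relations. The torsion relations $x^{\mu_1(x)} = 1$ hold in $\Tr(\Gamma)$ because $\mu_1(x) = \mu(x)$. For an edge $\{x,y\} \in E(\Gamma_1) \subseteq E(\Gamma)$, the braid-style relation $\varphi_{1,x}(y)\,x = \varphi_{1,y}(x)\,y$ is, under the inclusion, the same as $\varphi_x(y)\,x = \varphi_y(x)\,y$, which is a defining relation of $\Tr(\Gamma)$. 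Hence the set-theoretic inclusion $V(\Gamma_1) \hookrightarrow V(\Gamma) \subseteq \Tr(\Gamma)$ extends uniquely to a group homomorphism $\iota_1 \colon \Tr(\Gamma_1) \to \Tr(\Gamma)$. The main (mild) obstacle is keeping straight which $\varphi$ is being applied where, but since the parabolic condition makes everything compatible, no real work is required here; injectivity of $\iota_1$ is not claimed at this stage and will be addressed by Theorem~\ref{thm2_10}.
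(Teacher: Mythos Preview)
Your proposal is correct and matches the paper's approach: the paper simply states that Lemma~\ref{lem2_9} ``can be easily checked'' and gives no proof, so your direct verification of conditions (a)--(g) for $\Gamma_1$ and of the defining relations for $\iota_1$ is exactly what is intended. The only minor remark is that ``stabilizes'' in the definition of parabolic is most naturally read as set-wise invariance $\varphi_x(\starE_x(\Gamma_1)) = \starE_x(\Gamma_1)$, which makes bijectivity of $\varphi_{1,x}$ immediate without your finite-order detour.
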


In our study in Section \ref{sec6} we will use the normal forms for $\Tr (\Gamma)$ as they are defined in Subsection \ref{subsec2_2}.
We recall them.
We fix a total order $\preceq$ on $V(\Gamma)$ which extends the partial order $\le$ in the sense that, if $x \le y$, then $x \preceq y$.
If $U$ is a non-empty stratum that we write $U = \{ x_1^{a_1}, x_2^{a_2}, \dots, x_p^{a_p} \}$ with $x_1 \succ x_2 \succ \cdots \succ x_p$, then we set $\hat \omega (U) = x_1^{\rho (a_1)} \cdot x_2^{\rho (a_2)} \cdots x_p^{\rho (a_p)} \in (V (\Gamma) \sqcup V (\Gamma)^{-1})^*$.
Let $g \in \Tr (\Gamma)$.
By Theorems \ref{thm2_3} and \ref{thm2_4} there exists a unique piling $w = (U_1, U_2, \dots, U_p) \in \Omega^*$ such that $w$ is $\RR$-irreducible and $\overline{w} = g$.
Then the \emph{normal form} of $g$ is
\[
\nf_\Gamma (g) = \nf (g) = \hat \omega (U_1) \cdot \hat \omega (U_2) \cdots \hat \omega (U_p) \in (V (\Gamma) \sqcup V (\Gamma)^{-1})^*\,.
\]

Let $\Gamma_1$ be a parabolic subgraph of $\Gamma$.
As before, we assume that $V (\Gamma_1)$ is endowed with the restriction $\le_1$ of $\le$ to $V (\Gamma_1)$ and with the restriction $\mu_1: V (\Gamma_1) \to \N_{\ge 2} \cup \{ \infty\}$ of $\mu$ to $V (\Gamma_1)$.
Also, for each $x \in V (\Gamma_1)$, we denote by $\varphi_{1,x}$ the restriction of $\varphi_x$ to $\starE_x (\Gamma_1)$.
Thus, as pointed out in Lemma \ref{lem2_9}, $\Gamma_1 = (\Gamma_1, \le_1, \mu_1, (\varphi_{1,x})_{x \in V (\Gamma_1)})$ is a trickle graph.
Now, we denote by $\preceq_1$ the restriction of $\preceq$ to $V (\Gamma_1)$ and we observe that $\preceq_1$ extends the partial order $\le_1$, hence it can be used to define normal forms for $\Tr (\Gamma_1)$.
Unless otherwise stated, we will always use this total order to define normal forms for $\Tr (\Gamma_1)$.

The main result of Section \ref{sec6} is the following.

\begin{thm}\label{thm2_10}
Let $\Gamma = (\Gamma, \le, \mu, (\varphi_x)_{x\in V(\Gamma)})$ be a trickle graph and let $\Gamma_1$ be a parabolic subgraph of $\Gamma$.
\begin{itemize}
\item[(1)]
The homomorphism $\iota_1: \Tr (\Gamma_1) \to \Tr (\Gamma)$ induced by the embedding of $V (\Gamma_1)$ into $V (\Gamma)$ is injective.
In particular, we can identify $\Tr (\Gamma_1)$ with its image in $\Tr (\Gamma)$ under $\iota_1$.
\item[(2)]
Let $g \in \Tr (\Gamma)$.
We have $g \in \Tr (\Gamma_1)$ if and only if $\nf (g) \in (V (\Gamma_1) \sqcup V(\Gamma_1)^{-1})^*$.
Moreover, in this case, $g$ has the same normal form in $\Tr (\Gamma)$ as in $\Tr (\Gamma_1)$, that is, $\nf_\Gamma (g) = \nf_{\Gamma_1} (g)$.
\end{itemize}
\end{thm}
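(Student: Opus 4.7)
The plan is to derive both parts of the theorem from the rewriting system of Theorem \ref{thm2_4} together with the uniqueness of normal forms (Newman's theorem, Theorem \ref{thm2_3}). The key observation I would establish is that the rewriting system $\RR(\Gamma_1)$ on $\Omega(\Gamma_1)^*$ sits inside $\RR(\Gamma)$ on $\Omega(\Gamma)^*$: every $\RR(\Gamma_1)$-reduction is an $\RR(\Gamma)$-reduction after the identification $\Omega(\Gamma_1)^* \subseteq \Omega(\Gamma)^*$, and every $\RR(\Gamma_1)$-irreducible piling is also $\RR(\Gamma)$-irreducible.

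First I would verify that the three operations $L(V,x^a)$, $\gamma(V,x^a)$ and $R(U,y^b)$ restrict well from $\Gamma$ to $\Gamma_1$. The only nontrivial checks involve $\gamma$ and $R$, both of which apply certain compositions of $\varphi_{x_j}^{\pm a_j}$ to vertices of $\Gamma_1$; because $\Gamma_1$ is parabolic, each such $\varphi_{x_j}$ stabilises $\starE_{x_j}(\Gamma_1)$, so the outputs remain in $V(\Gamma_1)$. Since $\Gamma_1$ is a full subgraph, the ``addability'' condition and the order restriction also transfer verbatim. It follows that the T-transformations of $\RR(\Gamma_1)$ are exactly the T-transformations of $\RR(\Gamma)$ whose strata all lie in $\Omega(\Gamma_1)$. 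In particular, the inclusion of alphabets induces a monoid homomorphism $\hat\iota_1\colon M(\Gamma_1) \to M(\Gamma)$ that intertwines $\iota_1$ with the isomorphisms $\Phi$ and $\Psi$ of Subsection \ref{subsec2_2}.

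For part (1), given $g \in \Tr(\Gamma_1)$, let $w_1 \in \Omega(\Gamma_1)^*$ be the unique $\RR(\Gamma_1)$-irreducible representative of $\Psi_1(g)$. Viewed in $\Omega(\Gamma)^*$, the piling $w_1$ is still $\RR(\Gamma)$-irreducible and represents $\Psi_\Gamma(\iota_1(g))$, so by Newman's theorem it is \emph{the} unique $\RR(\Gamma)$-irreducible representative of that element. If $\iota_1(g) = 1$ then $\Psi_\Gamma(\iota_1(g)) = \epsilon$, forcing $w_1 = \epsilon$ and hence $g = 1$. For part (2), the same identification gives $\nf_\Gamma(g) = \hat\omega(U_1)\cdots\hat\omega(U_p)$, where $w_1 = (U_1,\ldots,U_p) \in \Omega(\Gamma_1)^*$; since $\preceq_1$ is defined to be the restriction of $\preceq$, the blocks $\hat\omega(U_i)$ computed inside $\Gamma_1$ coincide with those computed inside $\Gamma$, so $\nf_\Gamma(g) = \nf_{\Gamma_1}(g) \in (V(\Gamma_1)\sqcup V(\Gamma_1)^{-1})^*$. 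The converse implication in (2) is immediate.

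The main technical hurdle is the first step: checking that $L$, $\gamma$ and $R$ descend to $\Omega(\Gamma_1)$ and that the ``addability'' clause in the definition of $R$ is read the same way in $\Gamma_1$ and in $\Gamma$. This is precisely what the parabolic hypothesis, coupled with the fullness of $\Gamma_1$, is tailored to guarantee; once that compatibility is secured, everything else follows formally from the confluence and termination of $\RR(\Gamma)$ and $\RR(\Gamma_1)$.
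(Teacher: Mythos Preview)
Your proposal is correct and follows essentially the same route as the paper's proof: both arguments hinge on the observation that, because $\Gamma_1$ is a full parabolic subgraph, the operations $\gamma$ and $R$ restrict to $\Omega(\Gamma_1)$ and the addability condition reads identically in $\Gamma$ and in $\Gamma_1$, so that $\RR(\Gamma)$ restricted to $\Omega(\Gamma_1)^*$ coincides with $\RR(\Gamma_1)$. The only cosmetic difference is directional---the paper reduces a $\Gamma_1$-piling to its $\RR(\Gamma)$-irreducible form and then observes that the whole reduction stayed inside $\Omega(\Gamma_1)^*$, whereas you start from the $\RR(\Gamma_1)$-irreducible form and note it is already $\RR(\Gamma)$-irreducible; these are equivalent uses of the same compatibility.
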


\begin{defin}
Let $\Gamma = (\Gamma, \le, \mu, (\varphi_x)_{x \in V (\Gamma)})$ be a trickle graph and let $\Gamma_1$ be a parabolic subgraph of $\Gamma$.
Then $\Tr (\Gamma_1)$, viewed as a subgroup of $\Tr (\Gamma)$, is called a \emph{standard parabolic subgroup} of $\Tr (\Gamma)$.
\end{defin}

\begin{expl}
Let $(W,S)$ be a Coxeter system.
Let $\Gamma = (\Gamma, \le, \mu, (\varphi_x)_{x \in V (\Gamma)})$ be the trickle graph such that $\Tr (\Gamma) = C(W, S)$, as defined in Subsection \ref{subsec2_1}.
Recall that $\SS_{\fin}$ denotes the set of non-empty subsets $X$ of $S$ that are irreducible and of spherical type, and that the set of vertices of $\Gamma$ is $V (\Gamma) = \{ x_X \mid X \in \SS_{\fin} \}$.
Let $Y \subseteq S$.
We set $\SS_{Y,\fin} = \{X \in \SS_{\fin} \mid X \subseteq Y \}$ and we denote by $\Gamma_Y$ the full subgraph of $\Gamma$ spanned by $\{ x_X \mid X \in \SS_{Y,\fin}\}$
(Caution: do not confuse $\Gamma_Y$ with the full Coxeter subgraph spanned by $Y$). 
The proof of the following is left to the reader.

\begin{lem}\label{lem2_11}
Under the above assumptions, $\Gamma_Y$ is a parabolic subgraph of $\Gamma$ and $\Tr (\Gamma_Y)$ is naturally isomorphic to the cactus group $C (W_Y, Y)$.
In particular, $C(W_Y, Y)$ is a standard parabolic subgroup of $C(W, S)$.
\end{lem}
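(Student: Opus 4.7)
The plan is to verify the two assertions in order: first that $\Gamma_Y$ is parabolic, and then that the trickle presentation of $\Tr(\Gamma_Y)$ coincides, term for term, with the cactus presentation of $C(W_Y,Y)$.

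For parabolicity, I would fix $x_X \in V(\Gamma_Y)$, so $X \in \SS_{Y,\fin}$, and check that $\varphi_{x_X}$ sends $\starE_{x_X}(\Gamma_Y)$ to itself. Take $x_Z \in \starE_{x_X}(\Gamma_Y)$, so $Z \in \SS_{Y,\fin}$. By the explicit definition of $\varphi_{x_X}$ recalled in Example 2, either $\varphi_{x_X}(x_Z) = x_Z$ (if $Z \not\subset X$), in which case there is nothing to prove, or $Z \subset X$ and $\varphi_{x_X}(x_Z) = x_{w_X(Z)}$. Since $w_X \in W_X$ and $w_X X w_X^{-1} = X$, the subset $w_X(Z)$ lies in $X \subseteq Y$; moreover $w_X$ is a graph automorphism of $\Upsilon_X$, so $\Upsilon_{w_X(Z)}$ is isomorphic to $\Upsilon_Z$, and in particular $w_X(Z) \in \SS_{Y,\fin}$. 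Thus $\varphi_{x_X}(x_Z) \in V(\Gamma_Y)$, and it remains in $\starE_{x_X}(\Gamma_Y)$ because $w_X(Z) \subseteq X$ implies $x_{w_X(Z)} \le x_X$.

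For the identification $\Tr(\Gamma_Y) \simeq C(W_Y,Y)$, the key preliminary observation is that being irreducible and of spherical type is a local property: for any $X \subseteq Y$, the Coxeter subgraph $\Upsilon_X$ and the standard parabolic subgroup $W_X$ depend only on $X$, and by the classical fact that $(W_Y)_X = W_X$ one has the same longest element $w_X$ computed in $W$ or in $W_Y$. Consequently $\SS_{Y,\fin}$ coincides with the index set used in the cactus presentation of $C(W_Y,Y)$. Next I would compare edges: by Lemma 2.9 (applied to $\Gamma_Y$) the edges of $\Gamma_Y$ are the edges of $\Gamma$ between vertices of $V(\Gamma_Y)$, which by construction link $x_X$ and $x_Z$ exactly when one of $X,Z$ contains the other or $\Upsilon_{X \cup Z}$ is the disjoint union of $\Upsilon_X$ and $\Upsilon_Z$ — that is, precisely the edges used to write relations (j2) and (j3) of $C(W_Y,Y)$.

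Finally, I would write out the trickle presentation of $\Tr(\Gamma_Y)$ using the restricted data $(\le_1,\mu_1,(\varphi_{1,x_X}))$ and match it symbol by symbol with the cactus presentation of $C(W_Y,Y)$: the labeling $\mu_1 \equiv 2$ gives (j1); unrelated edges give (j2); comparable edges $x_Z < x_X$ give $x_X x_Z = \varphi_{x_X}(x_Z)\,x_X = x_{w_X(Z)} x_X$, which is (j3). Since the two presentations are literally identical, the identity on generators induces an isomorphism $\Tr(\Gamma_Y) \xrightarrow{\sim} C(W_Y,Y)$, and combining with Theorem 2.10(1) identifies $C(W_Y,Y)$ with a standard parabolic subgroup of $C(W,S)$. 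The only mildly delicate point is the first step of this paragraph, namely ensuring that the generators and longest elements used in $C(W_Y,Y)$ agree with those inherited from $(W,S)$, but this is a standard consequence of the structure theory of standard parabolic subgroups in Coxeter systems.
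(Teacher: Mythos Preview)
Your proof is correct and is exactly the straightforward verification the authors intend; note that the paper explicitly leaves this lemma to the reader, so there is no proof in the paper to compare against. The only tiny addition worth making is that, to conclude $\varphi_{x_X}$ \emph{stabilizes} (rather than merely maps into) $\starE_{x_X}(\Gamma_Y)$, you can observe $\varphi_{x_X}^2 = \id$, so invariance follows from the inclusion you established.
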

\end{expl}

If $\Gamma_1$ and $\Gamma_2$ are two full subgraphs of a trickle graph $\Gamma$, then we denote by $\Gamma_1 \cap \Gamma_2$ the full subgraph of $\Gamma$ spanned by $V (\Gamma_1) \cap V (\Gamma_2)$.
It is easily seen that, if $\Gamma_1$ and $\Gamma_2$ are both parabolic subgraphs, then $\Gamma_1 \cap \Gamma_2$ is also a parabolic subgraph.
Now, from Theorem \ref{thm2_10} we deduce the following.

\begin{corl}\label{corl2_12}
Let $\Gamma = (\Gamma, \le, \mu, (\varphi_x)_{x\in V(\Gamma)})$ be a trickle graph and let $\Gamma_1, \Gamma_2$ be two parabolic subgraphs of $\Gamma$.
Then $\Tr (\Gamma_1) \cap \Tr (\Gamma_2) = \Tr (\Gamma_1 \cap \Gamma_2)$.
In particular, the intersection of two standard parabolic subgroups of $\Tr (\Gamma)$ is a standard parabolic subgroup.
\end{corl}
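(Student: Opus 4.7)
The plan is to deduce Corollary \ref{corl2_12} directly from Theorem \ref{thm2_10}, specifically its part (2) that characterizes membership in a standard parabolic subgroup via the support of the normal form. The key observation is that, once we fix a total order $\preceq$ on $V(\Gamma)$ extending $\le$, this same order restricts to a compatible total order on $V(\Gamma_1)$, $V(\Gamma_2)$, and $V(\Gamma_1 \cap \Gamma_2)$, so the normal forms $\nf_\Gamma$, $\nf_{\Gamma_1}$, $\nf_{\Gamma_2}$, and $\nf_{\Gamma_1 \cap \Gamma_2}$ are all built using coherent data.

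First, I would verify the preliminary fact mentioned just before the statement: if $\Gamma_1$ and $\Gamma_2$ are parabolic subgraphs, so is $\Gamma_1 \cap \Gamma_2$. Setting $\Gamma_0 = \Gamma_1 \cap \Gamma_2$, this amounts to the identity $\starE_x(\Gamma_0) = \starE_x(\Gamma_1) \cap \starE_x(\Gamma_2)$ for every $x \in V(\Gamma_0)$, which is immediate from the definition of the star of a vertex in a full subgraph. Since $\varphi_x$ stabilizes both $\starE_x(\Gamma_1)$ and $\starE_x(\Gamma_2)$ by hypothesis, it stabilizes their intersection, so $\Gamma_0$ is a parabolic subgraph of $\Gamma$ and Lemma \ref{lem2_9} and Theorem \ref{thm2_10} apply to $\Gamma_0$.

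Next I would dispose of the easy inclusion $\Tr(\Gamma_0) \subseteq \Tr(\Gamma_1) \cap \Tr(\Gamma_2)$: since $V(\Gamma_0) \subseteq V(\Gamma_i)$, the homomorphism $\iota_0 : \Tr(\Gamma_0) \to \Tr(\Gamma)$ factors through each $\iota_i : \Tr(\Gamma_i) \to \Tr(\Gamma)$, and all three maps are injective by Theorem \ref{thm2_10}(1), so their images satisfy the claimed inclusion. For the reverse inclusion, take $g \in \Tr(\Gamma_1) \cap \Tr(\Gamma_2)$. By Theorem \ref{thm2_10}(2), $\nf_\Gamma(g)$ is a word over $V(\Gamma_1) \sqcup V(\Gamma_1)^{-1}$ and, simultaneously, a word over $V(\Gamma_2) \sqcup V(\Gamma_2)^{-1}$. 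Hence every letter occurring in $\nf_\Gamma(g)$ lies in $V(\Gamma_1) \cap V(\Gamma_2) = V(\Gamma_0)$, so $\nf_\Gamma(g)$ is a word over $V(\Gamma_0) \sqcup V(\Gamma_0)^{-1}$. Applying Theorem \ref{thm2_10}(2) in the other direction to the parabolic subgraph $\Gamma_0$, we conclude that $g \in \Tr(\Gamma_0)$.

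There is no serious obstacle once Theorem \ref{thm2_10} is in hand; the only technical point requiring care is that the total order $\preceq$ used to define $\nf_\Gamma$, $\nf_{\Gamma_1}$, $\nf_{\Gamma_2}$, $\nf_{\Gamma_0}$ is consistent across all four graphs. This is precisely why the convention introduced just before Theorem \ref{thm2_10}, that normal forms on a parabolic subgraph are defined using the restriction of $\preceq$, is essential: it guarantees that the equality $\nf_\Gamma(g) = \nf_{\Gamma_1}(g) = \nf_{\Gamma_2}(g) = \nf_{\Gamma_0}(g)$ holds unambiguously whenever $g$ lies in the respective subgroup, and this is what allows the single word $\nf_\Gamma(g)$ to serve as the certificate of membership in $\Tr(\Gamma_0)$.
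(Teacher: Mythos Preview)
Your proof is correct and follows essentially the same approach as the paper's own proof: both use Theorem \ref{thm2_10}(2) to conclude that $\nf_\Gamma(g)$ lies in $(V(\Gamma_1)\sqcup V(\Gamma_1)^{-1})^* \cap (V(\Gamma_2)\sqcup V(\Gamma_2)^{-1})^* = (V(\Gamma_1\cap\Gamma_2)\sqcup V(\Gamma_1\cap\Gamma_2)^{-1})^*$, and then apply Theorem \ref{thm2_10}(2) again to deduce $g \in \Tr(\Gamma_1\cap\Gamma_2)$. Your additional remarks on why $\Gamma_1\cap\Gamma_2$ is parabolic and on the compatibility of the total orders are helpful elaborations of points the paper leaves implicit or states just before the corollary.
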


\begin{proof}
The inclusion $\Tr (\Gamma_1 \cap \Gamma_2) \subseteq \Tr (\Gamma_1) \cap \Tr (\Gamma_2)$ is obvious.
We prove the reverse inclusion.
Let $g \in \Tr (\Gamma_1) \cap \Tr (\Gamma_2)$.
By Theorem \ref{thm2_10}, $\nf (g) \in (V (\Gamma_1) \sqcup V (\Gamma_1)^{-1})^*$ and $\nf (g) \in (V (\Gamma_2 ) \sqcup V (\Gamma_2)^{-1})^*$.
But
\[
(V (\Gamma_1) \sqcup V (\Gamma_1)^{-1})^* \cap (V (\Gamma_2) \sqcup V (\Gamma_2)^{-1})^* = (V (\Gamma_1 \cap \Gamma_2) \sqcup V (\Gamma_1 \cap \Gamma_2)^{-1})^*\,,
\]
hence $\nf (g) \in (V (\Gamma_1 \cap \Gamma_2) \sqcup V (\Gamma_1 \cap \Gamma_2)^{-1})^*$, and therefore $g \in \Tr (\Gamma_1 \cap \Gamma_2)$.
\end{proof}

Another straightforward consequence of Theorem \ref{thm2_10} is the following.

\begin{corl}\label{corl2_13}
Let $\Gamma = (\Gamma, \le, \mu, (\varphi_x)_{x \in V(\Gamma)})$ be a trickle graph with $V (\Gamma)$ finite, and let $\Gamma_1$ be a parabolic subgraph of $\Gamma$.
There exists an algorithm which, given $g \in \Tr (\Gamma)$, decides whether $g$ belongs to $\Tr (\Gamma_1)$ or not.
\end{corl}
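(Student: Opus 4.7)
The proof essentially combines two results already established in the excerpt, so the plan is short and direct. The input $g \in \Tr(\Gamma)$ is given as a word $w \in (V(\Gamma) \sqcup V(\Gamma)^{-1})^*$ with $\overline{w} = g$, and we want a decision procedure for membership in $\Tr(\Gamma_1)$.

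First I would invoke Corollary \ref{corl2_5}\,(2): since $V(\Gamma)$ is finite, there is an explicit algorithm which, on input $w$, computes $\nf(g) = \nf(\overline{w}) \in (V(\Gamma) \sqcup V(\Gamma)^{-1})^*$. Run this algorithm to produce $\nf(g)$.

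Next, apply Theorem \ref{thm2_10}\,(2), which characterizes membership: $g \in \Tr(\Gamma_1)$ if and only if every letter occurring in $\nf(g)$ lies in $V(\Gamma_1) \sqcup V(\Gamma_1)^{-1}$. Since $V(\Gamma_1) \subseteq V(\Gamma)$ is finite, this is a purely finite check: scan the letters of $\nf(g)$ one by one and accept if and only if each of them belongs to $V(\Gamma_1) \sqcup V(\Gamma_1)^{-1}$.

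Concatenating these two steps yields the desired algorithm, and its correctness is immediate from Corollary \ref{corl2_5}\,(2) and Theorem \ref{thm2_10}\,(2). There is no genuine obstacle here; the only subtlety worth noting is that the algorithm relies crucially on the fact, supplied by Theorem \ref{thm2_10}\,(2), that the normal form of an element of $\Tr(\Gamma_1)$ computed inside $\Tr(\Gamma)$ coincides with the one computed inside $\Tr(\Gamma_1)$, so that membership can be read off letter-by-letter from $\nf_\Gamma(g)$ without having to know a priori whether $g$ belongs to $\Tr(\Gamma_1)$.
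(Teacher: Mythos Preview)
Your proof is correct and follows exactly the paper's approach: the paper's proof simply states that, by Theorem~\ref{thm2_10}, $g \in \Tr(\Gamma_1)$ if and only if $\nf(g) \in (V(\Gamma_1) \sqcup V(\Gamma_1)^{-1})^*$. You have spelled out the implicit step of invoking Corollary~\ref{corl2_5}\,(2) to actually compute $\nf(g)$, which is a welcome clarification but not a different argument.
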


\begin{proof}
Let $g \in \Tr (\Gamma)$.
Then, by Theorem \ref{thm2_10}, $g \in \Tr (\Gamma_1)$ if and only if $\nf (g) \in (V (\Gamma_1) \sqcup V (\Gamma_1)^{-1})^*$.
\end{proof}

\begin{rem}
As with Corollary \ref{corl2_5}\,(2), Corollary \ref{corl2_13} can be extended to the case where $V (\Gamma)$ is infinite but countable, provided that one has an algorithm for manipulating the elements of $V (\Gamma)$, an algorithm which, given $x,y \in V (\Gamma)$, decides whether $\{ x,y\}$ belongs to $E (\Gamma)$ or not, an algorithm which, given $x,y \in V (\Gamma)$, decides whether $x < y$ or not, an algorithm which, given $x \in V (\Gamma)$ and $y \in \starE_x (\Gamma)$, calculates $\varphi_x (y)$, and an algorithm which, given $x \in V (\Gamma)$, decides whether $x \in V (\Gamma_1)$ or not.
\end{rem}


\subsection{PreGarside trickle groups -- Results of Section \ref{sec7}}\label{subsec2_5}

We start by recalling some definitions and notations on monoids.
We say that a monoid $M$ is \emph{cancellative} if, for all $a, b, c, d \in M$, if $c a d = c b d$, then $a = b$.
We say that $M$ is \emph{atomic} if there exists a map $\nu : M \to \N$, called a \emph{norm}, such that $\nu (a) = 0$ if and only if $a=1$, and $\nu (a b) \ge \nu (a) + \nu (b)$ for all $a, b \in M$.
If $M$ is atomic, then we can define two partial orders $\le_L$ and $\le_R$ on $M$ as follows.
Let $a, b \in M$.
We set $a \le_L b$ if there exists $c \in M$ such that $a c = b$, and we set $a \le_R b$ if there exists $c' \in M$ such that $c' a = b$.
For $b \in M$ we set $\Div_L (b) = \{a \in M \mid a \le_L b \}$ and $\Div_R (b) = \{ a \in M \mid a \le_R b\}$.
We say that $b$ is \emph{balanced} if $\Div_L (b) = \Div_R (b)$, and, in this case, we set $\Div (b) = \Div_L (b) = \Div_R (b)$.
The enveloping group of a monoid $M$ is denoted by $G(M)$ and the natural homomorphism from $M$ to $G (M)$ is denoted by $\iota_M : M \to G(M)$.

\begin{defin}
A monoid $M$ is called a \emph{preGarside monoid} if it satisfies the following three properties.
\begin{itemize}
\item[(a)]
$M$ is cancellative and atomic.
\item[(b)]
For all $a,b \in M$, if the set $\{ c \in M \mid a \le_L c \text{ and } b \le_L c \}$ is non-empty, then it contains a least element that we denote by $a \vee_L b$.
\item[(c)]
For all $a,b \in M$, if the set $\{ c \in M \mid a \le_R c \text{ and } b \le_R c \}$ is non-empty, then it contains a least element that we denote by $a \vee_R b$.
\end{itemize}
A \emph{preGarside group} is the enveloping group of a preGarside monoid.
\end{defin}

\begin{defin}
Let $M$ be a preGarside monoid.
An element $\Delta \in M$ is called a \emph{Garside element} if $\Delta$ is balanced and $\Div (\Delta)$ generates $M$.
If $M$ contains a Garside element, then $M$ is called a \emph{Garside monoid} and $G (M)$ is called a \emph{Garside group}.
\end{defin}

The notions of Garside monoids and Garside groups were introduced by Dehornoy and the third author in \cite{DehPar1,Dehor1}, where they show that these monoids and groups share many properties with spherical type Artin monoids and groups such as solutions to the word problem and to the conjugacy problem.
The theory of Garside groups is experiencing a significant growth and many examples and applications have appeared since the publication of \cite{DehPar1,Dehor1}.
On the other hand, our definitions of preGarside monoid and group are taken from \cite{GodPar2}.
The motivation for their study is that, by \cite{BriSai1}, every Artin monoid is a preGarside monoid and every Artin group is a preGarside group.
So, preGarside groups are to Garside groups what Artin groups are to spherical type Artin groups.

\begin{defin}
We say that a trickle graph $\Gamma = (\Gamma, \le, \mu, (\varphi_x)_{x \in V (\Gamma)})$ is a \emph{preGarside trickle graph} if $\mu (x) = \infty$ for all $x \in V (\Gamma)$.
In this case, in addition to the trickle group $\Tr (\Gamma)$, we associate with $\Gamma$ a \emph{preGarside trickle monoid}, $\Tr^+ (\Gamma)$, which is defined by the following monoid presentation.
\[
\Tr^+ (\Gamma) = \langle V (\Gamma) \mid \varphi_x (y)\, x = \varphi_y (x)\, y \text{ for all } \{ x, y \} \in E (\Gamma) \rangle^+ \,.
\]
It is clear that $\Tr (\Gamma)$ is the enveloping group of $\Tr^+ (\Gamma)$.
\end{defin}

The above definition is motivated by the following theorem which will be proved in Section \ref{sec7}.

\begin{thm}\label{thm2_14}
Let $\Gamma$ be a preGarside trickle graph.
Then $\Tr^+ (\Gamma)$ is a preGarside monoid.
\end{thm}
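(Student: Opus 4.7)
The plan is to verify directly the three defining properties of a preGarside monoid, leaning heavily on the rewriting system of Section~\ref{sec4}.

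\emph{Atomicity.} Every defining relation $\varphi_y(x)\,y = \varphi_x(y)\,x$ equates two words of length two in $V(\Gamma)$, so the ordinary word length descends to a well-defined map $\nu \colon \Tr^+(\Gamma) \to \N$ that is additive on products and vanishes exactly at $1$. This shows $\Tr^+(\Gamma)$ is atomic.

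\emph{Cancellativity.} Because $\mu(x) = \infty$ for every $x \in V(\Gamma)$, the operations $L$, $\gamma$, and $R$ of Section~\ref{sec4} all preserve positivity of exponents, so the rewriting system $\RR$ of Theorem~\ref{thm2_4} restricts to a rewriting system $\RR^+$ on positive pilings. The same arguments that will prove Theorem~\ref{thm2_4} show, \emph{mutatis mutandis}, that $\RR^+$ is a terminating, confluent rewriting system for the monoid presentation of $\Tr^+(\Gamma)$, producing a unique normal-form positive piling for each element. Right-cancellativity then follows by peeling off letters: if $ac = bc$, a careful induction on $\nu(c)$ shows that the $R$-operation is injective on its domain (given $R(U,y^b)$ together with $y^b$, one recovers $U$), so the normal form of $ac$ determines that of $a$. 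Left-cancellativity is symmetric via the dual presentation provided by Proposition~\ref{prop2_2}.

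\emph{Lattice property.} Let $a, b \in \Tr^+(\Gamma)$ and suppose the set $C_L = \{c \mid a \le_L c,\ b \le_L c\}$ is non-empty. The plan is to argue by induction on $\nu(a) + \nu(b)$ that $C_L$ has a least element. The base case is when $a \in V(\Gamma)$ is a generator: any $c \in C_L$ factors as $c = a\,a' = b\,b'$, and using relations of the form $yx = \varphi_y(x)\,y$ one rewrites $b$ so that its normal form explicitly exhibits $a$ as a prefix syllable, producing an explicit candidate $a \vee_L b$. For the inductive step, if $a = x\,a_1$ with $x$ a generator, one first forms $x \vee_L b$ (base case), writes it as $x\,b_1$, and then invokes the inductive hypothesis on the pair $(a_1, b_1)$ which has strictly smaller total norm. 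The dual statement for $\vee_R$ is handled symmetrically by applying the same argument to the dual rewriting system of Proposition~\ref{prop2_2}.

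\emph{Main obstacle.} The principal technical hurdle will be the inductive lattice argument. One must show that ``pushing'' a generator $x$ past an element $b$ is coherent across longer chains, and that the candidate $a \vee_L b$ built inductively is independent of the choice of witness $c \in C_L$ and indeed $\le_L$-below every other common multiple. This is where Condition~(g) of a trickle graph will play a decisive role, since it is precisely the associativity of the automorphisms $\varphi_x$ on chains $z < y < x$ that makes the piling-theoretic ``join'' operation well-defined. By contrast, atomicity is immediate, and cancellativity, while nontrivial, follows rather cleanly from the positive rewriting system.
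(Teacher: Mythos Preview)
Your approach differs substantially from the paper's. The paper does not verify the three preGarside axioms separately; instead it invokes a criterion from Garside theory (Proposition~\ref{prop7_1}, due to Dehornoy--Digne--Godelle--Krammer--Michel): a monoid with a short complemented presentation satisfying the \emph{sharp $\theta$-cube condition} is automatically preGarside. The paper then checks this condition directly on triples of generators $x,y,z$, setting $x \star y = \varphi_y(x)$ when $\{x,y\}\in E(\Gamma)$ and verifying that $(z\star x)\star(y\star x) = (z\star y)\star(x\star y)$ whenever both sides are defined. This is a short case analysis on the relative positions of $x,y,z$ in the partial order, and Condition~(g) is used exactly once, in the case $z<y<x$. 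Cancellativity, atomicity, and the lattice property all fall out of Proposition~\ref{prop7_1} for free.

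Your direct route has a genuine gap in the cancellativity step. Possessing a confluent terminating rewriting system, and hence unique normal forms, does \emph{not} by itself imply cancellativity: many non-cancellative monoids have decidable word problem. Your argument ``the normal form of $ac$ determines that of $a$'' is precisely right-cancellativity restated, not a proof of it. Knowing that $R(\,\cdot\,,y^b)$ is injective on a single stratum does not tell you that the full rewriting process $\nf^+(a)\cdot(\{y\}) \to^* \nf^+(ay)$ is reversible, since that process may involve many $T$-transformations acting on different pairs of strata, and it is not clear a priori that no information is lost. You would need a separate structural argument (essentially reconstructing a left-reversing/right-reversing procedure), which is nontrivial and is exactly what the $\theta$-cube machinery packages for you.

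Your lattice sketch also has a soft spot: in the base case you write ``one rewrites $b$ so that its normal form explicitly exhibits $a$ as a prefix syllable,'' but $b$ need not have the generator $a=x$ as a left divisor at all; only the common multiple $c$ does. The paper later proves a closely related statement (Claim~2 in the proof of Proposition~\ref{prop7_8}), but that argument already \emph{uses} cancellativity and the existence of joins in $\Tr^+(\Gamma)$, i.e.\ it presupposes Theorem~\ref{thm2_14}. If you want to salvage a direct proof, you would essentially be reproving the $\theta$-cube criterion by hand; it is far more economical to verify the cube condition on generators as the paper does.
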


The next question is: when is $\Tr^+ (\Gamma)$ a Garside monoid?
The answer is given by the following theorem which will be also proved in Section \ref{sec7}.

\begin{thm}\label{thm2_15}
Let $\Gamma$ be a preGarside trickle graph.
Then $\Tr^+ (\Gamma)$ is a Garside monoid if and only if $V(\Gamma)$ is finite and $\Gamma$ is a complete graph.
\end{thm}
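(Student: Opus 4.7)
\emph{Only if direction.} The argument relies on the normal form machinery of Theorem~\ref{thm2_4}. Suppose $\Tr^+(\Gamma)$ is Garside with Garside element $\Delta$. Since the defining relations $\varphi_x(y)\,x = \varphi_y(x)\,y$ preserve word length, the atoms of $\Tr^+(\Gamma)$ are exactly $V(\Gamma)$, and $\Div(\Delta)$ must contain every atom in order to generate the monoid. Writing $\nf(\Delta) = (U_1,\ldots,U_k)$ in the piling normal form (viewed inside $\Tr^+(\Gamma)$ by means of the embedding in Theorem~\ref{thm2_17}), I would show that an atom $x \in V(\Gamma)$ left-divides $\Delta$ if and only if $x^1 \in U_1$. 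Consequently $V(\Gamma) \subseteq \supp(U_1)$; since $U_1$ is by definition a finite stratum whose support must span a complete subgraph of $\Gamma$, one deduces simultaneously that $V(\Gamma)$ is finite and that $\Gamma$ is complete.

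\emph{If direction.} Assume $V(\Gamma) = \{x_1,\ldots,x_n\}$ is finite with $x_1 \succ \cdots \succ x_n$ a linear extension of $\le$, and $\Gamma$ complete. Set $\Delta := \omega(V(\Gamma)) = x_1 x_2 \cdots x_n$; since $\Gamma$ is complete, the set $\{x^1 \mid x \in V(\Gamma)\}$ is itself a stratum, so $\nf(\Delta)$ is the length-one piling carrying this stratum. The plan is to establish
\[
\Div_L(\Delta) = \Div_R(\Delta) = \{\omega(U) \mid U \subseteq V(\Gamma)\},
\]
a set of $2^n$ elements containing $V(\Gamma)$, which therefore generates $\Tr^+(\Gamma)$. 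For the inclusions $\supseteq$, I would construct, for each $U \subseteq V(\Gamma)$, a subset $U' \subseteq V(\Gamma)$ such that $\omega(U) \cdot \omega(U') = \Delta$ (and a mirror statement on the right), where $U'$ is produced from $V(\Gamma) \setminus U$ by the $\varphi$-action dictated by $\omega(U)$; condition (g) of a trickle graph makes this twisted-complement operation coherent. For the inclusions $\subseteq$, any left divisor $g$ of $\Delta$ has norm at most $n$, and by uniqueness of normal forms the piling $\nf(g)$ must fit inside $\nf(\Delta)$, which forces it to be a single stratum supported in $V(\Gamma)$, giving $g = \omega(U)$ for some $U$. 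The mirror statement for $\Div_R(\Delta)$ is obtained by running the rewriting from the right, or equivalently by appealing to the dual presentation of Proposition~\ref{prop2_2}.

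\emph{Main obstacle.} The technical heart is the balance condition $\Div_L(\Delta) = \Div_R(\Delta)$. Both sides are parametrized by $\PP(V(\Gamma))$, with cardinality $2^n$, but via \emph{different} injections into $\Tr^+(\Gamma)$: on the left, a subset $U$ is represented by $\omega(U)$, which arranges vertices with the larger $\le$-elements on the left, whereas the right-sided representative reverses this convention and incorporates $\varphi_x^{-1}$-twists. Matching the two parametrizations amounts to exhibiting a bijection of $\PP(V(\Gamma))$ that interchanges left and right divisibility, and this is where the real work lies. I would construct it by induction on $|U|$, pushing one generator at a time past $\omega(U)$ and tracking the resulting $\varphi$-twists; condition (g) ensures that successive twists compose coherently, and the finiteness of $V(\Gamma)$ guarantees that the process terminates, identifying the two $2^n$-element subsets inside $\Tr^+(\Gamma)$.
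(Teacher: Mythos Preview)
Your overall strategy matches the paper's: for the forward direction, identify $V(\Gamma)$ inside the first stratum of $\nf(\Delta)$; for the converse, show that $\Delta = x_1\cdots x_n$ is balanced with divisor set equal to the square-free elements, arguing the right-sided statement via the dual presentation. The ``if'' sketch is essentially Lemma~\ref{lem7_6}, and your identification of balance as the crux is correct.

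However, your ``only if'' argument contains a real error. The biconditional ``$x$ left-divides $\Delta$ if and only if $x^1 \in U_1$'' is false, and even the weaker statement ``$x \le_L \Delta$ implies $x \in \supp(U_1)$'' fails. Take $V(\Gamma)=\{x,y,z\}$ complete, with $y,z < x$, $y \,\|\, z$, and $\varphi_x$ swapping $y$ and $z$; then $g = xy$ has $\nf(g) = (\{x^1,y^1\})$, yet $z \le_L g$ because $xy = zx$, while $z \notin \supp(U_1)$. The correct description (Lemma~\ref{lem7_5} in the paper) is
\[
\Div_L(\Delta)\cap V(\Gamma) \;=\; \psi\bigl(\supp(U_1)\bigr),\qquad \psi = \varphi_{x_1}^{a_1}\circ\cdots\circ\varphi_{x_q}^{a_q},
\]
so the atom left-divisors are a $\psi$-twisted copy of $\supp(U_1)$, not $\supp(U_1)$ itself. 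Your conclusion survives this correction: $\psi$ is a composition of star-automorphisms, hence edge-preserving, so $\psi(\supp(U_1))$ is again a finite clique; since it equals $V(\Gamma)$, the graph is finite and complete. But the intermediate step you stated would need to be replaced by this twisted version, and proving it requires tracking how an atom prefix interacts with the trickle rewriting --- this is exactly the content of Lemma~\ref{lem7_5}, which is not immediate from the normal-form machinery alone.
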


\begin{rem}
Any spherical type Artin group $A$ has a natural finite quotient $W$, which is its associated Coxeter group, and the projection $A \to W$ is an important tool in the study of the group.
Such a quotient does not exist in general for a given Garside group, and determining which Garside groups have such quotients is a standard question in the theory (see for instance \cite{Gobet1}). 
Now, notice that Theorem \ref{thm2_15} provides new examples of such Garside groups.
Indeed, if $\Gamma = (\Gamma, \le, \mu, (\varphi_x)_{x \in V (\Gamma)})$ is a preGarisde trickle graph with finite $V (\Gamma)$ and $\Gamma$ complete, then, by Theorem \ref{thm2_15}, $\Tr (\Gamma)$ is a Garside group and, by Corollary \ref{corl2_7}, the quotient of $\Tr (\Gamma)$ by the relations $x^{\bar \mu(x)} = 1$, $x \in V (\Gamma)$, where $\bar \mu (x)$ is the order of $\varphi_x$ if $\varphi_x$ is different from the identity and $\bar \mu (x) = 2$ if $\varphi_x$ is the identity, is a finite trickle group.
\end{rem}

In \cite{GodPar3} four elementary questions are asked on Artin groups, questions which are unsolved to this date, and which also arise naturally for preGarside groups.
Two of them are of particular interest in this paper:
\begin{itemize}
\item[(1)]
Is any preGarside group torsion-free?
\item[(2)]
Does any preGarside group has a solution to the word problem?
\end{itemize}
Concerning preGarside trickle groups, a positive answer to Question (2) when $V (\Gamma)$ is finite is given by Corollary \ref{corl2_5}.
A positive answer to Question (1) in the case where $V (\Gamma)$ is finite is given by the following theorem which will be proved in Section \ref{sec7}.

\begin{thm}\label{thm2_16}
Let $\Gamma$ be a preGarside trickle graph with $V (\Gamma)$ finite.
Then $\Tr (\Gamma)$ is torsion-free.
\end{thm}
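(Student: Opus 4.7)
The plan is to prove the theorem by induction on $|V(\Gamma)|$, exhibiting $\Tr(\Gamma)$ as an HNN extension of a smaller trickle group and appealing to the classical fact that HNN extensions of torsion-free groups are torsion-free. The base case ($|V(\Gamma)| \le 1$) is immediate since $\Tr(\Gamma)$ is then trivial or isomorphic to $\Z$.

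For the inductive step I would pick a maximal element $x_0$ of the finite poset $(V(\Gamma), \le)$, set $\Gamma_1 = \Gamma \setminus \{x_0\}$, and set $L = \linkE_{x_0}(\Gamma)$. First I would check that $\Gamma_1$ is a parabolic subgraph of $\Gamma$: if $x_0 \in \starE_x(\Gamma)$ for some $x \neq x_0$, then Condition (d) combined with the maximality of $x_0$ forces $\varphi_x(x_0) = x_0$, so each $\varphi_x$ stabilizes $\starE_x(\Gamma_1)$. Theorem \ref{thm2_10} then embeds $\Tr(\Gamma_1)$ into $\Tr(\Gamma)$, and the induction hypothesis makes $\Tr(\Gamma_1)$ torsion-free.

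Next I would verify that $L$ is itself a parabolic subgraph of $\Gamma$ (hence of $\Gamma_1$) and that $\varphi_{x_0}|_L$ is a trickle-graph automorphism of $L$. For the parabolicity of $L$, given $y \in L$ and $z \in \starE_y(\Gamma) \cap L$, the interesting case is $\varphi_y(z) \neq z$, which by Condition (d) forces $z < y$. I would then split into cases: if $y||x_0$, Condition (b) applied to the edge $\{y, x_0\} \in E_{||}(\Gamma)$ propagates the incomparability-with-adjacency property from $y$ to both $z$ and $\varphi_y(z)$, giving $\varphi_y(z) \in L$; if $y < x_0$, Condition (g) rewrites $\varphi_y(z) = \varphi_{x_0}^{-1} \circ \varphi_{y'} \circ \varphi_{x_0}(z)$ with $y' = \varphi_{x_0}(y) \in L$, and a further application of Conditions (c), (d) and (a) keeps the intermediate element inside $\starE_{x_0}(\Gamma)$, so again $\varphi_y(z) \in L$. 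The automorphism claim for $\varphi_{x_0}|_L$ is then routine: Conditions (c) and (f) give preservation of order and labels, while Condition (g), combined with the fact (from (d)) that $\varphi_y$ and $\varphi_{y'}$ fix all elements not strictly below them, yields the commutation $\varphi_{x_0} \circ \varphi_y = \varphi_{\varphi_{x_0}(y)} \circ \varphi_{x_0}$ on $\starE_y(L)$ for all $y \in L$. Setting $\theta$ to be the induced group automorphism of the standard parabolic subgroup $H_L := \Tr(L) \le \Tr(\Gamma_1)$ completes this step.

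Finally I would assemble the HNN description. For each $y \in L$, maximality of $x_0$ forces $\varphi_y(x_0) = x_0$ by Condition (d), so the defining trickle relation associated with $\{x_0, y\}$ collapses to $\varphi_{x_0}(y)\, x_0 = x_0\, y$, equivalently $x_0\, y\, x_0^{-1} = \theta(y)$. Since these are the only defining relations of $\Tr(\Gamma)$ involving $x_0$, a Tietze argument identifies
\[
\Tr(\Gamma) \cong \langle \Tr(\Gamma_1),\, x_0 \mid x_0\, h\, x_0^{-1} = \theta(h) \text{ for } h \in H_L \rangle,
\]
which is the HNN extension of $\Tr(\Gamma_1)$ along the automorphism $\theta$ of $H_L$. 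By Britton's Lemma (or equivalently by the Bass--Serre action on the associated tree), every torsion element of an HNN extension is conjugate into the base group; since $\Tr(\Gamma_1)$ is torsion-free by induction, so is $\Tr(\Gamma)$. I expect the main obstacle to be the simultaneous verification that $L$ is parabolic and that $\varphi_{x_0}|_L$ respects the trickle-graph structure on $L$, since this is the step where all of Conditions (b), (c), (d), (f), (g) must be orchestrated together; once this structural work is done, the HNN assembly and the conclusion are essentially formal.
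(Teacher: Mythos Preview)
Your argument is correct and gives a genuinely different, more uniform route than the paper's. The paper also inducts on $|V(\Gamma)|$, but it looks at the whole set $Y$ of $\le$-maximal vertices and splits into two cases: if some $y_1,y_2\in Y$ are non-adjacent it writes $\Tr(\Gamma)$ as the amalgam $\Tr(\Gamma\setminus\{y_1\}) *_{\Tr(\Gamma\setminus\{y_1,y_2\})} \Tr(\Gamma\setminus\{y_2\})$ and invokes the torsion theorem for amalgams; if the maximal vertices span a clique it shows $\Tr(\Gamma)\cong \Tr(\Gamma_X)\rtimes \Z^{|Y|}$ (with $X=V(\Gamma)\setminus Y$) and concludes from the induction hypothesis and the torsion-freeness of $\Z^{|Y|}$. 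Your approach removes a single maximal vertex $x_0$ and realises $\Tr(\Gamma)$ uniformly as an HNN extension of $\Tr(\Gamma\setminus\{x_0\})$ along the automorphism of $\Tr(\linkE_{x_0}(\Gamma))$ induced by $\varphi_{x_0}$. This avoids the case split and uses a single Bass--Serre tool, at the cost of the structural work you flag (parabolicity of the link and the fact that $\varphi_{x_0}$ induces a trickle automorphism of it). The paper's second case, by contrast, yields the extra structural information that when the maximal vertices form a clique there is an explicit abelian quotient $\Z^{|Y|}$, which may be of independent use.

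One small remark: in your verification that $L$ is parabolic in the case $y<x_0$, the detour through Condition~(g) is unnecessary---once $z<y<x_0$ you have $\varphi_y(z)<y<x_0$ directly from (c) and (d), and then $\varphi_y(z)\in L$ follows from (a). This does not affect correctness.
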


We do not know whether the conclusion of Theorem \ref{thm2_16} holds if $V (\Gamma)$ is infinite.

Another question more specific to preGarside groups is the following.
\begin{itemize}
\item[(3)]
Let $M$ be a preGarside monoid.
Is the natural homomorphism $\iota_M : M \to G(M)$ injective?
\end{itemize}
We know from \cite{Paris1} that the answer is yes if the monoid is an Artin monoid.
The answer is also yes if the monoid is a preGarside trickle monoid as stated in the following theorem which will be proved in Section \ref{sec7}.

\begin{thm}\label{thm2_17}
Let $\Gamma$ be a preGarside trickle graph.
Then the natural homomorphism $\iota_{\Tr^+ (\Gamma)}: \Tr^+ (\Gamma) \to \Tr (\Gamma)$ is injective.
\end{thm}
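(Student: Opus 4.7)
The plan is to use the rewriting system $\RR$ from Theorem~\ref{thm2_4} and to observe that, in the preGarside case, it restricts cleanly to positive pilings. Let $\Omega^+ \subset \Omega$ denote the set of \emph{positive strata}, namely those strata $U = \{x_1^{a_1}, \dots, x_p^{a_p}\}$ with every $a_i \ge 1$; this is meaningful because $\mu(x) = \infty$ for every $x \in V(\Gamma)$. For a word $w = x_1 \cdots x_n \in V(\Gamma)^*$, the piling $\Psi(w) = (\{x_1\}, \dots, \{x_n\})$ belongs to $(\Omega^+)^*$.

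First I would verify that the rewriting system preserves positivity: if $\mathbf{w} \in (\Omega^+)^*$ and $\mathbf{w} \stackrel{\RR}{\to} \mathbf{w}'$, then $\mathbf{w}' \in (\Omega^+)^*$. The $\RR_0$ rule preserves positivity trivially, and for a $T$-transformation $T(U,V,x^a) = (U',V')$ with $U, V$ positive, the stratum $V' = L(V,x^a)$ is positive and the extracted syllable $\gamma(V,x^a) = y^a$ retains the positive exponent~$a$. The definition of $R(U, y^a)$ only produces exponents of the form $a$, $a + a_i$, or deletes an entry when $a + a_i = 0$; since all exponents involved are $\ge 1$, the deletion case cannot occur and $U'$ is positive.

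Second, I would establish that each rewriting step is an equality already valid in $\Tr^+(\Gamma)$. For $U = \{x_1^{a_1}, \dots, x_p^{a_p}\} \in \Omega^+$ numbered so that $x_j > x_k$ implies $j < k$, define $\omega(U) = x_1^{a_1} \cdots x_p^{a_p} \in \Tr^+(\Gamma)$. The goal is $\omega(U) \omega(V) = \omega(U') \omega(V')$ in $\Tr^+(\Gamma)$ for every $T$-transformation. Iterating the defining relation in its positive form $y x = \varphi_y(x) y$ (when $x < y$) or $yx = xy$ (when $x \| y$) yields, for any $a \ge 1$ and any $x < y$, the positive identity $y^a x = \varphi_y^a(x)\, y^a$. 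Chaining such commutations, one shows by induction on the length of $V$ that $\omega(V) = \gamma(V, x^a) \cdot \omega(V')$ (extraction of the syllable to the left), and similarly that $\omega(U) \cdot y^a = \omega(R(U, y^a))$ (insertion from the right). Composing the two identities yields the desired equality, derived entirely inside $\Tr^+(\Gamma)$.

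With these two points the conclusion is immediate. Let $g_1, g_2 \in \Tr^+(\Gamma)$ satisfy $\iota_{\Tr^+(\Gamma)}(g_1) = \iota_{\Tr^+(\Gamma)}(g_2)$ in $\Tr(\Gamma)$, and choose words $w_1, w_2 \in V(\Gamma)^*$ representing them. The pilings $\Psi(w_1), \Psi(w_2)$ lie in $(\Omega^+)^*$ and, by Theorems~\ref{thm2_3} and~\ref{thm2_4}, reduce via $\RR$ to the same normal form $\mathbf{u} = (U_1, \dots, U_q) \in (\Omega^+)^*$. Each reduction step preserves the value in $\Tr^+(\Gamma)$, so $g_i = \omega(U_1) \cdots \omega(U_q)$ in $\Tr^+(\Gamma)$ for both $i = 1, 2$, and hence $g_1 = g_2$.

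The main obstacle is the second step: showing that every $T$-transformation follows from the defining monoid relations alone, never requiring an inverse or a non-positive intermediate word. Careful bookkeeping, relying on Conditions~(c), (d), (g) of the trickle graph definition, is needed to verify that the intermediate elements appearing when successively commuting $x^a$ past $x_{i-1}^{a_{i-1}}, \dots, x_1^{a_1}$ remain strictly smaller than the next pivot, so that the positive commutation relation applies at each stage and the composed automorphisms match $\gamma(V, x^a)$.
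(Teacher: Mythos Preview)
Your proposal is correct and follows essentially the same approach as the paper. The paper packages the argument into Lemmas~7.2--7.4: it defines $\Omega^+$ and $\RR^+ = \RR \cap ((\Omega^+)^* \times (\Omega^+)^*)$, observes (your first step) that $\RR$-rewriting from a positive piling stays positive, then invokes the positive analogues of Lemmas~4.14--4.15 to get an isomorphism $\Phi^+ : M^+(\Gamma) \to \Tr^+(\Gamma)$ (your second step), and concludes via $\nf^+(g) = \nf(\iota(g))$, which is exactly your final paragraph.
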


\begin{defin}
Let $M$ be a preGarside monoid and let $N$ be a submonoid of $M$.
We say that $N$ is \emph{special} if, for all $a,b \in M$, if $ab \in N$, then $a,b \in N$.
We say that $N$ is a \emph{parabolic submonoid} of $M$ if it satisfies the following three properties.
\begin{itemize}
\item[(a)]
$N$ is special.
\item[(b)]
For all $a,b \in N$, if $a \vee_L b$ exists, then $a\vee_L b \in N$.
\item[(c)]
For all $a,b \in N$, if $a \vee_R b$ exists, then $a\vee_R b \in N$.
\end{itemize}
\end{defin}

In the case of PreGarside trickle monoids, this definition, taken from \cite{GodPar2}, coincides with that given in Subsection \ref{subsec2_4} (see Proposition \ref{prop7_8}).
On the other hand, in the case of Garside monoids, it coincides with the definition of \cite{Godel1,Godel2}.
In particular, a parabolic submonoid of a Garside monoid is itself a Garside monoid.

Now, a consequence of Proposition \ref{prop7_8} combined with the results of Subsection \ref{subsec2_4} is a positive answer in the particular case of preGarside trickle monoids to three questions asked in \cite{GodPar2} for parabolic submonoids of preGarside monoids.

\begin{thm}\label{thm2_18}
Let $M$ be a preGarside trickle monoid.
\begin{itemize}
\item[(1)]
Let $N$ be a parabolic submonoid of $M$.
Then the embedding $N \hookrightarrow M$ induces an embedding $G (N) \hookrightarrow G (M)$.
So, we can consider $G(N)$ as a subgroup of $G(M)$.
\item[(2)]
Let $N$ be a parabolic submonoid of $M$.
Then $M \cap G(N) = N$.
\item[(3)]
Let $N_1$ and $N_2$ be two parabolic submonoids of $M$.
Then $N_1 \cap N_2$ is a parabolic submonoid of $M$ and $G(N_1) \cap G (N_2) = G (N_1 \cap N_2)$.
\end{itemize}
\end{thm}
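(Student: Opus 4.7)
The plan is to reduce each of the three parts to results already established in Subsection 2.4, using Proposition 7.8 as the bridge between the two notions of parabolic object. By Proposition 7.8, every parabolic submonoid $N$ of $M = \Tr^+ (\Gamma)$ is of the form $N = \Tr^+ (\Gamma_1)$ for some parabolic subgraph $\Gamma_1$ of $\Gamma$, and conversely every such $\Tr^+(\Gamma_1)$ is parabolic. Under this identification, the enveloping group $G(N)$ is the trickle group $\Tr (\Gamma_1)$, which is precisely the object handled by Theorem \ref{thm2_10} and Corollary \ref{corl2_12}.

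For (1), once we write $N = \Tr^+ (\Gamma_1)$, the induced map $G(N) \to G(M)$ is the homomorphism $\iota_1 : \Tr (\Gamma_1) \to \Tr (\Gamma)$ of Lemma \ref{lem2_9}, and Theorem \ref{thm2_10}(1) asserts exactly that this map is injective.

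For (2), the key intermediate step I expect to be the main technical obstacle is the characterization: an element $g \in \Tr (\Gamma)$ lies in $\Tr^+ (\Gamma)$ if and only if $\nf_\Gamma (g) \in V (\Gamma)^*$, that is, its normal form contains no inverse letters. The nontrivial direction is that positivity is preserved by the algorithm. I would prove it by tracking how the isomorphism $\Psi : \Tr (\Gamma) \to M (\Gamma)$ of Subsection \ref{subsec2_2} behaves on the submonoid $\iota_{\Tr^+ (\Gamma)} (\Tr^+ (\Gamma))$ (which is well-defined by Theorem \ref{thm2_17}): a positive word in $V (\Gamma)$ maps to a piling of singleton strata with positive exponents, and each $T$-transformation in $\RR$ preserves the property that every syllable $x^a$ appearing in a stratum satisfies $a > 0$, since $T$-transformations only move and regroup syllables via $L$, $\gamma$ and $R$ without altering the signs of the exponents. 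Once this characterization is in hand, Theorem \ref{thm2_10}(2) gives: if $g \in \Tr^+ (\Gamma) \cap \Tr (\Gamma_1) = M \cap G(N)$, then $\nf_\Gamma (g) = \nf_{\Gamma_1} (g)$ lies both in $V (\Gamma_1)^*$ and is positive, hence represents an element of $\Tr^+ (\Gamma_1) = N$. The reverse inclusion $N \subseteq M \cap G(N)$ is immediate.

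For (3), I would first check directly that $\Gamma_1 \cap \Gamma_2$ is a parabolic subgraph of $\Gamma$: for $x \in V (\Gamma_1) \cap V (\Gamma_2)$ and $y \in \starE_x (\Gamma_1 \cap \Gamma_2) = \starE_x (\Gamma_1) \cap \starE_x (\Gamma_2)$, the image $\varphi_x (y)$ lies in each $\starE_x (\Gamma_i)$ by hypothesis, hence in their intersection. So by Proposition \ref{prop7_8}, $\Tr^+ (\Gamma_1 \cap \Gamma_2)$ is a parabolic submonoid of $M$. It remains to identify this with $N_1 \cap N_2$ and to identify $G(N_1) \cap G(N_2)$ with $G(N_1 \cap N_2)$. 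The group statement is Corollary \ref{corl2_12}: $G(N_1) \cap G(N_2) = \Tr (\Gamma_1) \cap \Tr (\Gamma_2) = \Tr (\Gamma_1 \cap \Gamma_2) = G(N_1 \cap N_2)$. The monoid statement follows by combining this with part (2): $N_1 \cap N_2 = (M \cap G(N_1)) \cap (M \cap G(N_2)) = M \cap G(N_1 \cap N_2) = \Tr^+ (\Gamma_1 \cap \Gamma_2)$, which is parabolic as just noted.
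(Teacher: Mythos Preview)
Your proposal is correct and follows essentially the same route as the paper: reduce via Proposition~\ref{prop7_8} to parabolic subgraphs, then invoke Theorem~\ref{thm2_10}, Corollary~\ref{corl2_12}, and the positivity characterization (which is Lemma~\ref{lem7_4} in the paper). One small presentational issue in part~(3): you write $G(N_1 \cap N_2) = \Tr(\Gamma_1 \cap \Gamma_2)$ before having established $N_1 \cap N_2 = \Tr^+(\Gamma_1 \cap \Gamma_2)$, so the argument as written is slightly circular; the fix is to first compute $N_1 \cap N_2 = M \cap \Tr(\Gamma_1) \cap \Tr(\Gamma_2) = M \cap \Tr(\Gamma_1 \cap \Gamma_2) = \Tr^+(\Gamma_1 \cap \Gamma_2)$ using part~(2), and only then identify $G(N_1 \cap N_2)$.
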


Again, a detailed proof of this theorem will be given in Section \ref{sec7}.


\section{Examples}\label{sec3}

\subsection{Generalized cactus groups}\label{subsec3_1}

Let $G$ be a group.
A \emph{cactus basis} based on $G$ is a non-empty family $\FF$ of pairs $X = (G_X, \Delta_X)$, where $G_X$ is a subgroup of $G$ and $\Delta_X$ is an element of $G_X$, such that,
\begin{itemize}
\item[(a)]
$G_X \neq G_Y$ for $X,Y \in \FF$ distinct;
\item[(b)]
for $X, Y \in \FF$, if $G_Y \subset G_X$, then $(\Delta_X G_Y \Delta_X^{-1}, \Delta_X \Delta_Y \Delta_X^{-1}), (\Delta_X^{-1} G_Y \Delta_X, \Delta_X^{-1} \Delta_Y \Delta_X) \in \FF$.
\end{itemize}

Recall that, if $H_1, H_2$ are two subgroups of a group $G$, then $[H_1, H_2]$ denotes the subgroup of $G$ generated by $\{h_1 h_2 h_1^{-1} h_2^{-1} \mid h_1 \in H_1\,,\ h_2 \in H_2\}$.
In particular, the equality $[H_1, H_2]=\{ 1 \}$ means that $h_1 h_2 = h_2 h_1$ for all $h_1 \in H_1$ and $h_2 \in H_2$.
Let $\FF$ be a cactus basis based on $G$.
Let $\mu : \FF \to \N \cup \{\infty\}$ be a map which satisfies the following conditions:
\begin{itemize}
\item
for all $X \in \FF$, if $\mu (X)$ is finite, then the order of $\Delta_X$ is finite and divides $\mu (X)$;
\item
for all $X = (G_X, \Delta_X)$ and $Y = (G_Y, \Delta_Y)$ in $\FF$, if $G_Y \subset G_X$, then $\mu (Y) = \mu (Y')$, where $Y' = (\Delta_X G_Y \Delta_X^{-1}, \Delta_X \Delta_Y \Delta_X^{-1})$.
\end{itemize}
Then we define a quadruple $\Gamma_{\FF,\mu} = \Gamma = (\Gamma, \le, \mu, (\varphi_x)_{x \in V (\Gamma)})$ as follows.
The set of vertices of $\Gamma$ is a set $V(\Gamma) = \{ x_X \mid X \in \FF\}$ in one-to-one correspondence with $\FF$.
Two vertices $x_X$ and $x_Y$ are connected by an edge if either $G_X \subset G_Y$, or $G_Y \subset G_X$, or $G_X \cap G_Y = \{1\}$ and $[G_X, G_Y]= \{1\}$.
We set $x_X \le x_Y$ if $G_X \subseteq G_Y$.
The map $\mu : V (\Gamma) \to \N_{\ge 2} \cup \{ \infty\}$ is defined by $\mu (x_X) = \mu (X)$ for all $x_X \in V (\Gamma)$.
Let $x_X \in V (\Gamma)$ and $x_Y \in V (\starE_{x_X} (\Gamma))$.
If $x_Y \le x_X$, then we set $\varphi_{x_X} (x_Y) = x_{Y'}$, where $X=(G_X,\Delta_X)$, $Y=(G_Y, \Delta_Y)$ and $Y'= (\Delta_X G_Y \Delta_X^{-1}, \Delta_X \Delta_Y \Delta_X^{-1})$.
If $x_Y \not \le x_X$, then we set $\varphi_{x_X} (x_Y) = x_Y$.

The proof of the following is identical to that of Lemma \ref{lem2_1}, hence it is left to the reader.

\begin{lem}\label{lem3_1}
Let $\FF$ be a cactus basis based on a group $G$ and let $\mu : \FF \to \N_{\ge 2} \cup \{ \infty \}$ be a map as above. 
Then $\Gamma_{\FF,\mu} = (\Gamma, \le, \mu, (\varphi_x)_{x \in V(\Gamma)})$ is a trickle graph.
\end{lem}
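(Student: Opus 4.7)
The plan is to verify the seven conditions (a)--(g) defining a trickle graph for the quadruple $\Gamma_{\FF,\mu}$, following the same strategy as the proof of Lemma \ref{lem2_1} but with $w_X$ replaced by $\Delta_X$. A preliminary step is to confirm that, for each $X \in \FF$, the map $\varphi_{x_X}$ is a graph automorphism of $\starE_{x_X}(\Gamma)$. Since $\Delta_X \in G_X$, conjugation by $\Delta_X$ is an inner automorphism of $G$ that stabilizes $G_X$, which readily implies that all three types of adjacency (one-sided subgroup inclusion, or disjointness together with commutativity) are preserved; the closure axiom (b) of a cactus basis guarantees that the conjugated pairs actually lie in $\FF$ and hence correspond to vertices of $\Gamma$.

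Conditions (a), (d) and (f) are then immediate from the construction: (a) because $x_X < x_Y$ means $G_X \subsetneq G_Y$, which is one of the cases creating an edge; (d) because $\varphi_{x_X}$ acts as the identity on every vertex $x_Y \not\le x_X$, while $\varphi_{x_X}(x_X) = x_X$ since $\Delta_X \in G_X$; and (f) is exactly the second of the two hypotheses imposed on $\mu$. For (b), the inclusion $G_Z \subseteq G_Y$ together with $G_X \cap G_Y = \{1\}$ and $[G_X, G_Y] = \{1\}$ immediately yields $G_X \cap G_Z = \{1\}$ and $[G_X, G_Z] = \{1\}$, with incomparability of $x_X$ and $x_Z$ following in the same way. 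For (e), the first hypothesis on $\mu$ gives $\Delta_X^{\mu(x_X)} = 1$, and since $\varphi_{x_X}^k$ acts by conjugation by $\Delta_X^k$, we conclude $\varphi_{x_X}^{\mu(x_X)} = \id$. Condition (c) reduces, by invertibility of $\varphi_{x_X}$, to the forward implication $x_Z \le x_Y \Rightarrow \varphi_{x_X}(x_Z) \le \varphi_{x_X}(x_Y)$, which is handled by a case analysis on the relative positions of $x_X, x_Y, x_Z$ that directly parallels the argument in Lemma \ref{lem2_1}, using only that $\Delta_X \in G_X$ to control the mixed cases.

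The main obstacle is condition (g), the braid-type identity. Fix $x_Z < x_Y < x_X$ and set $y' = \varphi_{x_X}(x_Y)$, so that $G_{Y'} = \Delta_X G_Y \Delta_X^{-1}$ and $\Delta_{Y'} = \Delta_X \Delta_Y \Delta_X^{-1}$. A direct computation shows that $(\varphi_{x_X} \circ \varphi_{x_Y})(x_Z) = x_{Z_1}$ with
\[
G_{Z_1} = \Delta_X \Delta_Y G_Z \Delta_Y^{-1} \Delta_X^{-1}, \quad \Delta_{Z_1} = \Delta_X \Delta_Y \Delta_Z \Delta_Y^{-1} \Delta_X^{-1},
\]
whereas $(\varphi_{y'} \circ \varphi_{x_X})(x_Z) = x_{Z_2}$ with
\[
G_{Z_2} = \Delta_{Y'} \Delta_X G_Z \Delta_X^{-1} \Delta_{Y'}^{-1}, \quad \Delta_{Z_2} = \Delta_{Y'} \Delta_X \Delta_Z \Delta_X^{-1} \Delta_{Y'}^{-1}.
\]
Substituting $\Delta_{Y'} = \Delta_X \Delta_Y \Delta_X^{-1}$ on the second side yields $Z_1 = Z_2$, which establishes (g). All intermediate pairs appearing in this argument belong to $\FF$ by iterated applications of axiom (b) in the definition of a cactus basis, so every $\varphi$-operation is legitimate and the proof is complete.
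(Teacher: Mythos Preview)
Your proposal is correct and follows exactly the approach the paper indicates (it simply says the proof is identical to that of Lemma \ref{lem2_1} and is left to the reader). The one place where your argument necessarily differs from Lemma \ref{lem2_1} is in condition (c): there the equivalence followed from $\varphi_{x_X}^2 = \id$, whereas here you appeal to invertibility---this works because $\varphi_{x_X}^{-1}$ is conjugation by $\Delta_X^{-1}$ and the cactus basis axiom (b) is symmetric in $\Delta_X$ and $\Delta_X^{-1}$, so the same forward-implication argument applies to the inverse.
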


\begin{rem}
Let $\FF$ be a cactus basis based on a group $G$ and let $\mu : \FF \to \N_{\ge 2} \cup \{ \infty \}$ be a map as above.
Then there is a homomorphism $\pi: \Tr (\Gamma_{\FF,\mu}) \to G$ which sends $x_X$ to $\Delta_X$ for all $X \in \FF$.
This homomorphism plays an important role in the study of cactus groups associated to Coxeter systems.
In that case it is known to be surjective and its kernel is known to embed into a right-angled Coxeter group (see \cite{Mosto1, Yu1}).
\end{rem}

It is not difficult to produce examples of cactus basis.
Below we give three examples which seem particularly interesting to us because they highlight the link between trickle groups and the theory of Coxeter, Artin and Garside groups.

\begin{expl1}
Let $(W,S)$ be a Coxeter system.
Recall that $\SS_{\fin}$ denotes the set of non-empty subsets $X \subseteq S$ that are irreducible and of spherical type, and that, for $X \in \SS_{\fin}$, $w_X$ denotes the longest element in $W_X$.
Then $\FF = \{(W_X,w_X) \mid X \in \SS_{\fin} \}$ is a cactus basis based on $W$.
If $\mu((W_X,w_X))=2$ for all $X \in \SS_{\fin}$, then $\Tr(\Gamma_{\FF,\mu})$ is the cactus group $C(W, S)$.
If $\mu((W_X,w_X)) = \infty$ for all $X \in \SS_{\fin}$, then $\Tr (\Gamma_{\FF,\mu})$ is the ``Artin'' version of $C(W,S)$.
We call this group the \emph{Artin-cactus group} associated with $(W,S)$ and we denote it by $\AC (W,S)$.
Like Artin groups, these groups are preGarside groups (see Theorem \ref{thm2_14}).
We also know that, when $S$ is finite, they are torsion-free and they have a solution to the word problem (see Theorem \ref{thm2_16} and Corollary \ref{corl2_5}).
\end{expl1}

\begin{expl2}
Dual structures for Artin groups were introduced by Birman--Ko--Lee \cite{BiKoLe1} for the braid groups and by Bessis \cite{Bessi1} for all Artin groups.
They are quite mysterious but they can be extremely useful for understanding some Artin groups.
For example, they are the main tool in the solutions to the word problem and to the $K (\pi, 1)$ conjecture for Artin groups of affine type \cite{McCSul1,PaoSal1}.

Let $(W,S)$ be a Coxeter system of spherical type (i.e. such that $W$ is finite).
Then another cactus basis based on $W$ can be defined as follows.
We denote by $T = \{ w s w^{-1} \mid w \in W\,,\ s \in S\}$ the set of reflections of $W$ and by $\lg_T$ the word length on $W$ with respect to $T$.
Fix a Coxeter element $c$.
Then the triple $(W,T,c)$ is called a \emph{Coxeter dual system} (see \cite[Definition 1.3.2]{Bessi1}).
Let $\delta$ be a \emph{standard parabolic element} (relative to $c$), that is, an element of $W$ such that $\lg_T (\delta) + \lg_T (\delta^{-1} c) = \lg_T (c)$.
Let $t_1 t_2\dots t_k = \delta$ be a decomposition of $\delta$ on $T$ with $k = \lg_T (\delta)$.
Then the \emph{standard (dual) parabolic subgroup} $W_\delta$ associated with $\delta$ is the subgroup generated by $\{t_1, t_2, \dots, t_k\}$.
We know from \cite[Corollary 1.6.2, Definition 1.6.3]{Bessi1} that this subgroup depends only on $\delta$ and not on the chosen decomposition and that $(W_\delta, T \cap W_\delta, \delta)$ is a dual Coxeter system.
Moreover, by \cite[Lemma 2.5]{BraWat1}, if $\delta, \delta'$ are two standard parabolic elements, then $\delta = \delta'$ if and only if $W_\delta = W_{\delta'}$.
We say that a standard parabolic subgroup $W_\delta$ (or $\delta$) is \emph{irreducible} if we cannot decompose $T \cap W_{\delta}$ into a non-trivial partition $ T_1\sqcup T_2$ such that $xy = yx$ for all $x \in T_1$ and $y \in T_2$.

Let $(W,T,c)$ be a dual Coxeter system with $W$ finite.
Then the family $\FF$ consisting of the pairs $X_\delta = (W_\delta, \delta)$, where $\delta$ is a standard (non-trivial) irreducible parabolic element, is a cactus basis based on $W$.
If we denote by $\mu (X_\delta)$ the order of $\delta$ for all $X_\delta \in \FF$, then we call $\Tr(\Gamma_{\FF, \mu})$ the \emph{dual cactus group} associated with $(W,T,c)$ and we denote it by $C^* (W,T,c)$.
If we set $\mu (X_\delta) = \infty$ for all $X_\delta \in \FF$, then we call $\Tr (\Gamma_{\FF,\mu})$ the \emph{dual Artin-cactus group} associated with $(W,T,c)$ and we denote it by $\AC^*(W,T,c)$.
We do not know when $C^* (W, T, \delta)$ is isomorphic to $C (W, S)$ and when $\AC^* (W, T, \delta)$ is isomorphic to $\AC (W, S)$.

\begin{expl}
Let $W$ be the symmetric group on three letters endowed with its dual presentation
\[
W = \langle x ,y ,z \mid x^2 = y^2 = z^2 = 1\,,\ xy = yz = zx \rangle\,.
\]
For the Coxeter element $c = xy$, there are four standard non-trivial parabolic elements, $x, y, z, u=xy$, and the dual cactus group and dual Artin-cactus group have the following presentations:
\begin{gather*}
C^* = \langle x, y, z, u \mid x^2 = y^2 = z^2 = u^3 = 1\,,\ x u = u z\,,\ y u = u x\,, z u = u y \rangle\,,\\
\AC^* = \langle x, y, z, u \mid x u = u z\,,\ y u = u x\,, z u = u y \rangle\,.
\end{gather*}
\end{expl}
\end{expl2}

\begin{expl3}
Examples 1 and 2 extend naturally to preGarside monoids (or groups) as follows.
Let $M$ be a preGarside monoid.
An \emph{atom} of $M$ is an element $a$ in $M \setminus \{1\}$ such that, for all $a_1, a_2 \in M$, if $a = a_1 a_2$, then $a_1 = 1$ or $a_2 = 1$.
We denote by $\AA$ the set of atoms of $M$.
It is easily shown that $\AA$ generates $M$ and, if $N$ is a parabolic submonoid, then $\AA \cap N$ is the set of atoms of $N$ and it generates $N$.
Let $N$ be a Garside parabolic submonoid of $M$.
By \cite{Dehor1} the submonoid $N$ admits a least Garside element, which we denote by $\Delta_N$, in the sense that, if $\Delta'_N$ is another Garside element of $N$, then $\Delta_N \in \Div (\Delta'_N)$.
We know that $N$ embeds into $G(N)$ and that the conjugation by $\Delta_N$ leaves $N$ invariant, that is, the map $N \to N$, $ g \mapsto \Delta_N g \Delta_N^{-1}$, is well-defined and is an automorphism.
Moreover, if $N_1$ is a parabolic submonoid of $N$, then $N_1$ and $\Delta_N N_1 \Delta_N^{-1}$ are both Garside parabolic submonoids of $N$ and $\Delta_N \Delta_{N_1} \Delta_N = \Delta_{\Delta_N N_1 \Delta_N^{-1}}$.

Let $M$ be a preGarside monoid and let $\AA$ be the set of its atoms.
We say that a parabolic submonoid $N$ of $M$ is \emph{irreducible} if we cannot decompose $\AA \cap N$ into a non-trivial partition $\AA_1 \sqcup \AA_2$ such that $ab = ba$ for all $a \in \AA_1$ and $b \in \AA_2$.
Then the family $\FF$ consisting of the pairs $X_N = (G(N), \Delta_N)$, with $N$ an irreducible Garside parabolic submonoid of $M$, is a cactus basis.
Note that we do not specify that $\FF$ is a cactus basis based on a given group, and actually we do not know if $\FF$ is a cactus basis based on a group ($G(M)$, for example), but the reader will easily extend the definition in this context.

To complete the data we also need to define the labeling $\mu: \FF \to \N_{\ge 2} \cup \{\infty\}$.
There are two natural choices.
The first consists in taking $\mu(X_N)$ the order of the conjugation by $\Delta_N$ in $\Aut (G(N))$ if different from $1$ and $2$ if the conjugation by $\Delta_N$ is the identity.
Note that $\Delta_N$ has always infinite order.
The second consists in setting $\mu (X_N) = \infty$ for all $X_N \in \FF$.
\end{expl3}


\subsection{Virtual cactus groups}\label{subsec3_2}

\begin{defin}
Let $n \ge 2$.
The \emph{virtual cactus group} on $n$ strands, denoted by $\VJ_n$, is the group defined by the presentation with generators
\[
x_{p,q}\,,\ 1\le p < q \le n \quad \text{and} \quad \rho_i\,,\ 1 \le i \le n-1\,,
\]
and relations:
\begin{itemize}
\item[(j1)]
$x_{p,q}^2 = 1$ for $1 \le p < q \le n$,
\item[(j2)]
$x_{p,q} x_{m,r} = x_{m,r} x_{p,q}$ for $[p,q] \cap [m,r] = \emptyset$,
\item[(j3)]
$x_{p,q} x_{m,r} = x_{p+q-r,p+q-m} x_{p,q}$ for $[m,r] \subset [p,q]$,
\item[(s1)]
$\rho_i^2 = 1$ for $1 \le i \le n-1$,
\item[(s2)]
$\rho_i \rho_j \rho_i = \rho_j \rho_i \rho_j$ for $ |i-j| = 1$,
\item[(s3)]
$\rho_i \rho_j = \rho_j \rho_i$ for $|i-j| > 1$,
\item[(m1)]
$\rho_i x_{p,q} = x_{p,q} \rho_i$ for $i < p-1$ and for $i \ge q+1$,
\item[(m2)]
$x_{p,q} \rho_{q} \rho_{q-1} \dots \rho_p = \rho_{q} \rho_{q-1} \dots \rho_p x_{p+1,q+1} $ for $1 \le p < q <n$.
\end{itemize}
\end{defin}

As pointed out in the introduction, the elements of the virtual cactus group are represented by planar diagrams.
To the usual diagrams which represent the elements of $J_n$ we add virtual crossings keeping the principle that two arcs connecting the same points and passing only through virtual crossings are equivalent.
For instance, the generators $x_{p,q}$ and $\rho_i$ are depicted in Figure \ref{fig3_1} and the relation (m2) is illustrated in Figure \ref{fig3_2}.

\begin{figure}[ht!]
\begin{center}
\includegraphics[width=6cm]{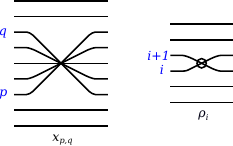}
\caption{Generators of $\VJ_n$}\label{fig3_1}
\end{center}
\end{figure}

\begin{figure}[ht!]
\begin{center}
\includegraphics[width=10.2cm]{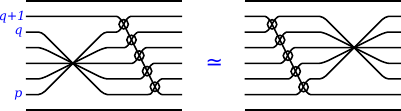}
\caption{Relation (m2) in the presentation of $\VJ_n$}\label{fig3_2}
\end{center}
\end{figure}

Relations (j1--j3) are the relations which define the cactus group $J_n$, and they induce a homomorphism $\iota_J : J_n \to \VJ_n$ which sends $x_{p,q}$ to $x_{p,q}$ for all $1 \le p < q \le n$.
Relations (s1--s3) are the relations in the standard presentation of the symmetric group $\SSS_n$, and they induce a homomorphism $\iota_S: \SSS_n \to \VJ_n$ which sends $s_i=(i,i+1)$ to $\rho_i$ for all $1 \le i \le n-1$.
Relations (m1--m2), called the \emph{mixed relations}, make that two arcs connecting the same points and passing through only virtual crossings are equivalent.

There is a homomorphism $\pi_K: \VJ_n \to \SSS_n$ defined by
\[
\pi_K (x_{p,q}) = 1 \text{ for } 1 \le p < q \le n \text{ and } \pi_K (\rho_i) = s_i \text{ for } 1 \le i \le n-1\,.
\]
Note that $\pi_K \circ \iota_S = \id_{\SSS_n}$, hence $\pi_K$ is surjective, $\iota_S$ is injective, and there is a semi-direct product decomposition $\VJ_n = \KVJ_n \rtimes \SSS_n$, where $\KVJ_n = \Ker (\pi_K)$.

The group $\VJ_n$ is not a trickle group, but $\KVJ_n$ is (see Proposition \ref{prop3_7}).
This enables to solve the word problem in $\VJ_n$ but also to prove other results on $\VJ_n$ such as the fact that $\iota_J : J_n \to \VJ_n$ is injective (see Corollary \ref{corl3_9}).
A different proof of this fact can be found in \cite[Corollary 11.4]{IKLPR1}.

\begin{rem}
For $1 \le p < q \le n$ we denote by $t(p,q)$ the permutation which sends $p$ to $q$ and $q$ to $p$, $p+1$ to $q-1$ and $q-1$ to $p+1$, and so on.
More precisely, for all $0 \le i \le q-p$, $t(p,q)$ sends $p+i$ to $q-i$, and $t(p,q)(k) =k$ if $k \not \in [p,q]$.
There exists another natural epimorphism from $\VJ_n$ to $\SSS_n$, denoted by $\pi_P: \VJ_n \to \SSS_n$, defined by
\[
\pi_P (x_{p,q}) = t(p,q) \text{ for } 1 \le p < q \le n \text{ and } \pi_P (\rho_i) = s_i \text{ for } 1 \le i \le n-1\,.
\]
The kernel of $\pi_P$ is the \emph{pure virtual cactus group} $\PVJ_n$ studied in \cite{IKLPR1}, and it is different from the group $\KVJ_n$ that we study.
A presentation for $\PVJ_n$ is given in \cite[Lemma 10.12]{IKLPR1}.
\end{rem}

For $2 \le \ell \le n$ we set $V_\ell = \{ (t_1, \dots, t_\ell) \in \{1, \dots, n\}^\ell \mid t_i \neq t_j \text{ for } i \neq j\}$.
Let $y = (t_1, \dots, t_\ell) \in V_\ell$.
We choose $w \in \SSS_n$ such that $w(i) = t_i$ for all $1 \le i \le \ell$ and we set $\delta_y = \iota_S(w) \,x_{1,\ell}\, \iota_S(w)^{-1}$.
Note that $\delta_y \in \KVJ_n$.

\begin{lem}\label{lem3_2}
Let $2 \le \ell \le n$ and let $y \in V_\ell$.
Then the above definition of $\delta_y$ does not depend on the choice of $w \in \SSS_n$.
\end{lem}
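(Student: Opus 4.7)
The plan is to show that any two permutations $w, w' \in \SSS_n$ satisfying $w(i) = w'(i) = t_i$ for $1 \le i \le \ell$ yield the same conjugate of $x_{1,\ell}$. The key observation is that such permutations differ by a factor that commutes with $x_{1,\ell}$ in $\VJ_n$.

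First I would note that if $w(i) = w'(i) = t_i$ for $1 \le i \le \ell$, then $u = w^{-1} w' \in \SSS_n$ fixes $\{1, \dots, \ell\}$ pointwise, and therefore belongs to the subgroup $\SSS_{\{\ell+1, \dots, n\}}$ of permutations of $\{\ell+1, \dots, n\}$. It is standard that this subgroup is generated by the simple transpositions $s_{\ell+1}, s_{\ell+2}, \dots, s_{n-1}$, so $\iota_S(u)$ is a product of the generators $\rho_{\ell+1}, \dots, \rho_{n-1}$ of $\VJ_n$.

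Next I would apply relation (m1) in the defining presentation of $\VJ_n$: for $x_{p,q} = x_{1,\ell}$ we have $p = 1$ and $q = \ell$, so the condition $i \ge q + 1 = \ell+1$ is satisfied by each index appearing in the expression of $\iota_S(u)$. Consequently $\rho_i \, x_{1,\ell} = x_{1,\ell}\, \rho_i$ for every such $i$, which gives $\iota_S(u)\, x_{1,\ell} = x_{1,\ell}\, \iota_S(u)$ in $\VJ_n$.

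Finally, writing $w' = w \cdot u$, I would conclude
\[
\iota_S(w')\, x_{1,\ell}\, \iota_S(w')^{-1} = \iota_S(w)\, \iota_S(u)\, x_{1,\ell}\, \iota_S(u)^{-1}\, \iota_S(w)^{-1} = \iota_S(w)\, x_{1,\ell}\, \iota_S(w)^{-1}\,,
\]
so $\delta_y$ is well-defined. There is no serious obstacle here; the argument is short and only requires identifying the stabilizer of $\{1, \dots, \ell\}$ in $\SSS_n$ and invoking the appropriate case of the mixed relation (m1).
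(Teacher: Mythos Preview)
Your proof is correct and follows essentially the same approach as the paper's: both identify $w^{-1}w'$ as lying in the subgroup generated by $s_{\ell+1},\dots,s_{n-1}$, then use relation (m1) to see that $\iota_S(w^{-1}w')$ commutes with $x_{1,\ell}$.
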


\begin{proof}
We set $y = (t_1, \dots, t_\ell)$ and we take $w, w' \in \SSS_n$ such that $w(i) = w'(i) = t_i$ for all $1 \le i \le \ell$.
We have $w^{-1} w'(i) = i$ for all $i \in \{1, \dots, \ell\}$, hence $w^{-1} w'$ lies in the subgroup of $\SSS_n$ generated by $s_{\ell+1}, \dots, s_{n-1}$, and therefore $\iota_S (w^{-1} w')$ lies in the subgroup of $\VJ_n$ generated by $\rho_{\ell+1}, \dots, \rho_{n-1}$.
Relations (m1) imply that $\rho_i x_{1,\ell} = x_{1,\ell} \rho_i$ for all $i \in \{\ell+1, \dots, n-1\}$, hence  $\iota_S (w^{-1} w') \, x_{1,\ell}\, \iota_S (w'^{-1} w) = x_{1,\ell}$, and therefore $\iota_S(w) \,x_{1,\ell}\, \iota_S (w)^{-1} = \iota_S (w') \,x_{1,\ell} \, \iota_S (w')^{-1}$.
\end{proof}

We set
\[
V = \bigcup_{\ell=2}^n V_\ell\,.
\]
We consider the action of $\SSS_n$ on $V$ defined by, for $w \in \SSS_n$ and $y = (t_1, \dots, t_\ell) \in V$,
\[
w \cdot y = (w (t_1) , \dots, w (t_\ell))\,.
\]

\begin{lem}\label{lem3_3}
\begin{itemize}
\item[(1)]
Let $w \in \SSS_n$ and $y \in V$.
Then $\iota_S (w) \, \delta_y \, \iota_S (w)^{-1} = \delta_{w \cdot y}$.
\item[(2)]
Let $1 \le p < q \le n$.
Then $\delta_{(p,p+1,\dots,q)} = x_{p,q}$.
\end{itemize}
\end{lem}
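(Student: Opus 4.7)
The plan is to handle (1) directly from the definition of $\delta_y$ together with the homomorphism property of $\iota_S$, and then to deduce (2) from (1) by induction, using the mixed relation (m2) as the inductive step.

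For (1), write $y = (t_1, \dots, t_\ell)$ and pick any $w_0 \in \SSS_n$ with $w_0(i) = t_i$ for $1 \le i \le \ell$, so that $\delta_y = \iota_S(w_0)\, x_{1,\ell}\, \iota_S(w_0)^{-1}$ by definition. For $w \in \SSS_n$, the product $w w_0 \in \SSS_n$ satisfies $(w w_0)(i) = w(t_i)$ for $1 \le i \le \ell$, and is therefore an admissible choice for computing $\delta_{w\cdot y}$. Since $\iota_S$ is a homomorphism,
\[
\delta_{w\cdot y} = \iota_S(w w_0)\, x_{1,\ell}\, \iota_S(w w_0)^{-1} = \iota_S(w) \bigl( \iota_S(w_0)\, x_{1,\ell}\, \iota_S(w_0)^{-1} \bigr) \iota_S(w)^{-1} = \iota_S(w)\, \delta_y\, \iota_S(w)^{-1}\,,
\]
where the independence of the choice of $w_0$ is Lemma~\ref{lem3_2}.

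For (2), fix $\ell \ge 2$ and prove by induction on $p$ that $\delta_{(p, p+1, \dots, p+\ell-1)} = x_{p, p+\ell-1}$ for all admissible $p$. The base case $p = 1$ is immediate from the defining formula with $w_0 = \id$, which gives $\delta_{(1,2,\dots,\ell)} = x_{1,\ell}$. For the inductive step, set $q = p+\ell - 1$ and let $\sigma = s_q s_{q-1} \cdots s_p \in \SSS_n$, so that $\iota_S(\sigma) = \rho_q \rho_{q-1} \cdots \rho_p$. Relation~(m2) then rewrites as
\[
x_{p+1, q+1} = \iota_S(\sigma)^{-1}\, x_{p,q}\, \iota_S(\sigma) = \iota_S(\sigma^{-1})\, x_{p,q}\, \iota_S(\sigma^{-1})^{-1}\,.
\]
By the induction hypothesis, $x_{p,q} = \delta_{(p,p+1,\dots,q)}$, and by part~(1) the right-hand side equals $\delta_{\sigma^{-1}\cdot (p,p+1,\dots,q)}$. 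A direct computation with the product of adjacent transpositions shows that $\sigma^{-1}$ sends $i$ to $i+1$ for $p \le i \le q$ (and sends $q+1$ to $p$), so that $\sigma^{-1} \cdot (p, p+1, \dots, q) = (p+1, p+2, \dots, q+1)$, which is the required statement for $p+1$.

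There is no real obstacle here: part~(1) is a formal consequence of how $\delta_y$ is defined, and the only point in part~(2) requiring a little care is the bookkeeping that identifies the permutation $\sigma^{-1}$ acting on $(p,p+1,\dots,q)$ with the cyclic shift to $(p+1,p+2,\dots,q+1)$, which fixes the convention for multiplying elements of $\SSS_n$.
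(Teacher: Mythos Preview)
Your proof is correct and follows essentially the same approach as the paper. Part (1) is handled identically; for part (2), the paper also reduces to a shifting conjugation argument, using the full cycle $u = s_1 s_2 \cdots s_{n-1}$ (which requires both (m1) and (m2)) in place of your more targeted $\sigma = s_q s_{q-1} \cdots s_p$ (which needs only (m2)), but the underlying idea is the same.
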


\begin{proof}
We set $y = (t_1, \dots, t_\ell)$ and we choose $w' \in \SSS_n$ such that $w' \cdot (1, \dots, \ell) = (t_1, \dots, t_\ell)$.
Notice that $w \cdot y = (ww'(1), \dots, ww'(\ell))= ww' \cdot (1, \dots, \ell)$.
Then
\[
\iota_S (w) \, \delta_y \, \iota_S (w)^{-1} = \iota_S (w) \, \iota_S (w') \, x_{1,\ell} \, \iota_S (w')^{-1} \, \iota_S (w)^{-1} = \iota_S (w w') \, x_{1,\ell} \, \iota_S (w w')^{-1} = \delta_{w \cdot y}\,.
\]

Let $u = s_1 s_2 \dots s_{n-1} = (1, 2, \dots, n)$.
We easily see using Relations (m1) and (m2) that, for all $1 \le m < r < n$,
\[
\iota_S (u) \, x_{m,r}\, \iota_S (u)^{-1} = x_{m+1,r+1}\,.
\]
On the other hand, $u \cdot (m, m+1, \dots, r) = (u(m), u(m+1), \dots, u(r)) = (m+1, m+2, \dots, r+1)$. 
So,
\[
\delta_{(p, p+1, \dots, q)} = \iota_S (u)^{p-1} \, x_{1,q-p+1} \, \iota_S(u)^{-p+1} = x_{p,q}\,.
\proved
\]
\end{proof}

\begin{lem}\label{lem3_4}
The set $\{ \delta_y \mid y \in V\}$ generates $\KVJ_n$.
\end{lem}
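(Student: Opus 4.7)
The plan is to exploit the semi-direct product decomposition $\VJ_n = \KVJ_n \rtimes \iota_S(\SSS_n)$ to show that $\KVJ_n$ is generated, as an ordinary subgroup, by the $\iota_S(\SSS_n)$-conjugates of the generators $x_{p,q}$, and then to identify each such conjugate as some $\delta_y$ with $y \in V$.

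First I would recall the basic fact that if a group $G$ decomposes as $G = N \rtimes H$ and is generated by $X \cup Y$ with $X \subseteq N$ and $Y \subseteq H$, then every element of $G$ can be written, by using the relations that let one move $H$-elements past $N$-elements, in the form $k\cdot h$ where $k$ is a product of conjugates of the form $u\, x^{\pm 1} u^{-1}$ with $u \in H$ and $x \in X$, and $h$ is a product of elements of $Y$. Applied to our generating set $\{x_{p,q}\} \cup \{\rho_i\}$ of $\VJ_n$ and the decomposition $\VJ_n = \KVJ_n \rtimes \iota_S(\SSS_n)$, this yields that an element $g$ of $\VJ_n$ belongs to $\KVJ_n$ if and only if, in such a normal form, the $\iota_S(\SSS_n)$-part is trivial; hence $\KVJ_n$ is generated by $\{\iota_S(w)\, x_{p,q}\, \iota_S(w)^{-1} \mid w\in \SSS_n,\ 1\le p<q\le n\}$.

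Next I would convert this generating set into the $\delta_y$'s by two applications of Lemma \ref{lem3_3}. By Lemma \ref{lem3_3}\,(2), $x_{p,q} = \delta_{(p,p+1,\dots,q)}$, and by Lemma \ref{lem3_3}\,(1),
\[
\iota_S(w)\, x_{p,q}\, \iota_S(w)^{-1} = \iota_S(w)\, \delta_{(p,p+1,\dots,q)}\, \iota_S(w)^{-1} = \delta_{w\cdot (p,p+1,\dots,q)}\,,
\]
and the tuple $w\cdot(p,p+1,\dots,q) = (w(p),w(p+1),\dots,w(q))$ is an element of $V_{q-p+1} \subseteq V$. Therefore every generator of $\KVJ_n$ of the above form is of the shape $\delta_y$ for some $y \in V$, which establishes that $\{\delta_y \mid y \in V\}$ generates $\KVJ_n$.

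The only point requiring a small verification is the elementary lemma on semi-direct products used in the first step, namely that the $H$-conjugates of $X$ generate $N$. This follows by rewriting any word in $X \cup Y$ as a word in the conjugates $u x^{\pm 1} u^{-1}$ followed by a word in $Y$ (push all the $H$-letters to the right using $y x = (y x y^{-1}) y$), so no serious obstacle arises. The rest is a direct translation between the two descriptions, $x_{p,q}$-conjugates versus $\delta_y$'s, already made transparent by Lemma \ref{lem3_3}.
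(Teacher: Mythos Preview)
Your proposal is correct and follows essentially the same approach as the paper: the paper writes an arbitrary $g\in\KVJ_n$ as an alternating word in the $\iota_S(w_j)$ and the $x_{p_j,q_j}$, pushes all the symmetric-group letters to the right (obtaining conjugates $\iota_S(v_i)\,x_{p_i,q_i}\,\iota_S(v_i)^{-1}$ followed by a tail $\iota_S(v_{k+1})$), observes that the tail vanishes since $\pi_K(g)=1$, and then invokes Lemma~\ref{lem3_3} to rewrite each conjugate as $\delta_{y_i}$. Your abstract semi-direct-product formulation packages exactly this computation.
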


\begin{proof}
Let $g \in \KVJ_n$.
Since $g$ belongs to $\VJ_n$, there exist $ k \ge 0$, $p_1, \dots, p_k, q_1, \dots, q_k \in \N$, $w_0, w_1, \dots, w_k \in \SSS_n$ such that $1 \le p_i <q_i \le n$ for all $1 \le i \le k$, and
\[
g = \iota_S (w_0) \, x_{p_1,q_1} \, \iota_S (w_1) \dots x_{p_k,q_k} \, \iota_S(w_k)\,.
\]
For $1 \le i \le k+1$ we set $v_i = w_0 w_1 \dots w_{i-1}$.
Then
\[
g = \iota_S (v_1) \, x_{p_1,q_1} \, \iota_S (v_1)^{-1} \, \iota_S (v_2) \, x_{p_2,q_2} \, \iota_S (v_2)^{-1} \dots \iota_S(v_k) \, x_{p_k,q_k} \, \iota_S (v_k)^{-1} \, \iota_S (v_{k+1})\,.
\]
Since $g \in \KVJ_n$, we have $1 = \pi_K (g)= v_{k+1}$, hence
\[
g = \iota_S (v_1) \, x_{p_1,q_1} \, \iota_S (v_1)^{-1} \, \iota_S (v_2) \, x_{p_2,q_2} \, \iota_S (v_2)^{-1} \dots \iota_S(v_k) \, x_{p_k,q_k} \, \iota_S (v_k)^{-1}  = \delta_{y_1} \delta_{y_2} \dots \delta_{y_k}\,,
\]
where $y_i = v_i \cdot (p_i, p_i + 1, \dots, q_i)$ for all $1 \le i \le k$.
\end{proof}

Now we define a quadruple $\Gamma = (\Gamma, \le, \mu, (\varphi_x)_{x \in V(\Gamma)})$ as follows.
The set of vertices of $\Gamma$ is $V = \bigcup_{\ell=2}^n V_\ell$.
We set $\mu (y) = 2$ for all $y \in V$.
For $y = (t_1, \dots, t_\ell)$ and $y' = (t_1', \dots, t_k')$ in $V$, we set $y' \le y$ if there exist $1 \le p < q \le \ell$ such that $y'= (t_p, t_{p+1}, \dots, t_q)$.
Let $y = (t_1, \dots, t_\ell)$ and $y' = (t_1', \dots, t_k')$ in $V$ such that $y \neq y'$.
Then $\{y, y'\} \in E (\Gamma)$ if and only if either $y \le y'$ or $y' \le y$ or $\{t_1, \dots, t_\ell\} \cap \{t_1', \dots, t_k' \} = \emptyset$.
Let $y = (t_1, \dots, t_\ell) \in V$ and $y' \in \starE_y (\Gamma)$.
If $y' \not \le y$, then we set $\varphi_y (y') = y'$.
Suppose $y' \le y$.
We know that there exist $1 \le p < q \le \ell$ such that $y' = (t_p, \dots, t_q)$.
Then we set $\varphi_y (y') = (t_{1 + \ell - q}, \dots, t_{1 + \ell - p})$.

\begin{lem}\label{lem3_5}
The above defined quadruple $\Gamma = (\Gamma, \le, \mu, (\varphi_x)_{x\in V(\Gamma)})$ is a trickle graph.
\end{lem}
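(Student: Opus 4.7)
The plan is to check in turn that $\le$ is a partial order, each $\varphi_x$ is an automorphism of $\starE_x(\Gamma)$, and then each of the seven conditions (a)--(g). The underlying picture driving every verification is that an element $y=(t_1,\dots,t_\ell)\in V$ encodes a ``window'' of distinct integers in $\{1,\dots,n\}$, that $y'\le y$ means $y'$ is a contiguous sub-window of $y$, and that $\varphi_y$ acts by reversing such sub-windows while leaving everything else in the star of $y$ fixed. Consequently $\varphi_y$ preserves the underlying support of any tuple in its domain and $\varphi_y^2=\id$.

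Reflexivity of $\le$ comes from taking $(p,q)=(1,\ell)$; transitivity and antisymmetry from the elementary combinatorics of sub-windows. That $\varphi_x$ is a bijection of $V(\starE_x(\Gamma))$ is the involutivity above; edge preservation is immediate, since adjacency of two vertices is determined by comparability under $\le$ or by disjointness of supports, both of which are preserved by sub-window reversal. Conditions (a) and (f) are immediate, (e) is the involutivity, and (d) follows because $\varphi_x(y)\ne y$ forces $y\le x$ directly from the definition while a short computation gives $\varphi_x(x)=x$, ruling out $y=x$. Condition (b) follows by noting that two adjacent incomparable vertices must have disjoint supports, so if $z\le y$ then the support of $z$ lies inside that of $y$ and hence is disjoint from that of $x$, yielding $\{x,z\}\in E_{||}(\Gamma)$.

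For (c), since $\varphi_x^2=\id$ it suffices to prove the implication $z\le y\Rightarrow\varphi_x(z)\le\varphi_x(y)$. I would dispose of this by case analysis on the relation of $y$ to $x$: when $y\le x$ then $z\le x$ as well, and reversal inside the window $x$ carries sub-windows of $x$ to sub-windows of $x$ while preserving nesting; when $y$ has support disjoint from $x$ then so does $z$, and both are fixed by $\varphi_x$; the remaining case $x<y$ splits further according to whether $z$ is below, equal to, above, or disjoint from $x$, and each sub-case is immediate because $\varphi_x$ fixes every vertex of the star that is not strictly below $x$.

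The crux is condition (g). Writing $x=(t_1,\dots,t_\ell)$ with $y$ sitting at positions $[p,q]$ inside $x$ and $z$ at relative positions $[p',q']$ inside $y$, my plan is to compute both $(\varphi_x\circ\varphi_y)(z)$ and $(\varphi_{y'}\circ\varphi_x)(z)$, where $y'=\varphi_x(y)$, directly in terms of these positions. A careful but mechanical indexing calculation shows that both tuples coincide with $(t_{\ell-q+p'},\dots,t_{\ell-q+q'})$. This is the only point in the argument where two distinct reversal operations interact, and hence is the only place an axiom could fail; the position bookkeeping is the main technical obstacle, but it is essentially the same computation that underlies the cactus relation (j3) for type $A$ in the proof of Lemma~\ref{lem2_1}.
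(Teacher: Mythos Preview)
Your proposal is correct and follows essentially the same approach as the paper: a direct verification of the axioms, with the involutivity $\varphi_x^2=\id$ handling (e) and reducing (c) to a one-directional implication, and an explicit index computation for (g) that matches the paper's (your relative indices $p',q'$ translate to the paper's absolute indices via $k=p+p'-1$, $m=p+q'-1$, and the resulting tuples agree). If anything you are slightly more thorough, explicitly treating the case $x<y$ in (c), which the paper leaves implicit.
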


\begin{proof}
Conditions (a) and (d) are satisfied by definition, and Conditions (b) and (f) are obviously satisfied. 
It is easily seen that, for all $x \in V$, $\varphi_x^2=1$, hence Condition (e) is also satisfied.

We show that $\Gamma$ satisfies Condition (c). 
Let $x \in V$ and $y, z \in \starE_x (\Gamma)$ be such that $z \le y$. 
If $y || x$ then, by Condition (b), $z || x$, hence $\varphi_x (z) = z \le y = \varphi_x (y)$. 
Suppose $z \le y < x$. 
Set $x = (t_1, \dots, t_\ell)$. 
There exist $1 \le p \le k < m \le q \le \ell$ such that $y = (t_p, \dots, t_q)$ and $z = (t_k, \dots, t_m)$. 
We have $\varphi_x (y) = (t_{1+\ell-q}, \dots, t_{1+\ell-p})$, $\varphi_x (z) = (t_{1+\ell-m}, \dots, t_{1+\ell -k})$ and $1 \le 1+\ell-p \le 1+\ell-m < 1+\ell-k \le 1 + \ell -p \le \ell$, hence $\varphi_x (z) \le \varphi_x (y)$. 
This shows that, if $z \le y$, then $\varphi_x (z) \le \varphi_x (y)$. 
But $\varphi_x^2 = 1$, hence we actually have $z \le y$ if and only if $\varphi_x (z) \le \varphi_x (y)$.

Now, we show that $\Gamma$ satisfies Condition (g). 
Let $x, y, z \in V$ be such that $z \le y \le x$. 
Set $x = (t_1, \dots, t_\ell)$. 
There exist $1 \le p \le k < m \le q \le \ell$ such that $y = (t_p, \dots, t_q)$ and $z = (t_k, \dots, t_m)$. 
Let $y' = \varphi_x (y) = (t_{1+\ell-q}, \dots, t_{1+\ell-p})$. 
Then a direct calculation shows that
\[
(\varphi_x \circ \varphi_y) (z) = (t_{1 + \ell - p - q + k}, \dots, t_{1 + \ell - p - q + m}) = (\varphi_{y'} \circ \varphi_x) (z)\,.
\proved
\]
\end{proof}

\begin{lem}\label{lem3_6}
There is a surjective homomorphism $\Phi: \Tr (\Gamma) \to \KVJ_n$ which sends $x$ to $\delta_x$ for all $x \in V$.
\end{lem}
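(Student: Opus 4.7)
The plan is to use the universal property of the group presentation defining $\Tr (\Gamma)$: it suffices to exhibit, for each generator $y \in V = V (\Gamma)$ of $\Tr (\Gamma)$, an element $\delta_y \in \KVJ_n$, and to check that the family $(\delta_y)_{y \in V}$ satisfies the defining relations of $\Tr (\Gamma)$. Since $\mu(y) = 2$ for every $y \in V$, these relations are $\delta_y^2 = 1$ together with $\delta_{\varphi_x (y)}\, \delta_x = \delta_{\varphi_y(x)}\, \delta_y$ for $\{x,y\} \in E(\Gamma)$. Once the existence of $\Phi$ is established, surjectivity will be an immediate consequence of Lemma \ref{lem3_4}, which asserts that $\{\delta_y \mid y \in V\}$ generates $\KVJ_n$.

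For the involution relation $\delta_y^2 = 1$, I would write $\delta_y = \iota_S(w)\, x_{1,\ell}\, \iota_S(w)^{-1}$ for some $w \in \SSS_n$ and use $x_{1,\ell}^2 = 1$ from (j1). For the edge relations, there are three cases to handle, distinguished by the relative position of $x$ and $y$ in the partial order. In every case, my strategy will be to choose a single permutation $w \in \SSS_n$ that simultaneously realises both $x$ and $y$ as images under the $\SSS_n$-action of simple tuples of the form $(p,p+1,\dots,q)$; then Lemma \ref{lem3_3} will allow me to rewrite the $\delta$-relation as the conjugate by $\iota_S(w)$ of an identity between two generators $x_{p,q}$ among the relations of $\VJ_n$.

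Concretely, if $x \,||\, y$ in the partial order, the supports of $x$ and $y$ are disjoint subsets of $\{1,\dots,n\}$; choose $w \in \SSS_n$ sending an initial block $\{1,\dots,\ell_x\}$ to $\supp(x)$ in order and a disjoint block $\{\ell_x+1,\dots,\ell_x+\ell_y\}$ to $\supp(y)$ in order, so that the required commutation $\delta_x\delta_y = \delta_y\delta_x$ follows by conjugating (j2). The main work lies in the comparable case, say $y < x$ with $x = (t_1,\dots,t_\ell)$ and $y = (t_p,\dots,t_q)$ for some $1 \le p \le q \le \ell$. Choose $w \in \SSS_n$ with $w(i) = t_i$ for $1 \le i \le \ell$; then $\delta_x = \iota_S(w)\, x_{1,\ell}\, \iota_S(w)^{-1}$ and $\delta_y = \iota_S(w)\, x_{p,q}\, \iota_S(w)^{-1}$ by Lemma \ref{lem3_3}. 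Conjugating the cactus relation (j3), namely $x_{1,\ell}\, x_{p,q} = x_{1+\ell-q,\,1+\ell-p}\, x_{1,\ell}$, by $\iota_S(w)$ yields $\delta_x\, \delta_y = \delta_{w \cdot (1+\ell-q,\dots,1+\ell-p)}\, \delta_x$, and Lemma \ref{lem3_3}\,(1) identifies the first factor on the right with $\delta_{(t_{1+\ell-q},\dots,t_{1+\ell-p})} = \delta_{\varphi_x(y)}$, which is exactly the required identity (noting that $\varphi_y(x) = x$ by Condition (d)). The symmetric case $x < y$ is handled in the same way.

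The only point demanding real care is matching the combinatorial formula defining $\varphi_x(y)$, namely the reversal rule $(t_p,\dots,t_q) \mapsto (t_{1+\ell-q},\dots,t_{1+\ell-p})$, with the index shift $[m,r] \mapsto [p+q-r,\,p+q-m]$ appearing in (j3); once that bookkeeping is set up correctly (by comparing the two shifts with $p=1$, $q=\ell$ and arbitrary subinterval), the verification of all the relations becomes routine and the homomorphism $\Phi : \Tr(\Gamma) \to \KVJ_n$ is well-defined. Surjectivity is then Lemma \ref{lem3_4}.
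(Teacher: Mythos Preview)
Your proposal is correct and follows essentially the same route as the paper's proof: verify the involution relation by conjugating (j1), and for each edge $\{x,y\}$ choose a single $w \in \SSS_n$ realising both $x$ and $y$ so that the required $\delta$-relation becomes the $\iota_S(w)$-conjugate of (j2) or (j3), then invoke Lemma~\ref{lem3_4} for surjectivity. The index-matching you flag between $\varphi_x(y)$ and relation (j3) is exactly the computation the paper carries out.
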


\begin{proof}
Let $x = (t_1, \dots, t_\ell) \in V$.
Let $w \in \SSS_n$ be such that $w(i) = t_i$ for all $1 \le i \le n$.
Then
\[
\delta_x^2 = \iota_S (w)\, x_{1,\ell}^2\, \iota_S (w)^{-1} = \iota_S (w) \, \iota_S (w)^{-1} = 1\,.
\]
Let $x, y \in V$ be such that $\{x, y\} \in E_{||} (\Gamma)$.
We set $x=(t_1, \dots, t_\ell)$ and $y = (t_1', \dots, t_k')$.
By definition we have $\{t_1, \dots, t_\ell\} \cap \{t_1', \dots, t_k'\} = \emptyset$, hence we can choose $w \in \SSS_n$ such that $ w(i)=t_i$ for all $1 \le i \le \ell$ and $w (\ell + j) = t_j'$ for all $1 \le j\le k$.
Then, by Lemma \ref{lem3_3},
\[
\delta_x \delta_y = \iota_S(w) \, x_{1,\ell} x_{\ell+1,\ell+k}\, \iota_S(w)^{-1} = \iota_S(w) \, x_{\ell+1,\ell+k} x_{1,\ell} \, \iota_S(w)^{-1} = \delta_y \delta_x\,.
\]
Let $x, y \in V$ be such that $y \le x$.
We set $x = (t_1, \dots, t_\ell)$ and $y = (t_p, \dots, t_q)$, where $1 \le p < q \le \ell$.
We choose $w \in \SSS_n$ such that $w (i) = t_i$ for all $1 \le i \le \ell$.
Then, by Lemma \ref{lem3_3},
\[
\delta_x \delta_y = \iota_S(w) \, x_{1,\ell} x_{p,q} \, \iota_S(w)^{-1} = \iota_S(w) \, x_{1+\ell-q,1+\ell-p} x_{1,\ell} \, \iota_S(w)^{-1} = \delta_{\varphi_x(y)} \delta_x\,.
\]
This shows that there is a homomorphism $\Phi: \Tr (\Gamma) \to \KVJ_n$ which sends $x$ to $\delta_x$ for all $x \in V$.
This homomorphism is surjective because, by Lemma \ref{lem3_4}, $\{\delta_x \mid x \in V\}$ generates $\KVJ_n$.
\end{proof}

\begin{prop}\label{prop3_7}
The homomorphism $\Phi: \Tr (\Gamma) \to \KVJ_n$ of Lemma \ref{lem3_6} is an isomorphism.
\end{prop}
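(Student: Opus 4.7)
The plan is to prove injectivity of $\Phi$ by constructing an explicit left inverse, coming from a splitting $\VJ_n \cong \Tr(\Gamma) \rtimes \SSS_n$. First, I would define an action of $\SSS_n$ on the vertex set $V$ of $\Gamma$ by $w \cdot (t_1, \dots, t_\ell) := (w(t_1), \dots, w(t_\ell))$ for $w \in \SSS_n$ and $(t_1, \dots, t_\ell) \in V$, and check from the explicit descriptions of $E(\Gamma)$, $\le$ and $\varphi_y$ given just above Lemma \ref{lem3_5} that this is an action by automorphisms of the trickle graph $\Gamma$. Consequently, it induces an action of $\SSS_n$ on $\Tr(\Gamma)$ by group automorphisms, and we may form the semi-direct product $H := \Tr(\Gamma) \rtimes \SSS_n$, whose product I write $(a, w) \cdot (b, w') = (a \, \alpha_w(b), w w')$.

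Next, I would construct a homomorphism $\Psi : \VJ_n \to H$ on generators by $\Psi(x_{p,q}) := ((p, p+1, \dots, q), 1)$ and $\Psi(\rho_i) := (1, s_i)$, and verify that $\Psi$ respects all the defining relations of $\VJ_n$. Relations (s1)--(s3) hold in the second coordinate by construction. Relations (j1)--(j3) translate into identities in $\Tr(\Gamma)$ in the first coordinate: (j1) uses $\mu(y) = 2$, (j2) uses that disjoint tuples give edges of $E_{||}(\Gamma)$ and hence commute, and (j3) reduces to the direct verification that $\varphi_{(p, \dots, q)}((m, \dots, r)) = (p+q-r, \dots, p+q-m)$ whenever $[m,r] \subset [p, q]$, which is a straightforward reindexing from the definition of $\varphi_y$. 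Relations (m1)--(m2) mix the two components: (m1) holds because $s_i$ fixes every entry of $(p, \dots, q)$ as soon as $i < p-1$ or $i \ge q+1$, so $\alpha_{s_i}$ fixes that tuple; and (m2) reduces to the direct check that the permutation $w = s_q s_{q-1} \cdots s_p$ sends $(p+1, p+2, \dots, q+1)$ to $(p, p+1, \dots, q)$.

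Since the projection of $H$ onto $\SSS_n$ composed with $\Psi$ coincides with $\pi_K$ on generators, we get $\Psi(\KVJ_n) \subseteq \Tr(\Gamma) \times \{1\}$, and hence a homomorphism $\Psi_K : \KVJ_n \to \Tr(\Gamma)$. To finish, I would verify that $\Psi_K \circ \Phi = \id_{\Tr(\Gamma)}$ on each generator. For $y = (t_1, \dots, t_\ell) \in V$ and any $w \in \SSS_n$ with $w(i) = t_i$ for $i = 1, \dots, \ell$, Lemma \ref{lem3_2} gives $\Phi(y) = \delta_y = \iota_S(w) x_{1, \ell} \iota_S(w)^{-1}$, and a direct computation in $H$ yields
\[
\Psi(\Phi(y)) = (1, w) \cdot ((1, \dots, \ell), 1) \cdot (1, w^{-1}) = (\alpha_w((1, \dots, \ell)), 1) = (y, 1)\,,
\]
so $\Psi_K(\Phi(y)) = y$. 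Combining this with the surjectivity established in Lemma \ref{lem3_6} shows that $\Phi$ is an isomorphism.

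The main obstacle lies in the simultaneous verification of the well-definedness of the $\SSS_n$-action on $\Tr(\Gamma)$ and of the homomorphism $\Psi$, which requires matching the combinatorics of the tuples in $V$ with the defining relations of $\VJ_n$ and with the trickle graph axioms. In particular, the interplay between relation (j3) in the presentation of $\VJ_n$ and the explicit formula for $\varphi_y$, and the rewriting of relation (m2) inside the semi-direct product, need to be handled with some care; all other verifications are routine once the action is set up.
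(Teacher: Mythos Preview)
Your proposal is correct and follows essentially the same approach as the paper: both construct the semi-direct product $\Tr(\Gamma) \rtimes \SSS_n$, define the same homomorphism $\Psi$ from $\VJ_n$ into it by $x_{p,q} \mapsto (p,p+1,\dots,q)$ and $\rho_i \mapsto s_i$, verify the relations, and then compute $\Psi(\Phi(y)) = y$ via the action of $w$ on $(1,\dots,\ell)$. The only cosmetic difference is that the paper packages the argument as showing $\Psi \circ \hat\Phi = \id$ for the extended map $\hat\Phi : \Tr(\Gamma)\rtimes\SSS_n \to \VJ_n$, whereas you restrict directly to $\KVJ_n$; the underlying computation is identical.
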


\begin{proof}
The action of $\SSS_n$ on $V$ extends to an action of $\SSS_n$ on $\Tr (\Gamma)$, hence we can consider the semi-direct product $G = \Tr (\Gamma ) \rtimes \SSS_n$.
Furthermore, by Lemma \ref{lem3_3}, the map $\hat \Phi: G \to \VJ_n$ defined by $\hat \Phi (g,w) = \Phi(g) \, \iota_S (w)$ is a group homomorphism.

We show that there is a homomorphism $\Psi: \VJ_n \to G$ which sends $x_{p,q}$ to $x_{p,q}' = (p, p+1, \dots, q ) \in V \subseteq \Tr (\Gamma)$ for $1 \le p < q \le n$, and which sends $\rho_i$ to $s_i = (i, i + 1) \in \SSS_n$ for $1 \le i \le n-1$.
The relations (j1) $x_{p,q}'^2 = 1$ for $1 \le p < q \le n$, (j2) $x_{p,q}' x_{m,r}' = x_ {m,r}' x_{p,q}'$ for $[p,q] \cap [m,r] = \emptyset$, and (j3) $x_{p,q}' x_{m,r}' = x_{p+q-r,p+q-m}' x_{p,q}'$ for $[m,r] \subset [p,q]$, follow from the definitions of the trickle graph $\Gamma$ and of the associated presentation of $\Tr (\Gamma)$.
The relations (s1) $s_i^2 = 1$ for $1 \le i \le n-1$, (s2) $s_i s_j s_i = s_j s_i s_j$ for $|i - j| = 1$, and (s3) $s_i s_j = s_j s_i$ for $|i - j| > 1$, are the relations of the standard presentation of the symmetric group $\SSS_n$.
Finally, the relations (m1) $s_i x_{p,q}' = x_{p,q}' s_i$ for $i < p-1$ and for $i \ge q+1$, and (m2) $ x_{p,q}'s_{q} s_{q-1} \dots s_p=s_{q} s_{q-1} \dots s_p x_{p+1,q+1}'$ for $1 \le p < q <n$, follow from the action of $\SSS_n$ on $V$.

For every $w \in \SSS_n$ we have $(\Psi \circ \hat \Phi) (w) = \Psi (\iota_S (w)) = w$.
On the other hand, let $x = (t_1, \dots, t_\ell) \in V$.
We choose $w \in \SSS_n$ such that $w \cdot (1, \dots, \ell) = x = (t_1, \dots, t_\ell)$.
Then
\[
(\Psi \circ \hat \Phi) (x) = \Psi (\delta_x) = \Psi (\iota_S (w)\, x_{1,\ell}\, \iota_S (w)^{-1}) = w \, x_{1,\ell}'\, w^{-1} = w \cdot x_{1,\ell}' = x\,.
\]
This shows that $\Psi \circ \hat \Phi = \id_G$, hence $\hat \Phi$ is injective, and therefore $\Phi$ is injective.
Since we already know that $\Phi$ is surjective, this completes the proof of the proposition.
\end{proof}

Since $\KVJ_n$ is a finite index subgroup of $\VJ_n$, the following follows from Proposition \ref{prop3_7} and Corollary \ref{corl2_5}.

\begin{corl}\label{corl3_8}
Let $n \ge 2$.
Then $\VJ_n$ has a solution to the word problem.
\end{corl}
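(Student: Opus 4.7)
The plan is to reduce the word problem in $\VJ_n$ to the word problem in the trickle group $\Tr(\Gamma)$ using the semi-direct product decomposition $\VJ_n = \KVJ_n \rtimes \SSS_n$ together with the isomorphism $\Tr(\Gamma) \cong \KVJ_n$ established in Proposition \ref{prop3_7}. Observe first that the vertex set $V = \bigcup_{\ell=2}^n V_\ell$ is finite (of cardinality $\sum_{\ell=2}^n n!/(n-\ell)!$), so Corollary \ref{corl2_5}\,(2) applies and provides an effective solution to the word problem in $\Tr(\Gamma)$.

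Given a word $w$ in the generators $x_{p,q}$ and $\rho_i$ of $\VJ_n$, the first step is to compute $\pi_K(w) \in \SSS_n$, which is immediate from the definition of $\pi_K$ (replace each $x_{p,q}$ by $1$ and each $\rho_i$ by $s_i$). If $\pi_K(w) \neq 1$, then $w \neq 1$ in $\VJ_n$. If $\pi_K(w) = 1$, then $w$ represents an element $g \in \KVJ_n$, and it remains to decide whether $g = 1$ in $\KVJ_n$.

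For this, I would rewrite $w$ as a product $\delta_{y_1} \delta_{y_2} \cdots \delta_{y_k}$ with $y_1, \dots, y_k \in V$, following exactly the procedure used in the proof of Lemma \ref{lem3_4}: insert $\iota_S(v_i)\,\iota_S(v_i)^{-1}$ between consecutive syllables and use the identity $\iota_S(v)\,x_{p,q}\,\iota_S(v)^{-1} = \delta_{v\cdot(p,p+1,\dots,q)}$ from Lemma \ref{lem3_3}. Since $\pi_K(w) = 1$, the final trailing permutation $v_{k+1}$ is the identity and disappears, yielding a word on the generating set $V$ of $\Tr(\Gamma)$ that represents $\Phi^{-1}(g)$.

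The last step is to apply the algorithm of Corollary \ref{corl2_5}\,(2) to this word and decide whether it represents the identity in $\Tr(\Gamma)$, which, via the isomorphism $\Phi$ of Proposition \ref{prop3_7}, is equivalent to $g = 1$ in $\KVJ_n$. No step is genuinely difficult here: the main point is simply to check that each ingredient (the semidirect decomposition, the finiteness of $V$, the isomorphism with a trickle group, and the explicit translation of a $\VJ_n$-word into a word on the $\delta_y$'s) is constructive and algorithmic, which is already apparent from the proofs of Lemmas \ref{lem3_3}, \ref{lem3_4} and Proposition \ref{prop3_7}.
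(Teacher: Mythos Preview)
Your proposal is correct and follows essentially the same approach as the paper. The paper's own proof is a one-line remark that the result follows from Proposition \ref{prop3_7} and Corollary \ref{corl2_5} because $\KVJ_n$ has finite index in $\VJ_n$; you have simply spelled out the algorithm explicitly, using the semi-direct product structure and the rewriting procedure of Lemma \ref{lem3_4} to translate a $\VJ_n$-word with trivial $\pi_K$-image into a word on $V$.
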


Another consequence of Proposition \ref{prop3_7} is the following.

\begin{corl}[Ilin--Kamnitzer--Li--Przytycki--Rybnikov \cite{IKLPR1}]\label{corl3_9}
Let $n \ge 2$.
Then the natural homomorphism $\iota_J: J_n \to \VJ_n$ is injective.
\end{corl}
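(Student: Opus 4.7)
The plan is to realize $\iota_J$ as the inclusion of a standard parabolic subgroup of $\Tr(\Gamma)$ and then invoke Theorem \ref{thm2_10}.

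First I would observe that, by Lemma \ref{lem3_3}\,(2), each generator $x_{p,q}$ of $\VJ_n$ equals $\delta_{(p,p+1,\dots,q)}$ and therefore lies in $\KVJ_n$. Consequently, $\iota_J$ factors as
\[
J_n \xrightarrow{\iota_J'} \KVJ_n \xrightarrow{\subset} \VJ_n\,,
\]
and, composing with the isomorphism $\Phi^{-1}: \KVJ_n \to \Tr(\Gamma)$ of Proposition \ref{prop3_7}, we obtain a homomorphism $\tilde\iota_J: J_n \to \Tr(\Gamma)$ sending $x_{p,q}$ to the ``consecutive'' tuple $(p,p+1,\dots,q) \in V(\Gamma)$. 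It suffices to show that $\tilde\iota_J$ is injective.

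Next I would introduce $\Gamma_1$, the full subgraph of $\Gamma$ spanned by the set
\[
V(\Gamma_1) = \{(p,p+1,\dots,q) \mid 1 \le p < q \le n\}
\]
of consecutive tuples, and verify that $\Gamma_1$ is a parabolic subgraph. This is the main calculation: given $x = (p,p+1,\dots,q) \in V(\Gamma_1)$ and $y \in \starE_x(\Gamma_1)$, I must check that $\varphi_x(y) \in V(\Gamma_1)$. If $y \not\le x$ this is immediate because $\varphi_x(y) = y$; if $y = (m,m+1,\dots,r) \le x$ with $p \le m \le r \le q$, a direct computation using the definition of $\varphi_x$ yields $\varphi_x(y) = (p+q-r,\,p+q-r+1,\,\dots,\,p+q-m)$, which is again a consecutive tuple, hence lies in $V(\Gamma_1)$.

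I would then compare presentations to show $\Tr(\Gamma_1) \cong J_n$. The edges of $\Gamma_1$ inherited from $\Gamma$ correspond precisely to the pairs $(p,q), (m,r)$ with $[p,q] \cap [m,r] = \emptyset$ or one interval contained in the other; incomparable consecutive tuples in $\Gamma_1$ yield the commutation relation (j2), and the relation for $y < x$ computed above reads $x_{p+q-r,p+q-m}\,x_{p,q} = x_{p,q}\,x_{m,r}$, which is exactly (j3). Together with the relations $x^{\mu(x)} = x^2 = 1$, this matches the presentation of $J_n$, giving $\tilde\iota_J: J_n \xrightarrow{\sim} \Tr(\Gamma_1)$.

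Finally, Theorem \ref{thm2_10} asserts that the standard parabolic subgroup $\Tr(\Gamma_1)$ embeds into $\Tr(\Gamma)$. Composing with the isomorphism $\Phi: \Tr(\Gamma) \to \KVJ_n$ and the inclusion $\KVJ_n \hookrightarrow \VJ_n$ shows that $\iota_J: J_n \to \VJ_n$ is injective. The main work is the parabolicity check for $\Gamma_1$; once the formula for $\varphi_x$ on a suborder of a consecutive tuple is written down, the rest is a straightforward comparison of presentations.
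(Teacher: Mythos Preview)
Your proposal is correct and follows essentially the same approach as the paper: define $\Gamma_1$ as the full subgraph of $\Gamma$ spanned by the consecutive tuples $(p,p+1,\dots,q)$, verify it is parabolic, identify $\Tr(\Gamma_1)$ with $J_n$ by matching presentations, and invoke Theorem~\ref{thm2_10} together with Proposition~\ref{prop3_7}. The paper's proof is simply terser, leaving the parabolicity check and the presentation comparison to the reader, whereas you spell them out.
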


\begin{proof}
Recall that, for $1 \le p < q \le n$, we set $x_{p,q}' = (p,p+1,\dots,q) \in V$.
Set $U = \{x_{p,q}' \mid 1 \le p < q \le n\}$ and denote by $\Gamma_1$ the full subgraph of $\Gamma$ spanned by $U$.
It is easily seen that $\Gamma_1$ is a parabolic subgraph of $\Gamma$.
We denote by $\le_1$ and $\mu_1$ the restrictions of $\le$ and $\mu$ to $V(\Gamma_1)$, respectively, and, for every $x \in V (\Gamma_1)$, we denote by $\varphi_{1,x}$ the restriction of $\varphi_x$ to $\starE_x (\Gamma_1)$.
Then $\Gamma_1 = (\Gamma_1, \le_1 ,\mu_1, (\varphi_{1,x})_{x\in U})$ is a trickle graph and, by Theorem \ref{thm2_10}, the inclusion $U \hookrightarrow V$ induces an injective homomorphism $\iota: \Tr (\Gamma_1) \to \Tr (\Gamma)$.
We easily see that the map $\{x_{p,q} \mid 1 \le p < q \le n\} \to U$, $x_{p,q} \mapsto x_{p,q}'$, induces an isomorphism $\phi: J_n \to \Tr (\Gamma_1)$ and that the composition $\iota \circ \phi: J_n \to \Tr (\Gamma) = \KVJ_n \subseteq \VJ_n$ coincides with $\iota_J: J_n \to \VJ_n$.
So, $\iota_J: J_n \to \VJ_n$ is injective.
\end{proof}


\subsection{Thompson group F}\label{subsec3_3}

There are several equivalent definitions of Thompson group $F$.
In this paper we consider the one as a subgroup of the group of orientation preserving piecewise linear homeomorphisms of the real line (see \cite[Theorem 1.4.1]{Buril1}).

\begin{defin}
Let $f: \R \to \R$ be an orientation preserving piecewise linear homeomorphism of the real line.
We say that $f$ belongs to \emph{Thompson group $F$} if there exist two finite sequences with the same cardinality $u_0, u_1, \dots, u_n$ and $v_0, v_1, \dots, v_n$ in $\Z [\frac{1}{2}]$ such that:
\begin{itemize}
\item
$u_0 < u_1 < \cdots < u_n$ and $v_0 <v_1 < \cdots <v_n$,
\item
$f$ sends linearly the interval $[u_{i-1}, u_i]$ onto $[v_{i-1}, v_i]$ for all $i \in \{1, \dots, n\}$,
\item
$f$ sends the interval $(-\infty, u_0]$ onto $(-\infty, v_0]$ via the translation $t \mapsto t + v_0 - u_0$, and $f$ sends the interval $[u_n, +\infty)$ onto $[v_n, +\infty)$ via the translation $t \mapsto t + v_n - u_n$.
\end{itemize}
\end{defin}

Our goal in this subsection is to provide a trickle graph $\Gamma = (\Gamma, \le, \mu,(\varphi_x)_{x \in V(\Gamma)})$ such that $F \simeq \Tr (\Gamma)$.
We start by defining the vertex set $V = V (\Gamma)$.

For $p \in \N$ we define a subset $V_p \subseteq \Z [\frac{1}{2}]$ and a map $s_p: V_p \to V_p$ which satisfies $x <s_p (x)$ and $[x, s_p(x)] \cap V_p = \{x, s_p(x)\}$ for all $x \in V_p$ by induction on $p$.
We set $V_0 = \Z$ and we set $s_0 (x) = x + 1$ for all $x \in V_0$.
Suppose $V_p$ and $s_p: V_p \to V_p$ are defined.
Let $x \in V_p$.
Let $x' = s_p(x)$ and, for $k \in \N$, let $u_k = u_{p+1,k} (x) = x' - \frac{x'-x}{2^k}$ (see Figure \ref{fig3_3}).
We set $V_{p+1} (x) = \{u_{p+1,k} (x) \mid k \in \N\}$.
Then
\[
V_{p+1} = \bigsqcup_{x \in V_p} V_{p+1} (x)\,.
\]
If $x \in V_p$ and $k \in \N$, then we set $s_{p+1} (u_{p+1, k} (x)) = u_{p+1, k+1} (x)$.
So, in Figure \ref{fig3_3}, $s_{p+1}$ sends $u_k$ to $u_{k+1}$.

\begin{figure}[ht!]
\begin{center}
\includegraphics[width=8cm]{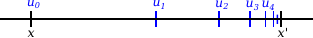}
\caption{Definition of $V_{p+1}$}\label{fig3_3}
\end{center}
\end{figure}

Observe that $V_p\subset V_{p+1}$ for all $p \in \N$ and that $\bigcup_{p \in \N} V_p = \Z [\frac{1}{2}]$.
So, $\{ V_p \mid p \in \N \}$ is a filtration of $\Z [\frac{1}{2}]$.
Now, we set $V = \Z [\frac{1}{2}] \sqcup \{\infty\}$.

We endow $\Z [\frac{1}{2}]$ with the order induced by the one of $\R$, and we set $x < \infty$ for all $x \in \Z [\frac{1}{2}]$.
So, the order $\le$ on $V$ is a total order and $\Gamma$ is defined to be the complete graph on $V$.
We set $\mu (x) = \infty$ for all $x \in V$.
It remains to define $\varphi_x$ for all $x \in V$, but for this we first need to define the generating system $\{h_x \mid x\in V\}$ for $F$ corresponding to the trickle structure.

In order to define the generating system $\{h_x \mid x\in V\}$, in addition to the sets $V_p$ and to the maps $s_p: V_p \to V_p$, we need other maps $t_p: V_p \to V_p$, $p \in \N$, defined by induction on $p$ as follows.
We set $t_0 (x) = x - 1$ for all $x \in V_0 = \Z$.
Let $p \ge 1$ and let $x \in V_p$.
If $x \in V_{p-1}$, then we set $t_p (x) = t_{p-1} (x)$.
If $x \not \in V_{p-1}$, then there exists $y \in V_{p-1}$ and $k \ge 1$ such that $x = u_{p, k} (y)$, and we set $t_p (x) = u_{p, k-1} (y)$.
Note that, for all $x \in V_p$, there exists $N \ge 0$ such that $t_p^n (x) \in V_0 = \Z$ for all $n\ge N$.

The map $h_\infty$ is defined to be the translation $h_\infty: \R \to \R$, $t \mapsto t-1$.
We define $h_x$ for $x \in \Z [\frac{1}{2}]$ as follows.
Let $p$ be the least integer $\ge 0$ such that $x \in V_p$.
We set $x' = t_p (x)$ (see Figure \ref{fig3_4}).
For all $k \ge 0$ we set $v_k = s_{p+1}^k (x')$ and $v_{-k} = t_p^k (x')$.
We have
\begin{gather*}
\cdots < v_{-k} < \cdots < v_{-1} < v_0 = x' < v_1 < \cdots < v_k < \cdots\,,\\
\lim\limits_{k \to -\infty} v_k = -\infty \text{ and } \lim\limits_{k \to +\infty} v_k =x\,.
\end{gather*}
Then $h_x: \R \to \R$  sends linearly the interval $[v_k, v_{k+1}]$ onto $[v_{k-1}, v_k]$ (preserving the orientation) for all $k \in \Z$ and it is the identity on $[x, +\infty)$.

\begin{figure}[ht!]
\begin{center}
\includegraphics[width=8.8cm]{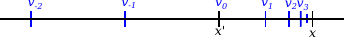}
\caption{Definition of $h_x$}\label{fig3_4}
\end{center}
\end{figure}

The proof of the following is left to the reader.

\begin{lem}\label{lem3_10}
Let $x \in \Z [\frac{1}{2}]$.
Let $(v_k)_{k\in \Z}$ be as defined above.
\begin{itemize}
\item[(1)]
$h_x$  sends linearly the interval $[v_1,x]$ onto $[v_0,x]$.
\item[(2)]
$h_x$ sends the interval $[x,+\infty)$ onto $[x,+\infty)$ via the identity.
\item[(3)]
Let $N$ be the least integer $\ge 0$ such that $v_{-N} \in V_0 = \Z$.
Then $h_x$ sends $(-\infty,v_{-N}]$ onto $(-\infty,v_{-N-1}]=(-\infty,v_{-N}-1]$ via the translation $t \mapsto t-1$.
\end{itemize}
In particular, $h_x\in F$.
\end{lem}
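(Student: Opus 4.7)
The plan is to verify the three structural claims about $h_x$ directly from its explicit piecewise linear definition by computing the consecutive differences $v_{k+1}-v_k$ from the closed form of the sequence $(v_k)$, and then to assemble the three claims into the format demanded by the definition of Thompson's group $F$.

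For part (1), I would exploit the closed formula $v_k=u_{p+1,k}(x')=x-(x-x')/2^k$ valid for $k\ge 0$ (an immediate induction on $k$ starting from $v_0=x'$ and using the definition of $s_{p+1}$). A short computation gives $v_{k+1}-v_k=(x-x')/2^{k+1}$, so on every subinterval $[v_k,v_{k+1}]$ with $k\ge 1$ the slope of $h_x$ is $(v_k-v_{k-1})/(v_{k+1}-v_k)=2$. These slope-$2$ pieces agree on overlapping endpoints, hence $h_x$ is linear of slope $2$ on $[v_1,x)=\bigcup_{k\ge 1}[v_k,v_{k+1}]$. Because $v_k\to x$ forces $h_x(v_k)=v_{k-1}\to x$, continuity at the accumulation point extends this to a linear map $[v_1,x]\to[v_0,x]$ with $h_x(x)=x$, giving part (1). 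Part (2) is immediate from the defining clause that $h_x$ is the identity on $[x,+\infty)$. For part (3), the recursion yields $t_p(n)=t_{p-1}(n)=\cdots=t_0(n)=n-1$ for every $n\in\Z$; consequently, once $v_{-N}\in\Z$, every subsequent $v_{-N-j}=v_{-N}-j$ lies in $\Z$ and all consecutive differences equal $1$. Therefore, on each piece $[v_k,v_{k+1}]$ with $k\le -N-1$ the slope of $h_x$ is $1$ and the shift is $v_{k-1}-v_k=-1$, so the pieces glue into the single translation $t\mapsto t-1$ on $(-\infty,v_{-N}]$.

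To conclude $h_x\in F$, I would exhibit $h_x$ in the required form via the finite list of breakpoints $v_{-N}<v_{-N+1}<\cdots<v_{-1}<v_0<v_1<x$ in the domain, with corresponding images $v_{-N-1}<v_{-N}<\cdots<v_{-1}<v_0<x$. All points involved lie in $\Z[\tfrac{1}{2}]$ because every $V_p$ does by construction. On the bounded middle intervals $h_x$ is piecewise linear (with $[v_1,x]$ being a single linear piece by part (1)); on the left tail $(-\infty,v_{-N}]$ it is the translation $t\mapsto t-1$ by part (3); and on the right tail $[x,+\infty)$ it is the identity by part (2). This matches exactly the definition of an element of $F$.

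The main subtlety is the accumulation argument in part (1): infinitely many slope-$2$ subpieces cluster at $x$, and continuity at $x$ must be checked explicitly in order to conclude that they truly combine into a single linear piece on $[v_1,x]$ rather than merely a piecewise linear map with countably many break points. Once this is handled, the remaining verifications reduce to direct substitutions in the closed formulas for the $v_k$.
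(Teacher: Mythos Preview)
Your proposal is correct. The paper leaves this proof to the reader, and your direct verification via the closed formula $v_k = x - (x-x')/2^k$ for $k \ge 0$ is exactly the kind of routine check the authors have in mind; the only point worth making explicit is that this formula relies on $s_p(x') = x$, which holds because $p$ is chosen minimal (so either $p=0$ and $s_0(x-1)=x$, or $p\ge 1$ and $x = u_{p,k}(y)$ with $k\ge 1$, whence $t_p(x) = u_{p,k-1}(y)$ and $s_p$ sends it back to $x$).
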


Note that we also have $h_\infty \in F$.
Furthermore, we know that $F$ is generated by $\{h_0,h_\infty\}$ (see \cite[Corollary 2.6]{CaFlPa1}), hence:

\begin{lem}\label{lem3_11}
The set $\{h_x \mid x \in V\}$ generates $F$.
\end{lem}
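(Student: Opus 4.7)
The statement is essentially immediate from the generation result cited just before it. My plan is as follows.

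The key observation is that the set $\{h_x \mid x \in V\}$ contains both distinguished elements $h_0$ and $h_\infty$: indeed $0 \in \Z[\tfrac{1}{2}] \subset V$ and $\infty \in V$, and both $h_0$ (constructed via the inductive procedure, noting that $0 \in V_0$) and $h_\infty$ (defined explicitly as the translation $t\mapsto t-1$) are among the elements indexed by $V$. Since the cited result \cite[Corollary 2.6]{CaFlPa1} asserts that the pair $\{h_0, h_\infty\}$ already generates Thompson's group $F$, any subset of $F$ containing $\{h_0, h_\infty\}$ also generates $F$.

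Thus the proof reduces to the one-line observation that $\{h_0, h_\infty\} \subseteq \{h_x \mid x \in V\} \subseteq F$, where the second inclusion follows from Lemma \ref{lem3_10} (together with the explicit definition of $h_\infty$ as a translation by a dyadic rational, which obviously lies in $F$). There is no real obstacle; the only point to verify is that the $h_0$ appearing in the generating set $\{h_0,h_\infty\}$ of \cite{CaFlPa1} coincides with (or is at least expressible in terms of) the element $h_0$ produced by our inductive construction, which is a direct unwinding of the definitions: with $p=0$, $x=0$, $x' = t_0(0) = -1$, and the sequence $(v_k)_{k\in\Z}$ yielding $v_0 = -1$, $v_k = -\frac{1}{2^k}$ for $k \ge 1$, and $v_{-k} = -1-k$ for $k \ge 0$, one checks that $h_0$ is the standard ``second generator'' of $F$ up to, at worst, a conjugation by $h_\infty$, which is already in the generating set.

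Hence the lemma follows at once, with the essential content packaged inside the citation to \cite{CaFlPa1}.
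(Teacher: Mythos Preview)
Your proposal is correct and takes essentially the same approach as the paper: the paper simply states, immediately before the lemma, that $F$ is generated by $\{h_0,h_\infty\}$ (citing \cite[Corollary 2.6]{CaFlPa1}) and concludes. Your added discussion of why the $h_0$ from the inductive construction agrees with the standard generator is more careful than the paper, which leaves this identification implicit.
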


The next result is an observation.

\begin{lem}\label{lem3_12}
Let $x \in V$.
Then $h_x (\Z [\frac{1}{2}]) = \Z [\frac{1}{2}]$, $h_x$ preserves the order of $\Z [\frac{1}{2}]$, and $h_x(y)=y$ for all $y \in \Z [\frac{1}{2}]$ such that $y \ge x$.
\end{lem}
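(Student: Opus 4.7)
The plan is to treat the three assertions separately, since two are essentially immediate and only the first requires genuine work. The case $x = \infty$ is trivial: $h_\infty$ is the translation $t \mapsto t-1$, which is an order-preserving bijection of $\Z[\frac{1}{2}]$ onto itself, and the inequality $y \ge \infty$ is vacuous for $y \in \Z[\frac{1}{2}]$. Assume then that $x \in \Z[\frac{1}{2}]$. Order preservation of $h_x$ on $\Z[\frac{1}{2}]$ is immediate from $h_x$ being an orientation-preserving homeomorphism of $\R$, and the equality $h_x(y) = y$ for $y \ge x$ is read off directly from the construction of $h_x$ together with Lemma \ref{lem3_10}\,(2).

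The only real content is therefore $h_x(\Z[\frac{1}{2}]) = \Z[\frac{1}{2}]$. My strategy is to show that on each affine piece $[v_k, v_{k+1}]$ the map $h_x$ has dyadic intercept and slope equal to an integer power of $2$. The breakpoints $v_k$ themselves lie in $\Z[\frac{1}{2}]$ by construction, since $v_k \in V_p \cup V_{p+1} \subset \Z[\frac{1}{2}]$. Granted the slope claim, each affine piece of $h_x$ then sends $\Z[\frac{1}{2}]$ into $\Z[\frac{1}{2}]$; applying the same reasoning to $h_x^{-1}$, whose slopes are the reciprocals and are again integer powers of $2$, yields surjectivity onto $\Z[\frac{1}{2}]$.

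The heart of the argument is the following claim, which I intend to establish by joint induction on $p$: \emph{for every $z \in V_p$, both $s_p(z) - z$ and $z - t_p(z)$ are of the form $2^{-m}$ for some $m \ge 0$.} For $p = 0$ both differences equal $1$. For $p \ge 1$, if $z \in V_{p-1}$ then the recursion $V_p(z) = \{s_{p-1}(z) - (s_{p-1}(z) - z)/2^j\}$ reduces the claim to the inductive hypothesis for $p-1$; and if $z = u_{p,j}(y) \in V_p \setminus V_{p-1}$ then the explicit formulas for $u_{p,j}$ express $z - t_p(z)$ and $s_p(z) - z$ as $(s_{p-1}(y) - y)$ divided by a power of $2$, which is again a power of $1/2$ by induction. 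With this in hand, let $p$ be minimal with $x \in V_p$ and set $x' = t_p(x)$. For $k \ge 1$ the construction gives $v_k = x - (x-x')/2^k$, so $v_{k+1} - v_k = (x-x')/2^{k+1}$ and the slope on $[v_k, v_{k+1}]$ equals $2$. For $k \le -N$, with $N$ as in Lemma \ref{lem3_10}\,(3), the $v_k$ are consecutive integers and the slope equals $1$. For the remaining finitely many indices, the claim ensures that both $v_k - v_{k-1}$ and $v_{k+1} - v_k$ are powers of $1/2$, so their ratio is an integer power of $2$.

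The main obstacle I expect is handling the transition at $k = 0$: the sequence $v_0, v_1, v_2, \dots$ is the geometric subdivision accumulating at $x$, whereas $v_0, v_{-1}, v_{-2}, \dots$ is obtained by iterating $t_p$ and only reaches the ``integer regime'' $V_0 = \Z$ after a number of steps depending on the filtration level of $x' = t_p(x)$. The joint induction described above is precisely what makes the uniform verification of the slope condition across this transition tractable.
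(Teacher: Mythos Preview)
The paper does not give a proof of this lemma at all: it is introduced with ``The next result is an observation'' and no argument follows. Your proof is correct and supplies a genuine self-contained verification. Your inductive claim that $s_p(z)-z$ and $z-t_p(z)$ are always of the form $2^{-m}$ is valid (the case split $z\in V_{p-1}$ versus $z=u_{p,j}(y)$ with $j\ge 1$ goes through exactly as you describe), and from it the slope computation on every affine piece follows.

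Two minor remarks. First, your treatment of the pieces $[v_k,v_{k+1}]$ for $k\ge 1$ is finer than necessary: by Lemma~\ref{lem3_10}\,(1), $h_x$ is already linear on the whole of $[v_1,x]$ with slope $2$, so there is really only a finite list of affine pieces to check, namely $(-\infty,v_{-N}]$, $[v_{-N},v_{-N+1}],\dots,[v_0,v_1]$, $[v_1,x]$, $[x,+\infty)$. Second, one could alternatively deduce the statement from Lemma~\ref{lem3_10} (that $h_x\in F$) together with the standard fact about Thompson's group that its elements preserve $\Z[\tfrac12]$; this is presumably the ``observation'' the authors have in mind, but it relies on the equivalence between the paper's definition of $F$ and the more common one requiring dyadic slopes, which is quoted from \cite{Buril1}. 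Your direct argument avoids that dependency.
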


For $x \in V$, we set $\varphi_x (\infty) = \infty$ and $\varphi_x (y) = h_x(y)$ for all $y \in \Z [\frac{1}{2}]$.

Now we need to show that $\Gamma = (\Gamma, \le, \mu, (\varphi_x)_{x \in V})$ is a trickle graph and that the map $V \to F$, $ x \mapsto h_x$, induces an isomorphism $\Tr (\Gamma) \to F$.
We begin by proving the following lemma which is a key point in the proofs that will follow.

\begin{lem}\label{lem3_13}
Let $x \in V$ and let $y \in \Z [\frac{1}{2}]$ be such that $y < x$.
Let $y' = h_x (y)$.
Then $h_x \circ h_y \circ h_x^{-1} = h_{y'}$.
\end{lem}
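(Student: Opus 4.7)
The plan is to show that $g := h_x \circ h_y \circ h_x^{-1}$ coincides with $h_{y'}$ as piecewise-linear homeomorphisms of $\R$. Both maps fix an initial (right) region pointwise, are translation by $-1$ on a terminal (left) region, and are determined on the middle region by a single bi-infinite breakpoint sequence together with the rule that each breakpoint is sent to its predecessor. Matching these three ingredients will prove the lemma.

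First, I would check the fixed region. Since $y < x$ and $h_x$ is the identity on $[x,+\infty)$ while it sends $[y,x]$ onto $[y',x]$ by Lemma \ref{lem3_10}, we obtain $h_x([y,+\infty)) = [y',+\infty)$. As $h_y$ fixes $[y,+\infty)$ pointwise, conjugation gives $g$ fixing $[y',+\infty)$ pointwise, just as $h_{y'}$ does by construction. Similarly, for $t$ sufficiently negative, $h_x$, $h_x^{-1}$ and $h_y$ are each translations by $\pm 1$, so $g$ acts as translation by $-1$ there, again matching $h_{y'}$.

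Next, on $(-\infty,y']$ both $g$ and $h_{y'}$ are piecewise linear. Writing $(v_k^y)_{k\in\Z}$ and $(v_k^{y'})_{k\in\Z}$ for the breakpoint sequences associated to $h_y$ and $h_{y'}$ respectively, the map $h_y$ sends $v_{k+1}^y \mapsto v_k^y$ and is linear between consecutive terms, so $g$ sends $h_x(v_{k+1}^y) \mapsto h_x(v_k^y)$ and is linear between consecutive images. Therefore the identification $g = h_{y'}$ reduces to the sequence identity
\[
h_x(v_k^y) = v_k^{y'} \qquad \text{for all } k \in \Z.
\]

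The main obstacle is establishing this identity, which I would handle by case analysis on the position of $y$ relative to the sequence $(v_k^x)_{k\in\Z}$ of $h_x$. In the easy case $y \le v_{-N_x}^x$ (where $N_x$ is as in Lemma \ref{lem3_10}(3)), every $v_k^y \le y$ lies in the region where $h_x$ acts as translation by $-1$, so $h_x(v_k^y) = v_k^y - 1$ and $y' = y - 1$; the required equality $v_k^{y-1} = v_k^y - 1$ then follows by induction on the level $p$ from the translation-invariance under $\Z$ of the filtration $(V_p)$ and of the maps $s_p,t_p$. In the delicate case $v_{-N_x}^x < y < x$, the map $h_x$ is not in general level-preserving, so one must trace through the inductive construction of $V_p$: the point is that $h_x$ is itself built from the same $u_{p,k}$-data used to define the filtration, so its action on the sequence $(v_k^y)$ can be matched term-by-term to the corresponding $u_{p',k}$-data for $y'$, where $p'$ is the level of $y'$. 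This term-by-term matching, essentially bookkeeping on the recursive structure of $(V_p, s_p, t_p)$, is where the bulk of the technical work lies.
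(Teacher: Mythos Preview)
Your reduction to the sequence identity $h_x(v_k^y)=v_k^{y'}$ is natural, and your treatment of the case $x=\infty$ (and more generally of $y$ lying to the left of all breakpoints of $h_x$) via $\Z$-translation invariance of the filtration is fine; this matches the paper's one-line handling of $x=\infty$.

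There is, however, a genuine gap in the ``delicate case''. You describe it as bookkeeping, but it is exactly where the content of the lemma lives, and you do not provide the mechanism that makes it work. A second, related gap: you assert that $g=h_x\circ h_y\circ h_x^{-1}$ is \emph{linear} between consecutive points $h_x(v_k^y)$ and $h_x(v_{k+1}^y)$. This does not follow formally from what you have written; it requires knowing that $h_x$ has no breakpoint strictly inside $[v_k^y,v_{k+1}^y]$, i.e.\ that the breakpoints of $h_x$ below $y$ form a subset of $\{v_k^y\}$. That is true, but it is another instance of the same recursive bookkeeping you have deferred; without it, agreement on the discrete set $\{h_x(v_k^y)\}$ does not force $g=h_{y'}$.

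The paper avoids all of this with a single idea you are missing: a self-similarity. For $x\in\Z[\tfrac12]$ it introduces the piecewise-linear homeomorphism $\alpha:(-\infty,x)\to\R$ sending $[v_k^x,v_{k+1}^x]$ linearly onto $[k,k+1]$, and observes that conjugation by $\alpha$ carries $h_x|_{(-\infty,x)}$ to $h_\infty$, $h_y|_{(-\infty,x)}$ to $h_{\alpha(y)}$, and $h_{y'}|_{(-\infty,x)}$ to $h_{\alpha(y')}$. Since $\alpha(y')=h_\infty(\alpha(y))$, the general case reduces in one stroke to the case $x=\infty$ you already handled. This is the clean substitute for your unperformed case analysis: rather than matching $h_x(v_k^y)$ with $v_k^{y'}$ term by term, the map $\alpha$ simultaneously straightens both sequences and identifies $h_x$ with a pure translation.
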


\begin{proof}
First assume $x=\infty$.
Then $y'=y-1$ and by construction $h_{y'} = h_{y-1} = h_\infty \circ h_y \circ h_\infty^{-1}$.

Now assume $x \in \Z [\frac{1}{2}]$.
Let $p$ be the least integer $\ge 0$ such that $x \in V_p$.
Set $x' = t_p (x)$.
For all $k \ge 0$ we set $v_k = s_{p+1}^k (x')$ and $v_{-k} = t_p^k (x')$.
Then $h_x: \R \to \R$ sends linearly the interval $[v_k, v_{k+1}]$ onto $[v_{k-1},v_k]$ for all $k \in \Z$ and it is the identity on $[x, +\infty)$.
We denote by $\alpha: (-\infty, x) \to \R$ the orientation preserving piecewise linear homeomorphism which sends linearly $[v_k, v_{k+1}]$ onto $[k, k+1]$ for all $k \in \Z$ (see Figure \ref{fig3_5}).
Since $h_x$, $h_y$ and $h_{y'}$ induce homeomorphisms of $(-\infty, x)$, we can consider the homeomorphisms of the real line $g_x = \alpha \circ h_x|_{ (-\infty, x)} \circ \alpha^{-1}$, $g_y = \alpha \circ h_y|_{(-\infty, x)} \circ \alpha^{-1}$ and $ g_{y'} = \alpha \circ h_{y'}|_{(-\infty, x)}\circ \alpha^{-1}$.
The following properties are easily observed:
\begin{itemize}
\item
$\alpha(\Z [\frac{1}{2}] \cap (-\infty, x)) = \Z[\frac{1}{2}]$,
\item
$g_x = h_\infty$, $g_y = h_{\alpha(y)}$ and $g_{y'} = h_{\alpha(y')}$.
\end{itemize}
We have $\alpha (y') = (\alpha \circ h_x) (y) = g_x (\alpha (y)) = h_\infty (\alpha (y))$, hence, by the case $x =\infty$ already treated,
\[
g_{y'} = h_{\alpha(y')} = h_\infty \circ h_{\alpha(y)} \circ h_\infty^{-1} = g_x \circ g_y \circ g_x^{-1}\,.
\]
Since $h_x|_{[x, +\infty)} = h_y|_{[x, +\infty)} = h_{y'}|_{[x, +\infty)} = \id_{[x ,+\infty)}$, we conclude that $h_{y'} =h_x \circ h_y \circ h_x^{-1}$.
\end{proof}

\begin{figure}[ht!]
\begin{center}
\includegraphics[width=8.8cm]{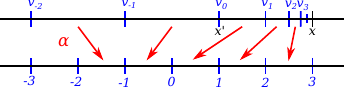}
\caption{Definition of $\alpha$}\label{fig3_5}
\end{center}
\end{figure}

Now we prove that $\Gamma = (\Gamma, \le, \mu, (\varphi_x)_{x\in V(\Gamma)})$ is a trickle graph.

\begin{lem}\label{lem3_14}
The above defined quadruple $\Gamma = (\Gamma, \le, \mu, (\varphi_x)_{x\in V})$ is a trickle graph.
\end{lem}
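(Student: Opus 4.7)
The proof amounts to verifying the seven axioms (a)--(g) in the definition of a trickle graph, and I plan to dispatch them in order of increasing difficulty, with condition (g) as the only nontrivial one.

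First I would dispose of (a), (b), (e), (f). Condition (a) is automatic because $\Gamma$ is defined to be the complete graph on $V$. Condition (b) is vacuous because $\le$ is a \emph{total} order on $V$ (the usual order on $\Z[\tfrac12]$ augmented by declaring $\infty$ to be the maximum), so $E_{||}(\Gamma)=\emptyset$. Conditions (e) and (f) are vacuous and trivial, respectively, since $\mu$ is constantly $\infty$.

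Next I would handle (c) and (d) using Lemma~\ref{lem3_12}. For (c), note that $\varphi_x$ fixes $\infty$ and that $\varphi_x(\Z[\tfrac12])=\Z[\tfrac12]$, so one only has to check monotonicity of the restriction of $\varphi_x$ to $\Z[\tfrac12]$, which is exactly the content of Lemma~\ref{lem3_12}. For (d), if $\varphi_x(y)\ne y$, then necessarily $y\ne\infty$ (because $\varphi_x(\infty)=\infty$), so $y\in\Z[\tfrac12]$ and $h_x(y)\ne y$; Lemma~\ref{lem3_12} then forces $y<x$ (whether $x\in\Z[\tfrac12]$ or $x=\infty$).

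The main step is (g). Let $z<y<x$ in $V$. Since $\infty$ is the maximum, $y\ne\infty$ and $z\ne\infty$, so $y,z\in\Z[\tfrac12]$, while $x\in\Z[\tfrac12]\cup\{\infty\}$. The claim
\[
(\varphi_x\circ\varphi_y)(z)=(\varphi_{y'}\circ\varphi_x)(z),\qquad y'=\varphi_x(y),
\]
becomes $(h_x\circ h_y)(z)=(h_{y'}\circ h_x)(z)$ after unpacking definitions. But Lemma~\ref{lem3_13}, applied to the pair $y<x$, gives exactly the operator identity $h_x\circ h_y\circ h_x^{-1}=h_{y'}$, i.e.\ $h_x\circ h_y=h_{y'}\circ h_x$, and evaluating at $z$ yields the desired equality.

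The only potential subtlety is making sure Lemma~\ref{lem3_13} is applicable in both cases $x\in\Z[\tfrac12]$ and $x=\infty$; the lemma is stated for any $x\in V$ with $y\in\Z[\tfrac12]$ and $y<x$, which is precisely our situation, so no additional argument is needed. Hence all seven conditions hold and $\Gamma$ is a trickle graph.
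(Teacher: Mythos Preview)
Your proof is correct and follows essentially the same approach as the paper: conditions (a), (b), (e), (f) are dispatched as trivial/vacuous, (c) and (d) are reduced to Lemma~\ref{lem3_12}, and (g) is deduced from Lemma~\ref{lem3_13}. Your treatment is slightly more detailed than the paper's (which just states the relevant lemma for each axiom), but the logical structure is identical.
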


\begin{proof}
We should prove that $(\Gamma, \le, \mu, (\varphi_x)_{x\in V(\Gamma)})$ satisfies Conditions (a) to (g) of the definition of a trickle graph.
Condition (a) is obvious because $\Gamma$ is a complete graph.
Condition (b) is empty because $E_{||}(\Gamma)=\emptyset$.
Condition (c) holds because $\varphi_x: V \to V$ preserves the order of $V$ for all $x \in V$.
Condition (d) holds by Lemma \ref{lem3_12}.
Conditions (e) and (f) are obvious because $\mu (x) = \infty$ for all $x \in V$.
Condition (g) is a direct consequence of Lemma \ref{lem3_13}.
\end{proof}

Now, we prove that $\Tr (\Gamma)$ is isomorphic to $F$.

\begin{thm}\label{thm3_15}
Let $\Gamma = (\Gamma, \le, \mu, (\varphi_x)_{x\in V})$ be the trickle graph of Lemma \ref{lem3_14}.
Then $\Tr (\Gamma)$ is isomorphic to $F$.
\end{thm}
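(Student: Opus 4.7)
The plan is in three main steps: construct a homomorphism $\Phi \colon \Tr(\Gamma) \to F$ sending $x \mapsto h_x$, verify surjectivity, and then establish injectivity by analyzing normal forms.

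First, since $\mu(x) = \infty$ for every $x \in V$, only the braid-like relations of the trickle presentation need to be checked. Fix an edge $\{x, y\}$ of $\Gamma$; since $\le$ is a total order on $V$, we may assume $y < x$. By Condition~(d) in the definition of a trickle graph, $\varphi_y(x) = x$ and $\varphi_x(y) = h_x(y)$, so the relation $\varphi_x(y) \cdot x = \varphi_y(x) \cdot y$ becomes $h_{h_x(y)} \cdot h_x = h_x \cdot h_y$, equivalently $h_x h_y h_x^{-1} = h_{h_x(y)}$. This is precisely Lemma~\ref{lem3_13}. Surjectivity of $\Phi$ is then Lemma~\ref{lem3_11}.

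To obtain injectivity I first exploit the fact that $\Gamma$ is complete and $\le$ is total on $V$ in order to simplify the normal forms of Corollary~\ref{corl2_5}. In this situation every syllable can be added to every stratum (the compatibility condition $\{y, x_i\} \in E(\Gamma)$ holds automatically), so a T-transformation applies to every consecutive pair of strata $(U_i, U_{i+1})$ with $U_{i+1} \neq \emptyset$. Combined with the $\RR_0$-rule removing the empty stratum, this forces every $\RR$-irreducible piling to have length at most one. Consequently every nontrivial $g \in \Tr(\Gamma)$ admits a unique normal form
\[
g = x_1^{a_1} x_2^{a_2} \cdots x_p^{a_p} \qquad (x_1 > x_2 > \cdots > x_p \text{ in } V,\ a_i \in \Z \setminus \{0\}).
\]

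It then suffices to show that, for any such expression, the image $f = h_{x_1}^{a_1} \circ \cdots \circ h_{x_p}^{a_p} \in F$ determines the pairs $(x_i, a_i)$ uniquely. The strategy is to recover $(x_1, a_1)$ and iterate. If $x_1 = \infty$, each remaining factor has $x_i \in \Z[\frac{1}{2}]$ and fixes $[x_i, +\infty)$, so near $+\infty$ the map $f$ acts as the translation $t \mapsto t - a_1$, recovering $x_1$ and $a_1$. If $x_1 \in \Z[\frac{1}{2}]$, Lemma~\ref{lem3_12} gives $h_{x_i}(t) = t$ for all $t \ge x_1$ (since $x_i \le x_1$), whence $f$ is the identity on $[x_1, +\infty)$; and on a left neighborhood $(x_1 - \epsilon, x_1)$ with $\epsilon$ smaller than $x_1 - x_2$ (interpreted as $+\infty$ if $p = 1$), the remaining factors act trivially, so $f$ coincides with $h_{x_1}^{a_1}$ there. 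Hence $x_1 = \sup\{t : f(t) \neq t\}$, and $a_1$ is recovered from the constant slope $2^{a_1}$ of $h_{x_1}^{a_1}$ on the pieces of its piecewise linear structure accumulating at $x_1$ from the left. Replacing $f$ by $h_{x_1}^{-a_1} \circ f$ reduces $p$ by one, and induction finishes the argument.

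The principal technical obstacle will be the analysis of the local piecewise linear structure of $h_{x_1}$ near $x_1$ needed to pin down the integer $a_1$. This relies on the explicit recursion $v_k = s_{p+1}^k(x')$ defining the breakpoints of $h_{x_1}$, together with a direct check that each linear piece of $h_{x_1}$ between consecutive breakpoints has slope exactly $2$, so that $h_{x_1}^a$ has slope $2^a$ on the corresponding piece.
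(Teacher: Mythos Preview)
Your proof is correct, and the construction of $\Phi$, the surjectivity, and the reduction to normal forms coming from a single stratum (via completeness of $\Gamma$) all match the paper exactly. The injectivity argument is where you diverge. The paper observes that to show $\Ker(\Phi)=\{1\}$ it suffices, for each nontrivial $g=x_1^{a_1}\cdots x_p^{a_p}$ with $x_1>\cdots>x_p$, to exhibit a single point moved by $\Phi(g)$: any $t\in(x_2,x_1)$ works (or any $t<x_1$ when $p=1$), since $h_{x_i}(t)=t$ for $i\ge 2$ while $h_{x_1}$ has no fixed point in $(-\infty,x_1)$, so $\Phi(g)(t)=h_{x_1}^{a_1}(t)\neq t$. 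Your approach instead reconstructs the entire tuple $(x_i,a_i)$ from $\Phi(g)$, which forces you into the finer slope analysis of $h_{x_1}$ near $x_1$. Both arguments are valid; the paper's is shorter and sidesteps the piecewise-linear computation entirely, while yours proves the slightly stronger statement that the normal-form data is recoverable from the image in $F$.
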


\begin{proof}
By Lemma \ref{lem3_13} there exists a homomorphism $\Phi: \Tr (\Gamma) \to F$ which sends $x$ to $h_x$ for all $x \in V$.
This homomorphism is surjective because $\{h_x \mid x \in V\}$ generates $F$ (see Lemma \ref{lem3_11}).
So, it remains to prove that $\Phi$ is injective.

Let $g \in \Tr (\Gamma) \setminus \{1\}$.
Consider the rewriting system $\RR = \RR (\Gamma)$ of Theorem \ref{thm2_4}.
Since $\Gamma$ is a complete graph, the $\RR$-irreducible pilings are all of length $1$ or $0$.
Moreover, by Theorems \ref{thm2_3} and \ref{thm2_4}, there exists a unique $\RR$-irreducible piling which represents $g$.
This piling is different from the empty one, because $g \neq 1$, hence it is of the form $(U)$, where $U=\{x_1^{a_1}, \dots, x_p^{a_p}\}$ and $x_1 > x_2 > \cdots >x_p$, where $p \ge 1$.
Then $g = \omega (U) = x_1^{a_1} x_2^{a_2} \dots x_p^{a_p}$, hence
\[
\Phi(g) = h_{x_1}^{a_1} \circ h_{x_2}^{a_2} \circ \cdots \circ h_{x_p}^{a_p}\,.
\]
The homeomorphism $h_{x_1}$ is of infinite order, hence, if $p=1$, then $\Phi (g) = h_{x_1}^{a_1} \neq \id$.
Suppose $p \ge 2$.
Choose $t \in (x_2,x_1)$.
Since $h_{x_i} (t) = t$ for all $2 \le i \le p$ and $h_{x_1}$ is strictly decreasing on $(-\infty, x_1)$, 
\[
\Phi(g)(t) = h_{x_1}^{a_1}(t) \neq t\,,
\]
hence $\Phi(g) \neq \id$.
This shows that $\Ker (\Phi) = \{1\}$, hence $\Phi$ is injective.
\end{proof}

To finish this subsection we show that some natural parabolic subgroups of $\Tr (\Gamma)$ are isomorphic to $F$ itself.
More precisely, let $x\in \Z [\frac{1}{2}]$, let $V_x = \{x\}_\downarrow = \{ y \in V \mid y \le x\}$, and let $\Gamma_x$ be the full subgraph of $\Gamma$ spanned by $V_x$.
As observed in Subsection \ref{subsec2_4}, $\Gamma_x$ is a parabolic subgraph of $\Gamma$.

\begin{prop}\label{prop3_16}
Let $x \in \Z [\frac{1}{2}]$.
There exists a bijection $\beta: V_x \to V$ which induces an isomorphism $\beta: \Tr(\Gamma_x) \to \Tr (\Gamma)$.
In particular, $\Tr (\Gamma_x)$ is isomorphic to $F$.
\end{prop}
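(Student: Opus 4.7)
The plan is to construct an explicit bijection $\beta: V_x \to V$ that is an isomorphism of trickle graphs, i.e., preserves the order, the label $\mu$, the edge set, and intertwines the automorphisms $\varphi_y$. Once this is done, the induced presentation-level isomorphism $\beta_*: \Tr(\Gamma_x) \to \Tr(\Gamma)$ follows by functoriality of the trickle construction, and the final claim that $\Tr(\Gamma_x) \simeq F$ is immediate from Theorem \ref{thm3_15}.

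The natural candidate for $\beta$ is already hidden in the proof of Lemma \ref{lem3_13}. Let $p$ be the least integer with $x \in V_p$, let $x' = t_p(x)$, and set $v_k = s_{p+1}^k(x')$ for $k \ge 0$ and $v_{-k} = t_p^k(x')$ for $k \ge 0$, so that the $v_k$ are cofinal in $(-\infty, x) \cap \Z[\tfrac{1}{2}]$ both as $k \to -\infty$ (going to $-\infty$) and $k \to +\infty$ (approaching $x$). Let $\alpha: (-\infty, x) \to \R$ be the orientation-preserving piecewise linear homeomorphism sending $[v_k, v_{k+1}]$ linearly onto $[k, k+1]$ for every $k \in \Z$. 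The proof of Lemma \ref{lem3_13} notes that $\alpha$ restricts to a bijection $\Z[\tfrac{1}{2}] \cap (-\infty, x) \to \Z[\tfrac{1}{2}]$; this is a dyadic-arithmetic computation based on the fact that, by induction on the filtration level, $v_{k+1} - v_k$ is always a positive power of $\tfrac{1}{2}$. We then define $\beta: V_x \to V$ by $\beta(y) = \alpha(y)$ for $y \in V_x \setminus \{x\}$ and $\beta(x) = \infty$. By construction $\beta$ is a strictly increasing bijection between the two totally ordered sets, and since both $\Gamma_x$ and $\Gamma$ are complete graphs and $\mu \equiv \infty$ on both sides, $\beta$ is automatically an isomorphism of labelled graphs.

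The crucial point is to verify that $\beta$ intertwines the star-automorphisms: for every $y \in V_x$ and $z \in V_x$ with $z < y$, one has $\beta(\varphi_y(z)) = \varphi_{\beta(y)}(\beta(z))$. For $y = x$ this is exactly the identity $g_x = \alpha \circ h_x|_{(-\infty,x)} \circ \alpha^{-1} = h_\infty$ established in the proof of Lemma \ref{lem3_13}, since $\beta(x) = \infty$ and $\varphi_\infty$ acts on $\Z[\tfrac{1}{2}]$ as $h_\infty$. For $y < x$ it is the identity $g_y = h_{\alpha(y)}$ from the same proof: indeed both sides fix $x$ (which corresponds to $\beta(x) = \infty$ being fixed by every $\varphi$), and on elements $z < x$ one has $\alpha(h_y(z)) = h_{\alpha(y)}(\alpha(z))$ by construction of $g_y$. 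This verifies all the conditions making $\beta$ a trickle-graph isomorphism.

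Finally, any isomorphism of trickle graphs induces in an obvious way an isomorphism between the associated trickle groups, since such an isomorphism sends the defining relators of one presentation to those of the other. Hence $\beta$ induces a group isomorphism $\Tr(\Gamma_x) \xrightarrow{\sim} \Tr(\Gamma)$, and composing with the isomorphism $\Tr(\Gamma) \simeq F$ of Theorem \ref{thm3_15} yields $\Tr(\Gamma_x) \simeq F$. The one slightly subtle step is the dyadic claim that $\alpha$ restricts to a bijection on $\Z[\tfrac{1}{2}]$, but as observed above this is already taken for granted in the proof of Lemma \ref{lem3_13} and follows by a straightforward induction on the filtration level $p$.
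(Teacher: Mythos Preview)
Your proof is correct and follows essentially the same route as the paper: both construct $\beta$ from the piecewise linear homeomorphism $\alpha$ introduced in the proof of Lemma \ref{lem3_13}, send $x$ to $\infty$, and verify that $\beta$ is a trickle-graph isomorphism via the conjugation identity $h_{\beta(y)} = \alpha \circ h_y|_{(-\infty,x)} \circ \alpha^{-1}$. Your write-up is in fact slightly more explicit than the paper's in separating the cases $y = x$ and $y < x$ when checking that $\beta$ intertwines the $\varphi$'s.
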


\begin{proof}
We consider the map $\alpha$ defined in the proof of Lemma \ref{lem3_13}.
Let $p$ be the least integer $\ge 0$ such that $x \in V_p$.
Set $x' = t_p (x)$.
For all $k \ge 0$ we set $v_k = s_{p+1}^k (x')$ and $v_{-k} = t_p^k (x')$.
Then $\alpha: (-\infty, x) \to \R$ denotes the orientation preserving piecewise linear homeomorphism which sends linearly $[v_k, v_{k+1}]$ onto $[k, k+1]$ for all $k \in \Z$ (see Figure \ref{fig3_5}).
We have $\alpha( (-\infty, x) \cap \Z [\frac{1}{2}]) = \alpha (V_x \setminus \{x\}) = \Z [\frac{1} {2}]$.
So, there is a bijection $\beta: V_x \to V$ which sends $y$ to $\alpha (y)$ for all $y \in V_x \setminus \{x\}$ and sends $x$ to $\infty$.
It is easily checked that, for all $y,y' \in V_x$, we have $y \le y'$ if and only if $\beta(y) \le \beta (y')$, and that, for all $y \in V_x$, $h_{\beta (y)} = \alpha \circ h_y|_{(-\infty,x)} \circ \alpha^{-1}$.
Thus, $\beta$ induces an isomorphism between $\Gamma_x = (\Gamma_x, \le_x, \mu_x, (\varphi_{x,y})_{y \in V_x})$ and $\Gamma = (\Gamma, \le, \mu, (\varphi_y)_{y \in V})$, where $\le_x$ and $\mu_x$ are the restrictions of $\le$ and $\mu$ to $V_x$, respectively, and, for all $y \in V_x$, $\varphi_{x,y}$ is the restriction of $\varphi_y$ to $V_x = \starE_y (\Gamma_x)$.
This $\beta$ obviously defines an isomorphism $\beta : \Tr (\Gamma_x) \to \Tr (\Gamma)$.
\end{proof}

\begin{rem}
By identifying $\Tr (\Gamma)$ with $F$ it is easily seen that $\Tr (\Gamma_x)$ is the subgroup of $F$ consisting of the elements whose support is contained in $(-\infty, x)$.
It is well-known and easy to prove that this subgroup is a copy $F$.
\end{rem}


\subsection{Ordered quandle groups}\label{subsec3_4}

A \emph{quandle} is a non-empty set $Q$ endowed with an operation $*$ which satisfies the following properties.
\begin{itemize}
\item[(1)]
For all $x\in Q$, $x * x = x$.
\item[(2)]
For all $x, y, z \in Q$, $(z * x) * (y * x) = (z * y) * x$.
\item[(3)]
For all $x \in Q$, the map $Q \to Q$, $y \mapsto y * x$, is a bijection.
\end{itemize}
A quandle $Q$ is said to be \emph{ordered} (on the right) if there exists a total order $\le$ on $Q$ such that, for all $x, y, z \in Q$, we have $y \le z$ if and only if $y * x \le z * x$.

\begin{expl1}
A group $G$ is called \emph{bi-orderable} if there exists a total order $\le$ on $G$ which is invariant under left and right multiplication.
Examples of bi-orderable groups are free abelian groups, free groups and, more generally, right-angled Artin groups.
Other important examples are the pure braid groups.
Let $G$ be a bi-orderable group, let $\le$ be a total order on $G$ invariant under left and right multiplication, and let $*$ be the operation on $G$ defined by $y * x = xyx^{-1}$.
Then $(G, *, \le)$ is an ordered quandle.
\end{expl1}

\begin{expl2}
Consider the field $\R$ of real numbers endowed with its natural total order.
Then, for a fixed $\alpha>0$, the binary operation $x*y = \alpha x + (1-\alpha) y$ endows $\R$ with an ordered quandle structure (see \cite{BaElh1}).
\end{expl2}

Let $(Q,*,\le)$ be an ordered quandle.
Let $\Gamma$ be the full graph whose set of vertices is $Q$.
For $x \in Q$ we define $\varphi_x: Q \to Q$ by $\varphi_x (y) = y$ if $x \le y$ and $\varphi_x (y) = y * x$ if $y \le x$.
We set $\mu (x) = \infty$ for all $x \in Q$.

\begin{prop}\label{prop3_17}
The above defined quadruple $\Gamma = (\Gamma, \le, \mu, (\varphi_x)_{x\in Q})$ is a trickle graph.
\end{prop}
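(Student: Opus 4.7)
The plan is to go through the seven axioms (a)--(g) for a trickle graph, dispatching the easy ones first and isolating condition (g) as the substantive point. Conditions (a), (e), (f) are immediate: $\Gamma$ is complete, so any pair of vertices forms an edge, and $\mu \equiv \infty$ trivializes the two labeling conditions. Condition (b) is vacuous because the order on $Q$ is total, so $E_{||}(\Gamma) = \emptyset$. Condition (d) is also immediate from the definition: if $x \le y$ then $\varphi_x(y) = y$, so $\varphi_x(y) \neq y$ forces $y < x$ (noting that $y = x$ gives $y * x = x * x = x$, so nothing changes there).

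Next I would verify simultaneously that $\varphi_x$ is a graph automorphism of $\starE_x(\Gamma) = \Gamma$ and that condition (c) holds, by showing $\varphi_x$ is an order-preserving bijection of $Q$. Bijectivity uses that right-multiplication $y \mapsto y * x$ is a quandle bijection of $Q$ which, by order-preservation, sends $\{y : y \le x\}$ into itself (since $z \le x$ implies $z * x \le x * x = x$), hence bijectively; outside this downset $\varphi_x$ is the identity, so $\varphi_x$ is a bijection of $Q$. For order preservation of $\varphi_x$, given $z \le y$ I split according to the positions of $z, y$ relative to $x$: if $x \le z$ both are fixed; if $y \le x$ both map via $* x$ and the ordered-quandle axiom applies; if $z \le x \le y$ then $\varphi_x(z) = z*x \le x * x = x \le y = \varphi_x(y)$. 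The reverse implication follows from bijectivity.

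The heart of the argument is condition (g). Fix $z < y < x$ and set $y' = \varphi_x(y) = y * x$. Using order preservation of $* x$, I first check that the relevant elements sit in the correct halves so that the right branch of $\varphi$ fires at each step:
\begin{itemize}
\item $z \le y$ implies $z * y \le y * y = y < x$, so $\varphi_x(z * y) = (z*y) * x$.
\item $z \le y$ implies $z * x \le y * x = y'$, so $\varphi_{y'}(z * x) = (z * x) * y' = (z*x) * (y*x)$.
\end{itemize}
Thus $(\varphi_x \circ \varphi_y)(z) = (z * y) * x$ and $(\varphi_{y'} \circ \varphi_x)(z) = (z * x) * (y * x)$, and the desired equality is exactly the self-distributivity axiom (2) of a quandle.

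The only step that could be called an obstacle is checking, in (g), that no branch of the piecewise definition of $\varphi$ is traversed on the wrong side; but this is controlled by two applications of order preservation of $* x$, after which self-distributivity closes the argument.
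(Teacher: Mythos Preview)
Your proof is correct and follows essentially the same approach as the paper's, which simply attributes each axiom to the relevant structural feature (completeness of $\Gamma$, totality of $\le$, $\mu\equiv\infty$, the ordered-quandle axiom, and self-distributivity). Your write-up is in fact more detailed than the paper's---particularly in verifying for condition (g) that the correct branches of the piecewise $\varphi$ are triggered before invoking self-distributivity, and in spelling out why $\varphi_x$ is a bijection---whereas the paper dispatches (g) in a single clause (``Condition (g) is satisfied because $Q$ is a quandle'').
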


\begin{proof}
Since $\Gamma$ is a full graph and $y\mapsto y * x$ is an order-preserving bijection, the map $\varphi_x$ is an order-preserving automorphism of $\Gamma$ for all $x \in Q$.
Since $\Gamma$ is a full graph and $\le$ is a total order, Conditions (a) and (b) are trivially satisfied.
Conditions (e) and (f) hold because $\mu (x) = \infty$ for all $x \in Q$.
Condition (c) follows from the fact that $Q$ is ordered, and Condition (d) follows from the definition of $\varphi_x$.
Finally, Condition (g) is satisfied because $Q$ is a quandle.
\end{proof}


\section{The trickle algorithm}\label{sec4}

In this section we fix a trickle graph $\Gamma = (\Gamma, \le, \mu, (\varphi_x)_{x\in V(\Gamma)})$ and we keep the notations of Subsection \ref{subsec2_2}.
Our aim is to prove Theorem \ref{thm2_4}.

Let $A$ be an alphabet and let $R$ be a rewriting system on $A^*$.
A \emph{critical pair} for $R$ is a quintuple $(u_1, u_2, u_3, v_1, v_2)$ of elements of $A^*$ satisfying one of the following two conditions.
\begin{itemize}
\item[(a)]
$(u_1 \cdot u_2, v_1) \in R$, $(u_2 \cdot u_3, v_2) \in R$ and $u_2 \neq \epsilon$;
\item[(b)]
$(u_1 \cdot u_2 \cdot u_3, v_1) \in R$ and $(u_2, v_2)\in R$.
\end{itemize}
We say that the critical pair $(u_1, u_2, u_3, v_1, v_2)$ is \emph{resolved} if there exists $w \in A^*$ such that
\begin{itemize}
\item
$v_1 \cdot u_3 \to^* w$ and $u_1 \cdot v_2 \to^* w$ in case (a),
\item
$v_1 \to^* w$ and $u_1 \cdot v_2 \cdot u_3 \to^* w$ in case (b).
\end{itemize}
We will say that the critical pair is of \emph{type (a)} or of \emph{type (b)} depending on which of the above conditions (a) or (b) it satisfies.

The following will be used to show that our rewriting system is confluent.

\begin{thm}[Newman \cite{Newma1}]\label{thm4_1}
Let $A$ be an alphabet and let $R$ be a terminating rewriting system on $A^*$.
If all critical pairs of $R$ are resolved, then $R$ is confluent.
\end{thm}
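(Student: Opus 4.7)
The plan is to prove Newman's Lemma by the classical two-stage argument: first reduce confluence to \emph{local confluence} (single-step diamond property), then invoke the hypothesis on critical pairs to verify local confluence. The termination hypothesis lets us run Noetherian induction along $\to$ to bridge the two.

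\textbf{Stage 1: Local confluence from resolution of critical pairs.} First I would prove that $R$ is locally confluent, meaning that whenever $u \to u_1$ and $u \to u_2$, there exists $w \in A^*$ with $u_1 \to^* w$ and $u_2 \to^* w$. Writing $u = \alpha_1 \cdot p_1 \cdot \alpha_2$ with $(p_1,q_1) \in R$ giving $u_1$, and $u = \beta_1 \cdot p_2 \cdot \beta_2$ with $(p_2,q_2) \in R$ giving $u_2$, I split on the relative position of the two distinguished factors. If the occurrences of $p_1$ and $p_2$ are disjoint, the two rewrites commute and applying both yields a common successor. If one factor is contained in the other, or the two factors properly overlap, then $u$ exhibits exactly a critical pair in case (b) or case (a) respectively, and the hypothesis that every critical pair is resolved supplies the required common $w$.

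\textbf{Stage 2: Local confluence plus termination implies confluence.} Since $R$ is terminating, the relation $\to$ is well-founded, so we may prove by Noetherian induction on $u$ the statement
\[
P(u): \text{ for all } v_1,v_2 \in A^* \text{ with } u \to^* v_1 \text{ and } u \to^* v_2, \text{ there exists } w \text{ with } v_1 \to^* w \text{ and } v_2 \to^* w.
\]
Assume $P(u')$ holds for every $u'$ with $u \to^+ u'$. If either sequence $u \to^* v_i$ is trivial, the conclusion is immediate. Otherwise write $u \to u_1 \to^* v_1$ and $u \to u_2 \to^* v_2$. By Stage 1 pick $w_0$ with $u_1 \to^* w_0$ and $u_2 \to^* w_0$. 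Apply the inductive hypothesis at $u_1$ to the two descents $u_1 \to^* v_1$ and $u_1 \to^* w_0$ to obtain $w_1$ with $v_1 \to^* w_1$ and $w_0 \to^* w_1$. Then apply the inductive hypothesis at $u_2$ to the descents $u_2 \to^* v_2$ and $u_2 \to^* w_0 \to^* w_1$ to obtain $w$ with $v_2 \to^* w$ and $w_1 \to^* w$. Then $v_1 \to^* w_1 \to^* w$ and $v_2 \to^* w$, establishing $P(u)$.

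\textbf{Main obstacle.} The conceptual heart of the argument is the second stage: local confluence does not in general imply confluence without an extra hypothesis, and here termination is used in an essential way to justify the Noetherian induction. Verifying Stage 1 is a routine case analysis on the positions of the two redexes inside $u$, which is precisely what the notion of critical pair is designed to encapsulate; the only subtlety there is to be sure the case split (disjoint / nested / overlapping) is exhaustive and correctly matches cases (a) and (b) of the definition of a critical pair given above.
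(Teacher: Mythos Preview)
The paper does not prove this theorem: it is stated as a known result attributed to Newman \cite{Newma1} and used as a black box in the confluence arguments of Section~\ref{sec4}. Your proposal supplies a correct and standard proof---the two-stage decomposition into the Critical Pair Lemma (Stage~1) and Newman's Lemma proper (Stage~2, via Noetherian induction on the terminating relation) is exactly the classical argument, and your handling of both stages is sound.
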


Now, we move on to the proof of Theorem \ref{thm2_4}.
We start by showing that the extraction operation is well-defined.

\begin{lem}\label{lem4_2}
Let $U = \{x_1^{a_1}, x_2^{a_2}, \dots, x_p^{a_p}\}$ be a non-empty stratum and let $x_i^{a_i} \in U$.
Set $U (x_i^{a_i}) = \{ x_j^{a_j} \mid x_j \ge x_i \}$.
Then $\supp (U (x_i^{a_i}))$ is totally ordered and $\gamma (U,x_i^{a_i}) = \gamma (U (x_i^{a_i}), x_i^{a_i})$.
In particular, the definition of $\gamma (U, x_i^{a_i})$ does not depend on the choice of the numbering of the elements of $U$.
\end{lem}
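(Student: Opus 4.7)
The plan is to reduce the statement to a backward induction along the chosen numbering of $U$ and to show that the automorphisms indexed by vertices incomparable to $x_i$ contribute trivially at the moment they are applied. Throughout, write $I = \{j \in \{1,\dots,i-1\} \mid x_j > x_i\}$ and $P = \{j \in \{1,\dots,i-1\} \mid x_j\,||\,x_i\}$; note $I \cup P = \{1,\dots,i-1\}$ since by the numbering condition $j < i$ forbids $x_j < x_i$. Write $I = \{j_1 < j_2 < \cdots < j_s\}$; the numbering condition forces $x_{j_1} > x_{j_2} > \cdots > x_{j_s} > x_i$.

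For the first claim, that $\supp(U(x_i^{a_i}))$ is totally ordered, I would argue directly from Condition (b): if two vertices $x_j, x_k$ of $\supp(U(x_i^{a_i}))$ were incomparable then $\{x_j, x_k\} \in E_{||}(\Gamma)$ (the edge exists since both lie in $U$), and applying (b) with $z = x_i \le x_j$ would give $\{x_k, x_i\} \in E_{||}(\Gamma)$, contradicting $x_k \ge x_i$. In particular the indices $j_1, \dots, j_s$ of $I$ give the \emph{unique} way of ordering $\supp(U(x_i^{a_i})) \setminus \{x_i\}$ compatibly with the numbering constraint, so $\gamma(U(x_i^{a_i}), x_i^{a_i})$ is unambiguous and equal to $(\varphi_{x_{j_1}}^{a_{j_1}} \circ \cdots \circ \varphi_{x_{j_s}}^{a_{j_s}})(x_i)^{a_i}$.

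For the second claim, set $z_i = x_i$ and $z_k = \varphi_{x_k}^{a_k}(z_{k+1})$ for $k < i$, so $\gamma(U,x_i^{a_i}) = z_1^{a_i}$. The key assertion, proved by backward induction on $k$, is the following: if $t$ is the least index with $j_t \ge k$ (with $t = s+1$ meaning none), then $z_k = (\varphi_{x_{j_t}}^{a_{j_t}} \circ \cdots \circ \varphi_{x_{j_s}}^{a_{j_s}})(x_i)$, with the empty composition read as $x_i$. Case $k \in I$ (so $k = j_t$) is immediate from the definition of $z_k$ and the inductive hypothesis. The substantive case is $k \in P$: here I must show $\varphi_{x_k}^{a_k}(z_{k+1}) = z_{k+1}$. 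By the contrapositive of Condition (d), this reduces to showing $z_{k+1} \not< x_k$.

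The main obstacle, and the core of the argument, is this sub-claim. I would establish it in two steps. First, an auxiliary observation using Condition (c): since each $\varphi_{x_{j_l}}$ fixes $x_{j_l}$ and preserves $\le$ on $\starE_{x_{j_l}}(\Gamma)$, a straightforward descending induction along $j_s > j_{s-1} > \cdots > j_t$ shows $z_{k+1} \le x_{j_t}$ (checking at each stage via Condition (a) that the intermediate element lies in the relevant star). Second, using Condition (b): for $k \in P$ with $j_l \ge k$, the pair $\{x_k, x_{j_l}\}$ belongs to $E_{||}(\Gamma)$ because $x_k > x_{j_l}$ would give $x_i < x_k$ via transitivity, and $x_k < x_{j_l}$ contradicts $k < j_l$ via the numbering. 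Applying (b) with $y = x_{j_t}$ and $z = z_{k+1}$ yields $\{x_k, z_{k+1}\} \in E_{||}(\Gamma)$, hence $z_{k+1}\,||\,x_k$ and in particular $z_{k+1} \not< x_k$, which both validates that $\varphi_{x_k}$ acts on $z_{k+1}$ and forces it to act trivially. The induction then closes and, taking $k = 1$, gives $\gamma(U, x_i^{a_i}) = \gamma(U(x_i^{a_i}), x_i^{a_i})$; independence from the numbering of $U$ follows since the right-hand side depends only on the canonical total ordering of $\supp(U(x_i^{a_i}))$.
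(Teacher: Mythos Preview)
Your argument is correct and close in spirit to the paper's, though organized differently. The paper removes one ``irrelevant'' syllable $x_j^{a_j}$ (with $x_i \not\le x_j$) at a time: its key observation is that $x_j$ is fixed by every $\varphi_{x_k}$ with $k>j$ (numbering plus Condition~(d)), so iterating Condition~(c) alone propagates the relation $x_i \not\le x_j$ through the full composition $\varphi_{x_{j+1}}^{a_{j+1}}\circ\cdots\circ\varphi_{x_{i-1}}^{a_{i-1}}$, whence $\varphi_{x_j}$ acts trivially at that point. You instead run a single backward induction tracking the already-purified partial product $z_{k+1}$, bound it above by $x_{j_t}$ via~(c), and then invoke~(b) (rather than~(c)) to obtain $z_{k+1}\,\|\,x_k$. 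The paper's route is a line shorter; yours has the mild advantage of making explicit at each step why $z_{k+1}$ lies in the relevant star. One small patch: when $k\in P$ and $t=s+1$ (no element of $I$ lies at or above $k$), there is no $x_{j_t}$ to invoke, but then $z_{k+1}=x_i$ and $x_k\,\|\,x_i$ by definition of $P$, so the conclusion is immediate.
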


\begin{proof}
As always we assume that, if $x_j > x_k$, then $j <k$.
Recall that 
\[
\gamma (U, x_i^{a_i}) = \big( (\varphi_{x_1}^{a_1} \circ \varphi_{x_2}^{a_2} \circ \cdots \circ \varphi_{x_{i-1 }}^{a_{i-1}}) (x_i) \big)^{a_i}\,.
\]
Let $x_j^{a_j}, x_k^{a_k} \in U (x_i^{a_i})$.
Since $x_i \le x_j$, $x_i \le x_k$, and $\{x_i, x_k\} \in E (\Gamma)$, Condition (b) in the definition of a trickle graph implies that either $x_j \le x_k$, or $x_k \le x_j$.
This shows that $\supp (U (x_i^{a_i}))$ is totally ordered.
Then, since $\supp (U (x_i^{a_i}))$ is totally ordered, the definition of the extraction operation in $U (x_i^{a_i})$ is unique, that is, $\gamma (U (x_i^{a_i}), x_i^{a_i})$ is well-defined.

It remains to show that $\gamma (U, x_i^{a_i}) = \gamma (U (x_i^{a_i}), x_i^{a_i})$.
Notice that this implies that the definition of $\gamma (U, x_i^{a_i})$ does not depend on the choice of the numbering of the elements of $U$.
It suffices to show that, if $x_j^{a_j} \not \in U(x_i^{a_i})$, then $\gamma(U, x_i^{a_i}) =\gamma (L(U, x_j^{a_j}), x_i^{a_i})$.
If $j > i$, then there is nothing to prove.
Suppose $j < i$.
Since $x_i \not \le x_j$, according to Condition (c) in the definition of a trickle graph, 
\begin{gather*}
\varphi_{x_{j+1}}^{a_{j+1}} \circ \cdots \circ \varphi_{x_{i-1}}^{a_{i-1}} (x_i) \not \le \varphi_{x_{j+1}}^{a_{j+1}} \circ \cdots \circ \varphi_{x_{i-1}}^{a_{i-1}} (x_j) = x_j\ \ \Rightarrow\\
\varphi_{x_{j}}^{a_j} \circ \varphi_{x_{j+1}}^{a_{j+1}} \circ \cdots \circ \varphi_{x_{i-1}}^{a_{i-1}} (x_i) = \varphi_{x_{j+1}}^{a_{j+1}} \circ \cdots \circ \varphi_{x_{i-1}}^{a_{i-1}} (x_i)\ \ \Rightarrow\\
\varphi_{x_1}^{a_1} \circ \varphi_{x_2}^{a_2} \circ \cdots \circ \varphi_{x_{i-1}}^{a_{i-1}} (x_i) = \varphi_{x_{1}}^{a_{1}} \circ \cdots \circ \varphi_{x_{j-1}}^{a_{j-1}} \circ \varphi_{x_{j+1}}^{a_{j+1}} \circ \cdots \circ \varphi_{x_{i-1}}^{a_{i-1}} (x_i)\ \ \Rightarrow\\
\gamma (U, x_i^{a_i})  = \gamma (L (U,  x_j^{a_j}), x_i^{a_i})\,.
\end{gather*}
\end{proof}

\begin{corl}\label{corl4_3}
Let $U = \{x_1^{a_1}, x_2^{a_2}, \dots, x_p^{a_p}\}$ be a non-empty stratum and let $x_i^{a_i} \in U$.
Suppose that, if $x_j > x_k$, then $j <k$.
\begin{itemize}
\item[(1)]
Let $j \in \{1, \dots, p\}$ be such that $x_i \not \le x_j$.
Then $\gamma (L(U, x_j^{a_j}), x_i^{a_i}) = \gamma (U, x_i^{a_i})$.
\item[(2)]
Let $j \in \{i-1, i, i+1, \dots, p\}$.
Then $\gamma (U, x_i^{a_i}) = \left( \varphi_{x_1}^{a_1} \circ \varphi_{x_2}^{a_2} \circ \cdots \circ \varphi_{x_j}^{a_j}(x_i) \right)^{a_i}$.
\end{itemize}
\end{corl}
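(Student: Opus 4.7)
Both statements are consequences of Lemma~\ref{lem4_2} together with condition~(d) in the definition of a trickle graph, exploiting the numbering convention that $x_j > x_k$ implies $j < k$.

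For part~(1), I would simply read off the result from Lemma~\ref{lem4_2}. Set $U(x_i^{a_i}) = \{x_k^{a_k} \in U \mid x_k \ge x_i\}$. The hypothesis $x_i \not\le x_j$ means exactly that $x_j \not\ge x_i$, so $x_j^{a_j} \notin U(x_i^{a_i})$. Consequently $L(U, x_j^{a_j})(x_i^{a_i}) = U(x_i^{a_i})$. Applying Lemma~\ref{lem4_2} both to $U$ and to $L(U, x_j^{a_j})$ (after any renumbering compatible with the convention, which is irrelevant in view of the lemma) yields
\[
\gamma(L(U, x_j^{a_j}), x_i^{a_i}) = \gamma(U(x_i^{a_i}), x_i^{a_i}) = \gamma(U, x_i^{a_i})\,.
\]

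For part~(2), I would proceed by induction on $j$ starting from $j = i-1$, which is precisely the definition of $\gamma(U, x_i^{a_i})$. For the inductive step from $j-1$ to $j$, with $j \ge i$, it suffices to prove that $\varphi_{x_j}^{a_j}(x_i) = x_i$, since then inserting $\varphi_{x_j}^{a_j}$ at the innermost position leaves the composition unchanged. If $j = i$, then $\varphi_{x_i}(x_i) = x_i$ because otherwise condition~(d) would force $x_i < x_i$. If $j > i$, then by the numbering convention we cannot have $x_i < x_j$; so condition~(d), applied to $x_j \in V(\Gamma)$ and $x_i \in \starE_{x_j}(\Gamma)$, gives $\varphi_{x_j}(x_i) = x_i$, hence $\varphi_{x_j}^{a_j}(x_i) = x_i$.

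Both parts are essentially bookkeeping around Lemma~\ref{lem4_2}, so there is no substantial obstacle; the only point requiring care is to justify invoking condition~(d) correctly, namely to verify that for $j \ge i$ the vertex $x_i$ is not strictly below $x_j$, which is immediate from the numbering convention on the elements of $U$.
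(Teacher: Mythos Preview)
Your proof is correct and follows the same approach as the paper's. For Part~(1) you spell out in detail what the paper simply calls a ``straightforward consequence of Lemma~\ref{lem4_2}'', and for Part~(2) you organize as an explicit induction the observation that $\varphi_{x_j}(x_i)=x_i$ for $j\ge i$ via condition~(d); your separate treatment of $j=i$ is in fact cleaner than the paper's phrasing ``$x_i\not\le x_j$'', which is literally false at $j=i$ though the conclusion still holds.
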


\begin{proof}
Part (1) is a straightforward consequence of Lemma \ref{lem4_2}.
We prove Part (2).
For $j \ge i$, we have $x_i \not \le x_j$, hence $\varphi_{x_j} (x_i) = x_i$.
So, if $j \ge i$, then
\[
\left( \varphi_{x_1}^{a_1} \circ \varphi_{x_2}^{a_2} \circ \cdots \circ \varphi_{x_j}^{a_j}(x_i) \right)^{a_i} = \left( \varphi_{x_1}^{a_1} \circ \varphi_{x_2}^{a_2} \circ \cdots \circ \varphi_{x_{i-1}}^{a_{i-1}}(x_i) \right)^{a_i} = \gamma (U, x_i^{a_i})\,.
\proved
\]
\end{proof}

\begin{lem}\label{lem4_4}
The rewriting system $\RR$ is terminating.
\end{lem}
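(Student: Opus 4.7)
The plan is to exhibit a well-founded measure on $\Omega^*$ that strictly decreases under any single rewriting step. For a piling $u = (U_1, U_2, \dots, U_p) \in \Omega^*$ I will set
\[
\nu(u) = \bigl( \lg_\pil(u),\; \sum_{i=1}^{p} i \cdot \lg_\st(U_i) \bigr) \in \N \times \N,
\]
and order $\N \times \N$ lexicographically. Since lex on $\N \times \N$ is a well-ordering, it will suffice to show that whenever $u \to u'$ using any rule of $\RR = \RR_0 \cup \RR_1$, we have $\nu(u') <_{\mathrm{lex}} \nu(u)$.

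First I handle $\RR_0$: any application of the rule $((\emptyset), \epsilon)$ at some position $k$ requires $U_k = \emptyset$ and deletes that entry, producing a piling of length $\lg_\pil(u) - 1$. Hence the first coordinate of $\nu$ strictly decreases, and we obtain $\nu(u') <_{\mathrm{lex}} \nu(u)$ immediately, whatever the effect on the second coordinate.

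Next I handle $\RR_1$: a T-transformation $((U,V),(U',V'))$ applied at consecutive positions $(k,k+1)$ of $u$ replaces $U_k = U$, $U_{k+1} = V$ by $U'$, $V'$, and leaves all other entries of the piling untouched. In particular $\lg_\pil(u') = \lg_\pil(u)$, so only the second coordinate of $\nu$ can change, and the change equals
\[
k \cdot (\lg_\st(U') - \lg_\st(U)) + (k+1) \cdot (\lg_\st(V') - \lg_\st(V)).
\]
By the very definition of the operation $L$, we have $\lg_\st(V') = \lg_\st(V) - 1$. The three cases in the definition of $R(U, y^a)$ give $\lg_\st(U') - \lg_\st(U) \in \{+1, 0, -1\}$. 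Substituting, the change in the second coordinate is $-1$, $-(k+1)$, or $-(2k+1)$ respectively. In each of the three cases it is strictly negative, so the second coordinate of $\nu$ strictly decreases, yielding $\nu(u') <_{\mathrm{lex}} \nu(u)$.

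Combining the two cases, $\nu$ strictly decreases along every one-step reduction; well-foundedness of the lex order on $\N \times \N$ then rules out any infinite rewriting sequence. The only delicate point is the bookkeeping on the second coordinate for T-transformations, but the conceptual content is simply that a T-transformation moves one syllable strictly to the left, and this is precisely what the position-weighted count $\sum_i i \cdot \lg_\st(U_i)$ is designed to detect; I do not anticipate any genuine obstacle beyond the case analysis already sketched.
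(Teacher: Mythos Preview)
Your proof is correct and takes essentially the same approach as the paper: both use the position-weighted syllable count $\sum_i i\cdot\lg_\st(U_i)$ to detect that T-transformations push a syllable leftward, together with the piling length to handle $\RR_0$. The only cosmetic difference is that the paper combines these into a single scalar weight $r + \sum_{i=1}^r i\cdot\lg_\st(U_i)$ rather than a lexicographic pair, and omits the case analysis on $\lg_\st(U')-\lg_\st(U)$ that you spell out.
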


\begin{proof}
Define the weight of a piling $u = (U_1, \dots, U_r) \in \Omega^*$ as $r + \sum_{i = 1}^r i \cdot \lg_\st (U_i)$.
It is easily seen that each elementary rewriting step strictly reduces the weight.
So, there is no infinite rewriting sequence, and therefore $\RR$ is terminating.
\end{proof}

\begin{prop}\label{prop4_5}
The rewriting system $\RR$ is confluent.
\end{prop}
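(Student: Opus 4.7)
The plan is to apply Newman's criterion (Theorem~\ref{thm4_1}): since Lemma~\ref{lem4_4} already shows that $\RR$ is terminating, it suffices to verify that every critical pair of $\RR$ is resolved. Because each rule in $\RR$ has a left-hand side of length at most two, a short bookkeeping shows that the non-trivial critical pairs fall into three families: (I) two T-transformations sharing the same length-two left-hand side $(U, V)$, corresponding to two distinct choices of extracted syllable $x^a, x'^{a'} \in V$; (II) two T-transformations applied to a three-stratum piling $(U_1, U_2, U_3)$, one acting on $(U_1, U_2)$ and the other on $(U_2, U_3)$; and (III) an overlap between the rule in $\RR_0$ and a T-transformation on a piling of the form $(\emptyset, V)$ with $V \neq \emptyset$.

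For family~(I), the strategy is to apply ``the other'' extraction to each of the two outputs and to verify that the two resulting pilings coincide. Writing the elements of $V$ in decreasing order with respect to $\le$, Corollary~\ref{corl4_3} together with Conditions~(c) and~(f) of a trickle graph shows that the extracted syllables agree regardless of the order of extraction, while Condition~(g) supplies the key commutation $\varphi_x^a \circ \varphi_y^b = \varphi_{\varphi_x^a(y)}^b \circ \varphi_x^a$ needed to reconcile the two compositions of automorphisms appearing in the subsequent applications of~$R$.

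Family~(II) is the core of the proof and the main technical obstacle. Given the two outputs $(U_1', U_2', U_3)$ and $(U_1, U_2'', U_3'')$, the target common descendant is the piling obtained by first extracting $x'^{a'}$ from $U_3$ and then extracting the appropriate image of $x^a$ in the modified middle stratum $U_2''$. The verification is a case analysis on the relative position of $x$ and $x'$ in $\le$, and it splits into three checks: (a) the ``can be added to $U$'' hypothesis in the definition of a T-transformation is preserved along both rewriting paths, which follows from Condition~(b) and the fact that each $\varphi_z$ preserves adjacency; (b) the two iterated applications of~$R$ on $U_1$ produce the same stratum, which is where Condition~(g) is used essentially to reorder the inverse automorphisms introduced by $R$; and (c) the new middle strata agree, which is a direct computation from the definitions of $L$, $R$, and $\gamma$ combined with Corollary~\ref{corl4_3}.

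Finally, for family~(III), I must exhibit a common descendant of $(V)$ and of the T-output $(\{y^a\}, V \setminus \{x^a\})$, where $y^a = \gamma(V, x^a)$. Since $(V)$ is $\RR$-irreducible when $V$ is a non-empty stratum, the task reduces to showing that $(\{y^a\}, V \setminus \{x^a\})$ can be rewritten by iterated T-transformations to $(V, \emptyset)$, which is then erased to $(V)$ by $\RR_0$. The plan is an induction on $|V|$: at each step one extracts the current maximal element of the right stratum and verifies that the $\varphi^{-b}$ factors introduced by~$R$ cancel precisely the $\varphi^{a}$ factors introduced by the original $\gamma$, so that the reconstructed left stratum is indeed $V$. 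Once all three families are handled, Theorem~\ref{thm4_1} yields the confluence of $\RR$.
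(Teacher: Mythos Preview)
Your plan is correct and matches the paper's proof: your families (I), (II), (III) are exactly the paper's cases (C3), (C2), (C1), and your resolution strategies coincide with Lemmas~\ref{lem4_12}, \ref{lem4_10}, and \ref{lem4_8} respectively, with the key ingredient in each case being the iterated form of Condition~(g) (the paper's Lemma~\ref{lem4_7}). One point to watch in family~(II): the decisive case split is not on the $\le$-relation between the two extracted vertices but on whether the syllable $\gamma(U_3,x'^{a'})$ lands on the same vertex as $x$; when it does, exponents merge or cancel in the middle stratum and the common descendant requires an extra T-step on one branch, so the single ``extract the appropriate image of $x^a$'' move you describe needs a small refinement there (Cases~2 and~3 of Lemma~\ref{lem4_10}).
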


We know from Lemma \ref{lem4_4} that $\RR$ is terminating, hence, by Theorem \ref{thm4_1}, in order to prove Proposition \ref{prop4_5} it suffices to show that every critical pair is resolved.
Let $(u_1, u_2, u_3, v_1, v_2)$ be a critical pair.
Then we have one of the following three situations.
\begin{itemize}
\item[(C1)]
$u_1 = \epsilon$, $u_2 = (\emptyset)$, $u_3 = (W)$, $v_1 = T (\emptyset, W, z^c)$ and $v_2 = \epsilon$, where $W \in \Omega$ and $z^c \in W$.
This is a critical pair of type (b).
\item[(C2)]
$u_1 = (U)$, $u_2 = (V)$, $u_3 = (W)$, $v_1 = T (U, V, y^b)$ and $v_2 = T (V, W, z^c)$, where $U, V, W \in \Omega$, $y^b \in V$ and $z^c \in W$.
This is a critical pair of type (a).
\item[(C3)]
$u_1 = u_3 = \epsilon$, $u_2 = (U, V)$, $v_1 = T (U, V, y^b)$ and $v_2 = T (U, V, y'^{b'})$, where $U, V \in \Omega$, $y^b, y'^{b'} \in V$ and $y^b \neq y'^{b'}$.
This is a critical pair of type (b).
\end{itemize}
We show in each case that the critical pair is resolved.
This requires a case-by-case proof through several lemmas.

\begin{lem}\label{lem4_6}
Let $x, y, z \in V (\Gamma)$ be such that $z \le y \le x$.
Then:
\begin{itemize}
\item[(1)]
$(\varphi_x \circ \varphi_y) (z) = (\varphi_{\varphi_x (y)} \circ \varphi_x) (z)$,
\item[(2)]
$(\varphi_x^{-1} \circ \varphi_y) (z) = (\varphi_{\varphi_x^{-1} (y)}\circ \varphi_x^{-1}) (z)$,
\item[(3)]
$(\varphi_x \circ \varphi_y^{-1}) (z) = (\varphi_{\varphi_x(y)}^{-1} \circ \varphi_x) (z)$,
\item[(4)]
$(\varphi_x^{-1} \circ \varphi_y^{-1}) (z) = (\varphi_{\varphi_x^{-1}(y)}^{-1} \circ \varphi_x^{-1}) (z)$.
\end{itemize}
\end{lem}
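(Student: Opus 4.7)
\medskip

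\textbf{Proof plan.} Part (1) follows directly from Condition (g) in the definition of a trickle graph together with the Remark that immediately extends (g) to weak inequalities: if any two of $z,y,x$ coincide, both sides of (1) trivially agree, and otherwise $z<y<x$ and (g) applies verbatim. So (1) requires no work beyond citation.

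For (2)--(4), my plan is to reduce each identity to (1) by a change of variables, exploiting the fact that $\varphi_x$ is an order-preserving automorphism of $\starE_x(\Gamma)$ by Condition (c). In particular $\varphi_x^{-1}$ is also order-preserving, and since $\varphi_x(x)=x$ (Condition (d) forces $\varphi_x$ to fix $x$), the chain $z\le y\le x$ can be transported under $\varphi_x^{\pm 1}$ or $\varphi_y^{\pm 1}$ to new chains inside $\starE_x(\Gamma)\cup\starE_y(\Gamma)$ that remain $\le x$. Concretely, for (3) I set $z' = \varphi_y^{-1}(z)$; then $z'\le y\le x$, so (1) applied to $(x,y,z')$ yields
\[
\varphi_x(z) \;=\; (\varphi_x\circ\varphi_y)(z') \;=\; (\varphi_{\varphi_x(y)}\circ\varphi_x)(z'),
\]
and applying $\varphi_{\varphi_x(y)}^{-1}$ to both sides gives (3). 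For (2) I set $y'' = \varphi_x^{-1}(y)$ and $z'' = \varphi_x^{-1}(z)$, so $z''\le y''\le x$; then (1) applied to $(x,y'',z'')$ combined with $\varphi_x(y'')=y$, $\varphi_x(z'')=z$ gives $(\varphi_x\circ\varphi_{y''})(z'') = \varphi_y(z)$, and composing with $\varphi_x^{-1}$ on the left is exactly (2).

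For (4), the cleanest route is to apply (1) to the dual trickle graph $\widetilde{\Gamma}=(\Gamma,\le,\mu,(\varphi_x^{-1})_{x\in V(\Gamma)})$, which has the same partial order so that $z\le y\le x$ still holds; identity (1) in $\widetilde{\Gamma}$ reads precisely (4). (Alternatively one can combine (2) and (3).)

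The only point requiring care is the verification that the auxiliary vertices $z',y'',z''$ genuinely lie in the relevant stars and satisfy the chain condition needed to invoke (1); this is where Condition (c) (order preservation) and Condition (d) (which, together with bijectivity, forces $\varphi_x(x)=x$) do the work. Since these are immediate, no substantive obstacle arises: the lemma is essentially a routine unpacking of Condition (g) combined with the order-preservation of the star automorphisms.
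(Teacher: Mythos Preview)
Your proposal is correct and follows essentially the same approach as the paper: Part (1) is Condition (g), and Parts (2)--(3) are obtained from (1) by the very substitutions you describe (the paper writes $z_1=\varphi_x^{-1}(z)$, $y_1=\varphi_x^{-1}(y)$ for (2) and $z_1=\varphi_y^{-1}(z)$ for (3)). The only cosmetic difference is in (4): the paper derives it by applying the already-proved (2) to $z_1=\varphi_y^{-1}(z)\le y\le x$, whereas you invoke the dual trickle graph $\widetilde{\Gamma}$; both routes are immediate and equivalent.
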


\begin{proof}
Part (1) is Condition (g) in the definition of a trickle graph.
For Part (2) we apply Part (1) to $z_1 = \varphi_x^{-1} (z) \le y_1 = \varphi_x^{-1} (y) \le x$.
For Part (3) we apply Part (1) to $z_1 = \varphi_y^{-1} (z) \le y \le x$.
Finally, for Part (4) we apply Part (2) to $z_1 = \varphi_y^{-1} (z) \le y \le x$.
\end{proof}

\begin{lem}\label{lem4_7}
Let $x, y, z \in V (\Gamma)$ be such that $z \le y \le x$ and let $a, b \in \Z$.
Then
\[
(\varphi_x^a \circ \varphi_y^b) (z) = (\varphi_{\varphi_x^a (y)}^b \circ \varphi_x^a) (z)\,.
\]
\end{lem}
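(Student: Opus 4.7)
The plan is to reduce the statement to Lemma \ref{lem4_6} by a double induction: first fix $b = \pm 1$ and induct on $|a|$, then bootstrap to arbitrary $|b|$. Throughout, Condition (a) guarantees that $z \le y \le x$ forces $z \in \starE_y(\Gamma)$ and $y \in \starE_x(\Gamma)$, so all the automorphisms are applied to legitimate arguments; Condition (c) guarantees that $\varphi_x$ preserves the partial order on $\starE_x(\Gamma)$, so inequalities of the form $z \le y$ are preserved under iterates.

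First I would establish, by induction on $|a|$, that for all $z \le y \le x$ and all $a \in \Z$,
\[
(\varphi_x^a \circ \varphi_y)(z) = (\varphi_{\varphi_x^a(y)} \circ \varphi_x^a)(z)\,.
\]
The cases $a \in \{-1, 0, 1\}$ are immediate: $a = 0$ is trivial, and $a = \pm 1$ are exactly parts (1) and (2) of Lemma \ref{lem4_6}. For the induction step $a \mapsto a + 1$ (with $a \ge 1$; the case $a \le -1$ is symmetric), set $y_a = \varphi_x^a(y)$ and $z_a = \varphi_x^a(z)$. Applying $\varphi_x$ to the inductive hypothesis gives $\varphi_x^{a+1} \circ \varphi_y(z) = \varphi_x \circ \varphi_{y_a}(z_a)$. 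Now $z \le y \le x$ yields $z_a \le y_a$ by Condition (c), and $y_a \le x$ follows by iterating the fact that $\varphi_x(y) \le \varphi_x(x) = x$ (again Condition (c)). Hence Lemma \ref{lem4_6}(1) applies to $z_a \le y_a \le x$ and produces $\varphi_x \circ \varphi_{y_a}(z_a) = \varphi_{\varphi_x(y_a)} \circ \varphi_x(z_a) = \varphi_{\varphi_x^{a+1}(y)} \circ \varphi_x^{a+1}(z)$, as required. The same argument using Lemma \ref{lem4_6}(2) handles the step $a \mapsto a - 1$.

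Next I would prove the full statement by induction on $|b|$. The cases $b = 0$ and $b = 1$ are respectively trivial and the statement of Step 1 (similarly $b = -1$). For the step $b \mapsto b + 1$, write $z' = \varphi_y^b(z)$. Since $z \le y$, Condition (c) gives $\varphi_y^b(z) \le \varphi_y^b(y) = y$, so $z' \le y \le x$; the already proved single-$\varphi_y$ version then gives
\[
\varphi_x^a \circ \varphi_y(z') = \varphi_{\varphi_x^a(y)} \circ \varphi_x^a(z')\,.
\]
Combining with the inductive hypothesis $\varphi_x^a \circ \varphi_y^b(z) = \varphi_{\varphi_x^a(y)}^b \circ \varphi_x^a(z)$ yields
\[
\varphi_x^a \circ \varphi_y^{b+1}(z) = \varphi_{\varphi_x^a(y)} \circ \varphi_x^a \circ \varphi_y^b(z) = \varphi_{\varphi_x^a(y)}^{b+1} \circ \varphi_x^a(z)\,.
\]
The step $b \mapsto b - 1$ proceeds identically, using $\varphi_y^{-1}$ in place of $\varphi_y$ and invoking the variant of Step 1 with $\varphi_y^{-1}$ (which follows from Lemma \ref{lem4_6}(3) and (4) in the base cases).

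I do not foresee any essential obstacle: the content is entirely formal once Lemma \ref{lem4_6} is in hand. The only bookkeeping that demands care is checking that every iterate $\varphi_x^a(y)$ stays $\le x$ and every iterate $\varphi_y^b(z)$ stays $\le y$, so that Lemma \ref{lem4_6} can legitimately be invoked at each inductive step; this is handled uniformly by Condition (c).
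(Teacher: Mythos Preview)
Your proposal is correct and follows essentially the same approach as the paper: a double induction reducing everything to Lemma~\ref{lem4_6}. The paper splits into the four sign cases for $(a,b)$ and treats one in detail, whereas you handle both signs simultaneously by inducting on $|a|$ and $|b|$; the underlying computations are identical.
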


\begin{proof}
If either $a=0$ or $b=0$ then the result is obvious.
So, we can assume that $a \neq 0$ and $b \neq 0$.
From here the proof is divided into four cases, depending on whether $a > 0$ and $b > 0$, or $a < 0$ and $b > 0$, or $a > 0$ and $b < 0$, or $a < 0$ and $b < 0$.
We treat the case $a > 0$ and $b >0$.
The other three cases can be treated in the same way.

So, we take two integers $a > 0$ and $b >0$.
First, we prove by induction on $a$ that
\[
(\varphi_x^a \circ \varphi_y) (z) = (\varphi_{\varphi_x^a (y)} \circ \varphi_x^a) (z)\,.
\]
The case $a = 1$ is Lemma \ref{lem4_6}\,(1).
Assume $a \ge 2$ and that the induction hypothesis holds. 
Let $y_1 = \varphi_x (y)$ and $z_1 = \varphi_x (z)$.
By applying the induction hypothesis to $z_1 \le y_1 \le x$ and Lemma \ref{lem4_6}\,(1) we get:
\begin{gather*}
(\varphi_{\varphi_x^a (y)} \circ \varphi_x^a) (z) = (\varphi_{\varphi_x^{a-1} (y_1)} \circ \varphi_x^{a-1}) (z_1) = (\varphi_x^{a-1} \circ \varphi_{y_1}) (z_1) = (\varphi_x^{a-1} \circ \varphi_{\varphi_x (y)} \circ \varphi_x) (z) =\\
(\varphi_x^{a-1} \circ \varphi_x \circ \varphi_y) (z) = (\varphi_x^a \circ \varphi_y) (z)\,.
\end{gather*}

Now we fix $a \ge 1$ and we prove by induction on $b$ that
\[
(\varphi_x^a \circ \varphi_y^b) (z) = (\varphi_{\varphi_x^a (y)}^b \circ \varphi_x^a) (z)\,.
\]
The case $b = 1$ is proved in the previous paragraph.
Assume $b \ge 2$ and that the induction hypothesis holds.
Let $z_1 = \varphi_y^{b-1} (z)$.
By applying the induction hypothesis and the case $b = 1$ to $z_1 \le y \le x$ we get:
\begin{gather*}
(\varphi_{\varphi_x^a (y)}^b \circ \varphi_x^a) (z) = (\varphi_{\varphi_x^a (y)} \circ \varphi_{\varphi_x^a (y)}^{b-1} \circ \varphi_x^a) (z) = (\varphi_{\varphi_x^a (y)} \circ \varphi_x^a \circ \varphi_y^{b-1}) (z) =\\
(\varphi_{\varphi_x^a (y)} \circ \varphi_x^a) (z_1) = (\varphi_x^a \circ \varphi_y) (z_1) = (\varphi_x^a \circ \varphi_y^b) (z)\,.
\end{gather*}
\end{proof}

\begin{lem}\label{lem4_8}
In Case (C1) every critical pair $(u_1, u_2, u_3, v_1, v_2)$ is resolved.
\end{lem}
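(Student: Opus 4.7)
In Case (C1), the critical pair has $u_1 = \epsilon$, $u_2 = (\emptyset)$, $u_3 = (W)$, $v_1 = T(\emptyset, W, z^c)$, and $v_2 = \epsilon$, for some non-empty $W \in \Omega$ and $z^c \in W$. Writing $z'^c := \gamma(W, z^c)$, the T-transformation unpacks to $v_1 = (\{z'^c\}, W \setminus \{z^c\})$, while $u_1 \cdot v_2 \cdot u_3 = (W)$. Because $W \neq \emptyset$, the piling $(W)$ has length $1$ and is $\RR$-irreducible (neither $\RR_0$ nor a T-transformation applies), so resolving the critical pair reduces to exhibiting a rewriting sequence $v_1 \to^* (W)$.

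The plan is to build this sequence by iteratively applying T-transformations that transfer the elements of $W \setminus \{z^c\}$ back to the first stratum, ending with $(W, \emptyset)$ and one $\RR_0$-step to $(W)$. I would fix a numbering $\supp(W) = \{w_1, \dots, w_n\}$ satisfying $w_i > w_j \Rightarrow i < j$, with $z = w_{j_0}$ and $b_i$ the exponent of $w_i$ in $W$, and perform extractions in the index order $w_1, w_2, \dots, w_n$, skipping $w_{j_0}$. The invariant maintained at step $k$ would be that the piling equals $(U_k, V_k)$, where $V_k$ is $W \setminus \{z^c\}$ with its first $k$ extracted syllables removed, and $U_k$ consists of the already-extracted syllables together with one extra syllable $z_k^c$, whose vertex $z_k$ is the one given by $\gamma$ of $z^c$ in the residual stratum $W$ minus the already-extracted elements. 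In particular, $U_{n-1} = W$ and $V_{n-1} = \emptyset$, after which $\RR_0$ yields $(W)$.

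Each inductive step relies on three ingredients. First, by Corollary \ref{corl4_3} and the numbering convention, the next extraction yields $\gamma(V_k, w^b) = w^b$, because no surviving element of $\supp(V_k)$ is $>$-above $w$. Second, the subsequent $R$-operation applies $\varphi_w^{-b}$ to every element of $U_k$; on the $z_k^c$ entry, this $\varphi_w^{-b}$ telescopes with the leading factor of the composition defining $z_k$ to produce $z_{k+1}^c$, while once $k \ge j_0$ the vertex $z_k$ has stabilized at $z$ and satisfies $z \not < w$ by the numbering, so Condition (d) fixes it. Third, on each already-extracted syllable $w_{i_j}^{b_{i_j}}$ with $j \le k$, Condition (d) gives $\varphi_w(w_{i_j}) = w_{i_j}$, since $w_{i_j}$ precedes $w$ in the numbering and hence $w_{i_j} \not < w$. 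The main obstacle is verifying at each step that $U_k$ is genuinely a stratum, i.e., that $w$ is adjacent in $\Gamma$ to every existing element of $U_k$: for the extracted vertices this is inherited from $W$ being a stratum, while for $z_k$ it follows by repeated use of Conditions (a)--(c), which keep $z_k \in \starE_w(\Gamma)$ throughout the induction.
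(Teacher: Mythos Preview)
Your proposal is correct and follows essentially the same route as the paper: set the target $w=(W)$, then rebuild $W$ in the first coordinate by extracting the remaining syllables of $W\setminus\{z^c\}$ in an order compatible with $\le$, so that each extraction yields the syllable itself and each $R$-step telescopes one $\varphi$ off the moving ``$z$-syllable'' while fixing the already-extracted vertices via Condition~(d). Two cosmetic remarks: the observation that $(W)$ is $\RR$-irreducible is unnecessary (you only need $u_1\cdot v_2\cdot u_3=(W)$ as a common target), and the adjacency check for $z_k$ with the next extracted vertex rests on the fact that each $\varphi_{w}$ is a graph automorphism of $\starE_{w}(\Gamma)$ together with Condition~(d), rather than on Conditions (a)--(c) per se.
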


\begin{proof}
We have $u_1 = \epsilon$, $u_2 = (\emptyset)$, $u_3 = (W)$, $v_1 = T (\emptyset, W, z^c)$ and $v_2 = \epsilon$, where $W \in \Omega$ and $z^c \in W$.
We must show that there exists $w \in \Omega^*$ such that $v_1 \to^* w$ and $u_1 \cdot v_2 \cdot u_3 \to^* w$.
In fact, we set $w = u_1 \cdot v_2 \cdot u_3 = u_3 = (W)$ and we show that $v_1 \to^* w$.

We set $W = \{z_1^{c_1}, \dots, z_p^{c_p}\}$ so that, if $z_j > z_k$, then $j < k$.
Let $i \in \{1, \dots, p\}$ be such that $z^c = z_i^{c_i}$.
Let $y = (\varphi_{z_1}^{c_1} \circ \cdots \circ \varphi_{z_{i-1}}^{c_{i-1}}) (z_i)$.
We have $\gamma (W, z^c) = y^c$, and $v_1 = T (\emptyset, W, z^c) = (\{y^c\}, L(W, z^c))$.
We define $(V_j, W_j) \in \Omega^*$ for $j \in \{0,1, \dots, p-1\}$ as follows.
We set $V_0 = \{y^c\}$ and $W_0 = L(W, z^c)$.
For $1 \le j< i$ we set
\[
V_j = \{z_1^{c_1}, \dots, z_j^{c_j}, u_j^c \} \text{ and } W_j = \{z_{j+1}^{c_{j+1}}, \dots, z_{i-1}^{c_{i-1}}, z_{i+1}^{c_{i+1}}, \dots, z_p^{c_p}\}\,,
\]
where 
\[
u_j = (\varphi_{z_{j+1}}^{c_{j+1}} \circ \cdots \circ \varphi_{z_{i-1}}^{c_{i-1}}) (z_i)\,.
\]
For $i \le j \le p-1$ we set
\[
V_j = \{z_1^{c_1}, \dots, z_{j+1}^{c_{j+1}}\} \text{ and } W_j = \{z_{j+2}^{c_{j+2}}, \dots, z_p^{c_p}\}\,.
\]
We show that $(V_j, W_j) \to (V_{j+1}, W_{j+1})$ for all $j \in \{0, \dots, p-2\}$.
Since $(V_0, W_0) = v_1$ and $(V_{p-1}, W_{p-1}) = (W, \emptyset) \to (W) =w$, this implies that $v_1 \to^*w$.

For $1 \le j < i$ we have $\gamma (W_{j-1}, z_j^{c_j}) = z_j^{c_j}$, and for $i \le j \le p-2$ we have $\gamma (W_{j-1}, z_{j+1}^{c_{j+1}}) = z_{j+1}^{c_{j+1}}$.
For $1 \le j < i$ we have $L (W_{j-1} ,z_j^{c_j}) = W_j$, and for $i \le j \le p-2$ we have $L (W_{j-1}, z_{j+1}^{c_{j+1}}) = W_j$.
For $1 \le k < j \le p$ we have $\{z_k, z_j\} \in E(\Gamma)$, because $W$ is a stratum, and for $j < i$ we have $\{u_{j-1}, z_j\} \in E(\Gamma)$, because, by the choice of the numbering of the indices,
\[
u_{j-1} = (\varphi_{z_j}^{c_j} \circ \cdots \circ \varphi_{z_{i-1}}^{c_{i-1}}) (z_i) \text{ and }
z_j = (\varphi_{z_j}^{c_j} \circ \cdots \circ \varphi_{z_{i-1}}^{c_{i-1}}) (z_j)\,.
\]
So, for $1 \le j \le i-1$ the syllable $z_j^{c_j}$ can be added to $V_{j-1}$ and $R (V_{j-1}, z_j^{c_j}) = V_j$.
Similarly, for $i \le j \le p-2$ the syllable $z_{j+1}^{c_{j+1}}$ can be added to $V_{j-1}$ and $R (V_{j-1}, z_{j+1}^{c_{j+1}}) = V_j$.
We deduce that, for every $j \in \{1, \dots, i-1\}$, we have $(V_j, W_j) = T(V_{j-1}, W_{j-1}, z_j^{c_j})$, and, for every $j \in \{i, \dots, p-2\}$, we have $(V_j, W_j) = T (V_{j-1}, W_{j-1}, z_{j+1}^{c_{j+1}})$.
So, $(V_j, W_j) \to (V_{j+1}, W_{j+1})$ for all $j \in \{0, \dots, p-2\}$.
\end{proof}

\begin{lem}\label{lem4_9}
Let $U$ be a non-empty stratum and let $x^a \in U$.
Let $y^b \in S (\Gamma)$ be a syllable such that $x \neq y$ and $y^b$ can be added to $U$.
Then
\[
\gamma (R (U, y^b), \varphi_y^{-b} (x)^a) = \gamma (U, x^a)\,.
\]
\end{lem}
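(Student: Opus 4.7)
The argument begins by applying Lemma 4.2 to both sides, replacing each by the $\gamma$ of its totally ordered upper part: $U(x^a)=\{x_j^{a_j}:x_j\geq x\}$ on the left, and $R(U,y^b)(\tilde x^a)=\{z^c\in R(U,y^b):z\geq \tilde x\}$ on the right, where $\tilde x=\varphi_y^{-b}(x)$. From there the proof splits according to whether $x<y$ or not.

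If $x\not<y$, then $\tilde x=x$ by Condition (d). I would first check that $\varphi_y^{-b}$ fixes every $x_j\in\supp(U)$ with $x_j\geq x$: the cases $x_j>y$, $x_j||y$ and $x_j=y$ all follow directly from Condition (d), while $x_j<y$ would force $x<y$ and is excluded. Next, I would show that no other element of $R(U,y^b)$ lies above $x$. For $x_j\in\supp(U)$ with $x_j\not\geq x$ and $x_j<y$, Condition (c) gives $\varphi_y^{-b}(x_j)<y$; then either $y<x$ (giving $\varphi_y^{-b}(x_j)<x$ immediately) or $y||x$ (in which case a short application of Condition (b) to $\{x,y\}\in E_{||}$ with $\varphi_y^{-b}(x_j)\leq y$ rules out $\varphi_y^{-b}(x_j)\geq x$). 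The $y$-entry of $R(U,y^b)$, if present, sits at the vertex $y\not\geq x$. Thus $R(U,y^b)(x^a)=U(x^a)$ as strata, and the two $\gamma$'s agree.

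If $x<y$, then $\tilde x=\varphi_y^{-b}(x)<y$ by Condition (c), and $R(U,y^b)(\tilde x^a)$ decomposes along the chain into three consecutive blocks: the unchanged elements $x_{j_l}^{a_{j_l}}$ with $x_{j_l}>y$, then a single $y$-entry with exponent $b'=b+a_m$ (with the convention $a_m=0$ if $y\notin\supp(U)$) unless this exponent vanishes, then the shifted elements $\varphi_y^{-b}(x_{j_l})^{a_{j_l}}$ with $x_{j_l}<y$, ending with $\tilde x^a$. Expanding $\gamma(R(U,y^b),\tilde x^a)$ along this chain and using $\varphi_y^{b'}=\varphi_y^{a_m}\circ\varphi_y^b$, I would push $\varphi_y^b$ to the right through each factor $\varphi_{\varphi_y^{-b}(x_{j_l})}^{a_{j_l}}$ with $x_{j_l}<y$ by repeated application of Lemma 4.7. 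Each such commutation turns $\varphi_{\varphi_y^{-b}(x_{j_l})}^{a_{j_l}}$ into $\varphi_{x_{j_l}}^{a_{j_l}}$, and the leftover $\varphi_y^b$ finally sends $\tilde x$ back to $x$; reassembling gives exactly $\gamma(U,x^a)$. The degenerate sub-case $y=x_m\in\supp(U)$ with $b=-a_m$ (the $y$-entry disappears) is handled symmetrically, starting from $\gamma(U,x^a)$ and pushing $\varphi_y^{-b}=\varphi_y^{a_m}$ to the right past $\varphi_{x_{j_l}}^{a_{j_l}}$ for $l>m$, landing on $\gamma(R(U,y^b),\tilde x^a)$.

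The main obstacle is justifying the iterated use of Lemma 4.7: at each step the running argument must lie $\leq \varphi_y^{-b}(x_{j_l})\leq y$. I would establish this by a descending induction along the chain, using Condition (c) to propagate $\varphi_y^{-b}(x_{j_{l+1}})<\varphi_y^{-b}(x_{j_l})$ from $x_{j_{l+1}}<x_{j_l}$, and observing that for any vertex $v$ and any $w\in\starE_v(\Gamma)$ with $w\leq v$, the image $\varphi_v^c(w)$ remains $\leq v$ (a direct consequence of Conditions (c) and (d) together with $\varphi_v(v)=v$). This keeps the hypotheses of Lemma 4.7 satisfied at every commutation and closes the argument.
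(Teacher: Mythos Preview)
Your proof is correct and follows essentially the same route as the paper: reduce via Lemma~4.2 to the totally ordered upper part $U(x^a)$, split into the cases $x\not<y$ and $x<y$, and in the second case commute $\varphi_y^{b}$ past the shifted factors using Lemma~4.7. The only differences are cosmetic: the paper handles Case~1 in one stroke via the equivalence $\varphi_y^{-b}(x_j)\ge\varphi_y^{-b}(x_i)\Leftrightarrow x_j\ge x_i$ (Condition~(c)) rather than your case split, and it absorbs the degenerate sub-case $y=x_m$, $b+a_m=0$ into a single uniform formula rather than treating it separately; conversely, your explicit verification of the hypotheses of Lemma~4.7 at each step is a point the paper leaves implicit.
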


\begin{proof}
We set $U = \{ x_1^{a_1}, \dots, x_p^{a_p}\}$ and we assume as always that, if $x_j > x_k$, then $j<k$.
Let $i \in \{1, \dots, p\}$ be such that $x^a =x_i^{a_i}$.
By Lemma \ref{lem4_2} we can assume that $U (x^a) = \{x_1^{a_1}, \dots, x_{i-1}^{a_{i-1}}, x_i^{a_i} \}$ and $x_1 > \cdots > x_{i-1} > x_i$.
From here the proof of the lemma is divided into two cases depending on whether $x \not < y$ or $x < y$.

{\it Case 1: $x \not < y$.}
Let $j \in \{1, \dots, p\}$.
Note that 
\[
\varphi_y^{-b} (x_j) \ge \varphi_y^{-b} (x_i)\ \Leftrightarrow\ x_j \ge x_i \Leftrightarrow\ 1\le j \le i\,.
\]
Suppose $1\le j \le i$.
If $x > y$, then $x_j > y$, hence $\varphi_y^{-b} (x_j) = x_j$.
Suppose $y || x$.
If we had $y > x_j$, then by Condition (b) in the definition of a trickle graph we would have $x_i || x_j$, which would contradict the hypothesis $x_j \ge x_i$.
Thus, $y || x_j$ or $y< x_j$, hence $\varphi_y^{-b} (x_j) = x_j$.
So, $\varphi_y^{-b} (x_j) = x_j$ for all $j \in \{1, \dots, i-1,i\}$, hence $R (U, y^b) (x^a) = U (x^a)$.
It follows that
\[
\gamma (R (U, y^b), \varphi_y^{-b} (x)^a) = \gamma (U (x^a), x^a) = \gamma (U, x^a) \,.
\]

{\it Case 2: $x < y$.}
By Lemma \ref{lem4_2} there exists $k \in \{1, \dots, i-1 \}$ such that $x_{k-1} > y > x_k$ if $y \not \in \supp (U)$ (which simply means $y > x_1$ if $k=1$) and $y = x_k$ if $y \in \supp (U)$.
For each $j \in \{k, \dots, i-1, i \}$ we set $z_j = \varphi_y^{-b} (x_j)$.
In particular, $\varphi_y^{-b} (x) = z_i$.
We have
\begin{gather*}
R(U, y^b) (z_i^a) = \{ x_1^{a_1}, \dots, x_{k-1}^{a_{k-1}}, y^b, z_k^{a_k}, z_{k+1}^{a_{k+1}}, \dots, z_{i-1}^{a_{i-1}}, z_i^{a_i} \} \text{ if } y \not \in \supp (U)\,,\\
R(U, y^b) (z_i^a) = \{ x_1^{a_1}, \dots, x_{k-1}^{a_{k-1}}, x_k^{b+a_k}, z_{k+1}^{a_{k+1}}, \dots, z_{i-1}^{a_{i-1}}, z_i^{a_i} \} \text{ if } y = x_k \in \supp (U) \text{ and } b + a_k \neq 0\,,\\
R(U, y^b) (z_i^a) = \{ x_1^{a_1}, \dots, x_{k-1}^{a_{k-1}}, z_{k+1}^{a_{k+1}}, \dots, z_{i-1}^{a_{i-1}}, z_i^{a_i} \} \text{ if } y= x_k \in \supp (U) \text{ and } b + a_k = 0\,.
\end{gather*}
In the three cases we have 
\[
\gamma (R (U, y^b), \varphi_y^{-b} (x)^a) = \gamma (R (U, y^b) (z_i^{a_i}), z_i^{a_i}) = (\varphi_{x_1}^{a_1} \circ \cdots \circ \varphi_{x_{k-1}}^{a_{k-1}} \circ \varphi_y^b \circ \varphi_{z_k}^{a_k} \circ \cdots \circ \varphi_{z_{i-1}}^{a_{i-1}}) (z_i)^a\,.
\]
By applying Lemma \ref{lem4_7} several times we get
\begin{gather*}
(\varphi_{z_k}^{a_k} \circ \cdots \circ \varphi_{z_{i-1}}^{a_{i-1}}) (z_i) =\\
(\varphi_{z_k}^{a_k} \circ \cdots \circ \varphi_{z_{i-2}}^{a_{i-2}} \circ \varphi_{\varphi_y^{-b} (x_{i-1})}^{a_{i-1}} \circ \varphi_y^{-b}) (x_i) =\\
(\varphi_{z_k}^{a_k} \circ \cdots \circ \varphi_{\varphi_y^{-b} (x_{i-2})}^{a_{i-2}} \circ \varphi_y^{-b} \circ \varphi_{x_{i-1}}^{a_{i-1}}) (x_i) =
\cdots =\\
(\varphi_y^{-b} \circ \varphi_{x_k}^{a_k} \circ \cdots \circ \varphi_{x_{i-1}}^{a_{i-1}}) (x_i)\,,
\end{gather*}
hence
\[
\gamma (R (U, y^b), \varphi_y^{-b} (x)^a) = (\varphi_{x_1}^{a_1} \circ \cdots \circ \varphi_{x_{k-1}}^{a_{k-1}} \circ \varphi_{x_k}^{a_k} \circ \cdots \circ \varphi_{x_{i-1}}^{a_{i-1}}) (x_i)^a =
\gamma (U,x^a)\,.
\proved
\]
\end{proof}

\begin{lem}\label{lem4_10}
In Case (C2) every critical pair $(u_1,u_2,u_3,v_1,v_2)$ is resolved.
\end{lem}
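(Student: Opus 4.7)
The plan is to exhibit a common reduct for the two pilings
\[
v_1 \cdot u_3 = (U', V', W) \quad \text{and} \quad u_1 \cdot v_2 = (U, V'', W''),
\]
where I write $y_1^b = \gamma(V, y^b)$ and $z_1^c = \gamma(W, z^c)$, so that $U' = R(U, y_1^b)$, $V' = L(V, y^b)$, $V'' = R(V, z_1^c)$, and $W'' = L(W, z^c)$. The argument splits according to whether the vertices $y$ and $z_1$ of $\Gamma$ coincide.

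In the case $y \neq z_1$, I would apply $T(V', W, z^c)$ to $v_1 \cdot u_3$ and $T(U, V'', \varphi_{z_1}^{-c}(y)^b)$ to $u_1 \cdot v_2$, and show that both arrive at the piling $(U', R(V', z_1^c), W'')$. The syllable $\varphi_{z_1}^{-c}(y)^b$ lies in $V''$ because $y \neq z_1$, and Lemma \ref{lem4_9} (with stratum $V$, element $y^b$, added syllable $z_1^c$) yields $\gamma(V'', \varphi_{z_1}^{-c}(y)^b) = \gamma(V, y^b) = y_1^b$, so the $U$-component on the right side becomes $U'$ as required. The middle-component identity $L(R(V, z_1^c), \varphi_{z_1}^{-c}(y)^b) = R(L(V, y^b), z_1^c)$ then follows by a direct check, subdivided according to whether $z_1 \in \supp(V)$ and, if so, whether the exponents at $z_1$ cancel in the two resulting strata.

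In the case $y = z_1$, the $y$-entry $y^b$ of $V$ interacts with the freshly added $y^c$ in $V''$. By Condition (d) in the definition of a trickle graph, $\varphi_y$ fixes every element of $\supp(V)$ that is not strictly below $y$, which yields the clean identities $\gamma(V'', y^{b+c}) = y_1^{b+c}$ (when $b+c \neq 0$) and $\gamma(R(V', y^c), y^c) = y_1^c$. If $b + c \neq 0$, I would apply $T(U, V'', y^{b+c})$ on the right and the pair $T(V', W, z^c)$ followed by $T(U', R(V', y^c), y^c)$ on the left; both sequences terminate at $(R(U, y_1^{b+c}), L(V'', y^{b+c}), W'')$, using the identity $R(R(U, y_1^b), y_1^c) = R(U, y_1^{b+c})$ which follows from $\varphi_{y_1}^{-c} \circ \varphi_{y_1}^{-b} = \varphi_{y_1}^{-(b+c)}$ after a brief case analysis on whether $y_1 \in \supp(U)$. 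If instead $b + c = 0$, the same pair of transformations on the left reduces $v_1 \cdot u_3$ directly to $u_1 \cdot v_2$, via the cancellation $R(R(U, y_1^b), y_1^{-b}) = U$.

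The main obstacle is bookkeeping rather than conceptual: at each step one must verify that the star/edge conditions required by the definition of $R$ hold for the syllable being added, and track how successive $L$, $R$, and $\varphi$ operations combine up to the appropriate twist. A handful of degenerate situations in which an intermediate stratum collapses to $\emptyset$ are handled by interposing an application of the rule $((\emptyset), \epsilon) \in \RR_0$ to shrink the piling before the next T-transformation; these reduce to the same underlying identities.
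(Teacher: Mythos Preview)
Your proposal is correct and follows essentially the same route as the paper's proof: the same case split on whether $y$ coincides with $z_1$ (the paper's $z'$), the same invocation of Lemma~\ref{lem4_9} to identify $\gamma(V'', \varphi_{z_1}^{-c}(y)^b)$ with $y_1^b$ in the non-coincident case, and the same two-step reduction on the left branch in the coincident case via the identity $R(R(U, y_1^b), y_1^c) = R(U, y_1^{b+c})$. Your cautionary remark about interposing $\RR_0$ when an intermediate stratum collapses to $\emptyset$ is harmless but unnecessary: a common reduct need not be $\RR$-irreducible, so a piling with an $\emptyset$ entry already witnesses resolution of the critical pair.
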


\begin{proof}
There exist $U, V, W \in \Omega$, $y^b \in V$ and $z^c \in W$ such that $u_1 = (U)$, $u_2 = (V)$, $ u_3 = (W)$, $v_1 = T(U, V, y^b)$ and $v_2 = T(V, W, z^c)$.
Let $y'^b = \gamma (V, y^b)$.
Then $y'^b$ can be added to $U$ and $T(U,V,y^b) = (U_1, V_1)$, where $U_1 = R (U, y'^b)$ and $V_1 = L (V, y^b)$.
Let $z'^c = \gamma (W, z^c)$.
Then $z'^c$ can be added to $V$ and $T(V, W, z^c) = (V_2, W_2)$, where $V_2 = R(V, z'^c)$ and $W_2 = L(W, z^c)$.
We must show that there exists $w \in \Omega^*$ such that $(U_1, V_1, W) \to^* w$ and $(U, V_2, W_2) \to^* w$.
We set $V=\{ y_1^{b_1}, \dots, y_p^{b_p} \}$ and we assume as always that, if $y_j > y_k$, then $j < k$.
Let $i \in \{1, \dots, p\}$ be such that $y^b = y_i^{b_i}$.
From here the proof is divided into three cases depending on whether $z' \neq y$, or $z'=y$ and $b+c \neq 0$, or $z'=y$ and $b +c=0$.

{\it Case 1: $z' \neq y$.}
We set
\begin{gather*}
V_3 = \{ \varphi_{z'}^{-c} (y_k)^{b_k} \mid 1 \le k \le p \text{ and } k \neq i \} \cup \{ z'^c \} \text{ if } z' \not \in \supp (V)\,,\\
V_3 = \{ \varphi_{z'}^{-c} (y_k)^{b_k} \mid 1 \le k \le p \text{ and } k \not\in \{i, j\} \} \cup \{ z'^{b_j +c} \} \text{ if } z' =y_j \in \supp (V) \text{ and } b_j + c \neq 0\,,\\
V_3 = \{ \varphi_{z'}^{-c} (y_k)^{b_k} \mid 1 \le k \le p \text{ and } k \not\in \{i, j\} \} \text{ if } z' =y_j \in \supp (V) \text{ and } b_j + c = 0\,,
\end{gather*}
and we set $w= (U_1, V_3, W_2)$.
Observe that $z'^c$ can be added to $V_1$ and $R(V_1,z'^c) = V_3$, hence $T(V_1,W,z^c) = (V_3, W_2)$, and therefore $(U_1, V_1, W) \to (U_1, V_3, W_2) = w$.
By Lemma \ref{lem4_9} we have $\gamma (V_2, \varphi_{z'}^{-c} (y)^b) = \gamma (V, y^b) = y'^b$.
We also know that $y'^b$ can be added to $U$ and $R(U,y'^b) = U_1$.
Moreover, $L (V_2, \varphi_{z'}^{-c} (y)^b) = V_3$, hence $T(U, V_2, \varphi_{z'}^{-c} (y)^b) = (U_1, V_3)$, and therefore $(U, V_2, W_2) \to (U_1, V_3, W_2) = w$.

{\it Case 2: $z' = y$ and $b+c \neq 0$.}
We set
\[
V_4 = \{ y_1^{b_1}, \dots, y_{i-1}^{b_{i-1}}, \varphi_y^{-c} (y_{i+1})^{b_{i+1}}, \dots, \varphi_y^{-c} (y_p)^{b_p} \}\,,
\]
$U_4= R (U, y'^{b+c})$, and $w = (U_4, V_4, W_2)$.
The syllable $z'^c = y^c$ can be added to $V_1$ and $R(V_1, z'^c) = V_3$, where
\[
V_3 = \{ y_1^{b_1}, \dots, y_{i-1}^{b_{i-1}}, y^c, \varphi_y^{-c} (y_{i+1})^{ b_{i+1}}, \dots, \varphi_y^{-c} (y_p)^{b_p} \}\,,
\]
hence $T(V_1,W, z^c) = (V_3,W_2)$, and therefore $(U_1, V_1, W) \to (U_1, V_3, W_2)$.
We have $\gamma (V_3, y^c) = y'^c$, which can be added to $U_1$, $R(U_1, y'^c) = R (U, y'^{b+c}) = U_4$, and $L(V_3, y^c) = V_4$, hence $T (U_1, V_3, y^c) = (U_4, V_4)$, and therefore $(U_1, V_3, W_2) \to (U_4, V_4, W_2) = w$.
So, $(U_1, V_1, W) \to^* w$.
Note that
\[
V_2 = \{ y_1^{b_1}, \dots, y_{i-1}^{b_{i-1}}, y^{b+c}, \varphi_y^{-c} (y_{i+1})^{b_{i+1}}, \dots, \varphi_y^{-c} (y_p)^{b_p} \}\,.
\]
We have $\gamma (V_2, y^{b+c}) = y'^{b+c}$, which can be added to $U$.
Moreover, $R (U, y'^{b+c}) = U_4$ and $L (V_2, y^{b+c}) = V_4$, hence $T (U, V_2, y^{b+c}) = (U_4, V_4)$, and therefore $(U,V_2, W_2) \to (U_4, V_4, W_2)=w$.

{\it Case 3: $z'=y$ and $b+c=0$.}
We set $w = (U, V_2, W_2)$ and we show that $(U_1, V_1, W) \to^* w$.
The syllable $z'^c = y^c$ can be added to $V_1$ and $R(V_1,z'^c) = V_3$, where
\[
V_3 = \{ y_1^{b_1}, \dots, y_{i-1}^{b_{i-1}}, y^c, \varphi_y^{-c} (y_{i+1})^{b_{i+1}}, \dots, \varphi_y^{-c} (y_p)^{b_p} \}\,,
\]
hence $T (V_1, W, z^c) = (V_3, W_2)$, and therefore $(U_1, V_1, W) \to (U_1, V_3, W_2)$.
We have $\gamma (V_3, y^c) = y'^c$, which can be added to $U_1$.
We have $R(U_1, y'^c) = U$, because $c=-b$, and we have $L(V_1,y^c) = V_2$, since
\[
V_2 = \{ y_1^{b_1}, \dots, y_{i-1}^{b_{i-1}}, \varphi_y^{-c} (y_{i+1})^{b_{i+1}}, \dots, \varphi_y^{-c} (y_p)^{b_p} \}\,,
\]
hence $T (U_1, V_3, y^c) = (U, V_2)$, and therefore $(U_1, V_3, W_2) \to (U, V_2, W_2) = w$.
So, $(U_1, V_1, W) \to^* w$.
\end{proof}

\begin{lem}\label{lem4_11}
Let $U$ be a non-empty stratum and let $x^a, y^b \in U$ be such that $x \neq y$.
Let $x'^a = \gamma (U, x^a)$, $y'^b = \gamma (U, y^b)$, $x''^a = \gamma (L (U, y ^b), x^a)$ and $y''^b = \gamma (L(U, x^a), y^b)$.
\begin{itemize}
\item[(1)]
We have $\{x',y'\}, \{x'',y'\} \in E(\Gamma)$.
\item[(2)]
We have $x || y$ if and only if $x' || y'$, and we have $x \le y$ if and only if $x' \le y'$.
\item[(3)]
We have $\varphi_{x'}^{-a} (y') = \varphi_{x''}^{-a} (y') = y''$.
\item[(4)]
Let $V$ be another stratum.
If both $x'^a$ and $y'^b$ can be added to $V$, then $y''^b$ can be added to $R(V,x'^a)$.
\end{itemize}
\end{lem}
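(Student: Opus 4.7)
The proof hinges on Lemma \ref{lem4_2} (equivalently, Corollary \ref{corl4_3}), which lets us restrict the computation of $\gamma$ to the subset of elements of $U$ comparable to the extracted syllable from above, together with Lemma \ref{lem4_7}, the commutation formula for compositions of $\varphi^a$'s. Throughout I would exploit that each $\varphi_z$ is a graph automorphism of $\starE_z (\Gamma)$ preserving the induced partial order by (c), and fixing every vertex not strictly below $z$ by (d). I would number $U = \{x_1^{a_1}, \ldots, x_p^{a_p}\}$ so that $x_j > x_k$ implies $j < k$, set $x = x_i$ and $y = x_j$, and note that each statement is naturally symmetric under swapping $x$ and $y$. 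Assuming without loss of generality $x \not< y$, so either $x||y$ or $x > y$, Corollary \ref{corl4_3}(1) immediately yields $x'' = x'$. Let $A = \{z \in U : z > x\}$ and $B = \{z \in U : z > y\}$, both totally ordered by Lemma \ref{lem4_2}, and let $u_A, u_B$ denote the compositions $\varphi_{z_1}^{a_1} \circ \cdots \circ \varphi_{z_k}^{a_k}$ indexed by $A$ (resp.\ $B$) in descending order. Then $x' = u_A(x)$ and $y' = u_B(y)$.

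In the case $x > y$, the set $B$ decomposes as $A \sqcup \{x\} \sqcup B'$ with $B' = \{z \in U : x > z > y\}$, so that $y' = u_A(\varphi_x^a(u_{B'}(y)))$ and $y'' = u_A(u_{B'}(y))$. Since $u_{B'}(y) \le x \le z$ for every $z \in A$ (using (c) to push the inequality $y < x$ through $u_{B'}$, and (d) to get $u_{B'}(x) = x$), iterating Lemma \ref{lem4_7} along $A$ yields $u_A \circ \varphi_x^{-a} = \varphi_{u_A(x)}^{-a} \circ u_A$ when applied to the element $\varphi_x^a(u_{B'}(y))$, giving $\varphi_{x'}^{-a}(y') = u_A(u_{B'}(y)) = y''$. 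This proves (3). Claim (2) follows because each $\varphi_z$ in the composition preserves strict order (by (c)) and fixes $x$ (by (d), for $z \in B'$), so $y < x$ propagates to $y' < x'$. Claim (1) follows because each $\varphi_z$ is a graph automorphism preserving the edge $\{x, y\}$ throughout the composition.

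In the case $x || y$, both $x'' = x'$ and $y'' = y'$, so claim (3) reduces via (d) to showing $y' \not< x'$, which is a consequence of claim (2). For claims (1) and (2), I would proceed by induction on $|A \cup B|$. The base case $A = B = \emptyset$ is trivial since then $x' = x$ and $y' = y$. For the inductive step, peel off the maximal element $z$ of $A \cup B$. If $z$ is above both $x$ and $y$, then $\varphi_z$ acts on $x, y \in \starE_z (\Gamma)$ preserving both the edge $\{x, y\}$ and the parallelism (the latter by the contrapositive of (c)), and the inductive hypothesis applied to $L(U, z^{a_z})$ transports to $U$ via $\varphi_z^{a_z}$. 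If $z$ is above only $x$ (the other subcase being symmetric), then $\varphi_z$ fixes $y$ by (d), and the image of $x$ under $\varphi_z^{a_z}$ remains adjacent and parallel to $y$ -- here condition (b) is crucial to guarantee that the parallelism $x || y$ propagates through $\varphi_z^{a_z}$ when only the $x$-endpoint is moved.

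For claim (4), the support of $R(V, x'^a)$ is $\{\varphi_{x'}^{-a}(v) : v \in \supp(V)\}$ together with $x'$, added or with modified exponent according to whether $x' \in \supp(V)$ and whether the new power vanishes. By claim (3), $\varphi_{x'}^{-a}(y') = y''$, so if $y' \in \supp(V)$ and no cancellation occurs then $y'' \in \supp(R(V, x'^a))$; otherwise $y'$ is adjacent to every $v \in \supp(V)$, and since $\varphi_{x'}^{-a}$ is a graph automorphism of $\starE_{x'} (\Gamma)$, transporting through it together with the edge $\{x', y'\} = \{x'', y'\} \in E (\Gamma)$ from claim (1) (which provides adjacency with $x'$ itself when it is added) shows that $y''$ is adjacent to every element of $\supp(R(V, x'^a))$. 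The main technical obstacle is the case $x || y$ of claims (1)--(2), where $A$ and $B$ may differ so that $u_A(x)$ and $u_B(y)$ cannot be compared directly; inductive peeling combined with condition (b) propagates parallelism through each intermediate $\varphi_z$, with the sub-case bookkeeping for claim (4)'s cancellation scenarios requiring care but no new ideas.
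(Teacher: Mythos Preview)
Your argument is correct and, for parts (3), (4), and the comparable case $x>y$, runs essentially parallel to the paper's: both compute $y'$ and $y''$ via the chain above $y$, then push $\varphi_{x'}^{-a}$ through using Lemma~\ref{lem4_7}, and both handle (4) by observing that $\supp(R(V,x'^a)) \subseteq \{\varphi_{x'}^{-a}(v):v\in\supp(V)\}\cup\{x'\}$ and that $\varphi_{x'}^{-a}$ transports the adjacencies of $y'$.

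The genuine divergence is in the case $x\,||\,y$, parts (1)--(2). You argue by induction on $|A\cup B|$, peeling a maximal element $z$ and invoking condition~(b) when $z$ lies over only one of $x,y$. This works, but the inductive step in the asymmetric subcase is more delicate than your sketch suggests: you need that the current image $y'_*$ satisfies $z\,||\,y'_*$, which follows from $y'_* < \max B$ together with $z\,||\,\max B$ and (b), but this is not quite the single application of (b) you describe. The paper avoids this induction entirely with a nice trick: it chooses the numbering of $U$ so that $U(y^b)=\{x_1^{a_1},\dots,x_j^{a_j}\}$ is an initial totally ordered segment, and then uses Corollary~\ref{corl4_3}\,(2) to pad the shorter composition so that both $x'$ and $y'$ become images of $x_i$ and $x_j$ under the \emph{same} composition $\varphi_{x_1}^{a_1}\circ\cdots\circ\varphi_{x_{i-1}}^{a_{i-1}}$. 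Since each factor is an order-preserving graph automorphism of its star, edge and order relations transfer in one stroke. Your induction is more hands-on and makes the role of condition~(b) explicit; the paper's padding via Corollary~\ref{corl4_3}\,(2) is slicker and bypasses the case analysis on the structure of $A\cup B$.
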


\begin{proof}
We set $U = \{ x_1^{a_1}, \dots, x_p^{a_p} \}$ and we suppose that, if $x_k > x_\ell$, then $k < \ell$.
Let $i,j \in \{1, \dots, p\}$ be such that $x=x_i$ and $y = x_j$.
By Lemma \ref{lem4_2} we can assume that $U (y) = \{x_1^{a_1}, \dots, x_{j-1}^{a_{j-1}}, x_j^{a_j} \}$ and $x_1 > \cdots > x_{j-1} > x_j= y$.
From here the proof is divided into two cases depending on whether $y \not < x$, or $y < x$.

{\it Case 1: $y \not < x$.}
Since $U (y) = \{x_1^{a_1}, \dots, x_{j-1}^{a_{j-1}}, x_j^{a_j} \}$, we have $i > j$.
By definition, $x' = (\varphi_{x_1}^{a_1} \circ \cdots \circ \varphi_{x_{i-1}}^{a_{i-1}}) (x_i)$ and, by Corollary \ref{corl4_3}\,(2), $y' = (\varphi_{x_1}^{a_1} \circ \cdots \circ \varphi_{x_{i-1}}^{a_{i-1}}) (x_j)$, hence $\{x',y'\} \in E (\Gamma)$, we have $x|| y$ if and only if $x' || y'$, and we have $x < y$ if and only if $x' < y'$.
Moreover, since $y \not < x$, we have $y' \not < x'$, hence $\varphi_{x'}^{-a} (y') = y'$.

We have $L (U, y^b) = \{ x_1^{a_1}, \dots, x_{j-1}^{a_{j-1}}, x_{j+1}^{a_{j+1}}, \dots, x_p^{a_p} \}$, hence $x'' = (\varphi_{x_1}^{a_1} \circ \cdots \circ \varphi_{x_{j-1}}^{a_{j-1}} \circ \varphi_{x_{j+1}}^{a_{j+1}} \circ \cdots \circ \varphi_{x_{i-1}}^{a_{i-1}}) (x_i)$.
On the other hand, if $k > j$, then $x_k \not > x_j$, hence $\varphi_{x_k}^{a_k} (x_j) = x_j$.
So,
\[
y' = (\varphi_{x_1}^{a_1} \circ \cdots \circ \varphi_{x_{j-1}}^{a_{j-1}}) (x_j) = (\varphi_{x_1}^{a_1} \circ \cdots \circ \varphi_{x_{j-1}}^{a_{j-1}} \circ \varphi_{x_{j+1}}^{a_{j+1}} \circ \cdots \circ \varphi_{x_{i-1}}^{a_{i-1}}) (x_j)\,.
\]
It follows that $\{x'', y' \} \in E (\Gamma)$ and $y' \not < x''$.
In particular, $\varphi_{x''}^{-a} (y') = y'$.

We have $L (U, x^a) = \{ x_1^{a_1}, \dots, x_{i-1}^{a_{i-1}}, x_{i+1}^{a_{i+1}}, \dots, x_p^{a_p} \}$, hence $y'' = (\varphi_{x_1}^{a_1} \circ \cdots \circ \varphi_{x_{j-1}}^{a_{j-1}}) (x_j) = y'$.
So, $\varphi_{x'}^{-a} (y') = \varphi_{x''}^{-a} (y') = y'' = y'$.

Let $V$ be another stratum such that both $x'^a$ and $y'^b$ can be added to $V$.
Set $V = \{z_1^{c_1}, \dots, z_q^{c_q} \}$.
The support of $R (V, x'^a)$ is contained in $\{\varphi_{x'}^{-a} (z_1), \dots, \varphi_{x'}^{-a} (z_q)\} \cup \{x'\}$.
Since $y'^b$ can be added to $V$, $y'' = \varphi_{x'}^{-a}(y')$, and $\{x', y'\} \in E(\Gamma)$, it follows that $y''^b$ can be added to $R (V, x'^a)$.

{\it Case 2: $y < x$.}
Since $U (y) = \{x_1^{a_1}, \dots, x_{j-1}^{a_{j-1}}, x_j^{a_j} \}$, we have $i < j$.
We have $y' = (\varphi_{x_1}^{a_1} \circ \cdots \circ \varphi_{x_{j-1}}^{a_{j-1}}) (x_j)$ and, by Corollary \ref{corl4_3}\,(2), $x' = (\varphi_{x_1}^{a_1} \circ \cdots \circ \varphi_{x_{j-1}}^{a_{j-1}}) (x_i)$, hence $\{x', y' \} \in E (\Gamma)$ and $y' < x'$.
Furthermore, $L(U, y^b) = \{x_1^{a_1}, \dots, x_{j-1}^{a_{j-1}}, x_{j+1}^{a_{j+1}}, \dots, x_p^{a_p}\}$, hence, $x'' = (\varphi_{x_1}^{a_1} \circ \cdots \circ \varphi_{x_{i-1}}^{a_{i-1}}) (x_i) = x'$.
In particular, $\{x'', y'\} \in E (\Gamma)$ and $y' < x''$.

Since $L (U, x^a) = \{x_1^{a_1}, \dots, x_{i-1}^{a_{i-1}}, x_{i+1}^{a_{i+ 1}}, \dots, x_p^{a_p} \}$, we have $y'' = (\varphi_{x_1}^{a_1} \circ \cdots \circ \varphi_{x_{i-1}}^{a_{i-1}} \circ \varphi_{x_{i+1}}^{a_{i+1}} \circ \cdots \circ \varphi_{x_{j-1}}^{a_{j-1}}) (x_j)$.
On the other hand, by applying Lemma \ref{lem4_7} several times we get
\begin{gather*}
\varphi_{x''}^{-a} (y') =
\varphi_{x'}^{-a} (y') =
\big( \varphi_{(\varphi_{x_1}^{a_1} \circ \cdots \circ \varphi_{x_{i-1}}^{a_{i-1}}) (x_i)}^{-a} \circ \varphi_{x_1}^{a_1} \circ \cdots \circ \varphi_{x_{j-1}}^{a_{j-1}} \big) (x_j) =\\
\big( \varphi_{x_1}^{a_1} \circ \varphi_{(\varphi_{x_2}^{a_2} \circ \cdots \circ \varphi_{x_{i-1}}^{a_{i-1}}) (x_i)}^{-a} \circ \varphi_{x_2}^{a_2} \circ \cdots \circ \varphi_{x_{j-1}}^{a_{j-1}} \big) (x_j) =
\cdots =\\
(\varphi_{x_1}^{a_1} \circ \cdots \circ \varphi_{x_{i-1}}^{a_{i-1}} \circ \varphi_{x_i}^{-a} \circ \varphi_{x_i}^a \circ \varphi_{x_{i+1}}^{a_{i+1}} \circ \cdots \circ \varphi_{x_{j-1}}^{a_{j-1}}) (x_j) = \\
(\varphi_{x_1}^{a_1} \circ \cdots \circ \varphi_{x_{i-1}}^{a_{i-1}} \circ \varphi_{x_{i+1}}^{a_{i+1}} \circ \cdots \circ \varphi_{x_{j-1}}^{a_{j-1}}) (x_j) = 
y''\,.
\end{gather*}

Let $V$ be another stratum such that both $x'^a$ and $y'^b$ can be added to $V$.
Then we show that $y''^b$ can be added to $R (V, x'^a)$ in the same way as in Case 1.
\end{proof}

\begin{lem}\label{lem4_12}
In Case (C3) every critical pair $(u_1, u_2, u_3, v_1, v_2)$ is resolved.
\end{lem}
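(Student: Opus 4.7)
The plan is to push each of $v_1$ and $v_2$ one step further by applying the T-transformation corresponding to the \emph{other} syllable, and to show that both resulting pilings coincide. Write $y_*^b = \gamma(V, y^b)$, $y'_*{}^{b'} = \gamma(V, y'^{b'})$, $V_1 = L(V, y^b)$, $V_2 = L(V, y'^{b'})$, $U_1 = R(U, y_*^b)$, $U_2 = R(U, y'_*{}^{b'})$, so that $v_1 = (U_1, V_1)$ and $v_2 = (U_2, V_2)$. Since $V$ is a stratum and $y^b \neq y'^{b'}$, we have $y \neq y'$, so Lemma \ref{lem4_11} applies to the pair $y^b, y'^{b'}$ in $V$. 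Its parts (1), (3) and (4) give $\{y_*, y'_*\} \in E(\Gamma)$, $\gamma(V_1, y'^{b'}) = \varphi_{y_*}^{-b}(y'_*)$, $\gamma(V_2, y^b) = \varphi_{y'_*}^{-b'}(y_*)$, and also that $\varphi_{y_*}^{-b}(y'_*)^{b'}$ can be added to $U_1$ while $\varphi_{y'_*}^{-b'}(y_*)^b$ can be added to $U_2$.

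This allows me to perform the T-transformations $T(U_1, V_1, y'^{b'})$ and $T(U_2, V_2, y^b)$. Their second components obviously coincide, both equal to $V \setminus \{y^b, y'^{b'}\}$. So, setting
\[
W_1 = R\bigl(U_1, \varphi_{y_*}^{-b}(y'_*)^{b'}\bigr), \quad W_2 = R\bigl(U_2, \varphi_{y'_*}^{-b'}(y_*)^b\bigr),
\]
I obtain $v_1 \to (W_1, V \setminus \{y^b, y'^{b'}\})$ and $v_2 \to (W_2, V \setminus \{y^b, y'^{b'}\})$. It then suffices to establish $W_1 = W_2$ and take $w = (W_1, V \setminus \{y^b, y'^{b'}\})$.

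The equality $W_1 = W_2$ follows from the ``Yang--Baxter''-type identity
\[
R\bigl(R(U, \alpha^a), \varphi_\alpha^{-a}(\beta)^b\bigr) = R\bigl(R(U, \beta^b), \varphi_\beta^{-b}(\alpha)^a\bigr),
\]
valid whenever $\alpha \neq \beta$, $\{\alpha, \beta\} \in E(\Gamma)$, and both $\alpha^a, \beta^b$ can be added to $U$ (these hypotheses being satisfied here thanks to Lemma \ref{lem4_11}). I would prove this by comparing the two strata element by element. The newly inserted syllables match because $\varphi_{\varphi_\alpha^{-a}(\beta)}^{-b}(\alpha) = \varphi_\beta^{-b}(\alpha)$ and, symmetrically, $\varphi_{\varphi_\beta^{-b}(\alpha)}^{-a}(\beta) = \varphi_\alpha^{-a}(\beta)$, which are short case distinctions on the three possibilities $\alpha \| \beta$, $\alpha < \beta$, $\alpha > \beta$, using Conditions~(c) and~(d) of a trickle graph. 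The images of the original $x_i^{a_i} \in U$ also coincide, by the identity $\varphi_{\varphi_\alpha^{-a}(\beta)}^{-b} \circ \varphi_\alpha^{-a}(x_i) = \varphi_{\varphi_\beta^{-b}(\alpha)}^{-a} \circ \varphi_\beta^{-b}(x_i)$, which reduces to Lemma \ref{lem4_7} whenever all three of $\alpha, \beta, x_i$ are comparable, and is handled by Condition~(b) together with the addability hypotheses in the remaining cases.

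The main obstacle will be the bookkeeping in this final step, especially when $\alpha$ or $\beta$ coincides with some $x_i \in \supp(U)$, so that the $R$-operation merges exponents and possibly cancels them (the sub-case $b + a_i = 0$). These degenerate situations mirror the three sub-cases appearing in the proof of Lemma \ref{lem4_10} and I expect to dispatch them by the same method: compute both sides explicitly, and appeal to the identities above together with Conditions~(a)--(g).
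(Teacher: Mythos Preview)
Your proposal is correct and follows essentially the same route as the paper. Both proofs push $v_1$ and $v_2$ one step further via the T-transformation on the remaining syllable, observe that the second components coincide, and then argue that $R(R(U,y_*^b),\varphi_{y_*}^{-b}(y'_*)^{b'}) = R(R(U,y'_*{}^{b'}),\varphi_{y'_*}^{-b'}(y_*)^b)$; the paper carries this out by decomposing $U$ according to whether each $x_i$ lies below $y_*$, below $y'_*$, or neither, and computing both sides explicitly in the two cases $y_*\,||\,y'_*$ and $y_* < y'_*$, which is exactly the element-by-element verification (including the $\alpha,\beta\in\supp(U)$ sub-cases) that you anticipate.
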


\begin{proof}
There exist $U, V \in \Omega$ and $y^b, z^c \in V$ such that $y \neq z$, $u_1 = u_3 = \epsilon$, $u_2 = (U,V) $, $v_1 = T (U, V, y^b)$ and $v_2 = T (U, V, z^c)$.
Let $y'^b = \gamma (V, y^b)$.
Then $y'^b$ can be added to $U$ and $T (U, V, y^b) = (U_1, V_1)$, where $U_1 = R(U, y'^b)$ and $V_1 = L (V, y^b)$.
Let $z'^c = \gamma (V, z^c)$.
Then $z'^c$ can be added to $U$ and $T (U, V, z^c) = (U_2, V_2)$, where $U_2 = R (U, z'^c)$ and $V_2 = L(V, z^c)$.
We must show that there exists $w \in \Omega^*$ such that $(U_1, V_1) \to^* w$ and $(U_2, V_2) \to^* w$.

Let $z''^c = \gamma (V_1, z^c)$ and $y''^b = \gamma (V_2, y^b)$.
We know from Lemma \ref{lem4_11}\,(4) that $z''^c$ can be added to $U_1$ and $y''^b$ can be added to $U_2$.
Let $(U_3, V_3) = T (U_1, V_1, z^c)$ and $(U_4, V_4) = T (U_2, V_2, y^b)$, where $U_3 = R (U_1, z''^c)$, $V_3 = L (V_1, z^c)$, $U_4 = R (U_2, y''^b)$ and $V_4 = L(V_2, y^b)$.
We will prove that $(U_3, V_3) = (U_4, V_4)$.
Then, by setting $w = (U_3, V_3) = (U_4, V_4)$, we will have $(U_1, V_1) \to w$ and $(U_2, V_2) \to w$.
Clearly, $V_3 = V_4 = V \setminus \{ y^b, z^c\}$, hence we only need to prove that $U_3 = U_4$.

We set $U= \{x_1^{a_1}, \dots, x_p^{a_p} \}$ and we assume as always that, if $x_i > x_j$, then $i < j$.
The rest of the proof is divided into two cases depending on whether $y || z$, or $y < z$.
The case $z < y$ is treated in the same way as the case $y < z$ by interchanging the roles of $y$ and $z$.

{\it Case 1: $y || z$.}
By Lemma \ref{lem4_11} we have $y' || z'$, $y'' = \varphi_{z'}^{-c} (y') = y'$, and $z'' = \varphi_{y'}^{-b} (z') = z'$.
Let
\begin{gather*}
W_1 = \{ x_i^{a_i} \mid 1 \le i \le p \text{ and } x_i \le y'\}\,, \
W_2 = \{ x_i^{a_i} \mid 1 \le i \le p \text{ and } x_i \le z'\}\,, \\
W_3 = U \setminus (W_1 \cup W_2)\,.
\end{gather*}
Condition (b) in the definition of a trickle graph implies that $W_1 \cap W_2 = \emptyset$, hence we have the disjoint union $U = W_1 \sqcup W_2 \sqcup W_3$.
We have $U_1 = R(U, y'^b) = W_1' \sqcup W_2 \sqcup W_3$, where
\begin{gather*}
W_1' = \{ \varphi_{y'}^{-b} (x_i)^{a_i} \mid 1 \le i \le p \text{ and } x_i \le y' \} \cup \{y'^b \} \text{ if } y' \not \in \supp (W_1)\,,\\
W_1' = \{ \varphi_{y'}^{-b} (x_i)^{a_i} \mid 1 \le i \le p\,,\ x_i \le y' \text{ and } i \neq k \} \cup \{y'^{a_k+b} \} \text{ if } y' = x_k \in \supp (W_1)\\
\hskip 5 truecm \text{ and } a_k + b \neq 0\,,\\
W_1' = \{ \varphi_{y'}^{-b} (x_i)^{a_i} \mid 1 \le i \le p\,,\ x_i \le y' \text{ and } i \neq k \} \text{ if } y' = x_k \in \supp (W_1) \text{ and } a_k + b = 0\,.
\end{gather*}
Then $U_3 = R (U_1, z''^c) = R(U_1, z'^c) = W_1' \sqcup W_2' \sqcup W_3$, where
\begin{gather*}
W_2' = \{ \varphi_{z'}^{-c} (x_i)^{a_i} \mid 1 \le i \le p \text{ and } x_i \le z' \} \cup \{z'^c \} \text{ if } z' \not \in \supp (W_2)\,,\\
W_2' = \{ \varphi_{z'}^{-c} (x_i)^{a_i} \mid 1 \le i \le p\,,\ x_i \le z' \text{ and } i \neq \ell \} \cup \{z'^{a_\ell+c} \} \text{ if } z' = x_\ell \in \supp (W_2)\\
\hskip 5 truecm \text{ and } a_\ell + c \neq 0\,,\\
W_2' = \{ \varphi_{z'}^{-c} (x_i)^{a_i} \mid 1 \le i \le p\,,\ x_i \le z' \text{ and } i \neq \ell \} \text{ if } z' = x_\ell \in \supp (W_2) \text{ and } a_\ell + c = 0\,.
\end{gather*}
By using the same argument we prove that $U_2= R(U, z'^c) = W_1 \sqcup W_2' \sqcup W_3$, and then that $U_4 = R(U_2, y''^b) = R (U_2, y'^b) = W_1' \sqcup W_2' \sqcup W_3$, hence $U_3 = U_4$.

{\it Case 2: $y < z$.}
By Lemma \ref{lem4_11} we have $y' < z'$, $z'' = \varphi_{y'}^{-b} (z') = z'$, $y''=\varphi_{z'} ^{-c} (y')$ and $y'' < z'$.
Let 
\begin{gather*}
W_1 = \{ x_i^{a_i} \mid 1 \le i \le p \text{ and } x_i \le y' \}\,,\ 
W_2 = \{ x_i^{a_i} \mid 1 \le i \le p\,,\ x_i \le z' \text{ and } x_i \not \le y' \}\,,\\
W_3 = U \setminus (W_1 \cup W_2)\,.
\end{gather*}
By construction we have the disjoint union $U = W_1 \sqcup W_2 \sqcup W_3$.
We have $U_1 = R (U, y'^b) = W_1^{(1)} \sqcup W_2 \sqcup W_3$, where 
\begin{gather*}
W_1^{(1)} = \{ \varphi_{y'}^{-b} (x_i)^{a_i} \mid 1 \le i \le p \text{ and } x_i \le y' \} \cup \{y'^b \} \text{ if } y' \not \in \supp (W_1)\,,\\
W_1^{(1)} = \{ \varphi_{y'}^{-b} (x_i)^{a_i} \mid 1 \le i \le p\,,\ x_i \le y' \text{ and } i \neq k \} \cup \{y'^{a_k+b} \} \text{ if } y' = x_k \in \supp (W_1)\\
\hskip 4 truecm \text{ and } a_k + b \neq 0\,,\\
W_1^{(1)} = \{ \varphi_{y'}^{-b} (x_i)^{a_i} \mid 1 \le i \le p\,,\ x_i \le y' \text{ and } i \neq k \} \text{ if } y' = x_k \in \supp (W_1) \text{ and } a_k + b = 0\,.
\end{gather*}
Then $U_3 = R (U_1, z''^c) = R(U_1, z'^c) = W_1^{(2)} \sqcup W_2' \sqcup W_3$, where 
\begin{gather*}
W_1^{(2)} = \{(\varphi_{z'}^{-c} \circ \varphi_{y'}^{-b}) (x_i)^{a_i} \mid 1 \le i \le p \text{ and } x_i \le y' \} \cup \{y''^b \} \text{ if } y' \not \in \supp (W_1)\,,\\
W_1^{(2)} = \{ (\varphi_{z'}^{-c} \circ \varphi_{y'}^{-b}) (x_i)^{a_i} \mid 1 \le i \le p\,,\ x_i \le y' \text{ and } i \neq k \} \cup \{y''^{a_k+b} \}\\
\hskip 4 truecm \text{ if } y' = x_k \in \supp (W_1) \text{ and } a_k + b \neq 0\,,\\
W_1^{(2)} = \{ (\varphi_{z'}^{-c} \circ \varphi_{y'}^{-b}) (x_i)^{a_i} \mid 1 \le i \le p\,,\ x_i \le y' \text{ and } i \neq k \} \text{ if } y' = x_k \in \supp (W_1)\\
\hskip 4 truecm \text{ and } a_k + b = 0\,,
\end{gather*}
and
\begin{gather*}
W_2' = \{ \varphi_{z'}^{-c} (x_i)^{a_i} \mid 1 \le i \le p\,,\ x_i \le z' \text{ and } x_i \not \le y' \} \cup \{z'^c \} \text{ if } z' \not \in \supp (W_2)\,,\\
W_2' = \{ \varphi_{z'}^{-c} (x_i)^{a_i} \mid 1 \le i \le p\,,\ x_i \le z'\,,\ x_i \not \le y' \text{ and } i \neq \ell \} \cup \{z'^{a_\ell+c} \}\\
\hskip 4 truecm \text{ if } z' = x_\ell \in \supp (W_2) \text{ and } a_\ell + c \neq 0\,,\\
W_2' = \{ \varphi_{z'}^{-c} (x_i)^{a_i} \mid 1 \le i \le p\,,\ x_i \le z'\,,\ x_i \not \le y' \text{ and } i \neq \ell \} \text{ if } z' = x_\ell \in \supp (W_2)\\
\hskip 4 truecm \text{ and } a_\ell + c = 0\,.
\end{gather*}
On the other hand, $U_2 = R (U, z'^c) = W_1^{(3)} \sqcup W_2' \sqcup W_3$, where 
\[
W_1^{(3)} = \{ \varphi_{z'}^{-c} (x_i)^{a_i} \mid 1 \le i \le p \text{ and } x_i \le y'\}\,.
\]
Then $U_4 = R (U_2, y''^b) = W_1^{(4)} \sqcup W_2' \sqcup W_3$, where 
\begin{gather*}
W_1^{(4)} = \{(\varphi_{y''}^{-b} \circ \varphi_{z'}^{-c}) (x_i)^{a_i} \mid 1 \le i \le p \text{ and } x_i \le y' \} \cup \{y''^b \} \text{ if } y' \not \in \supp (W_1)\,,\\
W_1^{(4)} = \{ (\varphi_{y''}^{-b} \circ \varphi_{z'}^{-c}) (x_i)^{a_i} \mid 1 \le i \le p\,,\ x_i \le y' \text{ and } i \neq k \} \cup \{y''^{a_k+b} \}\\
\hskip 4 truecm \text{ if } y' = x_k \in \supp (W_1) \text{ and } a_k + b \neq 0\,,\\
W_1^{(4)} = \{ (\varphi_{y''}^{-b} \circ \varphi_{z'}^{-c}) (x_i)^{a_i} \mid 1 \le i \le p\,,\ x_i \le y' \text{ and } i \neq k \}\text{ if } y' = x_k \in \supp (W_1)\\
\hskip 4 truecm \text{ and } a_k + b = 0\,.
\end{gather*}
Since $y'' = \varphi_{z'}^{-c} (y')$, by Lemma \ref{lem4_7} we have $(\varphi_{y''}^{-b} \circ \varphi_{z'}^{-c}) (x_i) = (\varphi_{z'}^{-c} \circ \varphi_{y'}^{-b}) (x_i)$ for all $i \in \{1, \dots, p\}$ such that $x_i \le y'$, hence $W_1^{(4)} = W_1^{(2)}$, and therefore $U_3 = U_4$.
\end{proof}

\begin{proof}[Proof of Proposition \ref{prop4_5}]
We know from Lemma \ref{lem4_4} that $\RR$ is terminating, hence, by Theorem \ref{thm4_1}, it suffices to prove that every critical pair is resolved.
We have seen that there are three types of critical pairs.
In Case (1) we know that every critical pair is resolved by Lemma \ref{lem4_8}, in Case (2) we know that every critical pair is resolved by Lemma \ref{lem4_10}, and in Case (3) we know that every critical pair is resolved by Lemma \ref{lem4_12}.
\end{proof}

Now we know that $\RR$ is terminating and confluent (see Lemma \ref{lem4_4} and Proposition \ref{prop4_5}), hence, in order to prove Theorem \ref{thm2_4} it remains to show that $\RR$ is a rewriting system for $\Tr (\Gamma)$, that is, to prove that $\Tr (\Gamma) \simeq \langle \Omega \mid u=v \text{ for } (u,v) \in \RR \rangle^+$.

We set $M (\Gamma) = \langle \Omega \mid u=v \text{ for } (u,v) \in \RR \rangle^+$.
We will construct homomorphisms $\Phi: M (\Gamma) \to \Tr (\Gamma)$ and $\Psi: \Tr (\Gamma) \to M (\Gamma)$, and then we will prove that $\Psi \circ \Phi = \id_{M (\Gamma)}$ and $\Phi \circ \Psi = \id_{\Tr (\Gamma)}$.

Let $U$ be a non-empty stratum.
We write $U = \{ x_1^{a_1}, \dots, x_p^{a_p} \}$ so that, if $x_i > x_j$, then $i < j$, and we set
\[
\omega (U) = x_1^{a_1} x_2^{a_2} \dots x_p^{a_p} \in \Tr (\Gamma)\,.
\]
If $U = \emptyset$, then we set $\omega (U) = 1$.

\begin{lem}\label{lem4_13}
Let $U \in \Omega$.
Then the definition of $\omega (U)$ does not depend on the choice of the numbering of the elements of $U$.
\end{lem}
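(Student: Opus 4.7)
The plan is to identify a valid numbering of $U$ with a linear extension of a partial order on $\supp(U)$, and then appeal to a standard combinatorial fact together with the commutation relations implied by Condition (d).

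More precisely, I would first observe that the set $\supp(U)$ inherits a partial order from $(V(\Gamma), \le)$, and the condition ``if $x_i > x_j$ then $i < j$'' says exactly that the sequence $(x_1, \ldots, x_p)$ lists the elements of $\supp(U)$ in an order which reverses a linear extension of this inherited partial order. It is a classical and elementary fact that any two linear extensions of a finite partially ordered set can be transformed into one another by a finite sequence of transpositions of \emph{adjacent} entries that are incomparable in the original partial order. So it suffices to prove that, if $(x_1^{a_1}, \ldots, x_p^{a_p})$ is a valid numbering and $x_i \parallel x_{i+1}$, then
\[
x_1^{a_1} \cdots x_i^{a_i} x_{i+1}^{a_{i+1}} \cdots x_p^{a_p}
= x_1^{a_1} \cdots x_{i+1}^{a_{i+1}} x_i^{a_i} \cdots x_p^{a_p}
\]
in $\Tr(\Gamma)$, i.e.\ that $x_i^{a_i} x_{i+1}^{a_{i+1}} = x_{i+1}^{a_{i+1}} x_i^{a_i}$.

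For this swap, I would invoke the hypothesis that $U$ is a stratum, which by definition forces $\{x_i, x_{i+1}\} \in E(\Gamma)$. Since $x_i \parallel x_{i+1}$, neither $x_i < x_{i+1}$ nor $x_{i+1} < x_i$, and Condition (d) in the definition of a trickle graph then gives $\varphi_{x_i}(x_{i+1}) = x_{i+1}$ and $\varphi_{x_{i+1}}(x_i) = x_i$. The defining relation of $\Tr(\Gamma)$ attached to the edge $\{x_i, x_{i+1}\}$ therefore reduces to $x_{i+1} x_i = x_i x_{i+1}$, so $x_i$ and $x_{i+1}$ commute in $\Tr(\Gamma)$, and hence so do their powers $x_i^{a_i}$ and $x_{i+1}^{a_{i+1}}$. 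Iterating adjacent swaps of this form transforms any valid numbering into any other, and each swap leaves the corresponding product unchanged, so $\omega(U)$ depends only on $U$.

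The argument has no real obstacle: the only non-formal ingredient is the fact that any two linear extensions of a finite poset are connected by adjacent incomparable transpositions, which is standard. The trickle-graph input is minimal, namely that stratum pairs are edges and that incomparable edges yield commutation via Condition (d).
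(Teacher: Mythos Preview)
Your proof is correct and is essentially the same as the paper's: both reduce to showing that adjacent incomparable elements in a valid numbering commute in $\Tr(\Gamma)$, via Condition (d) and the stratum hypothesis. The only difference is organizational---the paper runs a direct induction on $|U|$ (moving the first element of one numbering to the front of the other by commuting past incomparable elements), whereas you invoke the standard fact that any two linear extensions of a finite poset are connected by adjacent incomparable transpositions; the underlying mechanism is identical.
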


\begin{proof}
Suppose we have $U = \{ x_1^{a_1}, \dots, x_p^{a_p} \} = \{ {x_1'}^{a_1'}, \dots, {x_p'}^{a_p'} \} $ so that, if $x_i > x_j$, then $i < j$, and if $x_i' > x_j'$, then $i < j$.
We prove that $x_1^{a_1} \dots x_p^{a_p} = {x_1'}^{a_1'} \dots {x_p'}^{a_p'}$ by induction on $p = \lg_\st (U )$.
If $p=0$, then $U = \emptyset$, and there is nothing to prove.
We assume that $p \ge 1$ and that the induction hypothesis holds.
Let $i \in \{1, \dots, p \}$ be such that $x_1^{a_1} = {x_i'}^{a_i'}$.
Let $j \in \{1, \dots, p\}$.
If $j < i$, then either $x_j' > x_i'$ or $x_j' || x_i'$.
But, since ${x_i'}^{a_i'} = x_1^{a_1}$, there is no ${x_j'}^{a_j'} \in U$ such that $x_j' > x_i'$, hence, if $j < i$, then $x_j' || x_i'$.
It follows that
\begin{gather*}
{x_1'}^{a_1'} \dots {x_{i-1}'}^{a_{i-1}'} {x_i'}^{a_i'} {x_{i+1}'}^{a_{i+1}'} \dots {x_p'}^{a_p'} = {x_i'}^{a_i'} {x_1'}^{a_1'} \dots {x_{i-1}'}^{a_{i-1}'} {x_{i+1}'}^{a_{i+1}'} \dots {x_p'}^{a_p'} =\\
x_1^{a_1} {x_1'}^{a_1'} \dots {x_{i-1}'}^{a_{i-1}'} {x_{i+1}'}^{a_{i+1}'} \dots {x_p'}^{a_p'}\,.
\end{gather*}
By the induction hypothesis applied to $U \setminus \{ x_1^{a_1} \}$,  
\[
x_2^{a_2} \dots x_p^{a_p} = {x_1'}^{a_1'} \dots {x_{i-1}'}^{a_{i-1}'} {x_{i+1}'}^{a_{i+1}'} \dots {x_p'}^{a_p'}\,,
\]
hence $x_1^{a_1} \dots x_p^{a_p} = {x_1'}^{a_1'} \dots {x_p'}^{a_p'}$. 
\end{proof}

Let $\Phi^* : \Omega^* \to \Tr (\Gamma)$ be the homomorphism which sends $U$ to $\omega (U)$ for all $U \in \Omega$.
It is easily verified that, for all $(u,v) \in \RR$, $\Phi^* (u) = \Phi^* (v)$, hence $\Phi^*$ induces a homomorphism $\Phi : M (\Gamma) \to \Tr (\Gamma)$ which sends $U$ to $\omega (U)$ for all $U \in \Omega$.

For the construction of $\Psi : \Tr (\Gamma) \to M (\Gamma)$ we use the following monoid presentation for $\Tr (\Gamma)$:
\[
\Tr (\Gamma) = \left\langle S (\Gamma) \left|
\begin{array}{ll}
x^a \cdot x^b = x^{a+b} &\text{for } x \in V(\Gamma),\ a, b \in \Z_{\mu (x)} \setminus \{0\} \text{ and } a+b \neq 0\,,\\
x^a \cdot x^b = 1  &\text{for } x \in V(\Gamma),\ a, b \in \Z_{\mu (x)} \setminus \{0\} \text{ and } a+b = 0\,,\\
\varphi_x^a(y)^b \cdot x^a = \varphi_y^b (x)^a \cdot y^b &\text{for } \{x, y\} \in E (\Gamma),\ a \in \Z_{\mu (x)} \setminus \{ 0 \},\
 b \in \Z_{\mu (y)} \setminus \{ 0 \}
\end{array}
\right. \right\rangle^+\,.
\]

\begin{lem}\label{lem4_14}
The homomorphism $\Psi^*: S (\Gamma)^* \to M (\Gamma)$ which sends $x^a$ to $(\{x^a\})$ induces a homomorphism $ \Psi: \Tr (\Gamma) \to M (\Gamma)$.
\end{lem}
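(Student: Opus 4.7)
The plan is to invoke the monoid presentation of $\Tr (\Gamma)$ on the alphabet $S (\Gamma)$ displayed immediately before the lemma, and to verify that $\Psi^*$ respects each of its three families of defining relations. Since $M (\Gamma)$ is the monoid obtained by quotienting $\Omega^*$ by the relations in $\RR$, for each defining relation $u = v$ of $\Tr (\Gamma)$ it suffices to exhibit a chain of rewrites between $\Psi^* (u)$ and $\Psi^* (v)$ in $(\Omega^*, \RR)$.

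First, for $x \in V(\Gamma)$ and $a, b \in \Z_{\mu (x)} \setminus \{0\}$ with $a + b \ne 0$, we treat the piling $(\{x^a\}, \{x^b\})$. The singleton $\{x^b\}$ admits the trivial extraction $\gamma (\{x^b\}, x^b) = x^b$, and $x^b$ can be added to $\{x^a\}$ since $x \in \supp (\{x^a\})$; the third case of the definition of $R$ gives $R (\{x^a\}, x^b) = \{x^{a+b}\}$, so the T-transformation produces $(\{x^{a+b}\}, \emptyset)$, which further reduces to $(\{x^{a+b}\})$ by the rule in $\RR_0$. This handles the relation $x^a \cdot x^b = x^{a+b}$. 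The case $a + b = 0$ is identical except that the second case of the definition of $R$ gives $R (\{x^a\}, x^b) = \emptyset$, so the piling reduces to $(\emptyset, \emptyset)$ and then to $\epsilon$ via two applications of the rule in $\RR_0$, matching the relation $x^a \cdot x^b = 1$.

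For the third family, fix $\{x, y\} \in E (\Gamma)$, $a \in \Z_{\mu (x)} \setminus \{0\}$ and $b \in \Z_{\mu (y)} \setminus \{0\}$, and consider the piling $(\{\varphi_x^a (y)^b\}, \{x^a\})$. By Condition (d) the automorphism $\varphi_x$ fixes $x$, so $\varphi_x^a (y) = x$ would force $y = x$, contradicting $\{x, y\} \in E (\Gamma)$; thus $\varphi_x^a (y) \ne x$. Since $\{x, \varphi_x^a (y)\} \in E (\Gamma)$ (because $\varphi_x$ is an automorphism of $\starE_x (\Gamma)$), the syllable $x^a$ can be added to $\{\varphi_x^a (y)^b\}$ via the first case of $R$, producing
\[
R (\{\varphi_x^a (y)^b\}, x^a) = \{\varphi_x^{-a} (\varphi_x^a (y))^b,\, x^a\} = \{y^b, x^a\}\,.
\]
Hence $T (\{\varphi_x^a (y)^b\}, \{x^a\}, x^a) = (\{y^b, x^a\}, \emptyset)$, which reduces to $(\{y^b, x^a\})$ via $\RR_0$. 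The symmetric computation starting from $(\{\varphi_y^b (x)^a\}, \{y^b\})$ yields $(\{x^a, y^b\})$, and as strata we have $\{y^b, x^a\} = \{x^a, y^b\}$, so both pilings are equal in $M (\Gamma)$. This matches the relation $\varphi_x^a (y)^b \cdot x^a = \varphi_y^b (x)^a \cdot y^b$.

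The principal subtlety is the argument in the third case that $\varphi_x^a (y) \ne x$ and $\varphi_y^b (x) \ne y$, which is needed to guarantee that the addition operation $R$ is applied in its ``new element'' form rather than its ``merging powers'' form; this is exactly where Condition (d) of the definition of a trickle graph is used. Once these three families of defining relations are verified, $\Psi^*$ factors through the monoid presentation, yielding the desired homomorphism $\Psi : \Tr (\Gamma) \to M (\Gamma)$.
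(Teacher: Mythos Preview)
Your proof is correct and follows the same strategy as the paper: verify that $\Psi^*$ respects the three families of defining relations in the monoid presentation of $\Tr(\Gamma)$ by exhibiting explicit rewriting chains. Your treatment of the third family is in fact slightly cleaner than the paper's: rather than splitting into the cases $x \parallel y$ and $x < y$ (and appealing to symmetry for $y < x$), you observe directly that $R(\{\varphi_x^a(y)^b\}, x^a) = \{y^b, x^a\}$ holds uniformly, with Condition~(d) ensuring that $\varphi_x^a(y) \ne x$ so that the ``new element'' branch of $R$ applies.
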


\begin{proof}
Let $x \in V (\Gamma)$ and $a,b \in \Z_{\mu (x)} \setminus \{ 0 \}$ be such that $a + b \neq 0$.
We have the following sequence of rewriting transformations
\[
(\{x^a\}, \{x^b\}) \stackrel{\RR}{\to} (\{x^{a+b} \}, \emptyset) \stackrel{\RR}{ \to} (\{ x^{a+b} \})\,,
\]
hence $\Psi^* (x^a \cdot x^b) = \Psi^* (x^{a+b})$.
Let $x \in V (\Gamma)$ and $a,b \in \Z_{\mu (x)} \setminus \{ 0 \}$ be such that $a + b = 0$.
We have the following sequence of rewriting transformations 
\[
(\{x^a\}, \{x^b\}) \stackrel{\RR}{\to} (\emptyset, \emptyset) \stackrel{\RR}{\to} (\emptyset) \stackrel {\RR}{\to} \epsilon\,,
\]
hence $\Psi^* (x^a \cdot x^b) = \Psi^* (1)$.
Let $x^a, y^b \in S (\Gamma)$ be such that $\{x, y\} \in E (\Gamma)$ and $x || y$.
We have the following sequences of rewriting transformations
\begin{gather*}
( \{\varphi_x^a(y)^b\},  \{x^a\} ) = ( \{y^b\},  \{x^a\} ) \stackrel{\RR}{\to} ( \{x^a, y^b\}, \emptyset) \stackrel{\RR}{\to} ( \{x^a, y^b\})\,, \\
( \{ \varphi_y^b (x)^a\},  \{y^b\} ) = ( \{ x^a\},  \{y^b\} ) \stackrel{\RR}{\to} (\{ x^a, y^b\}, \emptyset) \stackrel{\RR}{\to} ( \{x^a, y^b\})\,,
\end{gather*}
hence $\Psi^* (\varphi_x^a(y)^b \cdot x^a) = \Psi^*(\varphi_y^b (x)^a \cdot y^b)$. 
Let $x^a, y^b \in S (\Gamma)$ be such that $\{x, y\} \in E (\Gamma)$ and $x  < y$. 
We have the following sequences of rewriting transformations
\begin{gather*}
( \{\varphi_x^a(y)^b\},  \{x^a\} ) = ( \{y^b\},  \{x^a\} ) \stackrel{\RR}{\to} ( \{ y^b, x^a\}, \emptyset ) \stackrel{\RR}{\to} ( \{ y^b, x^a\})\,, \\
( \{ \varphi_y^b (x)^a\},  \{y^b\} ) \stackrel{\RR}{\to} ( \{ y^b, (\varphi_y^{-b} \circ \varphi_y^b)(x)^a \}, \emptyset ) = ( \{ y^b, x^a\}, \emptyset ) \stackrel{\RR}{\to}  ( \{ y^b, x^a\})\,,
\end{gather*}
hence $\Psi^* (\varphi_x^a(y)^b \cdot x^a) = \Psi^*(\varphi_y^b (x)^a \cdot y^b)$. 
\end{proof}

\begin{lem}\label{lem4_15}
We have $\Psi \circ \Phi = \id_{M (\Gamma)}$ and $\Phi \circ \Psi = \id_{\Tr (\Gamma)}$.
\end{lem}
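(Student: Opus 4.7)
The plan is to verify both identities on natural generating sets, using the explicit descriptions of $\Phi$ and $\Psi$ together with Lemma \ref{lem4_13} and a short reduction argument through the rewriting system $\RR$.

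First I would handle $\Phi \circ \Psi = \id_{\Tr (\Gamma)}$. By construction, $\Tr (\Gamma)$ is generated by the syllables $S (\Gamma)$ (via the monoid presentation used before Lemma \ref{lem4_14}). For any $x^a \in S (\Gamma)$ we have directly
\[
\Phi (\Psi (x^a)) = \Phi ((\{x^a\})) = \omega (\{x^a\}) = x^a \,,
\]
so $\Phi \circ \Psi$ fixes a generating set and is therefore the identity.

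For the other identity $\Psi \circ \Phi = \id_{M (\Gamma)}$, the monoid $M (\Gamma)$ is generated by the strata, so it suffices to check that $\Psi (\Phi (U)) = U$ in $M (\Gamma)$ for every stratum $U$. Write $U = \{x_1^{a_1}, \dots, x_p^{a_p}\}$ with the usual convention that $x_i > x_j$ implies $i < j$. Then by definition $\Phi (U) = x_1^{a_1} x_2^{a_2} \cdots x_p^{a_p} \in \Tr (\Gamma)$ and, viewing this product in terms of the generators $S (\Gamma)$,
\[
\Psi (\Phi (U)) = (\{x_1^{a_1}\}) \cdot (\{x_2^{a_2}\}) \cdots (\{x_p^{a_p}\}) \in M (\Gamma) \,.
\]
I would show by induction on $p$ that this product equals the single-letter piling $(U)$ in $M (\Gamma)$. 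The case $p = 0$ is immediate (both sides equal the identity of $M(\Gamma)$), and the case $p = 1$ is trivial. For $p \ge 2$, by the induction hypothesis the product of the first $p-1$ factors equals the piling $(\{x_1^{a_1}, \dots, x_{p-1}^{a_{p-1}}\})$, so it suffices to show that in $M (\Gamma)$
\[
(\{x_1^{a_1}, \dots, x_{p-1}^{a_{p-1}}\}) \cdot (\{x_p^{a_p}\}) = (U) \,.
\]

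The key step — and the only nontrivial point — is to identify the T-transformation that realizes this equality. Set $V = \{x_p^{a_p}\}$. Then $\gamma (V, x_p^{a_p}) = x_p^{a_p}$ and $L (V, x_p^{a_p}) = \emptyset$. Since $U$ is a stratum, $\{x_i, x_p\} \in E (\Gamma)$ for all $i < p$ and $x_p \notin \{x_1, \dots, x_{p-1}\}$, so $x_p^{a_p}$ can be added to $\{x_1^{a_1}, \dots, x_{p-1}^{a_{p-1}}\}$. Moreover, by our ordering convention, for every $i < p$ we have $x_i \not< x_p$, so Condition (d) of a trickle graph gives $\varphi_{x_p} (x_i) = x_i$, whence $\varphi_{x_p}^{-a_p} (x_i) = x_i$. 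Therefore
\[
R (\{x_1^{a_1}, \dots, x_{p-1}^{a_{p-1}}\}, x_p^{a_p}) = \{x_1^{a_1}, \dots, x_{p-1}^{a_{p-1}}, x_p^{a_p}\} = U \,.
\]
The T-transformation then yields the rewriting step $(\{x_1^{a_1}, \dots, x_{p-1}^{a_{p-1}}\}, \{x_p^{a_p}\}) \to (U, \emptyset)$, and the rule $(\emptyset) \to \epsilon$ of $\RR_0$ applied to the second coordinate collapses this to $(U)$. Both steps are equalities in $M (\Gamma)$, completing the induction and hence the proof. The only delicate ingredient is the use of Condition (d) to rule out a nontrivial action of $\varphi_{x_p}$ on the remaining $x_i$; everything else is direct from the definitions.
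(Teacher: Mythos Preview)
Your proof is correct and follows essentially the same approach as the paper: both verify $\Phi\circ\Psi$ on the generators $S(\Gamma)$ directly, and both verify $\Psi\circ\Phi$ by exhibiting the chain of $\RR$-rewritings $(\{x_1^{a_1}\},\dots,\{x_p^{a_p}\})\to^*(U)$ built from the T-transformations $T(\{x_1^{a_1},\dots,x_{j-1}^{a_{j-1}}\},\{x_j^{a_j}\},x_j^{a_j})$ followed by removal of the resulting $\emptyset$. You simply phrase this chain as an induction and make explicit the appeal to Condition~(d) that the paper leaves implicit when writing $R(\{x_1^{a_1},\dots,x_{j-1}^{a_{j-1}}\},x_j^{a_j})=\{x_1^{a_1},\dots,x_j^{a_j}\}$.
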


\begin{proof}
Let $U \in \Omega$.
We write $U = \{ x_1^{a_1}, x_2^{a_2}, \dots, x_p^{a_p} \}$ so that, if $x_i > x_j$, then $i < j$.
We have the following sequence of rewriting transformations in $\Omega^*$:
\begin{gather*}
( \{ x_1^{a_1}\}, \{x_2^{a_2}\}, \dots, \{x_p^{a_p} \}) \to ( \{ x_1^{a_1}, x_2^{a_2}\}, \emptyset, \{x_3^{a_3}\}, \dots, \{x_p^{a_p} \}) \to ( \{ x_1^{a_1}, x_2^{a_2}\}, \{x_3^{a_3}\}, \dots, \{x_p^{a_p} \})\\
\to \cdots \to (\{x_1^{a_1}, \dots, x_{p-1}^{a_{p-1}} \}, \{x_p^{a_p}\} ) \to (\{x_1^{a_1}, \dots, x_{p-1}^{a_{p-1}}, x_p^{a_p} \}, \emptyset ) \to (\{x_1^{a_1}, \dots, x_{p-1}^{a_{p-1}}, x_p^{a_p} \})  = (U)\,.
\end{gather*}
So,
\[
(\Psi \circ \Phi) ((U)) =  \Psi ( x_1^{a_1} x_2 ^{a_2} \dots x_p^{a_p} ) = ( \{ x_1^{a_1}\}, \{x_2^{a_2}\}, \dots, \{x_p^{a_p} \} ) = (U)\,.
\]
Since $M (\Gamma)$ is generated by $\{ (U) \mid U \in \Omega \}$, this shows that $\Psi \circ \Phi = \id_{M (\Gamma)}$.

For $x^a \in S (\Gamma)$ we have $(\Phi \circ \Psi) (x^a) = \Phi (\{x^a\}) = x^a$.
Since $\Tr (\Gamma)$ is generated by $S (\Gamma)$, we conclude that $\Phi \circ \Psi = \id_{\Tr (\Gamma)}$.
\end{proof}

The following is a straightforward consequence of Lemma \ref{lem4_15}, and it ends the proof of Theorem \ref{thm2_4}.

\begin{prop}\label{prop4_16}
The pair $(\Omega,\RR)$ is a rewriting system for $\Tr (\Gamma)$.
\end{prop}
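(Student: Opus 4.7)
The plan is to observe that the statement is essentially already established by the preceding lemma, and only needs to be assembled. By definition, $(\Omega, \RR)$ is a rewriting system for the monoid $\Tr(\Gamma)$ precisely when $M(\Gamma) = \langle \Omega \mid u = v \text{ for } (u,v) \in \RR \rangle^+$ is a monoid presentation for $\Tr(\Gamma)$ (and $\Tr(\Gamma)$ is viewed as a monoid, as specified in Subsection \ref{subsec2_2} for rewriting systems of groups). So the proof reduces to showing that the homomorphisms $\Phi: M(\Gamma) \to \Tr(\Gamma)$ and $\Psi: \Tr(\Gamma) \to M(\Gamma)$ constructed just before are mutually inverse \emph{isomorphisms}.

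First, I would invoke Lemma \ref{lem4_13} and the observation that each relation in $\RR$ is sent by $\Phi^*: \Omega^* \to \Tr(\Gamma)$ to an identity in $\Tr(\Gamma)$, so that $\Phi$ is well-defined. Next, Lemma \ref{lem4_14} guarantees that $\Psi$ is well-defined on $\Tr(\Gamma)$, where one uses the syllabic monoid presentation of $\Tr(\Gamma)$ recorded before the lemma (with generating set $S(\Gamma)$ including all $x^a$ with $a \in \Z_{\mu(x)} \setminus \{0\}$, so that $\Tr(\Gamma)$ is generated as a monoid by these elements).

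The final step is simply to cite Lemma \ref{lem4_15}, which gives $\Psi \circ \Phi = \id_{M(\Gamma)}$ and $\Phi \circ \Psi = \id_{\Tr(\Gamma)}$. Together these equalities force both $\Phi$ and $\Psi$ to be isomorphisms, and in particular the presentation $\langle \Omega \mid u = v \text{ for } (u,v) \in \RR \rangle^+$ defining $M(\Gamma)$ is indeed a monoid presentation of $\Tr(\Gamma)$. This is exactly the statement that $(\Omega, \RR)$ is a rewriting system for $\Tr(\Gamma)$.

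There is no genuine obstacle at this stage: all the substantive work (that $\gamma(U, x^a)$ is well-defined, that $\omega(U)$ is well-defined, verification of the relations of $\RR$ in $\Tr(\Gamma)$ under $\Phi^*$, verification of the defining relations of $\Tr(\Gamma)$ under $\Psi^*$, and the round-trip identities) has been carried out in Lemmas \ref{lem4_2}, \ref{lem4_13}, \ref{lem4_14} and \ref{lem4_15}. Thus the proof of Proposition \ref{prop4_16} should be a short paragraph that wires these lemmas together, and combining it with the termination (Lemma \ref{lem4_4}) and confluence (Proposition \ref{prop4_5}) already established completes the proof of Theorem \ref{thm2_4}.
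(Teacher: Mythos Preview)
Your proposal is correct and matches the paper's approach exactly: the paper states that Proposition~\ref{prop4_16} is a straightforward consequence of Lemma~\ref{lem4_15}, and your write-up simply makes explicit the surrounding scaffolding (Lemmas~\ref{lem4_13} and~\ref{lem4_14} for well-definedness of $\Phi$ and $\Psi$) that the paper leaves implicit.
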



\section{The Tits-style algorithm}\label{sec5}

In this section we fix a trickle graph $\Gamma = (\Gamma, \le, \mu, (\varphi_x)_{x \in V(\Gamma)})$ and we keep the notations of Subsection \ref{subsec2_3}.
Our goal is to prove Theorem \ref{thm2_8}.

We fix a total order $\preceq$ on $V(\Gamma)$ which extends the partial order $\le$ in the sense that, if $x \le y$, then $x \preceq y$.
If $U$ is a non-empty stratum written $U = \{x_1^{a_1}, x_2^{a_2}, \dots, x_p^{a_p}\}$ with $x_1 \succ x_2 \succ \cdots \succ x_p$, then we set $\hat \omega^S (U) = (x_1^{a_1}, x_2^{a_2}, \dots, x_p^{a_p}) \in S (\Gamma)^*$.
On the other hand we set $\hat \omega^S (U) = \epsilon$ if $U = \emptyset$.
Notice that $\hat \omega^S (U)$ is a representative of $\omega (U)$.

The following three lemmas are preliminaries to the proof of Theorem \ref{thm2_8}.

\begin{lem}\label{lem5_1}
Let $U = \{x_1^{a_1}, \dots, x_p^{a_p} \}$ be a stratum.
We suppose that the numbering of the elements of $U$ is chosen so that, if $x_i > x_j$, then $i < j$.
Let $v = (x_1^{a_1}, \dots, x_p^{a_p}) \in S (\Gamma)^*$.
Then $v \stackrel{II\ *}{\to} \hat \omega^S (U)$.
\end{lem}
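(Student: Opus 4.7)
The plan is to prove the lemma by induction on $p = \lg_\st(U)$. The base cases $p=0$ and $p=1$ are trivial since $v$ and $\hat\omega^S(U)$ coincide. For the inductive step, the strategy is to locate the $\succ$-maximum vertex $x_k$ of the support $\supp(U) = \{x_1,\dots,x_p\}$, bubble it to the front of the word using only commutation-type $II$-moves that leave the support unchanged, and then apply the induction hypothesis to the remaining subword.

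The key combinatorial observation is the following. Fix $k \in \{1,\dots,p\}$ with $x_k$ the $\succ$-maximum of $\supp(U)$, and let $i < k$. First, $\succ$-maximality together with the fact that $\succ$ extends $\le$ forces $x_i \not> x_k$, i.e.\ $x_k < x_i$ or $x_k \| x_i$. Second, the hypothesis "$x_j > x_\ell \Rightarrow j < \ell$" applied to the pair $(k,i)$ (with $k > i$) forces $x_k \not> x_i$, i.e.\ $x_k < x_i$ or $x_k \| x_i$. Combining these with $x_k \ne x_i$ rules out the strict inequalities, giving $x_k \| x_i$. Consequently, for each $i < k$, $\varphi_{x_i}^{a_i}(x_k) = x_k$ and $\varphi_{x_k}^{-a_k}(x_i) = x_i$ by Condition (d) of a trickle graph, so the $II$-move applied to the adjacent pair $(x_i^{a_i}, x_k^{a_k})$ is a clean transposition $(x_i^{a_i}, x_k^{a_k}) \stackrel{II}{\to} (x_k^{a_k}, x_i^{a_i})$.

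Applying this observation $k-1$ times in succession, we carry $x_k^{a_k}$ leftward past $x_{k-1}^{a_{k-1}}, x_{k-2}^{a_{k-2}}, \dots, x_1^{a_1}$ via $II$-moves, reaching the word
\[
v' = (x_k^{a_k}, x_1^{a_1}, \dots, x_{k-1}^{a_{k-1}}, x_{k+1}^{a_{k+1}}, \dots, x_p^{a_p}).
\]
The tail $w = (x_1^{a_1}, \dots, x_{k-1}^{a_{k-1}}, x_{k+1}^{a_{k+1}}, \dots, x_p^{a_p})$ lists the elements of the stratum $U' = U \setminus \{x_k^{a_k}\}$ in an order that still satisfies the hypothesis "$x_i > x_j \Rightarrow i < j$" (the condition is inherited from the original ordering by restriction). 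By the induction hypothesis applied to $U'$, we have $w \stackrel{II\ *}{\to} \hat\omega^S(U')$, and then $v' \stackrel{II\ *}{\to} (x_k^{a_k}) \cdot \hat\omega^S(U')$. Since $x_k$ is the $\succ$-maximum of $\supp(U)$, by definition $\hat\omega^S(U) = (x_k^{a_k}) \cdot \hat\omega^S(U')$, completing the induction.

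I expect no serious obstacle: the only subtle point is the combinatorial verification that $x_k \| x_i$ for all $i < k$, which is a short case analysis combining $\succ$-maximality with the assumed numbering convention. Once this is established, the bubble-sort pattern runs routinely and the induction closes cleanly because the set-theoretic support is preserved by each of the clean transpositions performed.
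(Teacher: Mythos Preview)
Your proof is correct and follows essentially the same approach as the paper: induction on $p$, locate the $\succ$-maximal syllable, argue it is $\|$-incomparable to everything to its left (combining the numbering convention with $\succ$-maximality), bubble it to the front via commuting $II$-moves, and apply the inductive hypothesis to the remaining stratum. One tiny slip: in your first step the ``i.e.'' should read ``$x_i < x_k$ or $x_i \| x_k$'' rather than ``$x_k < x_i$ or $x_k \| x_i$'', but your combined conclusion $x_i \| x_k$ is unaffected.
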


\begin{proof}
We argue by induction on $p = \lg_\st (U)$.
If $p = 0$, then $U = \emptyset$ and $v = \hat \omega^S (U) = \epsilon$.
So, we can assume that $p \ge 1$ and that the induction hypothesis holds. 
We write $U=\{ {x_1'}^{a_1'}, \dots, {x_p'}^{a_p'} \}$ with $x_1' \succ \cdots \succ x_p'$.
Let $i \in \{1, \dots, p \}$ be such that ${x_1'}^{a_1'} = x_i^{a_i}$.
Let $j \in \{1, \dots, p\}$.
If $j < i$, then either $x_j > x_i$ or $x_j || x_i$.
But, since $x_i^{a_i} = {x_1'}^{a_1'}$, there is no $x_j^{a_j} \in U$ such that $x_j > x_i=x_1'$, hence, if $j < i$, then $ x_j || x_i$.
It follows that
\begin{gather*}
v= (x_1^{a_1}, \dots, x_{i-1}^{a_{i-1}}, x_i^{a_i}, x_{i+1}^{a_{i+1}}, \dots, x_p^{a_p}) \stackrel{II}{\to}
(x_1^{a_1}, \dots, x_{i-2}^{a_{i-2}}, x_i^{a_i}, x_{i-1}^{a_{i-1}},  x_{i+1}^{a_{i+1}}, \dots, x_p^{a_p}) \stackrel{II}{\to} \cdots \stackrel{II}{\to}\\
(x_i^{a_i}, x_1^{a_1}, \dots, x_{i-1}^{a_{i-1}}, x_{i+1}^{a_{i+1}}, \dots, x_p^{a_p}) = ({x_1'}^{a_1'}, x_1^{a_1}, \dots, x_{i-1}^{a_{i-1}}, x_{i+1}^{a_{i+1}}, \dots, x_p^{a_p})\,.
\end{gather*}
By the induction hypothesis applied to $U \setminus \{ x_i^{a_i} \}$ we have  
\[
(x_1^{a_1}, \dots, x_{i-1}^{a_{i-1}}, x_{i+1}^{a_{i+1}}, \dots, x_p^{a_p}) \stackrel{II\ *}{\to} ({x_2'}^{a_2'}, \dots, {x_p'}^{a_p'})\,.
\]
So,
\[
v = (x_1^{a_1}, \dots, x_p^{a_p}) \stackrel{II\ *}{\to} ({x_1'}^{a_1'}, \dots, {x_p'}^{a_p'}) = \hat \omega^S (U)\,.
\proved
\]
\end{proof}

\begin{lem}\label{lem5_2}
Let $U$ be a non-empty stratum and let $x^a \in U$.
Let $y^a = \gamma (x^a, U)$.
Then $\hat \omega^S (U) \stackrel{II\ *}{\to} (y^a) \cdot \hat \omega^S (L (U, x^a))$.
\end{lem}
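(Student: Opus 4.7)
The plan is to start from $\hat\omega^S(U) = (x_1^{a_1}, \dots, x_p^{a_p})$, where $U = \{x_1^{a_1}, \dots, x_p^{a_p}\}$ is numbered so that $x_1 \succ x_2 \succ \cdots \succ x_p$, and to let $i$ be the index with $x^a = x_i^{a_i}$. Since $\preceq$ extends $\le$, the indexing also satisfies $x_j > x_k \Rightarrow j < k$, so by the definition of $\gamma$ in Section~\ref{sec4} one has $y = (\varphi_{x_1}^{a_1} \circ \cdots \circ \varphi_{x_{i-1}}^{a_{i-1}})(x_i)$, and the target word is $(y^a, x_1^{a_1}, \dots, x_{i-1}^{a_{i-1}}, x_{i+1}^{a_{i+1}}, \dots, x_p^{a_p})$. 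The approach is to move $x_i^{a_i}$ leftward past $x_{i-1}^{a_{i-1}}, x_{i-2}^{a_{i-2}}, \dots, x_1^{a_1}$ by successive type~II swaps, leaving the suffix $(x_{i+1}^{a_{i+1}}, \dots, x_p^{a_p})$ untouched.

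Set $y_j := (\varphi_{x_j}^{a_j} \circ \cdots \circ \varphi_{x_{i-1}}^{a_{i-1}})(x_i)$ for $1 \leq j < i$ and $y_i := x_i$, so that $y_1 = y$. For $j = i-1, i-2, \dots, 1$ in turn, the desired swap is $(x_j^{a_j}, y_{j+1}^a) \stackrel{II}{\to} (y_j^a, x_j^{a_j})$. By the definition of $\RR_{II}$, this requires (i) $\{x_j, y_{j+1}\} \in E(\Gamma)$ and (ii) $\varphi_{y_{j+1}}^{-a}(x_j) = x_j$, the latter being equivalent by Condition~(d) to $x_j \not< y_{j+1}$.

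Both requirements would follow from a single invariant proved by downward induction on $j$. Let $I_1 = \{k < i : x_k > x_i\}$, put $I_1(j) = I_1 \cap [j, i-1]$, and, when $I_1(j) \neq \emptyset$, let $k_\ast(j) = \min I_1(j)$; since the elements of $I_1$ are pairwise comparable and form a decreasing chain whose $\succ$-order agrees with the index order, $x_{k_\ast(j)}$ is the largest element of $I_1$ with index $\geq j$. The invariant reads: either $I_1(j) = \emptyset$ and $y_j = x_i$, or $I_1(j) \neq \emptyset$ and $y_j \leq x_{k_\ast(j)}$. The inductive step when $j \in I_1$ is straightforward: $k_\ast(j) = j$ and $y_{j+1} \leq x_j$ either because $y_{j+1} = x_i < x_j$ or because $y_{j+1} \leq x_{k_\ast(j+1)} < x_j$ along the chain, hence $y_j = \varphi_{x_j}^{a_j}(y_{j+1}) \leq x_j$ by Condition~(c), while $y_{j+1} < x_j$ directly supplies~(i) and~(ii).

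The main obstacle is the other case $j \notin I_1$, where $x_j || x_i$ and $y_{j+1}$ need not be comparable to $x_i$ at all. The key is that $x_j$ and $x_{k_\ast(j+1)}$ must themselves be incomparable: $x_j > x_{k_\ast(j+1)}$ would force $x_j > x_i$, contradicting $x_j || x_i$, while $x_j < x_{k_\ast(j+1)}$ would give $x_j \prec x_{k_\ast(j+1)}$ and hence $j > k_\ast(j+1) \geq j+1$, absurd. Both vertices belonging to the stratum $U$, this yields $\{x_j, x_{k_\ast(j+1)}\} \in E_{||}(\Gamma)$; applying Condition~(b) to this together with $y_{j+1} \leq x_{k_\ast(j+1)}$ produces $\{x_j, y_{j+1}\} \in E_{||}(\Gamma)$, which simultaneously delivers adjacency~(i), the inequality $x_j \not< y_{j+1}$ for~(ii), and via Condition~(d) the equality $\varphi_{x_j}^{a_j}(y_{j+1}) = y_{j+1}$ needed to preserve the invariant $y_j = y_{j+1} \leq x_{k_\ast(j+1)} = x_{k_\ast(j)}$ for the next inductive step.
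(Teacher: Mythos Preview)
Your proof is correct and follows the same approach as the paper: write $\hat\omega^S(U)$ with the $\preceq$-decreasing ordering and move the $i$-th syllable leftward past $x_{i-1}^{a_{i-1}},\dots,x_1^{a_1}$ via successive type~II swaps, checking at each step that $x_j \not< y_{j+1}$ and $\{x_j,y_{j+1}\}\in E(\Gamma)$. The paper compresses your invariant argument into the single assertion that $x_j \,||\, x_i$ or $x_j > x_i$ forces $x_j \,||\, y_{j+1}$ or $x_j > y_{j+1}$; your chain invariant $y_{j+1}\le x_{k_\ast(j+1)}$ is exactly what makes that assertion work (the easy subcase $I_1(j+1)=\emptyset$, where $y_{j+1}=x_i$ and $\{x_j,x_i\}\in E_{||}(\Gamma)$ directly, is implicit in your setup).
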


\begin{proof}
We write $U = \{ x_1^{a_1}, \dots, x_p^{a_p} \}$ with $x_1 \succ x_2 \succ \cdots \succ x_p$.
Let $i \in \{1, \dots, p \}$ be such that $x^a = x_i^{a_i}$.
Note that, if $j < i$, then $x_j || x_i$ or $x_j > x_i$.
This implies that, if $j < i$, then $x_j || (\varphi_{x_{j+1}}^{a_{j+1}} \circ \cdots \circ \varphi_{x_{i-1}}^{a_{i-1}}) (x_i)$ or $x_j > (\varphi_{x_{j+1}}^{a_{j+1}} \circ \cdots \circ \varphi_{x_{i-1}}^{a_{i-1}}) (x_i)$.
Then we have the following sequence of rewriting transformations
\begin{gather*}
\hat \omega^S (U)  =
(x_1^{a_1}, \dots, x_{i-1}^{a_{i-1}}, x_i^{a_i}, x_{i+1}^{a_{i+1}}, \dots, x_p^{a_p}) \stackrel{II}{\to}\\
(x_1^{a_1}, \dots, x_{i-2}^{a_{i-2}}, \varphi_{x_{i-1}}^{a_{i-1}} (x_i)^{a_i}, x_{i-1}^{a_{i-1}}, x_{i+1}^{a_{i+1}}, \dots, x_p^{a_p}) \stackrel{II}{\to} \cdots \stackrel{II}{\to}\\
(x_1^{a_1}, (\varphi_{x_2}^{a_2} \circ \cdots \circ \varphi_{x_{i-1}}^{a_{i-1}}) (x_i)^{a_i}, x_2^{a_2}, \dots, x_{i-1}^{a_{i-1}}, x_{i+1}^{a_{i+1}}, \dots, x_p^{a_p})  \stackrel{II}{\to}\\
( (\varphi_{x_1}^{a_1} \circ \cdots \circ \varphi_{x_{i-1}}^{a_{i-1}}) (x_i)^{a_i}, x_1^{a_1}, \dots, x_{i-1}^{a_{i-1}}, x_{i+1}^{a_{i+1}}, \dots, x_p^{a_p}) =
(y^a) \cdot \hat \omega^S (L (U, x^a))\,.
\end{gather*}
\end{proof}

\begin{lem}\label{lem5_3}
Let $U \in \Omega$ be a stratum and let $y^b \in S (\Gamma)$ be a syllable which can be added to $U$.
Then
\[
\hat \omega^S (U) \cdot (y^b) \stackrel{M\ *}{\to} \hat \omega^S (R (U, y^b))\,.
\]
\end{lem}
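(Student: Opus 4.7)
The plan is to slide $y^b$ leftward into its correct $\preceq$-position, merge it with an existing syllable when required, and then re-sort using Lemma \ref{lem5_1}. Concretely, writing $U = \{x_1^{a_1}, \dots, x_p^{a_p}\}$ with $x_1 \succ \cdots \succ x_p$, I distinguish the three cases matching the definition of $R(U, y^b)$: Case (1) $y \notin \supp(U)$; Case (2) $y = x_i \in \supp(U)$ with $a_i + b = 0$; Case (3) $y = x_i$ with $a_i + b \neq 0$. I introduce an insertion index $k$: in Cases (2) and (3) set $k = i - 1$; in Case (1) let $k$ be the largest index (possibly $0$) with $x_k \succ y$.

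The sliding step rests on the observation that for every $j > k$ the automorphism $\varphi_{x_j}^{a_j}$ fixes $y$. Indeed, either $y = x_j$ (which forces the fixation trivially), or the choice of $k$ combined with the fact that $\preceq$ extends $\le$ gives $y \succ x_j$, whence $y \not< x_j$, whence Condition (d) yields $\varphi_{x_j}(y) = y$. Since $y^b$ can be added to $U$, every edge $\{x_j, y\}$ belongs to $E(\Gamma)$, so the $\RR_{II}$-rewriting $(x_j^{a_j}, y^b) \to (y^b, \varphi_y^{-b}(x_j)^{a_j})$ is available. Applying it successively for $j = p, p-1, \dots, k+1$ transforms $\hat\omega^S(U) \cdot (y^b)$ into
\[
w' = (x_1^{a_1}, \dots, x_k^{a_k}, y^b, \varphi_y^{-b}(x_{k+1})^{a_{k+1}}, \dots, \varphi_y^{-b}(x_p)^{a_p}).
\]

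Next comes the merging step: in Case (1), take $w = w'$; in Case (2), apply the $\RR_I$-deletion $(x_i^{a_i}, x_i^b) \to \epsilon$ to obtain $w$; in Case (3), apply $(x_i^{a_i}, x_i^b) \to (x_i^{a_i+b})$. Using Condition (d) once more (so that $\varphi_y^{-b}(x_j) = x_j$ for $j \le k$), the multiset of syllables in $w$ is exactly $R(U, y^b)$. To finish, I will check that $w$ has no $\le$-inversion, i.e.\ that it satisfies the hypothesis of Lemma \ref{lem5_1}: whenever the vertex of one entry of $w$ is $\le$-greater than that of a later entry, the first entry does come first. This reduces to a position-by-position case analysis (prefix/prefix, prefix/suffix, suffix/suffix, and the slots involving $y^b$), using Condition (c) to transport strict $\le$-inequalities through $\varphi_y^{-b}$, the defining property of $k$, and that $\preceq$ extends $\le$. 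Once this compatibility is settled, Lemma \ref{lem5_1} gives $w \stackrel{II\ *}{\to} \hat\omega^S(R(U, y^b))$, and chaining the sliding, merging, and re-sorting steps delivers the claimed $\stackrel{M\ *}{\to}$-derivation.

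The main obstacle will be the compatibility verification in the mixed sub-case where one entry lies in the unchanged prefix $x_1^{a_1}, \dots, x_k^{a_k}$ and the other in the transformed suffix $\varphi_y^{-b}(x_{k+1})^{a_{k+1}}, \dots, \varphi_y^{-b}(x_p)^{a_p}$. Excluding an inversion there requires distinguishing whether $\varphi_y^{-b}$ fixes the vertex in question, in which case Condition (d) applies directly and contradicts $x_{j'-1} \preceq y \prec x_{i'}$ for the indices involved, or moves it, in which case Condition (c) must be combined with the fact that $\varphi_y^{-b}$ fixes every vertex $\succ$-above $y$ to derive a contradiction with the original $\preceq$-ordering of $U$.
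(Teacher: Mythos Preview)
Your approach is exactly the paper's: slide $y^b$ leftward to its $\preceq$-position via type-II moves, merge with a type-I move if $y \in \supp(U)$, then invoke Lemma~\ref{lem5_1}. However, there is an off-by-one error in your insertion index for Cases~(2) and~(3). With $k = i-1$ you slide for $j = p, p-1, \dots, i$, and the final slide at $j = i$ attempts a type-II move on the pair $(x_i^{a_i}, y^b) = (y^{a_i}, y^b)$. But $\RR_{II}$ requires $\{x_i, y\} \in E(\Gamma)$, which fails since $x_i = y$; your blanket claim that ``every edge $\{x_j, y\}$ belongs to $E(\Gamma)$'' is false precisely at $j = i$ (the definition of ``can be added'' does not give an edge there, and indeed a simplicial graph has no loops).

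The fix is to take $k = i$ in Cases~(2) and~(3), so that sliding stops at $j = i+1$, leaving the word $(x_1^{a_1}, \dots, x_i^{a_i}, y^b, \varphi_y^{-b}(x_{i+1})^{a_{i+1}}, \dots, \varphi_y^{-b}(x_p)^{a_p})$ with $(x_i^{a_i}, y^b) = (y^{a_i}, y^b)$ adjacent at positions $(i, i+1)$; the type-I merge then applies directly. This is exactly what the paper does (its index $i$ is defined by $x_i \succeq y \succ x_{i+1}$, which coincides with your $k$ in Case~(1) and with $i$, not $i-1$, in Cases~(2) and~(3)). With that correction, your sketch of the no-$\le$-inversion verification for Lemma~\ref{lem5_1} is sound and the rest of the argument goes through.
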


\begin{proof}
We write $U = \{x_1^{a_1}, \dots, x_p^{a_p}\}$ with $x_1 \succ \cdots \succ x_p$.
Let $i \in \{0, 1, \dots, p\}$ be such that $x_i \succeq y \succ x_{i+1}$.
If $i=0$ this means that $y \not \in \supp (U)$ and $y \succ x_1$, and if $i=p$ this means that $x_p \succeq y$.
If $y \in \supp (U)$, then $y=x_i$, otherwise $x_i \succ y \succ x_{i+1}$.
Let $j \in \{i+1, \dots, p\}$.
Then either $y > x_j$ or $y || x_j$.
In both cases we have
\[
((x_j^{a_j}, y^b) , (y^b,\varphi_y^{-b} (x_j)^{a_j})) \in \RR_{II}\,.
\]
It follows that 
\[
\hat \omega^S (U) \cdot (y^a) = (x_1^{a_1}, \dots, x_p^{a_p}, y^a) \stackrel{II \ *}{\to} (x_1^{a_1}, \dots, x_i^{a_i}, y^b, \varphi_y^{-b} (x_{i+1})^{a_{i+1}}, \dots, \varphi_y^{-b} (x_p)^{a_p})\,.
\]
If $y \not \in \supp (U)$, then 
\[
R(U, y^b) = \{x_1^{a_1}, \dots, x_i^{a_i}, y^b, \varphi_y^{-b} (x_{i+1})^{a_{i+1}}, \dots, \varphi_y^{-b} (x_p)^{a_p}\}\,,
\]
hence, by Lemma \ref{lem5_1},
\[
(x_1^{a_1}, \dots, x_i^{a_i}, y^b, \varphi_y^{-b} (x_{i+1})^{a_{i+1}}, \dots, \varphi_y^{-b} (x_p)^{a_p}) \stackrel{II \ *}{\to} \hat \omega^S (R (U, y^b))\,.
\]
If $y = x_i \in \supp (U)$ and $b+a_i \neq 0$, then  
\[
R(U, y^b) = \{x_1^{a_1}, \dots, x_{i-1}^{a_{i-1}}, y^{a_i+b}, \varphi_y^{-b} (x_{i+1})^{a_{i+1}}, \dots, \varphi_y^{-b} (x_p)^{a_p}\}\,,
\]
hence, by Lemma \ref{lem5_1},
\begin{gather*}
(x_1^{a_1}, \dots, x_{i-1}^{a_{i-1}}, x_i^{a_i}, y^b, \varphi_y^{-b} (x_{i+1})^{a_{i+1}}, \dots, \varphi_y^{-b} (x_p)^{a_p}) \stackrel{I }{\to}\\
(x_1^{a_1}, \dots, x_{i-1}^{a_{i-1}}, y^{a_i+b}, \varphi_y^{-b} (x_{i+1})^{a_{i+1}}, \dots, \varphi_y^{-b} (x_p)^{a_p}) \stackrel{II \ *}{\to} \hat \omega^S (R (U, y^b))\,.
\end{gather*}
If $y = x_i \in \supp (U)$ and $b+a_i = 0$, then  
\[
R(U, y^b) = \{x_1^{a_1}, \dots, x_{i-1}^{a_{i-1}}, \varphi_y^{-b} (x_{i+1})^{a_{i+1}}, \dots, \varphi_y^{-b} (x_p)^{a_p}\}\,,
\]
hence, by Lemma \ref{lem5_1},
\begin{gather*}
(x_1^{a_1}, \dots, x_{i-1}^{a_{i-1}}, x_i^{a_i}, y^b, \varphi_y^{-b} (x_{i+1})^{a_{i+1}}, \dots, \varphi_y^{-b} (x_p)^{a_p}) \stackrel{I}{\to}\\
(x_1^{a_1}, \dots, x_{i-1}^{a_{i-1}}, \varphi_y^{-b} (x_{i+1})^{a_{i+1}}, \dots, \varphi_y^{-b} (x_p)^{a_p}) \stackrel{II \ *}{\to} \hat \omega^S (R (U, y^b))\,.
\end{gather*}
\end{proof}

The following definition is a slight modification of the definition of normal form given in Subsection \ref{subsec2_2}.
Let $g \in \Tr (\Gamma)$.
By Theorems \ref{thm2_3} and \ref{thm2_4} there exists a unique piling $w = (U_1,U_2, \dots, U_p) \in \Omega^*$ such that $w$ is $\RR$-irreducible and $w = \Psi (g)$.
Then we define the \emph{$S$-normal form} of $g$ to be 
\[
\nf^S (g) = \hat \omega^S (U_1) \cdot \hat \omega^S (U_2) \cdots \hat \omega^S (U_p) \in S(\Gamma)^*\,.
\]
Theorem \ref{thm2_8} will be a consequence of the following.

\begin{prop}\label{prop5_4}
Let $v \in S (\Gamma)^*$ and let $g = \bar v \in \Tr (\Gamma)$.
Then $v \stackrel{M\ *}{\to} \nf^S (g)$.
\end{prop}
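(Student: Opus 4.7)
The plan is to lift any $\RR$-rewriting of pilings to an $\RR_M$-rewriting of syllabic words via the map $\hat\omega^S$, extended multiplicatively to pilings by
\[
\hat\omega^S(U_1, \dots, U_p) = \hat\omega^S(U_1) \cdot \hat\omega^S(U_2) \cdots \hat\omega^S(U_p)\,.
\]
The key step is to show that if $u, u' \in \Omega^*$ satisfy $u \stackrel{\RR}{\to} u'$, then $\hat\omega^S(u) \stackrel{M\ *}{\to} \hat\omega^S(u')$. A step coming from $\RR_0$ removes an empty stratum and leaves $\hat\omega^S$ unchanged. A step coming from $\RR_1$ is a T-transformation replacing an adjacent pair $(U, V)$ inside the piling by $(U', V') = (R(U, y^a), L(V, x^a))$ where $y^a = \gamma(V, x^a)$. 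Applying Lemma \ref{lem5_2} to $V$ gives $\hat\omega^S(V) \stackrel{II\ *}{\to} (y^a) \cdot \hat\omega^S(V')$, and then Lemma \ref{lem5_3} gives $\hat\omega^S(U) \cdot (y^a) \stackrel{M\ *}{\to} \hat\omega^S(U')$. Concatenating these rewritings and leaving the surrounding strata untouched, we obtain $\hat\omega^S(u) \stackrel{M\ *}{\to} \hat\omega^S(u')$. By induction on the length of a rewriting sequence, $u \stackrel{\RR\ *}{\to} u'$ implies $\hat\omega^S(u) \stackrel{M\ *}{\to} \hat\omega^S(u')$.

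Now apply this to the given syllabic word $v = (x_1^{a_1}, \dots, x_n^{a_n})$ and to the associated piling
\[
u_v = (\{x_1^{a_1}\}, \{x_2^{a_2}\}, \dots, \{x_n^{a_n}\}) \in \Omega^*\,.
\]
By construction $\hat\omega^S(u_v) = v$, and, by the definition of $\Psi$ in Lemma \ref{lem4_14}, $u_v$ represents $\Psi(g)$ in $M(\Gamma)$. Since $\RR$ is terminating and confluent (Theorem \ref{thm2_4}), Theorem \ref{thm2_3} provides a unique $\RR$-irreducible piling $(U_1, \dots, U_p) \in \Omega^*$ such that $u_v \stackrel{\RR\ *}{\to} (U_1, \dots, U_p)$, and this piling is precisely the one used to define $\nf^S(g)$. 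Combining with the previous paragraph,
\[
v = \hat\omega^S(u_v) \stackrel{M\ *}{\to} \hat\omega^S(U_1, \dots, U_p) = \nf^S(g)\,,
\]
which is the desired conclusion.

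The main obstacle is the compatibility step for T-transformations, where one must check that the two local rewritings provided by Lemmas \ref{lem5_2} and \ref{lem5_3} really fit together into a single $\RR_M$-rewriting of $\hat\omega^S(U) \cdot \hat\omega^S(V)$ into $\hat\omega^S(U') \cdot \hat\omega^S(V')$; but once those two lemmas are in hand the argument reduces to concatenation, and no further case analysis is required.
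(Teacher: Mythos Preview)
Your proof is correct and follows essentially the same route as the paper: extend $\hat\omega^S$ multiplicatively to pilings, use Lemmas \ref{lem5_2} and \ref{lem5_3} to show that each $\RR$-step lifts to an $\RR_M$-rewriting, then apply this to the singleton piling associated with $v$ and its $\RR$-irreducible reduction. The paper's proof is more terse about the compatibility step for T-transformations, but your explicit chaining of the two lemmas is exactly what is meant.
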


\begin{proof}
For a piling $w = (U_1, \dots, U_p) \in \Omega^*$ we set
\[
\hat \omega^S (w) = \hat \omega^S (U_1) \cdot \hat \omega^S (U_2) \cdots \hat \omega^S (U_p) \in S(\Gamma)^*\,.
\]
Let $w, w' \in \Omega^*$.
It follows from Lemmas \ref{lem5_2} and \ref{lem5_3} that, if $w \stackrel{\RR\ *}{\to} w'$, then $\hat \omega^S (w) \stackrel{M\ *}{\to} \hat \omega^S (w')$.
Let $v = (x_1^{a_1}, \dots, x_p^{a_p}) \in S (\Gamma)^*$ and let $g = \bar v \in \Tr (\Gamma)$.
Set $w = ( \{ x_1^{a_1} \}, \{ x_2^{a_2} \}, \dots, \{x_p^{a_p} \}) \in \Omega^*$ and  denote by $w'$ the unique piling in $\Omega^*$ such that $w'$ is $\RR$-irreducible and $\overline{w'} = g$. 
Then, by Theorems \ref{thm2_3} and \ref{thm2_4}, we have $w \stackrel{\RR\ *}{\to} w'$, hence, by the above,
\[
v = \hat \omega^S (w) \stackrel{M \ *}{\to} \hat \omega^S (w') = \nf^S (g)\,.
\proved
\]
\end{proof}

\begin{proof}[Proof of Theorem \ref{thm2_8}]
To prove Theorem \ref{thm2_8} it suffices to show that, if $w, w' \in S (\Gamma)^*$ are syllabically $M$-reduced and $\overline{w} = \overline{w'}$, then $w \stackrel{II\ *}{\to} w'$.
Let $w, w' \in S (\Gamma)^*$ be two syllabically $M$-reduced words and let $g \in \Tr (\Gamma)$ be such that $\overline{w} = \overline{w'} = g$.
By Proposition \ref{prop5_4} we have $w \stackrel{M\ *}{\to} \nf^S (g)$ and $w' \stackrel{M\ *}{\to} \nf^S (g)$.
Since $w$ and $w'$ are both syllabically $M$-reduced, we actually must have $w \stackrel{II\ *}{\to} \nf^S (g)$ and $w' \stackrel {II\ *}{\to} \nf^S (g)$.
Finally, since the operation $\stackrel{II}{\to}$ is reversible, we conclude $w \stackrel{II\ *}{\to} \nf^S (g) \stackrel{II\ *}{\to} w'$.
\end{proof}


\section{Parabolic subgroups}\label{sec6}

\begin{proof}[Proof of Theorem \ref{thm2_10}]
Note that $\Omega (\Gamma_1) \subseteq \Omega (\Gamma)$, hence $\Omega (\Gamma_1)^* \subseteq \Omega (\Gamma)^*$.
It is easily seen that, if $U \in \Omega (\Gamma_1)$ and $x^a \in U$, then $\gamma (U, x^a) \in S (\Gamma_1)$.
It is also easily seen that, if $U \in \Omega (\Gamma_1)$ and $x^a \in S (\Gamma_1)$ can be added to $U$, then $R (U, x^a) \in \Omega (\Gamma_1)$.
Let $w \in \Omega (\Gamma_1)^*$ and $w' \in \Omega (\Gamma)^*$.
Then from the above observations it follows that, if $w \stackrel{\RR (\Gamma)\ *}{\longrightarrow} w'$, then $w' \in \Omega (\Gamma_1)^*$ and $w \stackrel{\RR (\Gamma_1)\ *}{\longrightarrow} w'$.

Let $g \in \Tr (\Gamma_1)$.
Write $g = x_1^{a_1} \dots x_p^{a_p}$ with $x_1^{a_1}, \dots, x_p^{a_p} \in S (\Gamma_1)$, and set
\[
v = (\{x_1^{a_1}\}, \{x_2^{a_2}\}, \dots, \{x_p^{a_p}\}) \in \Omega (\Gamma_1)^*\,.
\]
Let $w \in \Omega (\Gamma)^*$ be the unique $\RR (\Gamma)$-irreducible piling such that $v \stackrel{\RR (\Gamma)\ *}{\longrightarrow} w$.
From the above we get $w \in \Omega (\Gamma_1)^*$ and $v \stackrel{\RR (\Gamma_1)\ *}{\longrightarrow} w$.
Moreover, $w$ is $\RR (\Gamma_1)$-irreducible because it is $\RR (\Gamma)$-irreducible.
Write $w = (U_1, \dots, U_q)$ with $U_1, \dots, U_q \in \Omega (\Gamma_1)$.
Then
\[
\nf_\Gamma (\iota_1 (g)) =\nf_{\Gamma_1} (g) = \hat \omega (U_1) \cdot \hat \omega (U_2) \cdots \hat \omega (U_q)\,.
\]

Let $g \in \Ker (\iota_1)$.
From the above we get $\nf_{\Gamma_1} (g) = \nf_\Gamma (\iota_1 (g)) = \epsilon$, hence $g=1$.
This shows that $\iota_1$ is injective.

Let $g \in \Tr (\Gamma)$.
We already know that, if $g \in \Tr (\Gamma_1)$, then $\nf_\Gamma (g) = \nf_{\Gamma_1} (g)$, hence $\nf_\Gamma (g) \in (V (\Gamma_1) \sqcup V (\Gamma_1)^{-1})^*$.
Conversely, it is obvious that, if $\nf_\Gamma (g) \in (V (\Gamma_1) \sqcup V (\Gamma_1)^{-1})^*$, then $g = \overline{\nf_\Gamma (g)} \in \Tr (\Gamma_1)$.
\end{proof}


\section{PreGarside trickle groups}\label{sec7}

In this section $\Gamma = (\Gamma, \le, \mu, (\varphi_x)_{x \in V (\Gamma)})$ denotes a fixed preGarside trickle graph.
We also fix a total order $\preceq$ on $V (\Gamma)$ which extends the partial order $\le$ in the sense that, for all $x, y \in V (\Gamma)$, if $x \le y$, then $x \preceq y$.

The proof of Theorem \ref{thm2_14} is based on the existence of a complemented presentation for $\Tr^+ (\Gamma)$ that satisfies the sharp $\theta$-cube condition.
So, we start by recalling these notions and explaining their link with Theorem \ref{thm2_14}.

\begin{defin}
Let $\Lambda$ be a simplicial graph.
As always we denote by $V (\Lambda)$ its vertex set and by $E (\Lambda)$ its edge set.
Let $\hat E (\Lambda) = \{ (x,y) \in V (\Lambda) \times V (\Lambda) \mid \{ x, y \} \in E (\Lambda) \}$.
A \emph{partial complement} on $V (\Lambda)$ based on $\Lambda$ is a map $f : \hat E (\Lambda) \to V (\Lambda)^*$.
A presentation $M = \langle V \mid R \rangle^+$ for a monoid $M$ is called \emph{complemented} if there exist two partial complements $f$ on a graph $\Lambda$ and $g$ on a graph $\Lambda'$ such that $V (\Lambda) = V (\Lambda') = V$ and
\[
R = \{ x\, f(x,y) = y \, f(y,x) \mid \{x,y\} \in E(\Lambda) \} = \{ g(y,x)\,x = g(x,y)\,y \mid \{x,y\} \in E(\Lambda') \}\,.
\]
Such a presentation is called \emph{short} if $f(x,y) \in V$ for all $(x,y) \in \hat E (\Lambda)$, that is, if all relations in $R$ are of the form $xy' = yx'$ with $x,y,x',y' \in V$.
\end{defin}

\begin{rem}
The definition of short presentation given here is more restrictive than the one given in \cite{DDGKM1}, but it allows to consider only homogeneous monoid presentations. 
In particular, this implies that the considered monoids are all atomic (see the remark below).
\end{rem}

\begin{expl}
Let $\Gamma = (\Gamma, \le, \mu, (\varphi_x)_{x \in V (\Gamma)})$ be our preGarside trickle graph.
For all $(x, y) \in \hat E (\Gamma)$ we set $f (x, y) = \varphi_x^{-1} (y)$ and $g (x, y) = \varphi_y (x)$.
Then, by definition and (the proof of) Proposition \ref{prop2_2}, $\Tr^+ (\Gamma)$ has the presentation $\Tr^+ (\Gamma) = \langle V \mid R \rangle$, where $V = V(\Gamma)$ and
\[
R = \{ x\, f (x, y) = y \, f (y, x) \mid \{ x, y \} \in E (\Gamma) \} = \{ g(y, x) \, x = g (x, y) \, y \mid \{ x, y \} \in E (\Gamma) \} \,.
\]
In particular, this presentation is a complemented short presentation.
\end{expl}

\begin{rem}
Let $M$ be a monoid with a complemented short presentation $M = \langle V \mid R \rangle$.
Define a map $\nu : M \to \N$ as follows.
If $a \in M$ is written $a=x_1 \dots x_p$ with $x_1, \dots, x_p  \in V$, then set $\nu (a) =p$.
The map $\nu$ is well-defined because the relations in $R$ are homogeneous, hence the length $p$ does not depend on the choice of the expression of $a$.
It is clear that $\nu (a) = 0$ if and only if $a = 1$ and $\nu (ab) = \nu (a) + \nu (b)$ for all $a, b \in M$, hence $\nu$ is a norm and $M$ is atomic.
In particular, $\Tr^+ (\Gamma)$ is atomic.
\end{rem}

The sharp $\theta$-cube condition introduced in \cite[Definition II.4.14]{DDGKM1} is technical, but in our case where the presentations are short, its definition is simpler as, in particular, there is no need of extending the partial map $\star$ to $V^*$ as in the general case.

\begin{defin}
Let $\langle V \mid R \rangle^+$ be a complemented short presentation of a monoid $M$.
Recall that there exist a graph $\Lambda$ with $V = V (\Lambda)$ and a partial complement $g : \hat E (\Lambda) \to V$ such that
\[
R = \{ g(y,x)\, x = g(x, y)\, y \mid \{ x, y \} \in E (\Lambda) \}\,.
\]
We define a partial map $(x ,y) \mapsto x \star y$ from $(V \cup\{ \epsilon\}) \times (V \cup\{ \epsilon\})$ to $V \cup \{ \epsilon \}$ as follows.
Let $x, y \in V$ be such that $x \neq y$. 
If $\{x, y\} \in E (\Lambda)$, then we set $x \star y = g (x, y)$.
If $\{x, y\} \not \in E (\Lambda)$, then neither $x \star y$ nor $y \star x$ is defined.
On the other hand, for $x \in V \cup \{ \epsilon \}$ we set $x \star x = \epsilon \star x = \epsilon$ and $x \star \epsilon = x$.
We say that the presentation $\langle V \mid R \rangle^+$ satisfies the \emph{left sharp $\theta$-cube condition} if, for all $x, y, z \in V$ pairwise distinct, we have the following alternative:
\begin{itemize}
\item
either $(z \star x) \star (y \star x)$ and $(z \star y) \star (x \star y)$ are both defined and are equal,
\item
or neither expression is defined. 
\end{itemize}
We define similarly the \emph{right sharp $\theta$-cube condition}.
We say that the presentation satisfies the \emph{sharp $\theta$-cube condition} if it satisfies both, the left and the right sharp $\theta$-cube conditions.
\end{defin}

The main tool in the proof of Theorem \ref{thm2_14} comes from \cite[Proposition II.4.16]{DDGKM1}.

\begin{prop}[Dehornoy--Digne--Godelle--Krammer--Michel 
\cite{DDGKM1}]\label{prop7_1}
If a monoid $M$ admits a short complemented presentation that satisfies the sharp $\theta$-cube condition, then $M$ is a preGarside monoid.
\end{prop}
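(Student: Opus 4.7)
The plan is to apply the word-reversing framework of \cite{DDGKM1}. Given the partial complement $f$ on $\Lambda$, define the \emph{right-reversing} rewriting on words over $V \sqcup V^{-1}$: a one-step reversal replaces a factor $v^{-1} u$, with $u, v \in V$ and $\{u, v\} \in E(\Lambda)$, by $f(v, u) \cdot f(u, v)^{-1}$. When iterated reversing takes a word $u^{-1} v$ (with $u, v \in V^*$) to a word of the form $v' \cdot u'^{-1}$, the relation $u v' = v u'$ holds in $M$, exhibiting a common right multiple of $\bar{u}$ and $\bar{v}$ that, as will be shown, turns out to be the least such multiple.

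The central step is to establish that right-reversing is \emph{confluent}, in the sense that any two terminating reversing sequences starting from the same word end at the same word. The sharp $\theta$-cube condition is exactly the local confluence statement for three-letter configurations $(x,y,z)$: the identity $(z \star x) \star (y \star x) = (z \star y) \star (x \star y)$ records that the two ways of reversing the triple agree, and its sharpness (both sides defined, or neither) ensures that confluence is not spoiled by partiality of $\star$. An induction on total word length then promotes this local condition to confluence of terminating derivations on arbitrary words. This inductive step is the main technical obstacle, and it is where the hypothesis is used in an essential way; termination itself is \emph{not} required, since the theory only needs the diamond property for terminating branches.

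From confluence the three preGarside axioms follow. Atomicity is immediate from shortness: all relations are length-preserving, so the length function on $V^*$ descends to a monoid norm on $M$. Right-cancellativity is extracted from confluence by analyzing the reversing of $u^{-1} v$ whenever $u a = v a$ in $M$: the terminal form is forced to be the empty word, giving $\bar u = \bar v$; left-cancellativity follows symmetrically from the right sharp $\theta$-cube condition applied to the dual complement $g$. Finally, existence of $\bar{u} \vee_L \bar{v}$ whenever a common right multiple exists is read off directly from the terminal word of the reversing of $u^{-1} v$, confluence guaranteeing that the common right multiple produced is the least one; the statement for $\vee_R$ is obtained symmetrically from the dual presentation. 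This verifies the three axioms and shows that $M$ is a preGarside monoid.
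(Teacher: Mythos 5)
The paper does not actually prove this proposition: it is imported from \cite{DDGKM1} (Proposition II.4.16) and used as a black box, so there is no internal argument to measure your proposal against. Your sketch is a faithful outline of the word-reversing proof given in that reference: the sharp $\theta$-cube condition on letters is exactly local confluence of reversing at three-letter configurations, an induction on length promotes it to completeness of reversing, completeness yields cancellativity and the conditional lcms, and atomicity is immediate because a short presentation is homogeneous. Two small corrections. First, you have the two sides crossed: right-reversing of $u^{-1}v$, built from the right-complement $f$ (relations $x\,f(x,y)=y\,f(y,x)$), is what delivers \emph{left}-cancellativity and conditional right-lcms (the $\vee_L$ of this paper, since $a\le_L b$ means $b$ is a right multiple of $a$), while it is the left-complement $g$ and left-reversing that give right-cancellativity and $\vee_R$; the symmetry you invoke is real, but the labels in your last paragraph are swapped. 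Second, your remark that termination of reversing is not needed is true of the general theory but beside the point here: for a short complement every reversing step replaces a negative--positive adjacency by at most one positive and one negative letter, so reversing of $u^{-1}v$ always terminates in at most $|u|\,|v|$ steps, and this automatic termination is precisely what allows the cube condition to be verified on letters only. Neither point affects the validity of the argument.
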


\begin{proof}[Proof of Theorem \ref{thm2_14}]
We already know that $\Tr^+ (\Gamma)$ has a short complemented presentation.
So, in order to prove Theorem \ref{thm2_14}, it suffices to show that the standard presentation of $\Tr^+ (\Gamma)$ satisfies the sharp $\theta$-cube condition.
By symmetry and thanks to Proposition \ref{prop2_2}, it suffices to show that this presentation satisfies the left sharp $\theta$-cube condition.

Let $x, y \in V (\Gamma)$ be such that $x \neq y$.
If $\{x, y\} \in E (\Gamma)$, then we set $x \star y = \varphi_{y} (x)$, and, if $\{x, y\} \not \in E(\Gamma)$, then $x \star y$ is not defined.
On the other hand, for $x \in V (\Gamma) \cup \{\epsilon\}$ we set $\epsilon \star x = x \star x = \epsilon$ and $x \star \epsilon = x$.
The fact that the standard presentation of $\Tr^+ (\Gamma)$ satisfies the left sharp $\theta$-cube condition is a straightforward consequence of the following two claims.

{\it Claim 1.}
For all $x, y, z \in V$ pairwise distinct we have the following alternative:
\begin{itemize}
\item
either $( z \star x) \star ( y \star x)$ and $(z \star y) \star (x \star y)$ are both defined,
\item
or neither expression is defined.
\end{itemize}
Furthermore, $(z \star x) \star (y \star x)$ and $(z \star y) \star (x \star y)$ are both defined if and only if $\{x, y\}$, $\{x, z\}$, and $\{y, z\}$ belong to $E (\Gamma)$.

{\it Proof of Claim 1.}
By symmetry between $x$ and $y$, it suffices to show that $(z \star x) \star (y \star x)$ is defined if and only if $\{x, y\}$, $\{x ,z\}$ and $\{y, z\}$ belong to $E (\Gamma)$.
Note that, if $y \star x$ is defined, then $y \star x = \varphi_x (y)$ lies in $V (\Gamma)$ (and it is different from $\epsilon$), and, if $y \star x = \varphi_x (y)$ and $z \star x = \varphi_x (z)$ are both defined, then they are different.
So, saying that $(z \star x) \star (y \star x)$ is defined is equivalent of saying that $\{x, y\}$, $\{x, z\}$ and $\{(y \star x), (z \star x) \} = \{\varphi_x (y), \varphi_x (z) \}$ belong to $E (\Gamma)$.
But $\{\varphi_x (y), \varphi_x (z)\}$ belongs to $E (\Gamma)$ if and only if $\{y, z\}$ belongs to $E (\Gamma)$, hence $(z \star x) \star (y \star x)$ is defined if and only if $\{x, y\}$, $\{x, z\}$ and $\{ y, z \}$ belong to $E (\Gamma)$.
This completes the proof of Claim 1.

{\it Claim 2.}
Let $x, y, z \in V(\Gamma)$ pairwise distinct be such that $\{x, y\}, \{x, z\}, \{y, z\} \in E(\Gamma)$.
Then 
\[
(z \star x) \star (y \star x) = (z \star y) \star (x \star y)\,.
\]

{\it Proof of Claim 2.}
The proof of Claim 2 is divided into two cases depending on whether $x || y$ or $y < x$.
By symmetry, the case $x < y$ is treated in the same way as the case $y < x$.

{\it Case 1:} $x || y$.
By Condition (b) in the definition of a trickle graph, we cannot have together $z < x$ and $z < y$.
Thus, we have two subcases: $z<x$ and $z \not < y$ for the first, and $z \not < x$ and $z \not < y$ for the second.
The subcase $z \not < x$ and $z < y$ is treated in the same way as the subcase $z<x$ and $z \not < y$.
Suppose $z < x$ and $z \not < y$.
Then $\varphi_x (z) \not < \varphi_x (y) = y$, hence
\[
(z \star x) \star (y \star x) = \varphi_x (z) \star y = \varphi_x (z) = z \star x = (z \star y) \star (x \star y)\,.
\]
Suppose $z \not < x$ and $z \not < y$.
Then
\[
(z \star x) \star (y \star x) = z \star y = z = z \star x = (z \star y) \star (x \star y)\,.
\]

{\it Case 2:} $y < x$.
Here we have three subcases: $z \not < x$ (which implies $z \not < y$) for the first, $z < x$ and $z \not < y$ for the second, and $z < y$ (which implies $z < y < x$) for the third.
Suppose $z \not < x$.
Then $z = \varphi_x (z) \not < \varphi_x (y)$, hence
\[
(z \star x) \star (y \star x) = z \star \varphi_x (y) = z = z \star x = (z \star y) \star (x \star y)\,.
\]
Suppose $z < x$ and $z \not < y$.
Then $\varphi_x (z) \not < \varphi_x (y)$, hence
\[
(z \star x) \star (y \star x) = \varphi_x (z) \star \varphi_x (y) = \varphi_x (z) = z \star x = (z \star y) \star (x \star y)\,.
\]
Suppose $z < y$.
Then, by Condition (g) in the definition of a trickle graph,
\[
(z \star x) \star (y \star x) = \varphi_x (z) \star \varphi_x (y) = (\varphi_{\varphi_x (y)} \circ \varphi_x) (z) = (\varphi_x \circ \varphi_y) (z) = \varphi_y (z) \star x = (z \star y) \star (x \star y)\,.
\]
This completes the proof of Claim 2 and therefore of Theorem \ref{thm2_14}.
\end{proof}

For the remaining proofs in this section we need to understand how the rewriting system $\RR$ of Section \ref{sec4} can be restricted to positive words.
Then Theorem \ref{thm2_17} will be a straightforward consequence of this analysis, and therefore it will be proved before Theorem \ref{thm2_15} and Theorem \ref{thm2_16}.

Set $S^+ (\Gamma) = \{ x^a \mid x \in V (\Gamma) \text{ and } a \in \N_{\ge 1} \}$.
A \emph{positive stratum} of $\Gamma$ is a finite subset $U = \{x_1^{a_1}, x_2^{a_2}, \dots, x_p^{a_p}\} \subseteq S^+ (\Gamma)$ such that $x_i \neq x_j$ and $\{x_i, x_j\} \in E (\Gamma)$ for all $i, j \in \{1, \dots, p\}$, $i \neq j$.
The set of positive strata is denoted by $\Omega^+ = \Omega^+ (\Gamma)$.
Note that $\Omega^+ \subset \Omega$, hence $(\Omega^+)^* \subset \Omega^*$.
It is easily seen that, if $U \in \Omega^+$ and $x^a \in U$, then $\gamma (U, x^a) \in S^+ (\Gamma)$.
It is also easily seen that, if $U \in \Omega^+$ and $x^a \in S^+ (\Gamma)$ can be added to $U$, then $R (U, x^a) \in \Omega^+$.
Let $\RR^+ = \RR \cap ((\Omega^+)^* \times (\Omega^+)^*)$.
Then the following is a direct consequence of the above observations combined with Theorem \ref{thm2_4}.

\begin{lem}\label{lem7_2}
\begin{itemize}
\item[(1)]
$\RR^+$ is a terminating and confluent rewriting system.
\item[(2)]
Let $w \in (\Omega^+)^*$ and $w' \in \Omega^*$.
If $w \stackrel{\RR\ *}{\longrightarrow} w'$, then $w' \in (\Omega^+)^*$ and $w \stackrel{\RR^+\ *}{\longrightarrow} w'$.
\end{itemize}
\end{lem}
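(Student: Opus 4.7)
The plan is to establish (2) first and then derive (1) from it together with Theorem \ref{thm2_4}.

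For (2), it suffices to treat a single $\RR$-step $w \to_{\RR} w'$ with $w \in (\Omega^+)^*$ and then iterate. There are two kinds of rewriting rules in $\RR = \RR_0 \cup \RR_1$. If the step uses $\RR_0 = \{((\emptyset), \epsilon)\}$, then it merely deletes an empty stratum from $w$, which preserves positivity and is clearly an $\RR^+$-step. If the step uses a T-transformation $T(U,V,x^a) = (U',V')$ inside $w$, then the factor $(U,V)$ of $w$ is replaced by $(U',V')$. By hypothesis $U, V \in \Omega^+$ and $x^a \in V$, so the first observation preceding the lemma gives $y^a = \gamma(V,x^a) \in S^+(\Gamma)$; the second observation then gives $U' = R(U,y^a) \in \Omega^+$; and $V' = L(V,x^a) \subseteq V$ is automatically in $\Omega^+$. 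Thus $w' \in (\Omega^+)^*$ and the step lies in $\RR^+$, which completes the induction.

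For (1), termination of $\RR^+$ is immediate from $\RR^+ \subseteq \RR$ together with termination of $\RR$ (Theorem \ref{thm2_4}). For confluence, let $u \in (\Omega^+)^*$ with $u \stackrel{\RR^+\ *}{\longrightarrow} v_1$ and $u \stackrel{\RR^+\ *}{\longrightarrow} v_2$. These are also $\RR$-reductions, so by confluence of $\RR$ there exists $w \in \Omega^*$ with $v_1 \stackrel{\RR\ *}{\longrightarrow} w$ and $v_2 \stackrel{\RR\ *}{\longrightarrow} w$. Since $v_1, v_2 \in (\Omega^+)^*$, applying part (2) to each of these reductions yields $w \in (\Omega^+)^*$ together with $v_1 \stackrel{\RR^+\ *}{\longrightarrow} w$ and $v_2 \stackrel{\RR^+\ *}{\longrightarrow} w$, establishing confluence of $\RR^+$.

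There is no real obstacle here: the whole argument reduces to the stability of $\Omega^+$ and $S^+(\Gamma)$ under the operations $L$, $R$, and $\gamma$, which are exactly the two observations recorded immediately before the statement of the lemma.
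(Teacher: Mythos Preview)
Your argument is correct and is exactly the direct-consequence argument the paper gestures at: you use the two stability observations (for $\gamma$ and $R$) recorded just before the lemma to show that every $\RR$-step from a positive piling stays inside $(\Omega^+)^*$ and hence is an $\RR^+$-step, and then you deduce termination and confluence of $\RR^+$ from those of $\RR$ via Theorem~\ref{thm2_4}. There is nothing to add.
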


Set $M^+ (\Gamma) = \langle \Omega^+ \mid u = v \text{ for } (u, v) \in \RR^+ \rangle^+$.
Let $U\in \Omega^+$, $U \neq \emptyset$.
Write $U = \{ x_1^{a_1}, \dots, x_p^{a_p} \}$ with $x_1 \succ x_2 \succ \cdots \succ x_p$, and set $\omega^+ (U) = x_1^{a_1} \dots x_p^{a_p} \in \Tr^+ (\Gamma)$.
If $U = \emptyset$, then set $\omega^+ (U) = 1$.
The same proofs as the ones of Lemmas \ref{lem4_14} and \ref{lem4_15} applied to $M^+ (\Gamma)$ and $\Tr^+ (\Gamma)$ prove the following.

\begin{lem}\label{lem7_3}
We have an isomorphism $\Phi^+ : M^+ (\Gamma) \to \Tr^+ (\Gamma)$ which sends $U$ to $\omega^+ (U)$ for all $U \in \Omega^+$.
\end{lem}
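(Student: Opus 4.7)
The plan is to mirror the construction of the isomorphism $\Phi : M(\Gamma) \to \Tr(\Gamma)$ in the positive setting, following the blueprint of Lemmas \ref{lem4_14} and \ref{lem4_15}. First, I would establish an analog of Lemma \ref{lem4_13} for positive strata: the word $\omega^+(U) = x_1^{a_1} \dots x_p^{a_p} \in \Tr^+(\Gamma)$ does not depend on the numbering of the elements of $U$ chosen so that $x_i > x_j$ implies $i < j$. The proof is word-for-word the same as that of Lemma \ref{lem4_13}, since it only uses commutations between incomparable generators, which are available already in $\Tr^+(\Gamma)$ because $\{x,y\} \in E_{||}(\Gamma)$ gives the defining positive relation $xy = yx$.

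Next I would construct $\Phi^+$. The free-monoid map $\omega^+ : (\Omega^+)^* \to \Tr^+(\Gamma)$, $U \mapsto \omega^+(U)$, needs to be checked against the relations in $\RR^+$. The rule $(\emptyset) \to \epsilon$ is trivial. For each T-transformation $((U,V),(U',V')) \in \RR^+$, the equality $\omega^+(U)\,\omega^+(V) = \omega^+(U')\,\omega^+(V')$ in $\Tr^+(\Gamma)$ follows from a sequence of applications of the positive relations $\varphi_x(y)\,x = \varphi_y(x)\,y$; this is exactly the computation one performs in $\Tr(\Gamma)$, but all exponents stay positive throughout. Hence $\Phi^+ : M^+(\Gamma) \to \Tr^+(\Gamma)$ is a well-defined monoid homomorphism.

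For the inverse, I would first promote the standard presentation of $\Tr^+(\Gamma)$ to a syllabic one on $S^+(\Gamma)$ with relations $x^a \cdot x^b = x^{a+b}$ and $\varphi_x^a(y)^b \cdot x^a = \varphi_y^b(x)^a \cdot y^b$ (for $\{x,y\} \in E(\Gamma)$, $a,b \in \N_{\geq 1}$). That these relations define the same monoid as the standard one is a straightforward induction on $a + b$ using the relation $\varphi_x(y)\,x = \varphi_y(x)\,y$ repeatedly and the trickle graph axioms. Then I define $\Psi^+ : S^+(\Gamma)^* \to M^+(\Gamma)$ by $x^a \mapsto (\{x^a\})$ and verify it factors through the syllabic relations by exhibiting positive rewriting sequences in $\RR^+$ exactly as in the proof of Lemma \ref{lem4_14}: the sequence $(\{x^a\}, \{x^b\}) \to (\{x^{a+b}\}, \emptyset) \to (\{x^{a+b}\})$ handles the first family, and the two symmetric sequences corresponding to adding one syllable to the other handle the second family. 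Crucially, every intermediate piling that appeared in Lemma \ref{lem4_14} lies in $(\Omega^+)^*$, so the proof restricts to $\RR^+$ without modification. The mutual-inverse verification then follows from the same argument as in Lemma \ref{lem4_15}: the rewriting chain $(\{x_1^{a_1}\}, \dots, \{x_p^{a_p}\}) \to^* (U)$ used there is a chain in $\RR^+$, yielding $\Psi^+ \circ \Phi^+ = \id_{M^+(\Gamma)}$, and $\Phi^+ \circ \Psi^+ = \id_{\Tr^+(\Gamma)}$ is immediate on generators.

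The only point requiring any genuine care is verifying that the syllabic presentation of $\Tr^+(\Gamma)$ is equivalent to the standard one purely within the monoid (not within the group). In the group case of Lemma \ref{lem4_14} one can multiply by inverses, but here the derivation of $\varphi_x^a(y)^b \cdot x^a = \varphi_y^b(x)^a \cdot y^b$ from $\varphi_x(y)\,x = \varphi_y(x)\,y$ must be done by positive rewriting; this is a routine induction on $a + b$ using Condition (g) and Lemma \ref{lem4_7}, but it is where the proof really differs (even if only superficially) from the group case.
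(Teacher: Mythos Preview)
Your proposal is correct and matches the paper's approach exactly: the paper's entire proof is the sentence ``The same proofs as the ones of Lemmas \ref{lem4_14} and \ref{lem4_15} applied to $M^+(\Gamma)$ and $\Tr^+(\Gamma)$ prove the following.'' Your identification of the one point needing care --- that the syllabic relation $\varphi_x^a(y)^b\,x^a = \varphi_y^b(x)^a\,y^b$ must be derived in the monoid by positive rewriting rather than by multiplying by inverses --- is a legitimate observation that the paper leaves implicit, and your proposed induction on $a+b$ handles it.
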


If $U$ is a non-empty positive stratum that we write $U = \{ x_1^{a_1}, x_2^{a_2}, \dots, x_p^{a_p} \}$ with $x_1 \succ x_2 \succ \cdots \succ x_p$, then we set $\hat \omega^+ (U) = x_1^{a_1} \cdot x_2^{a_2} \cdots x_p^{a_p} \in V (\Gamma)^*$.
Note that $\hat \omega^+ (U)$ is a representative of $\omega^+(U)$.
Let $g \in \Tr^+ (\Gamma)$.
By the above there exists a unique piling $w = (U_1, U_2, \dots, U_p) \in (\Omega^+)^*$ such that $w$ is $\RR^+$-irreducible and $w$ represents $g$.
Then we set
\[
\nf^+ (g) = \hat \omega^+ (U_1) \cdot \hat \omega^+ (U_2) \cdots \hat \omega^+ (U_p) \in V (\Gamma)^*\,.
\]
The following is a direct consequence of the above observations.

\begin{lem}\label{lem7_4}
\begin{itemize}
\item[(1)] 
Let $g \in \Tr^+ (\Gamma)$.
Then $\nf^+ (g) = \nf (\iota_{\Tr^+ (\Gamma)} (g))$.
\item[(2)] 
Let $g \in \Tr (\Gamma)$.
We have $g \in \iota_{\Tr^+ (\Gamma)} (\Tr^+ (\Gamma))$ if and only if $\nf (g) \in V(\Gamma)^*$.
\end{itemize}
\end{lem}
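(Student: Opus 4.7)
The plan is to deduce both parts of the lemma from the ``positive stays positive'' observation encoded in Lemma~\ref{lem7_2}, together with the isomorphism $\Phi^+ : M^+(\Gamma) \to \Tr^+(\Gamma)$ of Lemma~\ref{lem7_3} and the existence and uniqueness of $\RR$-irreducible pilings provided by Theorem~\ref{thm2_4} and Newman's Theorem~\ref{thm2_3}.

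For part~(1), I would start with $g \in \Tr^+(\Gamma)$ and choose any positive representative $g = y_1 \cdots y_p$ with $y_i \in V(\Gamma)$. Under the isomorphism $\Psi : \Tr(\Gamma) \to M(\Gamma)$, the element $\iota_{\Tr^+(\Gamma)}(g)$ is sent to the piling $v = (\{y_1\}, \ldots, \{y_p\}) \in (\Omega^+)^*$. Let $w \in \Omega^*$ be the unique $\RR$-irreducible piling such that $v \stackrel{\RR\ *}{\to} w$, so that $\nf(\iota_{\Tr^+(\Gamma)}(g))$ is computed from $w$ via the concatenation of the $\hat\omega(U_i)$. Applying Lemma~\ref{lem7_2}(2) to $v$, I get that $w$ already lies in $(\Omega^+)^*$ and is reached by $\RR^+$-rewriting steps. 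Since $\RR^+ \subseteq \RR$, the $\RR$-irreducibility of $w$ implies its $\RR^+$-irreducibility, so $w$ is the unique $\RR^+$-irreducible piling in $(\Omega^+)^*$ representing $g$ through $\Phi^+$, and therefore computes $\nf^+(g)$ as the concatenation of the $\hat\omega^+(U_i)$. Because $U_i \in \Omega^+$, the words $\hat\omega(U_i)$ and $\hat\omega^+(U_i)$ agree (no inverse letters appear), and the two normal forms coincide.

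For part~(2), the forward direction is immediate from~(1): if $g = \iota_{\Tr^+(\Gamma)}(h)$ for some $h \in \Tr^+(\Gamma)$, then $\nf(g) = \nf^+(h)$ belongs to $V(\Gamma)^*$ by construction of $\hat\omega^+$. For the converse, if $\nf(g) \in V(\Gamma)^*$ then $g = \overline{\nf(g)}$ is a product of elements of $V(\Gamma)$ in $\Tr(\Gamma)$, so it lies in the image of the submonoid generated by $V(\Gamma)$, which by definition is $\iota_{\Tr^+(\Gamma)}(\Tr^+(\Gamma))$.

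There is no serious obstacle here: the heart of the argument is purely the compatibility of the two rewriting systems $\RR$ and $\RR^+$ on positive input, already recorded in Lemma~\ref{lem7_2}. The only point requiring a bit of care is to observe that $\RR$-irreducibility and $\RR^+$-irreducibility coincide on pilings in $(\Omega^+)^*$, which again follows directly from Lemma~\ref{lem7_2}(2) applied to any hypothetical one-step $\RR$-reduction.
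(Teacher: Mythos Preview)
Your proposal is correct and is precisely the argument the paper has in mind: the paper states that Lemma~\ref{lem7_4} ``is a direct consequence of the above observations'' (namely Lemma~\ref{lem7_2} and Lemma~\ref{lem7_3}), and you have spelled out exactly those deductions. The one minor point worth tightening is that for part~(1) you only need the trivial implication $\RR$-irreducible $\Rightarrow$ $\RR^+$-irreducible (from $\RR^+ \subseteq \RR$), not the full equivalence you mention at the end; the latter is what Lemma~\ref{lem7_2}(2) gives, but it is not strictly required here.
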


\begin{proof}[Proof of Theorem \ref{thm2_17}]
Let $g, h \in \Tr^+ (\Gamma)$.
If $\iota_{\Tr^+ (\Gamma)} (g) = \iota_{\Tr^+ (\Gamma)} (h)$, then we have $\nf ( \iota_{\Tr^+ (\Gamma)} (g)) = \nf ( \iota_{\Tr^+ (\Gamma)} (h))$, hence, by Lemma \ref{lem7_4}, $\nf^+ (g) = \nf^+ (h)$, and therefore $g=h$.
\end{proof}

Now, we turn to the proof of Theorem \ref{thm2_15}.
The following two lemmas are preliminaries of it.

\begin{lem}\label{lem7_5}
Let $g \in \Tr^+ (\Gamma)$.
Let $w = (U_1, \dots, U_p)$ be the unique piling in $(\Omega^+)^*$ such that $\Phi^+ (\bar w) = g$ and $w$ is $\RR^+$-irreducible.
Write $U_1=\{ x_1^{a_1}, \dots, x_q^{a_q} \}$ with $x_1 \succ x_2 \succ \cdots \succ x_q$.
Let $\psi = \varphi_{x_1}^{a_1} \circ \varphi_{x_2}^{a_2} \circ \cdots \circ \varphi_{x_q}^{a_q}$.
Then $\Div_L (g) \cap V (\Gamma) = \psi (\{x_1, x_2, \dots, x_q\})$.
\end{lem}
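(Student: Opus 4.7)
The plan is to prove the two inclusions separately. For the inclusion $\psi(\{x_1, \dots, x_q\}) \subseteq \Div_L(g) \cap V(\Gamma)$, fix $i \in \{1, \dots, q\}$; since $x_j \not > x_i$ whenever $j > i$, condition (d) of the trickle graph gives $\varphi_{x_j}(x_i) = x_i$ for all $j \geq i$, so $\psi(x_i) = \varphi_{x_1}^{a_1} \circ \cdots \circ \varphi_{x_{i-1}}^{a_{i-1}}(x_i)$, which is exactly the vertex part of the syllable $\gamma(U_1, x_i^{a_i})$. Reading Lemma \ref{lem5_2} in $\Tr^+(\Gamma)$ then yields $\omega^+(U_1) = \psi(x_i)^{a_i} \cdot \omega^+(L(U_1, x_i^{a_i}))$, so since $a_i \geq 1$, $\psi(x_i)$ left-divides $\omega^+(U_1)$ and hence $g = \omega^+(U_1)\,\omega^+(U_2) \cdots \omega^+(U_p)$.

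For the reverse inclusion, take $z \in V(\Gamma)$ with $g = zh$, write $h$ as a product of atoms $h = y_1 \cdots y_r$, and consider the piling $w^* = (\{z\}, \{y_1\}, \dots, \{y_r\}) \in (\Omega^+)^*$, which represents $g$ in $M^+(\Gamma)$. By Theorem \ref{thm2_4} and Lemma \ref{lem7_2}, $w^*$ reduces under $\RR^+$ to the irreducible piling $w = (U_1, \dots, U_p)$. For any positive stratum $V$, introduce
\[
\Psi(V) := \{\text{vertex part of } \gamma(V, v^e) \mid v^e \in V\} \subseteq V(\Gamma),
\]
which is well defined by Lemma \ref{lem4_2}; by the preceding paragraph, $\Psi(U_1) = \psi(\{x_1, \dots, x_q\})$. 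The plan is to establish the monotone invariant
\[
(*)\quad w' \stackrel{\RR^+}{\to} w'' \text{ in one step, with respective first strata } V_1', V_1'' \ \Longrightarrow\ \Psi(V_1') \subseteq \Psi(V_1''),
\]
and iterate it along $w^* \stackrel{\RR^+\,*}{\longrightarrow} w$ to obtain $\{z\} = \Psi(\{z\}) \subseteq \Psi(U_1)$, which gives $z \in \psi(\{x_1, \dots, x_q\})$.

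Rewriting steps affecting positions $\geq 2$, whether $T$-transformations or $\RR_0$-removals, leave $V_1$ unchanged, so $(*)$ is trivial there; moreover, in the positive setting the operation $R$ never produces an empty stratum from a non-empty one, so the only substantial case is $V_1'' = R(V_1', y^b)$ for some syllable $y^b$ addable to $V_1'$. For $v \in \supp(V_1')$, set $v' := v$ if $v \not < y$ and $v' := \varphi_y^{-b}(v)$ if $v < y$; in either case $v' \in \supp(V_1'')$ by the definition of $R$. The main obstacle is to prove the vertex identity $\gamma(V_1'', v'^{e'}) = \gamma(V_1', v^{e_v})$. For this, one uses condition (b) of the trickle graph to verify that the elements of $V_1''$ lying above $v'$ are in natural bijection with the elements of $V_1'$ above $v$, namely the unchanged elements $x$ of $V_1'$ with $x > y$, the syllable $y$ with exponent $b$ or $a_y + b$, and the $\varphi_y^{-b}$-images of the elements $x$ of $V_1'$ with $v < x < y$; condition (c) supplies the order-preservation needed for this bijection, and repeated application of Lemma \ref{lem4_7} commutes $\varphi_y^b$ past each $\varphi_{\varphi_y^{-b}(x_k)}^{a_{x_k}}$, turning it back into $\varphi_{x_k}^{a_{x_k}}$ on suitable inputs and eventually annihilating the $\varphi_y^{-b}$ applied to $v$, yielding the desired equality.
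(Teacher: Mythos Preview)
Your proof is correct and follows essentially the same approach as the paper. Both arguments handle the inclusion $\psi(\{x_1,\dots,x_q\})\subseteq\Div_L(g)$ by the same extraction computation, and both handle the reverse inclusion by tracking how the $\gamma$-values in the first stratum evolve along a rewriting sequence from a piling beginning with the singleton $\{z\}$; the paper phrases the invariant as ``$y$ is always realised as some $\gamma(V_j,\,\cdot\,)$'' while you phrase it as the set inclusion $\Psi(V_1')\subseteq\Psi(V_1'')$, but the substance is identical.

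One remark: the ``vertex identity'' $\gamma(V_1'',v'^{e'})=\gamma(V_1',v^{e_v})$ that you justify at the end via conditions (b), (c) and Lemma~\ref{lem4_7} is precisely the content of Lemma~\ref{lem4_9} (the case $v=y$ being immediate since elements above $y$ are fixed by $\varphi_y^{-b}$). Citing Lemma~\ref{lem4_9} directly would replace the last paragraph of your argument and make the proof considerably shorter.
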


\begin{proof}
Let $y \in \psi (\{x_1, x_2, \dots, x_q\})$.
There exists $i \in \{1, \dots, q\}$ such that $y = \psi (x_i)$.
Since $x_j \not > x_i$ for $j \ge i$, we have $y = \psi(x_i) =(\varphi_{x_1}^{a_1} \circ \cdots \circ \varphi_{x_{i-1}}^{a_{i-1}}) (x_i)$.
Then 
\begin{gather*}
g = x_1^{a_1} \dots x_{i-1}^{a_{i-1}} x_i^{a_i} x_{i+1}^{a_{i+1}} \dots x_q^{a_q} \, \omega (U_2) \dots \omega (U_p) =\\
x_1^{a_1} \dots x_{i-2}^{a_{i-2}} \varphi_{x_{i-1}}^{a_{i-1}} (x_i)^{a_i} x_{i-1}^{a_{i-1}} x_{i+1}^{a_{i+1}} \dots x_q^{a_q} \, \omega (U_2) \dots \omega (U_p) = \cdots =\\
(\varphi_{x_1}^{a_1} \circ \cdots \circ \varphi_{x_{i-1}}^{a_{i-1}}) (x_i)^{a_i}\, x_1^{a_1} \dots x_{i-1}^{a_{i-1}} x_{i+1}^{a_{i+1}} \dots x_q^{a_q} \, \omega (U_2) \dots \omega (U_p) =\\
y\,y^{a_i-1} x_1^{a_1} \dots x_{i-1}^{a_{i-1}} x_{i+1}^{a_{i+1}} \dots x_q^{a_q} \, \omega (U_2) \dots \omega (U_p)\,,
\end{gather*}
hence $y \in \Div_L (g) \cap V (\Gamma)$.

Let $y \in \Div_L (g) \cap V (\Gamma)$.
Let $g' \in \Tr^+ (\Gamma)$ be such that $g = yg'$.
Let $w'$ be the unique piling in $(\Omega^+)^*$ such that $\Phi^+ (\overline{w'}) = g'$ and $w'$ is $\RR^+$-irreducible.
Let $u = \{y\} \cdot w' \in (\Omega^+)^*$.
We have $\Phi^+ (\bar u) = g$, hence $u \stackrel{\RR^+\ *}{\longrightarrow} w$.
In other words, there exists a finite sequence $u_0=u, u_1, \dots, u_\ell=w$ in $(\Omega^+)^*$ such that $u_{j-1} \stackrel{\RR^+}{\to} u_j$ for all $j \in \{1, \dots, \ell \}$.
For each $j \in \{0,1, \dots, \ell \}$ we denote by $V_j$ the first stratum in the piling $u_j$ and we set $V_j = \{z_{1,j}^{c_{1,j}}, \dots, z_{r_j,j}^{c_{r_j,j}} \}$ with $z_{1,j} \succ \cdots \succ z_{r_j,j}$.

{\it Claim.} 
Let $j \in \{ 0, 1, \dots, \ell\}$.
Then there exists $i \in \{1, \dots, r_j \}$ such that $y = (\varphi_{z_{1,j}}^{c_{1,j}} \circ \cdots \circ \varphi_{z_{i-1,j}}^{c_{i-1,j}}) (z_{i,j})$.

{\it Proof of the claim.}
We argue by induction on $j$.
Suppose $j = 0$.
Then $V_0 = \{y\}$ and the claim is trivial.
Suppose $j > 0$ and that the induction hypothesis holds.
By the induction hypothesis there exists $i \in \{1, \dots, r_{j-1} \}$ such that $y = (\varphi_{z_{1,j-1}}^{c_{1,j-1}} \circ \cdots \circ \varphi_{z_{i-1,j-1}}^{c_{i-1,j-1}}) (z_{i,j-1})$.
If $V_j = V_{j-1}$, then there is nothing to prove.
So, we can assume that $V_j \neq V_{j-1}$.
Then there exists a syllable $z^c$ that can be added to $V_{j-1}$ such that $V_j = R(V_{j-1},z^c)$.
Let $k \in \{0, \dots, r_{j-1} \}$ be such that $z_{k,j-1} \succeq z \succ z_{k+1,j-1}$.
If $k=0$, then this means that $z \succ z_{1,j-1}$.
If $k=r_{j-1}$, then this means that $z_{r_{j-1},j-1} \succeq z$.
Also, note that, if $z = z_{k,j-1}$, then $c + c_{k,j-1}>0$, hence $c + c_{k,j-1} \neq 0$.
If $k \ge i$, then
\[
(\varphi_{z_{1,j}}^{c_{1,j}} \circ \cdots \circ \varphi_{z_{i-1,j}}^{c_{i-1,j}}) (z_{i,j}) = (\varphi_{z_{1,j-1}}^{c_{1,j-1}} \circ \cdots \circ \varphi_{z_{i-1,j-1}}^{c_{i-1,j-1}}) (z_{i,j-1}) = y\,.
\] 
If $k < i$ and $z \neq z_{k,j-1}$, then, by applying Lemma \ref{lem4_7} several times, 
\begin{gather*}
(\varphi_{z_{1,j}}^{c_{1,j}} \circ \cdots \circ \varphi_{z_{i,j}}^{c_{i,j}})  (z_{i+1,j}) =\\
(\varphi_{z_{1,j-1}}^{c_{1,j-1}} \circ \cdots \circ \varphi_{z_{k,j-1}}^{c_{k,j-1}} \circ \varphi_z^c \circ  \varphi_{\varphi_z^{-c} (z_{k+1,j-1})}^{c_{k+1,j-1}} \circ \cdots \circ \varphi_{\varphi_z^{-c} (z_{i-1,j-1})}^{c_{i-1,j-1}}) (\varphi_z^{-c} (z_{i,j-1})) =\\ 
(\varphi_{z_{1,j-1}}^{c_{1,j-1}} \circ \cdots \circ \varphi_{z_{k,j-1}}^{c_{k,j-1}} \circ \varphi_z^c \circ \varphi_z^{-c} \circ \varphi_{z_{k+1,j-1}}^{c_{k+1,j-1}} \circ \cdots \circ \varphi_{z_{i-1,j-1}}^{c_{i-1,j-1}}) (z_{i,j-1}) =\\
(\varphi_{z_{1,j-1}}^{c_{1,j-1}} \circ \cdots \circ \varphi_{z_{i-1,j-1}}^{c_{i-1,j-1}}) (z_{i,j-1}) = y\,.
\end{gather*}
If $k < i$ and $z= z_{k,j-1}$, then we prove in the same way that
\[
(\varphi_{z_{1,j}}^{c_{1,j}} \circ \cdots \circ \varphi_{z_{i-1,j}}^{c_{i-1,j}}) (z_{i,j}) = y\,.
\]
This completes the proof of the claim.

Applying the claim to $j=\ell$ we see that there exists $i \in \{1, \dots, q \}$ such that $y = (\varphi_{x_1}^{a_1} \circ \cdots \circ \varphi_{x_{i-1}}^{a_{i-1}}) (x_i) = \psi (x_i)$.
\end{proof}

An element $g \in \Tr^+ (\Gamma)$ is called \emph{square-free} if it can be written in the form $g = x_1 x_2 \dots x_p$ with $x_1 \succ x_2 \succ \cdots \succ x_p$.
We denote by $\SF (\Gamma)$ the set of square-free elements.

\begin{lem}\label{lem7_6}
Suppose $V (\Gamma)$ is finite and $\Gamma$ is complete.
Write $V (\Gamma) = \{x_1, \dots, x_n\}$ with $x_1 \succ x_2 \succ \cdots \succ x_n$, and set $\Delta = x_1 x_2 \dots x_n$.
Then $\Delta = \vee_L V (\Gamma) = \vee_R V (\Gamma)$ and $\Div_L (\Delta) = \Div_R (\Delta) = \SF (\Gamma)$.
In particular, $\Delta$ is balanced and $\Div (\Delta)$ generates $\Tr^+ (\Gamma)$, that is, $\Delta$ is a Garside element.
\end{lem}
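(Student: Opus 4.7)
My plan has four stages. First, I will identify the normal form: the length-one piling $(V(\Gamma))$, viewing $V(\Gamma)$ as a stratum with all exponents $1$, is $\RR^+$-irreducible and represents $\Delta$, so $\nf^+(\Delta)$ equals this piling and $\nu(\Delta) = \lg_\syl(\Delta) = n$. Since $\Gamma$ is complete, every subset $W \subseteq V(\Gamma)$ is a stratum, and uniqueness of the normal form yields $\SF(\Gamma) = \{\omega^+(W) : W \subseteq V(\Gamma)\}$. For the easier inclusion $\Div_L(\Delta) \subseteq \SF(\Gamma)$: by Theorem \ref{thm2_8}(1), every length-$n$ syllabic expression of $\Delta$ is syllabically reduced and so must have all generators distinct (since $\stackrel{II}{\to}$-swaps applied appropriately could otherwise bring equal syllables adjacent, triggering a length-reducing $\stackrel{I}{\to}$). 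Given $g \le_L \Delta$, the concatenation of syllabic expressions of $g$ and $g^{-1}\Delta$ is such a length-$n$ expression whose prefix witnesses $g$ as a product of $\nu(g)$ distinct generators; iterated $T$-transformations in $\RR^+$ combine these into a single stratum of exponents $1$, so $g \in \SF(\Gamma)$.

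For the reverse inclusion $\SF(\Gamma) \subseteq \Div_L(\Delta)$, I peel generators from $\Delta$ one at a time. Given $g = z_1 \cdots z_k \in \SF(\Gamma)$ with $z_1 \succ \cdots \succ z_k$ distinct, I establish by induction on $\ell \in \{0,\ldots,k\}$ the invariant that $\Delta = z_1 \cdots z_\ell \cdot h_\ell$, where $\nf^+(h_\ell) = \hat\omega^+(W_\ell)$ for some $W_\ell \subseteq V(\Gamma)$ of size $n - \ell$, and $\Div_L(h_\ell) \cap V(\Gamma) = V(\Gamma) \setminus \{z_1,\ldots,z_\ell\}$. The base $\ell = 0$ follows from Lemma \ref{lem7_5} together with bijectivity of $\psi_0 = \varphi_{x_1} \circ \cdots \circ \varphi_{x_n}$ on $V(\Gamma)$ (which holds because each $\varphi_{x_i}$ is an automorphism of $\starE_{x_i}(\Gamma) = \Gamma$). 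In the inductive step, distinctness of the $z_i$'s places $z_\ell$ in $\Div_L(h_{\ell-1}) \cap V(\Gamma)$; then the explicit formula inside the proof of Lemma \ref{lem7_5} produces $h_\ell = \omega^+(W_{\ell-1} \setminus \{v_{j_\ell}\})$, where $v_{j_\ell} \in W_{\ell-1}$ is uniquely determined by $\psi_{\ell-1}(v_{j_\ell}) = z_\ell$. Iterating $k$ times yields $\Delta = g \cdot h_k$, so $g \le_L \Delta$.

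Finally, I will show $\Delta = \vee_L V(\Gamma)$ by a length argument: any $\Delta' = \vee_L V(\Gamma)$ (which exists, as $\Delta$ is a common left multiple) satisfies $\nu(\Delta') \le n$ from $\Delta' \le_L \Delta$, and $\nu(\Delta') \ge n$ because each generator being a left divisor of $\Delta'$ forces via Lemma \ref{lem7_5} and bijectivity of the associated $\psi$ that the first stratum of $\nf^+(\Delta')$ has support $V(\Gamma)$ and thus contributes at least $n$ to the norm; combined, $\Delta' = \Delta$. The dual statements $\Div_R(\Delta) = \SF(\Gamma)$ and $\vee_R V(\Gamma) = \Delta$ follow by applying the same reasoning through the dual presentation of $\Tr^+(\Gamma)$ (Proposition \ref{prop2_2} at the monoid level). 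Since $V(\Gamma) \subseteq \Div(\Delta)$ generates $\Tr^+(\Gamma)$ and $\Div_L(\Delta) = \Div_R(\Delta)$, $\Delta$ is a Garside element.

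The main obstacle is verifying the inductive invariant in the peeling argument, namely $\Div_L(h_\ell) \cap V(\Gamma) = V(\Gamma) \setminus \{z_1,\ldots,z_\ell\}$ at each step: by Lemma \ref{lem7_5} and bijectivity of $\psi_\ell$, this reduces to the set-theoretic identity $\psi_\ell(\{v_{j_1},\ldots,v_{j_\ell}\}) = \{z_1,\ldots,z_\ell\}$. Its verification requires a careful secondary induction tracking how $\psi_\ell$ (obtained from $\psi_{\ell-1}$ by removing the factor $\varphi_{v_{j_\ell}}$) acts on the previously extracted $v_{j_m}$ for $m < \ell$. The key technical ingredients are Condition (d) in the definition of a trickle graph (implying $\varphi_v$ fixes any $v_{j_m}$ whenever $v \prec v_{j_m}$ in $\preceq$, since $\preceq$ extends $\le$) and Condition (g) (the braid-type commutation relation that governs how factors $\varphi_v$ commute past one another when restricted to the relevant chain).
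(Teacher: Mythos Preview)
Your argument for the ``easier'' inclusion $\Div_L(\Delta) \subseteq \SF(\Gamma)$ contains a genuine gap. The claim that every length-$n$ syllabic expression of $\Delta$ must have all generators distinct is \emph{false}, and the justification you give --- that $\stackrel{II}{\to}$-swaps could otherwise bring equal syllables adjacent --- does not work in the trickle setting. Unlike in the Coxeter or right-angled Artin case, the elementary operation $(x^a, y^b) \stackrel{II}{\to} (\varphi_x^a(y)^b, \varphi_y^{-b}(x)^a)$ is not a transposition of syllables: it changes the underlying vertices. Concretely, take $V(\Gamma) = \{a,b,c\}$ with $a > b$, $a > c$, $b \,\|\, c$, $\varphi_a$ swapping $b$ and $c$, and $\varphi_b = \varphi_c = \id$. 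Then $\Delta = abc$, and one checks via the trickle algorithm that $cac = \Delta$, so $(c,a,c)$ is a syllabically reduced length-$3$ expression with a repeated vertex. Neither available swap brings the two $c$'s together: $(c,a,c) \stackrel{II}{\to} (a,b,c)$ or $(c,a,c) \stackrel{II}{\to} (c,b,a)$. Your follow-up step is also problematic: even when a prefix consists of distinct generators, iterated $T$-transformations need not yield a stratum with all exponents~$1$; in the same example, $bac$ has pairwise distinct letters but $\nf^+(bac) = (\{a, c^2\})$.

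The paper's argument for this inclusion avoids the issue entirely by working directly with normal forms: if $g \le_L \Delta$ and the single-stratum normal form of $g$ had an exponent $\ge 2$, then in rewriting the piling $(\nf^+(g), \nf^+(g^{-1}\Delta))$ to $\nf^+(\Delta)$, the first stratum is only ever modified by $R$-operations, and in the positive setting these never decrease an exponent --- forcing an exponent $\ge 2$ in the normal form of $\Delta$, a contradiction. For the reverse inclusion $\SF(\Gamma) \subseteq \Div_L(\Delta)$, your peeling argument is plausible (the invariant $\Div_L(h_\ell) \cap V(\Gamma) = V(\Gamma) \setminus \{z_1,\ldots,z_\ell\}$ does appear to hold), but it is substantially more laborious than the paper's route: the paper observes that $g \mapsto X_g := \psi_g(\supp(\nf^+(g)))$ gives an injection $\SF_p(\Gamma) \to \PP_p(V(\Gamma))$ with $g = \vee_L X_g$, and then a cardinality count ($|\SF_p(\Gamma)| = \binom{n}{p} = |\PP_p(V(\Gamma))|$) makes it a bijection, whence $\Delta = \vee_L V(\Gamma)$ and $g = \vee_L X_g \le_L \vee_L V(\Gamma) = \Delta$ for every square-free $g$.
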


\begin{proof}
Let $\nu : \Tr^+ (\Gamma) \to \N$ be the length homomorphism which sends $x_i$ to $1$ for all $i \in \{1, \dots, n\}$.
For $p \in \{0,1, \dots, n\}$ we denote by $\SF_p (\Gamma)$ the set of square-free elements of length $p$.
Let $g \in \SF_p (\Gamma)$.
We write $g = x_{i_1} x_{i_2} \dots x_{i_p}$ with $i_1 < i_2 < \cdots < i_p$, we set $\psi_g = \varphi_{x_{i_1}} \circ \cdots \circ \varphi_{x_{i_p}}$ and $X_g = \psi_g (\{x_{i_1}, \dots, x_{i_p} \})$.
We know by Lemma \ref{lem7_5} that $\Div_L (g) \cap V (\Gamma) = X_g$.
Furthermore, again by Lemma \ref{lem7_5}, if $h \in \Tr^+ (\Gamma)$ satisfies $\Div_L (h) \cap V (\Gamma) \supseteq X_g$, then $\nu (h) \ge p$.
This implies that $\vee_L X_g$ exists and $\vee_L X_g = g$.
Denote by $\PP_p (V (\Gamma))$ the set of subsets of $V (\Gamma)$ with cardinality $p$.
We have an injective map $\SF_p (\Gamma) \to \PP_p (V (\Gamma))$, $g \mapsto X_g$, and $|\SF_p (\Gamma)| = |\PP_p (V (\Gamma))| = \binom{n}{p}$, hence this map is a bijection.
In particular, since $\SF_n (\Gamma) = \{\Delta\}$ and $\PP_n (V (\Gamma)) = \{ V(\Gamma)\}$, we have $\vee_L V(\Gamma) = \Delta$.

Let $g \in \Tr^+ (\Gamma)$.
If $g \le_L \Delta$, then $g$ is square-free, otherwise by applying the trickle algorithm we see that $\Delta$ would not be square-free.
Suppose that $g$ is square-free.
Then, since $X_g \subseteq V (\Gamma)$, we have $g = \vee_L X_g \le_L \vee_L V (\Gamma) = \Delta$.
This shows that $\Div_L (\Delta) = \SF (\Gamma)$.

For $p \in \{0, 1, \dots, n\}$ we denote by $\widetilde{\SF}_p (\Gamma)$ the set of elements of $\Tr^+ (\Gamma)$ that can be written in the form $x_{i_p} \dots x_{i_2} x_{i_1}$ with $i_1 < i_2 < \cdots < i_p$.
We show by induction on $p$ that $\widetilde{\SF}_p (\Gamma) = \SF_p (\Gamma)$.
The cases $p=0$ and $p=1$ are obvious, hence we can assume that $p \ge 2$ and that the induction hypothesis holds.
Let $g \in \SF_p (\Gamma)$.
Write $g$ in the form $g = x_{i_1} x_{i_2} \dots x_{i_p}$ with $i_1 < i_2 < \cdots < i_p$.
Since, for all $j \in \{2, \dots, p\}$, we have $x_{i_1} > x_{i_j}$ or $x_{i_1} || x_{i_j}$, 
\[
g= \varphi_{x_{i_1}}(x_{i_2}) \dots \varphi_{x_{i_1}}(x_{i_p}) \, x_{i_1}\,.
\]
By Lemma \ref{lem4_13}, $\varphi_{x_{i_1}}(x_{i_2}) \dots \varphi_{x_{i_1}}(x_{i_p}) \in \SF_{p-1} (\Gamma)$.
Let $V_1 = \{ x \in V (\Gamma) \mid x \not \ge x_{i_1} \}$ and let $\Gamma_1$ be the full subgraph of $\Gamma$ spanned by $V_1$.
Observe that $\Gamma_1$ is a parabolic subgraph of $\Gamma$ and $\varphi_{x_{i_1}}(x_{i_2}) \dots \varphi_{x_{i_1}}(x_{i_p}) \in \SF_{p-1} (\Gamma_1)$. 
By the induction hypothesis, $\SF_{p-1} (\Gamma_1) = \widetilde{\SF}_{p-1} (\Gamma_1)$, hence $\varphi_{x_{i_1}}(x_{i_2}) \dots \varphi_{x_{i_1}}(x_{i_p})$ can be written in the form $x_{k_p} \dots x_{k_3} x_{k_2}$ with $k_2 < k_3 < \cdots < k_p$ and $x_{k_j} \in V_1$ for all $j \in \{2, \dots, p\}$.
Note that, for $j \in \{2, \dots, p\}$, as $x_{k_j} \in V_1$, either $x_{i_1} > x_{k_j}$ or $x_{i_1} || x_{k_j}$.
Applying Lemma \ref{lem4_13} to the dual presentation of $\Tr^+ (\Gamma)$ we deduce that $g = x_{k_p} \dots x_{k_2} x_{i_1} \in \widetilde{\SF}_p (\Gamma)$.
This shows that $\SF_p (\Gamma) \subseteq \widetilde{\SF}_p (\Gamma)$.
We show in the same way that $\widetilde{\SF}_p (\Gamma) \subseteq \SF_p (\Gamma)$, hence $\widetilde{\SF}_p (\Gamma) = \SF_p (\Gamma)$.

Let $\widetilde{\SF} (\Gamma) = \bigcup_{p=0}^n \widetilde{\SF}_p (\Gamma)$.
From the equality $\SF_n (\Gamma) = \widetilde{\SF}_n (\Gamma)$ it follows that $\Delta = x_n \dots x_2 x_1$.
Moreover, by applying the previous reasoning to the dual presentation of $\Tr^+ (\Gamma)$ we get $\Div_R (\Delta) = \widetilde{\SF} (\Gamma) = \SF (\Gamma)$.
\end{proof}

\begin{proof}[Proof of Theorem \ref{thm2_15}]
Suppose $\Tr^+ (\Gamma)$ is a Garside monoid.
Let $\Delta$ be a Garside element of $\Tr^+ (\Gamma)$.
By definition, $\Div (\Delta) = \Div_L (\Delta)$ generates $\Tr^+ (\Gamma)$, hence $V (\Gamma) \subseteq \Div_L (\Delta)$, that is, $\Div_L (\Delta) \cap V (\Gamma) = V (\Gamma)$.
Now, Lemma \ref{lem7_5} implies that $\Div_L (\Delta) \cap V(\Gamma) = V(\Gamma)$ is a finite stratum, hence $V (\Gamma)$ is finite and $\Gamma$ is complete.

Suppose $V (\Gamma)$ is finite and $\Gamma$ is complete.
We write $V (\Gamma) = \{ x_1, \dots, x_n \}$ with $x_1 \succ x_2 \succ \cdots \succ x_n$, and we set $\Delta = x_1 x_2 \dots x_n$.
Then, by Lemma \ref{lem7_6}, $\Delta$ is a Garside element, hence $\Tr^+ (\Gamma)$ is a Garside monoid.
\end{proof}

\begin{proof}[Proof of Theorem \ref{thm2_16}]
We argue by induction on $|V (\Gamma)|$.
If $|V (\Gamma)| = 1$, then $\Tr (\Gamma) \simeq \Z$, hence $\Tr (\Gamma)$ is torsion-free.
So, we can assume that $|V (\Gamma)| \ge 2$ and that the induction hypothesis holds.
Let $Y$ be the set of maximal elements of $V (\Gamma)$ for the (partial) order $\le$.

First, suppose that there exist $y_1, y_2 \in Y$ such that $y_1 \neq y_2$ and $\{ y_1, y_2 \} \not \in E (\Gamma)$.
Let $X_1 = V (\Gamma) \setminus \{ y_1 \}$, $X_2 = V (\Gamma) \setminus \{ y_2 \}$ and $X_{12} = V (\Gamma) \setminus \{ y_1, y_2 \}$.
We denote by $\Gamma_1$ the full subgraph of $\Gamma$ spanned by $X_1$, by $\Gamma_2$ the full subgraph of $\Gamma$ spanned by $X_2$, and by $\Gamma_{12}$ the full subgraph of $\Gamma$ spanned by $X_{12}$.
It is easily seen that $\Gamma_1$, $\Gamma_2$, and $\Gamma_{12}$ are parabolic subgraphs of $\Gamma$.
Moreover, by Theorem \ref{thm2_10}, the embedding of $\Gamma_{12}$ into $\Gamma_k$ induces an injective homomorphism $\Tr (\Gamma_{12}) \hookrightarrow \Tr (\Gamma_k)$ and, by the induction hypothesis, $\Tr (\Gamma_k)$ is torsion-free, for $k \in \{1, 2\}$.
From the standard presentation of $\Tr (\Gamma)$ we see that $\Tr (\Gamma) = \Tr (\Gamma_1) *_{\Tr (\Gamma_{12})} \Tr (\Gamma_2)$, hence, by \cite[Chapter I, Corollary 1]{Serre1}, $\Tr (\Gamma)$ is torsion-free.

Suppose $\{y_1, y_2 \} \in E (\Gamma)$ for all $y_1, y_2 \in Y$, $y_1 \neq y_2$.
Note that, since all the elements of $Y$ are maximal for the order $\le$, we have $y_1 || y_2$ for all $y_1, y_2 \in Y$, $y_1 \neq y_2$. 
Let $X = V(\Gamma) \setminus Y$.
Denote by $\Gamma_Y$ the full subgraph of $\Gamma$ spanned by $Y$ and by $\Gamma_X$ the full subgraph of $\Gamma$ spanned by $X$.
It is easily observed that $\Gamma_X$ and $\Gamma_Y$ are both parabolic subgraphs of $\Gamma$ and that $\Tr (\Gamma_Y) \simeq \Z^{|Y|}$.
Let $y \in Y$.
Let $x \in X$ be such that $x \not < y$.
By definition, there exists $y' \in Y$ such that $x < y'$.
Since $y || y'$, Condition (b) in the definition of a trickle graph implies that $x|| y$.
Thus, $X \subseteq \starE_y (\Gamma)$ and $\varphi_y (X) = X$.
Let $y,y' \in Y$, $y \neq y'$.
Let $x \in X$.
If $x \not < y$ and $x \not < y'$, then
\[
(\varphi_y \circ \varphi_{y'}) (x) = \varphi_y (x) = x = \varphi_{y'} (x) = (\varphi_{y'} \circ \varphi_y) (x)\,.
\]
If $x < y$, then, by Condition (b) in the definition of a trickle graph, $x || y'$.
Since $\varphi_y$ is an automorphism of $\starE_y (\Gamma)$, we also have $\varphi_y(x) || y'$.
So, 
\[
(\varphi_y \circ \varphi_{y'}) (x) = \varphi_y (x) = \varphi_{y'}(\varphi_y (x)) = (\varphi_{y'} \circ \varphi_y) (x).
\]
If $x < y'$, then we prove in the same way that
\[
(\varphi_y \circ \varphi_{y'}) (x) = (\varphi_{y'} \circ \varphi_y) (x).
\]
This shows that we have an action of $\Tr (\Gamma_Y) \simeq \Z^{|Y|}$ on $\Tr (\Gamma_X)$ defined by $y \cdot x = \varphi_y (x)$ for all $y \in Y$ and $x \in X$, and that $\Tr (\Gamma) = \Tr (\Gamma_X) \rtimes \Tr (\Gamma_Y)$. 
Since $\Tr (\Gamma_Y) \simeq \Z^{|Y|}$ is torsion-free and, by the induction hypothesis, $\Tr (\Gamma_X)$ is torsion-free, it follows that $\Tr (\Gamma)$ is also torsion-free.
\end{proof}

Now, we turn to the study of parabolic submonoids and subgroups of preGarside trickle monoids and groups.
We first observe that, by repeating mutatis mutandis the proof of Theorem \ref{thm2_10}, we get the following.

\begin{lem}\label{lem7_7}
Let $\Gamma_1$ be a parabolic subgraph of $\Gamma$.
\begin{itemize}
\item[(1)]
The canonical homomorphism $\iota_{\Tr^+ (\Gamma_1)}: \Tr^+ (\Gamma_1) \to \Tr^+ (\Gamma)$ is injective.
So, we can identify $\Tr^+ (\Gamma_1)$ with $\iota_{\Tr^+ (\Gamma_1)} (\Tr^+ (\Gamma_1))$.
\item[(2)]
Let $g \in \Tr^+ (\Gamma)$.
We have $g \in \Tr^+ (\Gamma_1)$ if and only if $\nf^+ (g) \in V (\Gamma_1)^*$.
Moreover, in this case, $g$ has the same normal form in $\Tr^+ (\Gamma)$ as in $\Tr^+ (\Gamma_1)$, that is, $\nf_\Gamma^+ (g) = \nf_{\Gamma_1}^+ (g)$.
\end{itemize}
\end{lem}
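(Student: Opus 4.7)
The plan is to mirror the proof of Theorem \ref{thm2_10} verbatim, but working with the positive rewriting system $\RR^+$ provided by Lemma \ref{lem7_2} in place of $\RR$, and with the positive strata $\Omega^+ (\Gamma_1) \subseteq \Omega^+(\Gamma)$. The first step is to record the basic closure properties: since $\Gamma_1$ is a parabolic subgraph, $\varphi_x$ stabilizes $\starE_x (\Gamma_1)$ for every $x \in V (\Gamma_1)$, hence the extraction operation satisfies $\gamma (U, x^a) \in S^+ (\Gamma_1)$ whenever $U \in \Omega^+ (\Gamma_1)$ and $x^a \in U$, and the addition operation satisfies $R (U, x^a) \in \Omega^+ (\Gamma_1)$ whenever $U \in \Omega^+ (\Gamma_1)$ and $x^a \in S^+ (\Gamma_1)$ can be added to $U$. (Note that in the preGarside setting the cancellation case $b + a_i = 0$ does not occur, since all exponents are positive integers.) Consequently, any $\RR^+ (\Gamma)$-reduction starting from a piling in $\Omega^+ (\Gamma_1)^*$ remains in $\Omega^+ (\Gamma_1)^*$ and, step by step, is also a valid $\RR^+ (\Gamma_1)$-reduction.

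For part (2), I would argue exactly as in Theorem \ref{thm2_10}. Given $g \in \Tr^+ (\Gamma_1)$, write $g = x_1 \dots x_p$ with $x_i \in V (\Gamma_1)$, and set $v = (\{x_1\}, \dots, \{x_p\}) \in \Omega^+ (\Gamma_1)^* \subseteq \Omega^+ (\Gamma)^*$. By Lemma \ref{lem7_2} there is a unique $\RR^+ (\Gamma)$-irreducible piling $w \in \Omega^+ (\Gamma)^*$ with $v \stackrel{\RR^+ (\Gamma)\ *}{\longrightarrow} w$. By the closure observation, $w$ in fact lies in $\Omega^+ (\Gamma_1)^*$ and $v \stackrel{\RR^+ (\Gamma_1)\ *}{\longrightarrow} w$, and $w$ remains $\RR^+ (\Gamma_1)$-irreducible (since any $\RR^+ (\Gamma_1)$-reduction would also be an $\RR^+ (\Gamma)$-reduction). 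Therefore $\nf_\Gamma^+ (\iota_{\Tr^+ (\Gamma_1)} (g)) = \nf_{\Gamma_1}^+ (g) \in V (\Gamma_1)^*$. Conversely, if $\nf^+ (g) \in V (\Gamma_1)^*$, then $g$ is literally the product of the letters of $\nf^+ (g)$, so $g \in \Tr^+ (\Gamma_1)$.

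Part (1) is then immediate: if $g \in \Tr^+ (\Gamma_1)$ satisfies $\iota_{\Tr^+ (\Gamma_1)} (g) = 1$ in $\Tr^+ (\Gamma)$, then $\nf_\Gamma^+ (\iota_{\Tr^+ (\Gamma_1)} (g))$ is the empty word, so by part (2) the same is true of $\nf_{\Gamma_1}^+ (g)$, forcing $g = 1$.

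There is really no substantive obstacle: the only point that requires care is verifying the two closure properties of $\Omega^+ (\Gamma_1)$ under extraction and addition, and both follow directly from the parabolic hypothesis together with the positivity of the exponents in $\Omega^+$. Every other ingredient -- existence and uniqueness of irreducible reductions, the link between pilings and elements via $\Phi^+$, and the definition of $\nf^+$ -- has already been established in Lemmas \ref{lem7_2}--\ref{lem7_4}, so the argument is essentially a transcription of the one for Theorem \ref{thm2_10}.
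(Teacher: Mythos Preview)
Your proposal is correct and is exactly the approach taken in the paper: the authors simply state that Lemma \ref{lem7_7} follows ``by repeating mutatis mutandis the proof of Theorem \ref{thm2_10}'', and you have spelled out precisely that transcription, with the positive rewriting system $\RR^+$ and $\Omega^+(\Gamma_1)$ in place of $\RR$ and $\Omega(\Gamma_1)$.
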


As mentioned in Section \ref{sec2}, we have two different definitions of parabolicity, the one coming from the trickle groups and the one coming from the preGarside monoids.
Now, we show that these two definitions coincide.

\begin{prop}\label{prop7_8}
Let $N$ be a submonoid of $\Tr^+ (\Gamma)$.
Then $N$ is a parabolic submonoid of $\Tr^+ (\Gamma)$ if and only if there exists a parabolic subgraph $\Gamma_1$ of $\Gamma$ such that $N = \Tr^+ (\Gamma_1)$.
\end{prop}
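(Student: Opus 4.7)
The plan is to prove the two implications separately. For the direction $(\Leftarrow)$, given a parabolic submonoid $N$ of $\Tr^+ (\Gamma)$, set $V_1 := N \cap V (\Gamma)$. Specialness forces every atomic factor of an element of $N$ to lie in $V_1$, so $N$ is generated as a monoid by $V_1$, which is therefore its atom set. Let $\Gamma_1$ be the full subgraph of $\Gamma$ on $V_1$. To see $\Gamma_1$ is parabolic, take $x \in V_1$ and $y \in V_1 \cap \starE_x (\Gamma)$: if $y \not\le x$ then $\varphi_x (y) = y \in V_1$, while if $y < x$ then Condition (d) forces $\varphi_y (x) = x$, so the defining relation reads $\varphi_x (y) \cdot x = x y \in N$, and specialness yields $\varphi_x (y) \in N \cap V (\Gamma) = V_1$. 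The universal property then produces a surjection $\Tr^+ (\Gamma_1) \twoheadrightarrow N$, which by Lemma \ref{lem7_7}(1) is an isomorphism.

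For the direction $(\Rightarrow)$, let $\Gamma_1$ be a parabolic subgraph and set $N := \Tr^+ (\Gamma_1)$. Specialness is proved by induction on $\nu (a)$: assuming $ab \in N$ with $a \neq 1$, write $a = x a'$ with $x \in V (\Gamma)$. By Lemma \ref{lem7_5} applied to $ab$, the set $\Div_L (ab) \cap V (\Gamma)$ equals $\psi (\{x_1, \dots, x_q\})$ for the first stratum of $\nf^+ (ab)$; since $\nf^+ (ab) \in V (\Gamma_1)^*$ by Lemma \ref{lem7_7}(2) and $\psi$ preserves $V (\Gamma_1)$ by parabolicity, we obtain $x \in V (\Gamma_1) \subseteq N$. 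Then $a' b = x^{-1} (ab) \in \Tr (\Gamma_1) \cap \Tr^+ (\Gamma)$, and this intersection equals $N$ by combining Theorem \ref{thm2_10}(2) with Lemma \ref{lem7_4}(2). The inductive hypothesis applied to $a' b$ gives $a' \in N$, whence $a \in N$ and $b = a^{-1} (ab) \in N$ by the same identity.

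For stability under $\vee_L$ (the $\vee_R$ case following dually via Proposition \ref{prop2_2}), suppose $a, b \in N$ and $c := a \vee_L b$ exists in $\Tr^+ (\Gamma)$. The short complemented presentation used in the proof of Theorem \ref{thm2_14} has partial complement $f (x, y) = \varphi_x^{-1} (y)$, which by parabolicity of $\Gamma_1$ restricts to a partial complement on $V (\Gamma_1)$. Consequently the reversing procedure of \cite{DDGKM1} applied to positive words in $V (\Gamma_1)^*$ representing $a$ and $b$ stays inside $V (\Gamma_1)^*$ and terminates precisely when the corresponding reversing in $\Tr^+ (\Gamma)$ does; termination in $\Tr^+ (\Gamma)$ is ensured by the existence of $c$, so $a$ and $b$ admit a common left multiple in $N$. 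Since $N$ is preGarside by Theorem \ref{thm2_14} applied to $\Gamma_1$, the join $c_1 := a \vee_L^N b$ exists in $N$. Then $c \le_L c_1$ in $\Tr^+ (\Gamma)$; specialness gives $c \in N$; and minimality of $c_1$ inside $N$ forces $c_1 \le_L c$, whence $c = c_1 \in N$. The main obstacle is precisely this lattice-stability step: transferring the existence of $c$ from $\Tr^+ (\Gamma)$ into $N$ requires showing that the reversing computation for $a \vee_L b$ remains internal to $V (\Gamma_1)^*$, which is the essential content of parabolicity.
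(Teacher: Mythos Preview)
Your argument is correct. Note first that your direction labels $(\Leftarrow)$ and $(\Rightarrow)$ are swapped, but the content of each paragraph matches the right implication. The implication ``parabolic submonoid $\Rightarrow$ $N=\Tr^+(\Gamma_1)$'' is handled exactly as in the paper, via $V_1=N\cap V(\Gamma)$ and specialness; for the converse, your treatment of specialness is a mild reorganisation of the paper's Claim~1 (you induct on $\nu(a)$ and use $\Tr(\Gamma_1)\cap\Tr^+(\Gamma)=\Tr^+(\Gamma_1)$, whereas the paper writes out $h$ explicitly from the first stratum).

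The genuine divergence is in the closure under $\vee_L$. The paper proves an internal statement (its Claim~2) by induction on $\nu(g)$: if $g_1,g_2\in\Tr^+(\Gamma_1)$ admit a common right multiple $g$ in $\Tr^+(\Gamma)$, then they admit one $g'\in\Tr^+(\Gamma_1)$ with $g'\le_L g$; the inductive step peels a single letter off each $g_i$ using Lemma~\ref{lem7_5} and the edge relation $y_2y_1=y_1\,\varphi_{y_1}^{-1}(y_2)$. Your route instead invokes the right-reversing calculus of \cite{DDGKM1}: since the complement $f(x,y)=\varphi_x^{-1}(y)$ restricts to $V(\Gamma_1)$ by parabolicity, the reversing of $\tilde a^{-1}\tilde b$ is literally the same computation whether read in $\Tr^+(\Gamma)$ or in $\Tr^+(\Gamma_1)$. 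This is valid (for short complemented presentations the number of negative--positive inversions strictly decreases, so reversing always halts, and the sharp cube condition guarantees it halts \emph{successfully} precisely when a common multiple exists), and in fact it gives more than you use: the reversing output is $a\vee_L b$ itself, already in $V(\Gamma_1)^*$, so your detour through $c_1=a\vee_L^N b$ and the squeeze $c\le_L c_1\le_L c$ is unnecessary. The trade-off is that the paper's Claim~2 is entirely self-contained within the trickle framework, while your version imports the reversing machinery; on the other hand, yours makes transparent that parabolicity of $\Gamma_1$ is exactly the condition ensuring the $\vee_L$-algorithm does not leave $V(\Gamma_1)$.
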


\begin{proof}
Suppose $N$ is a parabolic submonoid of $\Tr^+ (\Gamma)$.
Set $X = V (\Gamma) \cap N$ and denote by $\Gamma_X$ the full subgraph of $\Gamma$ spanned by $X$.
Let $g \in N$.
Since $V (\Gamma)$ generates $\Tr^+ (\Gamma)$, the element $g$ can be written in the form $g = x_1 x_2 \dots x_p$ with $x_1, \dots, x_p \in V (\Gamma)$.
Since $N$ is a parabolic submonoid, $N$ is a special submonoid, hence $x_1, \dots, x_p \in N$, that is, $x_1, \dots, x_p \in X$.
This shows that $N$ is the submonoid of $\Tr^+ (\Gamma)$ generated by $X$.
Let $x, y \in X$ be such that $y < x$.
Since $\varphi_x(y) \, x = x y \in N$ and $N$ is special, we have $\varphi_x(y) \in N \cap V (\Gamma) = X$.
This implies that, for all $x \in X$, $\varphi_x$ restricts to an automorphism of $\starE_x (\Gamma_X)$.
So, $\Gamma_X$ is a parabolic subgraph of $\Gamma$ and $N = \Tr^+ (\Gamma_X)$.

Now, we take a parabolic subgraph $\Gamma_1$ of $\Gamma$ and we show that $\Tr^+ (\Gamma_1)$ is a parabolic submonoid of $\Tr^+ (\Gamma)$.

{\it Claim 1.}
$\Tr^+ (\Gamma_1)$ is special.

{\it Proof of Claim 1.}
We prove that, if $x \in V (\Gamma)$ and $h \in \Tr^+ (\Gamma)$ are such that $g= xh \in \Tr^+ (\Gamma_1)$, then $x \in V (\Gamma_1)$ and $h \in \Tr^+ (\Gamma_1)$.
Let $w = (U_1, \dots, U_p)$ be the unique piling in $(\Omega^+)^*$ such that $\Phi^+ (\bar w) = g$ and $w$ is $\RR^+$-irreducible.
Write $U_1=\{ x_1^{a_1}, \dots, x_q^{a_q} \}$ with $x_1 \succ x_2 \succ \cdots \succ x_q$.
Let $\psi = \varphi_{x_1}^{a_1} \circ \varphi_{x_2}^{a_2} \circ \cdots \circ \varphi_{x_q}^{a_q}$.
We know from Lemma \ref{lem7_5} that $x \in \psi (\{x_1, x_2, \dots, x_q\})$, hence there exists $i \in \{1, \dots, q \}$ such that $x = (\varphi_{x_1}^{a_1} \circ \cdots \circ \varphi_{x_{i-1}}^{a_{i-1}}) (x_i)$.
We have 
\[ 
g = x x^{a_i-1} x_1^{a_1} \dots x_{i-1}^{a_{i-1}} x_{i+1}^{a_{i+1}} \dots x_q^{a_q} \omega (U_2) \dots \omega (U_p)\,.
\] 
By Lemma \ref{lem7_7}, $\nf^+ (g) \in V (\Gamma_1)^*$, hence $x_1, \dots, x_q \in V (\Gamma_1)$, and therefore $x \in V (\Gamma_1)$.
Moreover, the inclusion $\nf^+ (g) \in V (\Gamma_1)^*$ also implies that $\omega (U_2), \dots, \omega (U_p) \in \Tr^+ (\Gamma_1)$, hence
\[
h = x^{a_i-1} x_1^{a_1} \dots x_{i-1}^{a_{i-1}} x_{i+1}^{a_{i+1}} \dots x_q^{a_q} \omega (U_2) \dots \omega (U_p) \in \Tr^+ (\Gamma_1)\,.
\]
This completes the proof of Claim 1.

{\it Claim 2.}
Let $g_1, g_2 \in \Tr^+ (\Gamma_1)$.
Suppose there exists $g \in \Tr^+ (\Gamma)$ such that $g_1, g_2 \le_L g$.
Then there exists $g' \in \Tr^+ (\Gamma_1)$ such that $g_1, g_2 \le_L g' \le_L g$.

{\it Proof of Claim 2.}
We argue by induction on the word length $\nu (g)$ of $g$.
If $\nu (g_1) = 0$, then $g_1 = 1$ and $g' = g_2$ satisfies $g' \in \Tr^+ (\Gamma_1)$ and $g_1, g_2 \le_L g' \le_L g$.
Similarly, if $\nu (g_2) = 0$, then $g_2 = 1$ and $g' = g_1$ satisfies $g' \in \Tr^+ (\Gamma_1)$ and $g_1, g_2 \le_L g' \le_L g$.
The case $\nu (g_1)= 0$ contains the case $\nu (g) = 0$, hence we can assume that $\nu (g_1) \ge 1$, $\nu (g_2) \ge 1$, and that the induction hypothesis holds.
Let $y_1, y_2 \in V (\Gamma_1)$ be such that $y_1 \le_L g_1$ and $y_2 \le_L g_2$, and let $h_1, h_2 \in \Tr^+ (\Gamma_1)$ be such that $g_1 = y_1 h_1$ and $g_2 = y_2 h_2$.

First, assume that $y_1 = y_2$.
Let $h \in \Tr^+ (\Gamma)$ be such that $y_1 h = g$.
Since $\Tr^+ (\Gamma)$ is cancellative, $h_1, h_2 \le_L h$, hence, by the induction hypothesis, there exists $h' \in \Tr^+ (\Gamma_1)$ such that $h_1, h_2 \le_L h' \le_L h$.
Let $g' = y_1 h'$.
Then $g' \in \Tr^+ (\Gamma_1)$ and $g_1, g_2 \le_L g' \le_L g$.

Now, assume $y_1 \neq y_2$.
Let $w = (U_1, \dots, U_p)$ be the unique piling in $(\Omega^+)^*$ such that $\Phi^+ (\bar w) = g$ and $w$ is $\RR^+$-irreducible.
Write $U_1=\{ x_1^{a_1}, \dots, x_q^{a_q} \}$ with $x_1 \succ x_2 \succ \cdots \succ x_q$.
Let $\psi = \varphi_{x_1}^{a_1} \circ \cdots \circ \varphi_{x_q}^{a_q}$.
Since $y_1, y_2 \le_L g$, by Lemma \ref{lem7_5} we have $y_1, y_2 \in \psi (\{x_1, x_2, \dots, x_q\})$.
Let $i, j \in \{1, \dots, q\}$ be such that $y_1 = \psi(x_i)$ and $y_2 = \psi(x_j)$.
We can assume without loss of generality that $i < j$.
Then, since $y_1 = \psi (x_i)$ and $y_2 = \psi (x_j)$, we have $\{y_1, y_2\} \in E (\Gamma)$ and $y_1 \not < y_2$.
Moreover,
\begin{gather*}
g = x_1^{a_1} \dots x_q^{a_q} \omega(U_2) \dots \omega (U_p) = y_2^{a_j} x_1^{a_1} \dots x_{j-1}^{a_{j-1}} x_{j+1}^{a_{j+1}} \dots x_q^{a_q} \omega(U_2) \dots \omega (U_p) =\\
y_2^{a_j} y_1^{a_i} x_1^{a_1} \dots x_{i-1}^{a_{i-1}} x_{i+1}^{a_{i+1}} \dots x_{j-1}^{a_{j-1}} x_{j+1}^{a_{j+1}} \dots x_q^{a_q} \omega(U_2) \dots \omega (U_p) =\\
y_2 y_1 (\varphi_{y_1}^{-1}(y_2))^{a_j-1} y_1^{a_i-1} x_1^{a_1} \dots x_{i-1}^{a_{i-1}} x_{i+1}^{a_{i+1}} \dots x_{j-1}^{a_{j-1}} x_{j+1}^{a_{j+1}} \dots x_q^{a_q} \omega(U_2) \dots \omega (U_p)\,,
\end{gather*}
hence $y_2 y_1 = y_1\, \varphi_{y_1}^{-1} (y_2) \le_L g$.
Let $h \in \Tr^+ (\Gamma)$ be such that $y_2 y_1 h = y_1\, \varphi_{y_1}^{-1} (y_2)\, h = g$.
We have $h_2, y_1 \le_L y_1 h$, hence, by the induction hypothesis, there exists $k_2 \in \Tr^+ (\Gamma_1)$ such that $h_2, y_1 \le_L k_2 \le_L y_1 h$.
Similarly, there exists $k_1 \in \Tr^+ (\Gamma_1)$ such that $h_1, \varphi_{y_1}^{-1} (y_2) \le_L k_1 \le_L \varphi_{y_1}^{-1}(y_2)\, h$.
Let $\ell_1, \ell_2 \in \Tr^+ (\Gamma_1)$ be such that $k_2 = y_1 \ell_2$ and $k_1 = \varphi_{y_1}^{-1} (y_2)\, \ell_1$.
We have $\ell_1, \ell_2 \le_L h$, hence, by the induction hypothesis, there exists $h' \in \Tr^+ (\Gamma_1)$ such that $\ell_1, \ell_2 \le_L h' \le_L h$.
Let $g' = y_2 y_1 h' = y_1\, \varphi_{y_1}^{-1} (y_2)\, h'$.
Then $g' \in \Tr^+ (\Gamma_1)$ and $g_1, g_2 \le_L g' \le_L g$.
This completes the proof of Claim 2.

Recall that $\Gamma_1$ is a parabolic subgraph of $\Gamma$ and that we want to prove that $\Tr^+ (\Gamma_1)$ is a parabolic submonoid of $\Tr^+ (\Gamma)$.
We know from Claim 1 that $\Tr^+ (\Gamma_1)$ is special.
Let $g_1, g_2 \in \Tr^+ (\Gamma_1)$ be such that $g_1 \vee_L g_2$ exists in $\Tr^+ (\Gamma)$.
By Claim 2 there exists $g' \in \Tr^+ (\Gamma_1)$ such that $g_1, g_2 \le_L g' \le_L g_1 \vee_L g_2$.
Then, by definition, $g' = g_1 \vee_L g_2 \in \Tr^+ (\Gamma_1)$.
Let $g_1, g_2 \in \Tr^+ (\Gamma_1)$ be such that $g_1 \vee_R g_2$ exists in $\Tr^+ (\Gamma)$.
By applying the above argument to the dual presentation of $\Tr^+ (\Gamma)$ we get that $g_1 \vee_R g_2 \in \Tr^+ (\Gamma_1)$.
\end{proof}

\begin{proof}[Proof of Theorem \ref{thm2_18}]
Let $N_1$ be a parabolic submonoid of $\Tr^+ (\Gamma)$.
By Proposition \ref{prop7_8} there exists a parabolic subgraph $\Gamma_1$ of $\Gamma$ such that $N_1 = \Tr^+ (\Gamma_1)$.
The homomorphism $G(N_1) \to \Tr (\Gamma)$ induced by the embedding $N_1 \hookrightarrow \Tr^+ (\Gamma)$ is the homomorphism $\iota_1 : \Tr (\Gamma_1) \to \Tr (\Gamma)$ induced by the embedding of $\Gamma_1$ into $\Gamma$, and we know from Theorem \ref{thm2_10} that this homomorphism is injective.

Let $g \in G(N_1) \cap \Tr^+ (\Gamma) = \Tr (\Gamma_1) \cap \Tr^+ (\Gamma)$.
By Theorem \ref{thm2_10} we have $\nf (g) \in (V (\Gamma_1) \cup V (\Gamma_1)^{-1})^*$ and by Lemma \ref{lem7_4} we have $\nf (g) \in V(\Gamma)^*$, hence $\nf (g) \in V (\Gamma_1)^*$, and therefore $g \in \Tr^+ (\Gamma_1)$.
This shows that $N_1 = \Tr^+ (\Gamma_1) = G (N_1) \cap \Tr^+ (\Gamma)$.

Let $N_2$ be another parabolic submonoid of $\Tr^+ (\Gamma)$.
As for $N_1$, there exists a parabolic subgraph $\Gamma_2$ of $\Gamma$ such that $N_2 = \Tr^+ (\Gamma_2)$.
Let $g \in N_1 \cap N_2 = \Tr^+ (\Gamma_1) \cap \Tr^+ (\Gamma_2)$.
By Lemma \ref{lem7_7} we have $\nf^+ (g) \in V(\Gamma_1)^* \cap V (\Gamma_2)^* = V (\Gamma_1 \cap \Gamma_2) ^*$, hence $g \in \Tr^+ (\Gamma_1 \cap \Gamma_2)$.
This shows that $N_1 \cap N_2 \subseteq \Tr^+ (\Gamma_1 \cap \Gamma_2)$.
Since the inclusion $\Tr^+ (\Gamma_1 \cap \Gamma_2) \subseteq N_1 \cap N_2$ is obvious, it follows that $N_1 \cap N_2 = \Tr^+ (\Gamma_1 \cap \Gamma_2)$.
In particular, by Proposition \ref{prop7_8}, $N_1 \cap N_2$ is a parabolic submonoid of $\Tr^+ (\Gamma)$. 
Finally, applying Corollary \ref{corl2_12} we get
\[
G (N_1) \cap G (N_2) = \Tr (\Gamma_1) \cap \Tr (\Gamma_2) = \Tr (\Gamma_1 \cap \Gamma_2) = G (N_1 \cap N_2)\,. \proved
\]
\end{proof}


\frenchspacing



\begin{thebibliography}{DDH$\phantom{}^+$07}

\bibitem[BE22]{BaElh1}
{\bf I Ba, M Elhamdadi,}
{\it Circular orderability and quandles,}
Preprint,  	arXiv:2204.09458, 2022.

\bibitem[BPS22]{BaPaSi1}
{\bf V\,G Bardakov, I\,B\,S Passi, M Singh,}
{\it Zero-divisors and idempotents in quandle rings,}
Osaka J. Math. 59 (2022), no. 3, 611--637.

\bibitem[BCP16]{BeCiPa1}
{\bf P Bellingeri, B\,A Cisneros de la Cruz, L Paris,}
{\it A simple solution to the word problem for virtual braid groups,}
Pacific J. Math. 283 (2016), no. 2, 271--287.

\bibitem[BP20]{BelPar1}
{\bf P Bellingeri, L Paris,}
{\it Virtual braids and permutations,}
Ann. Inst. Fourier (Grenoble) 70 (2020), no. 3, 1341--1362.

\bibitem[BPT23]{BePaTh1}
{\bf P Bellingeri, L Paris, A-L Thiel,}
{\it Virtual Artin groups,}
Proc. London Math. Soc. (3) 126 (2023), no. 1, 192--215.

\bibitem[Bes03]{Bessi1}
{\bf D Bessis,}
{\it The dual braid monoid,}
Ann. Sci. École Norm. Sup. (4) 36 (2003), no. 5, 647--683. 

\bibitem[BKL98]{BiKoLe1}
{\bf J Birman, K\,H Ko, S\,J Lee,}
{\it A new approach to the word and conjugacy problems in the braid groups,}
Adv. Math. 139 (1998), no. 2, 322--353. 

\bibitem[Bon16]{Bonna1}
{\bf C Bonnaf\'e,}
{\it Cells and cacti,}
Int. Math. Res. Not. IMRN (2016), no. 19, 5775--5800.

\bibitem[Bou68]{Bourb1}
{\bf N Bourbaki,}
{\it \'El\'ements de math\'ematique. Fasc. XXXIV. Groupes et alg\`ebres de Lie. Chapitre IV: Groupes de Coxeter et syst\`emes de Tits. Chapitre V: Groupes engendr\'es par des r\'eflexions. Chapitre VI: syst\`emes de racines,} 
Actualit\'es Scientifiques et Industrielles No 1337. Hermann, Paris, 1968.

\bibitem[BW08]{BraWat1}
{\bf T Brady, C Watt,}
{\it Non-crossing partition lattices in finite real reflection groups,}
Trans. Amer. Math. Soc. 360 (2008), no. 4, 1983--2005.

\bibitem[BS72]{BriSai1}
{\bf E Brieskorn, K Saito,}
{\it Artin-Gruppen und Coxeter-Gruppen,}
Invent. Math. 17 (1972), 245--271.

\bibitem[Bur24]{Buril1}
{\bf J. Burillo,}
{\it Introduction to Thompson's group $F$,}
Book currently being written, available at
https://web.mat.upc.edu/pep.burillo.

\bibitem[CFP96]{CaFlPa1}
{\bf J\,W Cannon, W\,J Floyd, W\,R Parry,}
{\it Introductory notes on Richard Thompson's groups,}
Enseign. Math. (2) 42 (1996), no. 3--4, 215--256.

\bibitem[CGP20]{ChGlPy1}
{\bf M Chmutov, M Glick, P Pylyavskyy,}
{\it The Berenstein-Kirillov group and cactus groups,}
J. Comb. Algebra 4 (2020), no. 2, 111--140.

\bibitem[CGW09]{CrGoWi1}
{\bf J Crisp, E Godelle, B Wiest,}
{\it The conjugacy problem in subgroups of right-angled Artin groups,}
J. Topol. 2 (2009), no. 3, 442--460.

\bibitem[DDH$\phantom{}^+$07]{DDHPV1}
{\bf M\,A Dabkowska, M\,K Dabkowski, V\,S Harizanov, J\,H Przytycki, M\,A Veve,}
{\it Compactness of the space of left orders,}
J. Knot Theory Ramifications 16 (2007), no. 3, 257--266.

\bibitem[DJS03]{DaJaSc1}
{\bf M Davis, T Januszkiewicz, R Scott,}
{\it Fundamental groups of blow-ups,}
Adv. Math. 177 (2003), no. 1, 115--179.

\bibitem[Deh02]{Dehor1}
{\bf P Dehornoy,}
{\it Groupes de Garside,}
Ann. Sci. \'Ecole Norm. Sup. (4) 35 (2002), no. 2, 267--306.

\bibitem[DDG$\phantom{}^+$15]{DDGKM1}
{\bf P Dehornoy, F Digne, E Godelle, D Krammer, J Michel,}
{\it Foundations of Garside theory,}
EMS Tracts Math., 22, European Mathematical Society (EMS), Zürich, 2015. 

\bibitem[DP99]{DehPar1}
{\bf P Dehornoy, L Paris,}
{\it Gaussian groups and Garside groups, two generalisations of Artin groups,}
Proc. London Math. Soc. (3) 79 (1999), no. 3, 569--604.

\bibitem[Dev99]{Devad1}
{\bf S\,L Devadoss,}
{\it Tessellations of moduli spaces and the mosaic operad,}
Contemp. Math., 239, American Mathematical Society, Providence, RI, 1999, 91--114.

\bibitem[DK92]{DucKro1}
{\bf G Duchamp, D Krob,}
{\it The lower central series of the free partially commutative group,}
Semigroup Forum 45 (1992), no. 3, 385--394.

\bibitem[EHK$\phantom{}^+$10]{EHKR1}
{\bf P Etingof, A Henriques, J Kamnitzer, E\,M Rains,}
{\it The cohomology ring of the real locus of the moduli space of stable curves of genus 0 with marked points,}
Ann. of Math. (2) 171 (2010), no. 2, 731--777.

\bibitem[Gen22]{Genev1}
{\bf A Genevois,}
{\it Cactus groups from the viewpoint of geometric group theory,}
Topology Proceedings, to appear, arXiv: 2212.03494, 2022.

\bibitem[Gob24]{Gobet1}
{\bf T Gobet,}
{\it Toric reflection groups,}
J. Aust. Math. Soc. 116 (2024), no. 2, 171--199.

\bibitem[God07]{Godel1}
{\bf E Godelle,}
{\it Parabolic subgroups of Garside groups,}
J. Algebra 317 (2007), no. 1, 1--16.

\bibitem[God10]{Godel2}
{\bf E Godelle,}
{\it Parabolic subgroups of Garside groups II: ribbons,}
J. Pure Appl. Algebra 214 (2010), no. 11, 2044--2062. 

\bibitem[GP12a]{GodPar1}
{\bf E Godelle, L Paris,}
{\it $K(\pi,1)$ and word problems for infinite type Artin-Tits groups, and applications to virtual braid groups,}
Math. Z. 272 (2012), no. 3--4, 1339--1364.

\bibitem[GP12b]{GodPar3}
{\bf E Godelle, L Paris,}
{\it Basic questions on Artin-Tits groups,}
Configuration spaces, 299--311.
CRM Series, 14, Edizioni della Normale, Pisa, 2012.

\bibitem[GP13]{GodPar2}
{\bf E Godelle, L Paris,}
{\it PreGarside monoids and groups, parabolicity, amalgamation, and FC property,}
Internat. J. Algebra Comput. 23 (2013), no. 6, 1431--1467.

\bibitem[Gre90]{Green1}
{\bf E\,R Green,}
{\it Graph products of groups,}
Ph. D. Thesis, University of Leeds, 1990.

\bibitem[HK06]{HenKam1}
{\bf A Henriques, J Kamnitzer,}
{\it Crystals and coboundary categories,}
Duke Math. J. 132 (2006), no. 2, 191--216.

\bibitem[HM95]{HerMei1}
{\bf S Hermiller, J Meier,}
{\it Algorithms and geometry for graph products of groups,}
J. Algebra 171 (1995), no. 1, 230--257.

\bibitem[${\rm IKL}^+$23]{IKLPR1}
{\bf A Ilin, J Kamnitzer, Y Li, P Przytycki, L Rybnikov,}
{\it The moduli space of cactus flower curves and the virtual cactus group,}
Preprint,  	arXiv:2308.06880, 2023. 

\bibitem[Joy99]{Joyce1}
{\bf D Joyce,}
{\it A classifying invariant of knots, the knot quandle,}
J. Pure Appl. Algebra 23 (1982), no. 1, 37--65.

\bibitem[Kau99]{Kauff1}
{\bf L\,H Kauffman,}
{\it Virtual knot theory,}
European J. Combin. 20 (1999), no. 7, 663--690. 

\bibitem[Kau00]{Kauff2}
{\bf L\,H Kauffman,}
{\it A survey of virtual knot theory,}
Knots in Hellas '98 (Delphi), 143--202, Ser. Knots Everything, 24, World Sci. Publ., River Edge, NJ, 2000. 

\bibitem[KW19]{KhWi1}
{\bf A Khoroshkin, T Willwacher,}
{\it Real moduli space of stable rational curves revisted,}
preprint, arXiv:1905.04499, 2019,
to appear in J. Eur. Math. Soc..

\bibitem[KTW04]{KnTaWo1}
{\bf A Knutson, T Tao, C Woodward,}
{\it A positive proof of the Littlewood-Richardson rule using the octahedron recurrence,}
Electron. J. Combin. 11 (2004), no. 1, Research Paper 61, 18 pp.

\bibitem[Los19]{Losev1}
{\bf I Losev,}
{\it Cacti and cells,}
J. Eur. Math. Soc. (JEMS) 21 (2019), no. 6, 1729--1750.

\bibitem[MS17]{McCSul1}
{\bf J McCammond, R Sulway,}
{\it Artin groups of Euclidean type,}
Invent. Math. 210 (2017), no. 1, 231--282.

\bibitem[Mat82]{Matve1}
{\bf S\,V Matveev,}
{\it Distributive groupoids in knot theory,}
Mat. Sb. (N.S.) 119 (161) (1982), no. 1, 78--88, 160.

\bibitem[Mos19]{Mosto1}
{\bf J Mostovoy,}
{\it The pure cactus group is residually nilpotent,}
Arch. Math. (Basel) 113 (2019), no. 3, 229--235.

\bibitem[New42]{Newma1}
{\bf M\,H\,A Newman,}
{\it On theories with a combinatorial definition of ``equivalence'',}
Ann. of Math. (2) 43 (1942), 223--243.

\bibitem[PS21]{PaoSal1}
{\bf G Paolini, M Salvetti,}
{\it Proof of the $K (\pi, 1)$ conjecture for affine Artin groups,}
Invent. Math. 224 (2021), no. 2, 487--572.

\bibitem[Par02]{Paris1}
{\bf L Paris,}
{\it Artin monoids inject in their groups,}
Comment. Math. Helv. 77 (2002), no. 3, 609--637.

\bibitem[PS23]{ParSoe1}
{\bf L Paris, M Soergel,}
{\it Word problem and parabolic subgroups in Dyer groups,}
Bull. Lond. Math. Soc. 55 (2023), no. 6, 2928--2947. 

\bibitem[RW24]{RouWhi1}
{\bf R Rouquier, N White,}
{\it Cactus groups and Lusztig's asymptotic algebra,}
preprint, arXiv:2408.16922, 2024.

\bibitem[Ser77]{Serre1}
{\bf J\,P Serre,}
{\it Arbres, amalgames, SL2,}
Astérisque, No. 46,
Société Mathématique de France, Paris, 1977.

\bibitem[Tit69]{Tits1}
{\bf J Tits,}
{\it Le probl\`eme des mots dans les groupes de Coxeter,}
1969 Symposia Mathema\-ti\-ca (INDAM, Rome, 1967/68), Vol. 1, pp. 175--185, Academic Press, London.

\bibitem[Wyk94]{Wyk1}
{\bf L Van Wyk,}
{\it Graph groups are biautomatic,}
J. Pure Appl. Algebra 94 (1994), no. 3, 341--352.

\bibitem[Yu23]{Yu1}
{\bf R Yu,}
{\it Linearity of generalized cactus groups,}
J. Algebra 635 (2023), 256--270.

\end{thebibliography}
\end{document}